\newcommand{\NSFThree}{NSF Grant DMS-1509652}
\newcommand{\Sloan}{the Sloan Foundation}
\newcommand{\Z}{{\mathbb  Z}}
\newcommand{\N}{{\mathbb N}}
\newcommand{\Ind}{\big\uparrow} 
\newcommand{\ind}{\uparrow}
\DeclareMathOperator{\Map}{Map}
\DeclareMathOperator{\Sym}{Sym}
\DeclareMathOperator{\Stab}{Stab}
\newcommand{\res}{res}
\newcommand{\m}[1]{{\protect\underline{#1}}}
\newcommand{\mM}{\m{M}}
\newcommand{\mR}{\m{R}}
\newcommand{\mA}{\m{A}}
\newcommand{\mSet}{\m{\Set}}
\newcommand{\cc}[1]{\mathcal #1}
\newcommand{\cC}{\cc{C}}
\newcommand{\ccD}{\cc{D}}
\newcommand{\cF}{\cc{F}}
\newcommand{\cO}{\cc{O}}
\newcommand{\cP}{\cc{P}}
\newcommand{\aC}{\cC}
\newcommand{\aD}{\ccD}
\newcommand{\ob}{\textrm{ob}}
\newcommand{\id}{\textrm{id}}
\newcommand{\CoInd}{\textnormal{CoInd}}
\newcommand{\Set}{\mathcal Set}
\newcommand{\Ninfty}{N_\infty}
\newcommand{\cOrb}{\mathcal Orb}
\newcommand{\Tamb}{\mathcal Tamb}
\newcommand{\OTamb}{\cO\mhyphen\Tamb}
\newcommand{\Mackey}{\mathcal Mackey}
\newcommand{\Cat}{\mathcal Cat}
\newcommand{\SymM}{\mathcal Sym}
\newcommand{\Green}{\mathcal Green}
\newcommand{\leftadjoint}{\dashv}
\newcommand{\mC}{\m{\cC}}
\mathchardef\mhyphen=45
\newtheorem{theorem}{Theorem}[section]
\newtheorem{lemma}[theorem]{Lemma}
\newtheorem{corollary}[theorem]{Corollary}
\newtheorem{definition}[theorem]{Definition}
\newtheorem{proposition}[theorem]{Proposition}
\newtheorem{remark}[theorem]{Remark}
\newtheorem{example}[theorem]{Example}
\newtheorem{warning}[theorem]{Warning}
\newcommand{\defemph}[1]{\textbf{#1}} 
\newcommand{\Ho}{\textrm{Ho}}
\begin{document}

\title{Incomplete Tambara Functors}

\author[A.~J.~Blumberg]{Andrew~J. Blumberg}
\address{University of Texas \\ Austin, TX 78712}
\email{blumberg@math.utexas.edu}
\thanks{A.~J.~Blumberg was supported in part by NSF grant DMS-1151577}

\author[M.~A.~Hill]{Michael~A. Hill}
\address{University of California Los Angeles\\ Los Angeles, CA 90095}
\email{mikehill@math.ucla.edu}
\thanks{M.~A.~Hill was supported in part by {\NSFThree} and {\Sloan}}

\begin{abstract}
For a ``genuine'' equivariant commutative ring spectrum $R$,
$\pi_0(R)$ admits a rich algebraic structure known as a Tambara
functor.  This algebraic structure mirrors the structure on $R$
arising from the existence of multiplicative norm maps.  Motivated by
the surprising fact that Bousfield localization can destroy some of
the norm maps, in previous work we studied equivariant commutative
ring structures parametrized by $N_\infty$ operads.  In a precise
sense, these interpolate between ``naive'' and ``genuine'' equivariant
ring structures. 

In this paper, we describe the algebraic analogue of $N_\infty$ ring
structures.  We introduce and study categories of incomplete Tambara
functors, described in terms of certain categories of bispans.
Incomplete Tambara functors arise as $\pi_0$ of $N_\infty$ algebras,
and interpolate between Green functors and Tambara functors.  We
classify all incomplete Tambara functors in terms of a basic
structural result about polynomial functors.  This classification
gives a conceptual justification for our prior description of
$N_\infty$ operads and also allows us to easily describe the
properties of the category of incomplete Tambara functors.  
\end{abstract}

\maketitle

\section{Introduction}

Much of the richness and subtlety of equivariant stable homotopy
theory arises from the complexity of the notion of a commutative ring
spectrum (i.e., multiplicative cohomology theory) in this context.
Although one can define an equivariant commutative ring spectrum as an
equivariant spectrum with a homotopy-coherent multiplication
parametrized by an $E_\infty$ operad (regarded as a $G$-trivial
equivariant operad), much more power comes from considering
multiplications parametrized by ``genuine'' equivariant $E_\infty$
operads.  Such commutative ring spectra have a coherent collection of
multiplicative norm maps, studied extensively first by Greenlees and
May~\cite{GreenMay} and utilized to great effect in the work of the
second author, Hopkins, and Ravenel~\cite{HHR} resolving the Kervaire
invariant one problem.

One of the most surprising observations emerging from the recent
renewed interest in equivariant stable homotopy is the discovery that
Bousfield localization does not necessarily preserve the existence of
these multiplicative norms~\cite{HHInversion, HHLocalization}.
Succinctly, Bousfield localization does not necessarily take genuine
equivariant commutative ring spectra to genuine equivariant
commutative ring spectra.  For formal reasons, the localization of a
genuine equivariant commutative ring spectra is equipped with a
homotopy-coherent multiplication, but the question of which norm maps
survive is considerably more complicated.

In order to understand exactly what kinds of structures are preserved,
in previous work we introduced the more general notion of an $\Ninfty$
operad, an operad in $G$-spectra which interpolates between the naive
$E_{\infty}$ operads which parameterize a coherently homotopy
commutative multiplication and the genuine $G$-$E_{\infty}$ operads
whose algebras are genuine equivariant commutative ring
spectra~\cite{BHNinfty}.  Algebras in spectra over $\Ninfty$ operads are
equivariant commutative ring spectra that admit families of
multiplicative norms.  One of the most surprising results in our study
of $\Ninfty$ operads and their algebras was that homotopically, the
entire story is essentially discrete. The homotopy type of an
$\Ninfty$ operad is completely determined by an ``indexing system'', a
coherent collection of finite $H$-sets for each subgroup $H$ of $G$.
In addition to their role classifying $\Ninfty$ operads, indexing
systems also parameterize exactly which norms arise in algebras over
an $\Ninfty$ operad.  For example, the trivial $\Ninfty$ operad gives
only a coherently commutative multiplication, while a $G$-$E_\infty$
operad gives compatible norm maps for all pairs of subgroups $H\subset K$.  

\begin{definition}
A \defemph{symmetric monoidal coefficient system} is a contravariant functor 
\[
\mC\colon\cOrb_{G}^{op}\to\SymM
\]
from the opposite of the orbit category of $G$ to the category of symmetric monoidal categories and strong symmetric monoidal functors.
\end{definition}

The prototype of a symmetric monoidal is $\mSet$, the functor which assigns to $G/H$ the category of finite $H$-sets, viewed as a symmetric monoidal category under disjoint union. The functoriality here is most easily seen by replacing $\Set^H$ with the equivalent category of finite $G$-sets over $G/H$, and we will implicitly work in this formulation.
An indexing system is a sub coefficient systems of this coefficient system which has properties analogous to closure under composition.

\begin{definition}\label{defn:IndexingSystem}
An {\defemph{indexing system}} is a full symmetric monoidal sub-coefficient system $\mC$ of $\mSet$ that contains all trivial sets and is closed under
\begin{enumerate}
\item finite limits and
\item ``self-induction'': if $H/K\in \mC(H)$ and $T\in\mC(K)$, then $H\times_{K}T\in\mC(H)$.
\end{enumerate}
To reduce clutter, we  write $\mC(H)$ for $\mC(G/H)$.
\end{definition}

The set $\mathcal I$ of indexing systems forms a poset under inclusion, and one of the basic results in \cite{BHNinfty} is that there is a fully-faithful functor
\[
\mC\colon \Ho\left(\mathcal N_{\infty}\mhyphen{\mathcal O}perad\right)\to \mathcal I.
\]
We conjecture there that this functor is in fact an equivalence of categories.

The purpose of this paper is to study the analogous story in algebra,
which provides a conceptual explanation of the homotopically discrete
behavior of $N_\infty$ operads.  Via $\pi_0$, the structure of the
equivariant stable category is mirrored in the abelian category of
Mackey functors.  Mackey functors which have a commutative
multiplication, typically referred to as commutative Green functors,
mirror the structure of a homotopy-coherent multiplication on an
equivariant spectrum.  Mackey functors that admit a commutative
multiplication and in addition multiplicative norm maps, known as
Tambara functors, mirror the structure of a genuine equivariant
commutative ring spectrum.  Although the theory of these sorts of
algebraic equivariant ring objects is well-developed, there has not
been any study of the algebraic analogue of the algebras over the
intermediate $\Ninfty$ operads, namely commutative Green functors
which have some, but not necessarily all, multiplicative norm
maps. This paper introduces these ``incomplete Tambara functors'' and
explores their basic properties.

Tambara originally defined his $TNR$-functors as product-preserving functors from a category of ``bispans'', now called ``polynomials'', of finite $G$-sets into the category sets \cite{Tambara}. Here, a ``bispan'', is an isomorphism class of diagrams of the form
\begin{equation}\label{eqn:BiSpan}
X\xleftarrow{h} A\xrightarrow{g} B\xrightarrow{f} Y,
\end{equation}
where isomorphisms are isomorphisms of diagrams which are the identity on $X$ and on $Y$. The set of all such isomorphisms forms the morphisms from $X$ to $Y$ in the category of bispans, and in this category, disjoint union of finite $G$-sets forms the product in this category. Category theorists have generalized this approach, describing the category of polynomials in a wide variety of contexts such as locally Cartesian closed categories or categories with pullbacks. 
Any arrow in the category of polynomials in a category $\cC$ can be
decomposed as $T_f\circ N_g\circ R_h$, where the definitions of these
maps are reviewed below. The choices of letters reflect the underlying
structure: $R$ gives the restriction in a Mackey functors, $T$ the
transfer, and $N$ the norm, a generalization of the Evans transfer in
group cohomology. Our incomplete Tambara functors arise by restricting the map $g$
in Equation~\ref{eqn:BiSpan} to live in a particular subcategory.  The
fact that such a restriction is well-defined comes from the following
theorem, which holds for polynomials in any of the contexts normally
studied.

\begin{theorem}
Let $\cC$ be a locally Cartesian closed category or more generally a category with pullbacks, and let $\ccD$ be a wide, pullback stable subcategory of $\cC$. Then the subgraph of the category of polynomials in $\cC$ with all objects and with only morphisms of the form
\[
X\xleftarrow{h} A\xrightarrow{g} B\xrightarrow{f} Y,
\]
where $g\in\ccD$, is a subcategory.
\end{theorem}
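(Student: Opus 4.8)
The plan is to check the two defining properties of a subcategory directly. Identities first: the identity endomorphism of an object $X$ in the category of polynomials in $\cC$ is represented by the bispan $X \xleftarrow{\id} X \xrightarrow{\id} X \xrightarrow{\id} X$, and its middle map $\id_X$ lies in $\ccD$ because $\ccD$ is wide. So the only real issue is closure under composition: given composable polynomials whose middle maps lie in $\ccD$, the composite polynomial (a priori only defined up to isomorphism) must again be representable by a bispan whose middle map lies in $\ccD$.

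For this I would use the factorization of an arbitrary polynomial $X \xleftarrow{h} A \xrightarrow{g} B \xrightarrow{f} Y$ as $T_f \circ N_g \circ R_h$ and the standard rules for re-expressing a word in the $T$'s, $N$'s and $R$'s in the canonical order $T \cdot N \cdot R$. Concretely, to compose $T_{f_2} N_{g_2} R_{h_2}$ with $T_{f_1} N_{g_1} R_{h_1}$ one must rewrite the block $R_{h_2} T_{f_1} N_{g_1}$, which is done by successive applications of: (i) the Beck--Chevalley isomorphism $R \cdot T \simeq T \cdot R$ attached to a pullback square; (ii) the isomorphism $R \cdot N \simeq N \cdot R$ saying that a base change of a norm is a norm; and (iii) the distributivity isomorphism $N \cdot T \simeq T \cdot N \cdot R$ attached to an exponential diagram. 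After these moves one is left with a word of the form $T \cdot T \cdot N \cdot N \cdot R \cdot R \cdot R$, which collapses to $T \cdot N \cdot R$ by functoriality of $T$, $N$ and $R$; reading off the middle factor gives the composite polynomial, with middle map $g_2' \circ g_1'$ for suitable maps $g_1', g_2'$.

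The crux is to track what happens to the norm in each move, and here is where the hypotheses on $\ccD$ enter. In move (ii), Beck--Chevalley for the dependent product identifies the pullback of $N_g$ along $h$ with $N_{g'}$, where $g'$ is the pullback of $g$ along $h$; in move (iii), the exponential diagram built from $f\colon A\to B$ and $g\colon B\to C$ has the form $A \xleftarrow{\varepsilon} W \xrightarrow{q} \Pi_g(f) \xrightarrow{\pi} C$ with $\pi$ the structure map of the dependent product, $W = \Pi_g(f)\times_C B$, and $q$ the projection, so that the new norm $q$ is again a pullback of $g$. Thus every norm appearing along the way is $N$ of a pullback of either $g_1$ or $g_2$, and tracing through the four rewriting steps shows that $g_1'$ is a pullback of $g_1$ and $g_2'$ is a pullback of $g_2$. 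I expect the careful construction and verification of the exponential diagram --- in particular, exhibiting its ``$g$-edge'' as a genuine pullback of $g$ rather than merely some map canonically associated to $g$ --- to be the main technical step; everything else is bookkeeping with iterated pullbacks.

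Granting that, the conclusion is formal: since $\ccD$ is pullback stable, $g_1'$ and $g_2'$ lie in $\ccD$; since $\ccD$ is closed under composition, $g_2' \circ g_1'$ lies in $\ccD$; and together with the identity case this shows the indicated subgraph is closed under composition, hence is a subcategory. I would also remark that the argument is uniform across the stated settings: in a locally Cartesian closed category all the dependent products exist outright, while in a bare category with pullbacks one runs exactly the same computation after restricting the maps $g$ to the pullback stable class of exponentiable morphisms along which norms are defined, which is precisely the generality in which the category of polynomials is set up in that case.
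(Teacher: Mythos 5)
Your proposal is correct and follows essentially the same route as the paper: reduce to the generators $T$, $N$, $R$, rewrite composites into the canonical order $T\circ N\circ R$ via the pullback and distributivity (exponential-diagram) commutation relations, and observe that every norm produced along the way is $N$ of a pullback of one of the original exponents, so pullback stability plus closure of $\ccD$ under composition finishes the argument. The point you flag as the main technical step — that the norm edge of the exponential diagram for $N_g\circ T_f$ is literally the pullback of $g$ along the structure map of the dependent product — is exactly the fact the paper isolates before its proof.
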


We call this subcategory the ``polynomials in $\cC$ with exponents in $\ccD$''. This theorem shows that we can find interesting generalizations of Tambara's construction by considering the wide, pullback stable subcategories of the category of finite $G$-sets. We have a complete classification of the wide, pullback stable subcategories which are of most interest to us, and this is the major result of Section~\ref{sec:PBStableSubcats}.

\begin{theorem}
There is an isomorphism between the poset of indexing systems and the poset of wide, pullback stable, finite coproduct complete subcategories of $\Set^G$.
\end{theorem}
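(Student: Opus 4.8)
The plan is to produce an explicit order-preserving bijection between the two posets, working throughout with the standard equivalence $\Set^G/(G/H)\simeq\Set^H$ that identifies a finite $G$-set over the orbit $G/H$ with its fiber, a finite $H$-set. Given an indexing system $\mC$, let $\Phi(\mC)\subseteq\Set^G$ be the wide subgraph whose morphisms are the maps $g\colon B\to Y$ such that, writing $Y=\coprod_i G/H_i$ for the orbit decomposition of the target, every fiber $g^{-1}(G/H_i)\to G/H_i$ lies in $\mC(H_i)$ under that equivalence. Conversely, given a wide, pullback stable, finite coproduct complete subcategory $\ccD\subseteq\Set^G$, let $\Psi(\ccD)$ be the full sub-coefficient system of $\mSet$ with $\Psi(\ccD)(H)\subseteq\Set^H$ the full subcategory on those $T$ whose structure map $G\times_H T\to G/H$ lies in $\ccD$. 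Since a map of finite $G$-sets decomposes canonically over the orbits of its target, a short computation using pullback stability (to restrict a $\ccD$-map to an orbit summand of its target) and finite coproduct completeness (to reassemble a map from its orbitwise pieces, and to recognize a coproduct of $\ccD$-maps as a $\ccD$-map) shows that $\Phi$ and $\Psi$ are mutually inverse on underlying classes; monotonicity in both directions is immediate. It then remains to check that $\Phi(\mC)$ always has the stated closure properties and that $\Psi(\ccD)$ is always an indexing system.

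That $\Psi(\ccD)$ is an indexing system is the easier half. Fullness and the symmetric monoidal structure are built in; $\Psi(\ccD)$ is a sub-coefficient system because (twisted) restriction along a map of orbits translates under the slice equivalence into pulling back the structure map $G\times_H T\to G/H$, and $\ccD$ is pullback stable. All finite trivial sets belong to $\Psi(\ccD)$ because finite coproduct completeness forces the fold maps $\coprod^n G/H\to G/H$ into $\ccD$. Closure under finite limits then reduces to containing the terminal object (a point, hence trivial), to closure under finite products — which follows from pullback stability together with closure under composition, since a product over $G/H$ of structure maps is a pullback followed by a composite of $\ccD$-maps — and to closure under sub-objects, which holds because an inclusion of a summand is a morphism of $\ccD$, so the restriction of a $\ccD$-map to a summand of its source is again in $\ccD$. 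Finally, self-induction is transparent: if $H/K\in\Psi(\ccD)(H)$ and $T\in\Psi(\ccD)(K)$, then $G/K\to G/H$ and $G\times_K T\to G/K$ both lie in $\ccD$, and their composite is exactly the structure map of $H\times_K T$, so $H\times_K T\in\Psi(\ccD)(H)$.

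For the other direction, $\Phi(\mC)$ is evidently wide, contains the identities (the fiber of an identity over any orbit is a point, hence a trivial set in $\mC$), is pullback stable (restriction of finite $H$-sets along a map of orbits preserves $\mC$, since $\mC$ is a sub-coefficient system), and is finite coproduct complete (orbits and fibers merely get partitioned, and each $\mC(H)$ is closed under disjoint union). The one real point is closure under composition. Reducing to a composite $A\xrightarrow{g_1}B\xrightarrow{g_2}G/K$ with target a single orbit, write the fiber $S=B_{eK}\in\mC(K)$ as $\coprod_i K/L_i$; each $K/L_i$ is a sub-object of $S$ and so lies in $\mC(K)$ (indexing systems are closed under sub-objects). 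The orbit of $B$ corresponding to $K/L_i$ is $G/L_i$, and applying the hypothesis $g_1\in\Phi(\mC)$ to it says precisely that the fiber of the induced $K$-map $A_{eK}\to S$ over the point $eL_i$, viewed as an $L_i$-set, lies in $\mC(L_i)$; self-induction then puts the summand $K\times_{L_i}(\text{that fiber})$ of $A_{eK}$ into $\mC(K)$, and closure under disjoint union puts $A_{eK}$ into $\mC(K)$, i.e.\ $g_2g_1\in\Phi(\mC)$. This matching of closure under composition with self-induction is the heart of the argument and is where all the bookkeeping with the nested induced sets $G\times_K(-)$ and $K\times_{L_i}(-)$ lives; it runs parallel to the first theorem above, which is exactly what makes the construction $\ccD\mapsto(\text{polynomials with exponents in }\ccD)$ well-posed in the first place. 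Assembling the two constructions, their mutual inverseness, monotonicity, and the two closure checks yields the asserted poset isomorphism.
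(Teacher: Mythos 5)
Your proposal is correct and follows essentially the same route as the paper: the two constructions are the paper's $\cO\mapsto\Set^G_{\cO}$ and $\ccD\mapsto\ccD_{/G/H}$ in the fiberwise/orbitwise formulation, and every key step matches (self-induction $\leftrightarrow$ closure under composition, the sub-coefficient-system condition $\leftrightarrow$ pullback stability, finite limits via products-as-pullbacks and subobjects, and coproduct completeness $\leftrightarrow$ trivial sets and disjoint unions). The only organizational difference is that by declaring $\Psi(\ccD)(H)$ to be the \emph{full} subcategory on the objects with structure map in $\ccD$, you bypass the paper's Proposition~\ref{prop:Full} (that the slice $\ccD_{/G/H}$ is automatically full in $\Set^H$), which is harmless for the poset isomorphism since indexing systems are determined by their objects.
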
 

Loosely speaking, an indexing system describes all of the norm maps
which arise in the study of algebras over an $\Ninfty$ operad, and
this theorem shows that from a categorical point of view, these are
all that we should have expected. 

Functors out of the category of polynomials in $\Set^G$ with exponents
in various wide, pullback stable subcategories gives our notion of
incomplete Tambara functors. 

\begin{definition}[See Definition~\ref{defn:incompleteTamb}]
For an indexing system $\cO$, the category of $\cO$-Tambara functors
is the category of functors from polynomials with exponents in $\cO$
to abelian groups.
\end{definition}

In Sections~\ref{sec:OTambara} and~\ref{sec:OTambprop}, we describe
the basic constructions and explores some of the properties of the
category of incomplete Tambara functors. In particular, we describe
also the result of localization in the category of incomplete Tambara
functors, building on work of Nakaoka \cite{NakaokaLocalization}.

Next, in Section~\ref{sec:ChangeofGroup} we study ``change'' functors
and in particular focus on the analogue of the norm-restriction
adjunction in this context.

Finally, we explain the connection between $\cO$-ring spectra and
$\cO$-Tambara functors, for $\cO$ an $N_\infty$ operand.  We have the
following theorem, proved as Theorem~\ref{thm:pi0} below.

\begin{theorem}
Let $\cO$ be an $N_\infty$ operad and $R$ an $\cO$-algebra in
orthogonal $G$-spectra.  Then $\pi_0(R)$ is an $\cO$-Tambara functor.
\end{theorem}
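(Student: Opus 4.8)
The plan is to construct a functor from the category of polynomials with exponents in $\cO$ to abelian groups directly out of the homotopy-theoretic structure on $R$. The key point is that an $\cO$-algebra structure on $R$ gives, for each admissible $H$-set $T \in \cO(H)$ (equivalently, each $g$-map in the pullback-stable subcategory corresponding to $\cO$), a genuine norm map $N^T \colon R^{\wedge T} \to R$ in the homotopy category of $H$-spectra, and these are coherently compatible. More precisely, I would first recall — citing \cite{BHNinfty} — that an $\cO$-algebra $R$ has underlying $E_\infty$ structure (so $\pi_0 R$ is at least a Green functor, with restrictions and transfers), and in addition, for each admissible $T$, an operadic norm multiplication that on $\pi_0$ descends to a multiplicative map $N^T_K \colon (\pi_0 R)(K) \to (\pi_0 R)(H)$ when $T = H/K$, and more generally a multiplicative map indexed by an arbitrary admissible $T$. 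The restriction to exponents in the subcategory $\ccD_\cO$ (via the classification theorem relating indexing systems to wide, pullback-stable, finite-coproduct-complete subcategories of $\Set^G$) is precisely what makes these norms well-defined on exactly the bispans allowed in $\cO$-polynomials.

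**Next**, I would verify that these assignments assemble into a product-preserving functor on $\cO$-polynomials. Every morphism in the polynomial category factors as $T_f \circ N_g \circ R_h$ with $g \in \ccD_\cO$; so it suffices to (i) define the functor on each of the three generating types of morphism — $R_h$ goes to the Mackey restriction, $T_f$ to the Mackey transfer, $N_g$ to the operadic norm determined by the admissible set underlying $g$ — and (ii) check that the relations in the polynomial category (the distributivity/exponential diagrams, compatibility of norms with restriction and transfer, and the interchange laws among composites of $R$'s, $N$'s, and $T$'s) are satisfied on $\pi_0 R$. The relations involving only $R$ and $T$ are the Mackey/Green axioms and are classical. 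The relations expressing how norms interact with restriction and with each other follow from the double-coset-type formulas for operadic norms established in \cite{BHNinfty, HHR}; crucially, \cite{BHNinfty} shows the relevant indexing system is closed under the self-induction and finite-limit operations, which is exactly what guarantees that each diagram appearing in a polynomial relation stays inside $\ccD_\cO$ so that the corresponding norm is defined. Product-preservation ($R(X \sqcup Y) \cong R(X) \times R(Y)$) is immediate since $\pi_0$ of a wedge/product of genuine $G$-spectra splits as the product of the homotopy Mackey functors.

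**The main obstacle** I expect is bookkeeping the coherence: translating the homotopy-coherent operadic norm structure on $R$ into strict equalities of maps of abelian groups after applying $\pi_0$, and checking that all the Tambara-style distributivity relations (which in Tambara's original setup are proven by an explicit, somewhat involved diagram chase with finite $G$-sets) hold for the operadically-defined norms. In the complete case this is essentially the content of Brun's and Ullman's identification of $\pi_0$ of a genuine commutative ring spectrum as a Tambara functor, so the real work is checking that their arguments only ever use norms indexed by admissible sets, hence go through verbatim with exponents restricted to $\ccD_\cO$. I would organize this by isolating the finitely many relation-types in a presentation of the $\cO$-polynomial category (restriction functoriality, transfer functoriality, projection formula, norm multiplicativity, norm-restriction double coset formula, and the exponential/distributivity law), and dispatch each by reducing to the corresponding statement for operadic norms in \cite{BHNinfty}, taking care that the indexing-system axioms of Definition~\ref{defn:IndexingSystem} supply exactly the closure properties needed for each reduction to make sense.
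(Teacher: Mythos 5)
Your proposal is correct and follows essentially the same route as the paper: the paper deduces this theorem as a direct corollary of Theorem~\ref{thm:GenAndRels}, which characterizes an $\cO$-Tambara functor as a commutative Green functor equipped with norm maps $N_H^K$ for the admissible orbits $G/H\to G/K\in\cOrb_{\cO}$ satisfying the universal Tambara reciprocity formulae of Propositions~\ref{prop:SumTambaraRec} and~\ref{prop:TrTambaraRec} --- exactly the generators-and-relations reduction you describe in your second and third paragraphs. Your observation that the indexing-system closure axioms guarantee every set appearing in the reciprocity formulae remains admissible, so that the complete-case (Brun-type) arguments restrict verbatim to exponents in $\ccD_{\cO}$, is precisely the point the paper is leaning on.
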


In fact, the functor $\pi_0$ translates the $G$-symmetric monoidal
structure on the equivariant stable category associated to an
$N_\infty$ operad $\cO$ (as in~\cite{BHmodules}) to the $G$-symmmetric
monoidal structure on Mackey functors specified by $\cO$.  However, to
be precise about this, we need to study the homological algebra of
$\cO$-Tambara functors.  More generally, because they are central to
the behavior of localization on commutative rings in equivariant
stable homotopy theory, we expect that the theory of the homological
algebra of $\cO$-Tambara functors will be an important aspect of
developing equivariant derived algebraic geometry.  We intend to carry
out this work in a subsequent paper.

\begin{remark}
We work in this paper in an additively complete setting, meaning that all of our flavors of Tambara functors will have an underlying Mackey functor. This is motivated by our goals in equivariant spectra, where our objects of study are multiplicative structures put on genuine $G$-spectra. Allowing additive incompleteness as well adds very interesting consequences, and we will return to this in a future paper.
\end{remark}

\subsection*{Acknowledgments.}  The authors would like to thank John Greenlees, Mike 
Mandell, and Peter May for helpful conversations.  This project was
made possible by the hospitality of the Hausdorff Research Institute
for Mathematics at the University of Bonn.

\subsection{Notation}

We will use the symbol $\cO$ abusively to refer either to an
$\Ninfty$ operad or to an indexing system; when $\cO$ refers to an
$\Ninfty$-operad, we will use the same symbol to describe the
associated indexing system.  If $\cO$ is an indexing system, then we
say that an $H$-set $T$ is ``admissible for $\cO$'' if $T\in \cO(H)$.

If $S$ is a $G$-set and $s\in S$, let $G_{s}=\Stab(s)$ denote the
stabilizer subgroup. 

\section{Polynomials with restricted exponents}\label{sec:Polynomials}

A polynomial (or bispan) in a category $\aC$ is an isomorphism class
of composites $X \leftarrow S \to T \to Y$.  The collection of
polynomials forms a category with composition given by pullback.
Tambara functors can be described as certain Functors out of this
category.  Given a subcategory $\aD \subset \aC$, a natural question
when considering incomplete Tambara functors is to consider
polynomials in $\aC$ with the ``middle'' map $S \to T$ required to be
in $\aD$.  In this section, we discuss the basic theory of polynomials
with restricted exponents; we also establish some technical results
about polynomials with restricted exponents that we need in the
remainder of the paper.  The main result of this section
(Theorem~\ref{thm:PolynomialSubcategories}) provides natural criteria
on $\aD$ that describe when the resulting collection of polynomials
itself forms a category.

\subsection{Review of Polynomials}
Much of the background material here is taken from work of
Gambino-Kock, although everything works in Weber's context as well \cite{GamKock, Weber}.  Following their conventions, we
work in a locally Cartesian closed category $\cC$. In particular, this
means that for any morphism $f\colon X\to Y$, the pullback functor
$f^{\ast}\colon \cC/Y\to\cC/X$ has both adjoints: 
\[
\Sigma_{f}\dashv f^{\ast}\dashv \Pi_{f}.
\]
The left adjoint is called the ``dependent sum'' and the right the
``dependent product''. In the category of finite $G$-sets, the
dependent sum is simply ``disjoint union of the fibers over $y$'',
while the dependent product is the ``product of the fibers over
$y$''. 

In any locally Cartesian closed category $\cC$, we can define the category of polynomials.

\begin{definition}
If $\cC$ is a locally Cartesian closed category, let $\cP^{\cC}$ be the category with objects the objects of $\cC$ and with morphisms isomorphism classes of ``bispans''
\[
X\leftarrow S\to T\to Y.
\]
Here isomorphisms of bispans are specified by isomorphisms $S \to S'$ and $T \to T'$ such that following diagram commutes
\[
\xymatrix@R=.5\baselineskip{
{} & {S}\ar[r]\ar[dd]_{\cong}\ar[dl] & {T}\ar[dd]^{\cong}\ar[dr] & {} \\
{X} & {} & {} & {Y.} \\
{} & {S'}\ar[ul]\ar[r] & {T'} \ar[ur] & {}
}
\]
\end{definition}

\begin{remark}
In fact, we have a bicategory of polynomials in $\cC$, where the diagram expressing an isomorphism defines a $2$-cell provided the central square is a pullback.  In all of what follows, the statements remain true if we work in this $\Cat$ enriched setting.  
\end{remark}

Composition of bispans in most easily expressed by choosing a convenient set of generating morphisms.

\begin{definition}
If $f\colon S\to T$ is a map in $\cC$, then let
\begin{enumerate}
\item $R_f=[T\xleftarrow{f} S\xrightarrow{1} S\xrightarrow{1} S]\in \cP^{\cC}(T,S),$
\item $N_f=[S\xleftarrow{1} S\xrightarrow{f} T\xrightarrow{1} T]\in \cP^{\cC}(S,T),$ and
\item $T_f=[S\xleftarrow{1} S\xrightarrow{1} S\xrightarrow{f} T]\in \cP^{\cC}(S,T)$.
\end{enumerate}
We will refer to maps of this form as {\em basic} polynomials.
\end{definition}

These maps generate the category $\cP^{\cC}$.  First, we have an identification
\begin{equation}\label{eqn:aTNR}
[X\xleftarrow{f} S\xrightarrow{g} T\xrightarrow{h} Y] =T_h\circ N_g\circ R_f.
\end{equation}

We will say that the order of maps $T$, $N$, $R$ given by
Equation~\ref{eqn:aTNR} is the ``canonical ordering'', and we need only
show that any other composite of basic maps can be brought into this
form.  We do this by establishing commutation relations.  The argument
goes back to Tambara in the context of finite $G$-sets
\cite{Tambara}. In the locally Cartesian closed category context,
these were shown by Gambino-Kock and in the context of categories with
pullbacks, by Weber. We summarize the commutation relations in a
series of propositions. 

\begin{proposition}[{\cite{Tambara,GamKock,Weber}}]\label{prop:TNRcomp}
We have
\begin{align*}
N_{g}\circ N_{g'}&=N_{g\circ g'}\\
T_{h}\circ T_{h'}&= N_{h\circ h'}\\
R_{f}\circ R_{f'}&=R_{f'\circ f}.
\end{align*}
\end{proposition}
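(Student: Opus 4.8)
The plan is to go back to the construction of composition in $\cP^{\cC}$ and specialize it to the three cases, in each of which two of the three legs of each factor are identities. Recall from Gambino--Kock~\cite{GamKock} that the composite of two bispans is produced from the given data by a finite sequence of pullbacks together with a single application of a dependent product (the ``distributivity'' step), and that, equivalently, the bispan $[I\xleftarrow{s}B\xrightarrow{p}A\xrightarrow{t}J]$ represents the functor $\Sigma_{t}\circ\Pi_{p}\circ s^{\ast}\colon\cC/I\to\cC/J$, with composition of bispans implementing composition of these functors. Under this dictionary $R_{f}\leftrightarrow f^{\ast}$, $N_{g}\leftrightarrow\Pi_{g}$, and $T_{h}\leftrightarrow\Sigma_{h}$, so the three relations amount exactly to the functoriality, in the evident variances, of pullback, dependent product, and dependent sum. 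I would run the argument on the bispans themselves via the explicit composition recipe, so as to stay inside the $1$-category $\cP^{\cC}$, bringing in the functor picture only where it clarifies the $N$-case.

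I would dispose of the two easy relations first. For $R_{f}\circ R_{f'}$ both factors have identity middle and right legs, so the distributivity step is vacuous and the composition recipe degenerates to a single pullback; its defining square exhibits the new left leg as $f'\circ f$, so the composite is $R_{f'\circ f}$ (equivalently, $f^{\ast}\circ f'^{\ast}=(f'\circ f)^{\ast}$). For $T_{h}\circ T_{h'}$ both factors have identity left and middle legs, and one checks directly from the recipe that composing with $T_{h}$ on the left simply post-composes the right leg of the other bispan with $h$; hence the composite is the bispan with identity left and middle legs and right leg $h\circ h'$, namely $T_{h\circ h'}$.

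The substantive relation is $N_{g}\circ N_{g'}$, where the left and right legs are identities but the middle legs $g'$ and $g$ are not, so the composite genuinely uses the locally Cartesian closed structure. Here I would trace the distributivity construction through and identify the middle leg of the resulting bispan with $g\circ g'$. The content is that dependent products compose, $\Pi_{g}\circ\Pi_{g'}\cong\Pi_{g\circ g'}$, which is formal --- both are right adjoint to $(g\circ g')^{\ast}=g'^{\ast}\circ g^{\ast}$ --- but I expect this to be the main obstacle, because what is needed is an \emph{equality of isomorphism classes of bispans}, not merely a natural isomorphism of functors. There are two ways to close this gap: invoke the fact that a bispan is determined up to isomorphism by the polynomial functor it represents~\cite{GamKock}, so that the canonical isomorphism $\Pi_{g}\Pi_{g'}\cong\Pi_{g\circ g'}$ upgrades to the desired equality; or, to keep the section self-contained, unwind Gambino--Kock's composite of $N_{g'}$ with $N_{g}$ and verify by a diagram chase that the distributivity pullback it produces is canonically isomorphic to the trivial bispan with middle leg $g\circ g'$ --- which is essentially a re-proof that $\Pi$ respects composition. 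The same pattern --- reduce to the composition recipe, exploit that one factor is built almost entirely from identities --- is what I would then use for the Beck--Chevalley and distributivity relations among $T$, $N$, $R$ needed to bring an arbitrary composite of basic polynomials into the canonical order of Equation~\ref{eqn:aTNR}.
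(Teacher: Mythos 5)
Your argument is correct and is essentially the standard one: the paper gives no proof of this proposition, deferring to Tambara, Gambino--Kock, and Weber, and what you describe --- reading the three relations off the bispan composition recipe via the dictionary $R_{f}\leftrightarrow f^{\ast}$, $N_{g}\leftrightarrow\Pi_{g}$, $T_{h}\leftrightarrow\Sigma_{h}$, with the only real content in the $N$-case (where either of your two ways of upgrading $\Pi_{g}\Pi_{g'}\cong\Pi_{g\circ g'}$ to an equality of isomorphism classes is fine) --- is precisely the argument in those sources. One thing your computation gets right that the printed statement does not: the second relation should read $T_{h}\circ T_{h'}=T_{h\circ h'}$, not $N_{h\circ h'}$, and your derivation (left composition with $T_{h}$ post-composes the right leg) correctly produces $T_{h\circ h'}$.
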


\begin{proposition}[{\cite{Tambara,GamKock,Weber}}]\label{prop:RN}
If 
\[
\xymatrix{
{X'}\ar[r]^{g'}\ar[d]_{f'} &  {X}\ar[d]^{f} \\
{Y'}\ar[r]_{g} & {Y}
}
\]
is a pullback diagram, then we have
\begin{align*}
R_{f}\circ N_{g}&=N_{g'}\circ R_{f'}\\
R_{f}\circ T_{g} &=T_{g'}\circ R_{f'}.
\end{align*}
\end{proposition}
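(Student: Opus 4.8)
The plan is to compute each of the two composites of basic polynomials directly from the bispan composition rule and check that the answer is, on the nose, the right-hand side. The first thing to do is to rewrite the right-hand sides conveniently. Using the canonical decomposition~\eqref{eqn:aTNR} together with $T_1 = N_1 = \id$ one gets
\[
N_{g'}\circ R_{f'}=[Y'\xleftarrow{f'}X'\xrightarrow{g'}X\xrightarrow{1}X],
\qquad
T_{g'}\circ R_{f'}=[Y'\xleftarrow{f'}X'\xrightarrow{1}X'\xrightarrow{g'}X],
\]
so it suffices to show that $R_f\circ N_g$ and $R_f\circ T_g$ equal these bispans. The common simplification is that in $R_f=[Y\xleftarrow{f}X\xrightarrow{1}X\xrightarrow{1}X]$ the norm and transfer legs are identities, so post-composing with $R_f$ is a pure base change along $f$, and the composition rule (after Proposition~\ref{prop:TNRcomp}) collapses to a single Beck--Chevalley move.

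For $R_f\circ T_g$, the composition rule tells us to pull the transfer leg $g\colon Y'\to Y$ of $T_g$ back along the restriction leg $f\colon X\to Y$ of $R_f$; by hypothesis that pullback is precisely the given square, so $X'=X\times_Y Y'$ with projections $g'\colon X'\to X$ and $f'\colon X'\to Y'$. Tracing the legs through (the norm leg of $T_g$ is an identity, so nothing further happens) produces $R_f\circ T_g=[Y'\xleftarrow{f'}X'\xrightarrow{1}X'\xrightarrow{g'}X]$, which is the desired identity. At the level of polynomial functors this is exactly the compatibility of base change with dependent sum, $f^{\ast}\Sigma_g\simeq\Sigma_{g'}(f')^{\ast}$, which holds in any category with pullbacks.

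For $R_f\circ N_g$, the transfer leg of $N_g=[Y'\xleftarrow{1}Y'\xrightarrow{g}Y\xrightarrow{1}Y]$ is $\id_Y$, so pulling it back along $f$ is trivial; the only substantive step of the composition rule is then to pull the norm leg $g\colon Y'\to Y$ back along $f\colon X\to Y$, which is again the given square, and to apply Beck--Chevalley for the dependent product, $f^{\ast}\Pi_g\simeq\Pi_{g'}(f')^{\ast}$ (valid in a locally Cartesian closed category, and available in Weber's setting since exponentiable maps are pullback-stable). No genuine distributive law is invoked: that step would be forced only if the restriction leg of $N_g$ or the transfer leg of $R_f$ were nontrivial, and both are identities. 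Unwinding the rule gives $R_f\circ N_g=[Y'\xleftarrow{f'}X'\xrightarrow{g'}X\xrightarrow{1}X]$, matching $N_{g'}\circ R_{f'}$. Alternatively, one may use that the passage from bispans to polynomial functors is locally fully faithful (Gambino--Kock), so each identity can be checked after passing to polynomial functors, where the two statements are precisely the left and right Beck--Chevalley conditions for the given pullback square.

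The substance here is modest; the thing to get right is the bookkeeping in the composition rule, namely the orientation of each pullback, so that $X'$ emerges with $f'$ as its \emph{restriction} leg and $g'$ as the following map — reproducing $N_{g'}\circ R_{f'}$ and $T_{g'}\circ R_{f'}$ rather than their transposes — together with the observation that the distributivity step of the general composition formula degenerates in both of these special cases. With those points verified, the two equalities follow by inspection.
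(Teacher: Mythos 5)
Your proof is correct. The paper itself gives no argument for Proposition~\ref{prop:RN} --- it is quoted from Tambara, Gambino--Kock, and Weber --- and what you have written is precisely the standard argument from those sources: the two identities are the Beck--Chevalley isomorphisms $f^{\ast}\Pi_{g}\cong\Pi_{g'}(f')^{\ast}$ and $f^{\ast}\Sigma_{g}\cong\Sigma_{g'}(f')^{\ast}$ for the given square, and your observation that the distributivity (exponential-diagram) step of the general composition recipe degenerates here because the relevant legs are identities is exactly the right bookkeeping point.
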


\begin{proposition}[{\cite{Tambara,GamKock,Weber}}]\label{prop:NT}
If
\[
\xymatrix{
{X}\ar[d]_{g} & {A}\ar[l]_{h} & {X\times_{Y}\prod_{g} A}\ar[d]^{g'}\ar[l]_-{f'} \\
{Y} & & {\prod_{g} A}\ar[ll]^{h'}
}
\]
is an exponential diagram, then we have
\[
N_{g}\circ T_{h}=R_{f'}\circ N_{g'}\circ T_{h'}.
\]
\end{proposition}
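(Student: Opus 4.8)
The identity in the statement is the exponential (distributivity) relation, and the plan is to prove it by running the definition of composition in $\cP^{\cC}$ on the two especially simple bispans $T_{h}=[A\xleftarrow{1}A\xrightarrow{1}A\xrightarrow{h}X]$ and $N_{g}=[X\xleftarrow{1}X\xrightarrow{g}Y\xrightarrow{1}Y]$, then reading off the answer. It helps to keep in mind the representation of bispans by polynomial functors $\cC/X\to\cC/Y$ of Gambino--Kock (and of Weber when $\cC$ merely has pullbacks): under it $R_{f}$, $N_{g}$, $T_{h}$ correspond to $f^{\ast}$, $\Pi_{g}$, $\Sigma_{h}$, composition of bispans corresponds to composition of these functors, and the composite of two bispans is computed by a string of base changes together with a single distributivity pullback. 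In particular $N_{g}\circ T_{h}$ represents $\Pi_{g}\Sigma_{h}$ and the right-hand side represents $\Sigma_{h'}\Pi_{g'}(f')^{\ast}$, so the Proposition is equivalent to the distributivity isomorphism $\Pi_{g}\Sigma_{h}\cong\Sigma_{h'}\Pi_{g'}(f')^{\ast}$ of functors $\cC/A\to\cC/Y$ together with the assertion that it is the canonical one.

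First I would carry out the composition. Every map appearing in the bispans for $T_{h}$ and $N_{g}$ other than $h$ and $g$ is an identity, so each base change the recipe calls for is a pullback along an identity and hence trivial; the one genuine step is the distributivity pullback resolving the collision of the final map $h$ of $T_{h}$ with the middle map $g$ of $N_{g}$, and performing that step is exactly the formation of an exponential diagram for $A\xrightarrow{h}X\xrightarrow{g}Y$. Reading off the output bispan then gives
\[
N_{g}\circ T_{h}=[A\xleftarrow{f'}X\times_{Y}\prod_{g}A\xrightarrow{g'}\prod_{g}A\xrightarrow{h'}Y],
\]
where $h'$ is the structure map of $\prod_{g}A=\Pi_{g}(h)$, $g'$ is the pullback of $g$ along $h'$, and $f'$ is the counit of $g^{\ast}\dashv\Pi_{g}$; this is the right-hand side of the Proposition.

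The substance behind this bookkeeping is the distributivity isomorphism itself, which I would establish by Yoneda on $\cC/Y$. For $B\xrightarrow{q}A$ in $\cC/A$ and $C\in\cC/Y$, a map $C\to\Pi_{g}\Sigma_{h}B$ over $Y$ is the same datum as a map $c\colon C\to\prod_{g}A$ over $Y$ together with a lift through $q$ of the associated map $g^{\ast}C\to A$ over $X$; the counit property factors that map as $g^{\ast}C\to X\times_{Y}\prod_{g}A\xrightarrow{f'}A$, the pullback square defining $g'$ identifies $g^{\ast}C\to X\times_{Y}\prod_{g}A$ with $(g')^{\ast}$ applied to $(C,c)\in\cC/\prod_{g}A$, and $(g')^{\ast}\dashv\Pi_{g'}$ then turns the lift into a map $(C,c)\to\Pi_{g'}(f')^{\ast}B$ over $\prod_{g}A$, i.e., a map $C\to\Sigma_{h'}\Pi_{g'}(f')^{\ast}B$ over $Y$; this bijection is natural in $C$, so the Yoneda lemma produces the desired natural isomorphism.

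The step that really requires care is not a computation but the coherence bookkeeping: one must check that the isomorphism just produced is the canonical comparison built into the composition law of $\cP^{\cC}$, so that $N_{g}\circ T_{h}$ and the right-hand side agree on the nose rather than merely up to isomorphism of represented functors, and one must use that an exponential diagram for $A\xrightarrow{h}X\xrightarrow{g}Y$ is determined up to canonical isomorphism by its universal properties, so that the right-hand side does not depend on the chosen diagram. In the $\Cat$-enriched version of $\cP^{\cC}$ this same isomorphism is the required invertible $2$-cell, and in Weber's setting one reads $\prod_{g}$ as the partial dependent product recorded by the exponential diagram; with those readings the argument is unchanged. Complete treatments are in \cite{Tambara, GamKock, Weber}.
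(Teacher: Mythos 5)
The paper does not actually prove this proposition: it is quoted from Tambara, Gambino--Kock, and Weber as one of the commutation relations presenting composition in $\cP^{\cC}$, so there is no in-text argument to compare yours against. What you have written is correct and is essentially the Gambino--Kock proof: represent bispans by polynomial functors $\Sigma_h\Pi_g f^{\ast}$, observe that composing $T_h$ with $N_g$ reduces to a single distributivity pullback which is precisely the formation of the exponential diagram, and establish the natural isomorphism $\Pi_g\Sigma_h\cong\Sigma_{h'}\Pi_{g'}(f')^{\ast}$ by the chain of adjunctions you spell out (your Yoneda computation checks out: the counit identifies the transpose of $c$ with $f'\circ g^{\ast}(c)$, the pullback square defining $g'$ identifies $g^{\ast}C$ with $(g')^{\ast}(C,c)$, and $(g')^{\ast}\dashv\Pi_{g'}$ converts the lift through $q$ into the required map over $\prod_g A$). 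Two remarks. First, you have silently and correctly repaired a typo in the statement: the displayed right-hand side $R_{f'}\circ N_{g'}\circ T_{h'}$ does not typecheck as a composite, and the intended expression is $T_{h'}\circ N_{g'}\circ R_{f'}$ in the canonical ordering of Equation~\ref{eqn:aTNR} --- this is the form in which the proposition is invoked in the proof of Theorem~\ref{thm:PolynomialSubcategories} --- and your reading of the right-hand side as the bispan $[A\leftarrow X\times_Y\textstyle\prod_g A\to \prod_g A\to Y]$ is the right one. Second, the coherence point you flag is genuinely the only delicate step: since morphisms of $\cP^{\cC}$ are isomorphism classes of bispans, passing from an isomorphism of represented functors to an equality in $\cP^{\cC}$ requires either faithfulness of the polynomial-functor representation (Gambino--Kock's biequivalence) or the observation that the composition recipe outputs the displayed bispan on the nose; either route closes the argument, and your caveat about Weber's setting (where $\prod_g$ is only the partial dependent product recorded by the exponential diagram) is also apt.
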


For our purposes, a key fact is that the outer rectangle of an exponential diagram is actually a pullback diagram.  

\subsection{Polynomials with restricted exponents}
We can now describe several natural subcategories of the category of polynomials.  Recall that a subcategory of $\cC$ is {\em wide} if it contains all of the objects, and {\em essentially wide} if every object of $\cC$ is isomorphic to an object in the subcategory.

\begin{definition}\label{adef:Exponents}
If $\ccD\subset \cC$ is a wide subcategory, then let $\cP_{\ccD}^{\cC}$ by the wide subgraph of $\cP^{\cC}$ with morphisms the isomorphism classes of bispans
\[
X\leftarrow S\xrightarrow{f} T\to Y,
\]
with $f\in \ccD$. We call this the {\defemph{polynomials in $\cC$ with exponents in $\ccD$}}.
\end{definition}

The somewhat surprising result is that $\cP_{\ccD}^{\cC}$ is a subcategory of $\cP^{\cC}$ under the hypothesis of pullback stability. We begin by recalling the notion of a pullback stable subcategory.

\begin{definition}
If $\cC$ is a category that admits pullbacks, then we say that a subcategory $\ccD\subset\cC$ is \defemph{pullback stable} if whenever
\[
\xymatrix{
{A}\ar[d]_f \ar[r] & {B}\ar[d]^g \\
{C}\ar[r] & {D}
}
\]
is a pullback diagram and $g\in\ccD$, the map $f$ is also in $\ccD$.
\end{definition}

We have two elementary results which we use quite often.  

\begin{proposition}\label{prop:basic}
Let $\ccD$ be a pullback stable subcategory of a category $\cC$ that admits pullbacks.
\begin{enumerate}
\item If $A\in\ob(\ccD)$ and $f\colon B\to A$ is an isomorphism in $\cC$, then $f$ is in $\ccD$.
\item If $\cC$ contains a terminal object $\ast$, then $\ccD$ is a wide subcategory.
\end{enumerate}
\end{proposition}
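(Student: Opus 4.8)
The plan is to deduce both statements directly from the definition of pullback-stability, by exhibiting suitable cartesian squares; nothing about $\cC$ is used beyond the existence of pullbacks and, for the second part, of a terminal object.

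For (1): given $A\in\ob(\ccD)$ and an isomorphism $f\colon B\to A$ in $\cC$, I would consider the square
\[
\xymatrix{
{B}\ar[d]_{f}\ar[r]^{f} & {A}\ar[d]^{\id_A} \\
{A}\ar[r]_{\id_A} & {A,}
}
\]
which commutes on the nose. It is a pullback: the pullback of the cospan $A\xrightarrow{\id_A}A\xleftarrow{\id_A}A$ has apex $A$ with both legs $\id_A$, and since $f$ is an isomorphism the cone with apex $B$ and both legs $f$ maps isomorphically onto that one, hence is itself a pullback. Because $A\in\ob(\ccD)$ and $\ccD$ is a subcategory, $\id_A\in\ccD$; applying pullback-stability to this square, with $\id_A$ as the map known to lie in $\ccD$, forces the opposite vertical map $f$ into $\ccD$. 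I anticipate no difficulty here.

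For (2): by part (1) it suffices to show that $\id_X\in\ccD$ for every object $X$ of $\cC$. The key observation is that, writing $!_X\colon X\to\ast$ for the unique map to the terminal object, the square
\[
\xymatrix{
{X}\ar[d]_{\id_X}\ar[r]^{!_X} & {\ast}\ar[d]^{\id_\ast} \\
{X}\ar[r]_{!_X} & {\ast}
}
\]
is cartesian — it exhibits $\id_X$ as the pullback of $\id_\ast$ along $!_X$, since pulling an identity back along an arbitrary map returns an identity. Consequently, the moment we know $\id_\ast\in\ccD$, equivalently $\ast\in\ob(\ccD)$, pullback-stability yields $\id_X\in\ccD$ for all $X$ at once, so that $\ccD$ is wide. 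The whole proposition thus reduces to the single claim that $\ast\in\ob(\ccD)$. To see this I would choose any $D\in\ob(\ccD)$, which exists since $\ccD$ is nonempty, together with a point $p\colon\ast\to D$, and pull back the identity $\id_D\in\ccD$ along $p$ via the cartesian square
\[
\xymatrix{
{\ast}\ar[d]_{\id_\ast}\ar[r]^{p} & {D}\ar[d]^{\id_D} \\
{\ast}\ar[r]_{p} & {D,}
}
\]
putting $\id_\ast$ into $\ccD$.

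The step I expect to be the crux is exactly this last one — securing $\ast\in\ob(\ccD)$; the square manipulations themselves are purely formal. In the setting relevant to the paper this is automatic: for $\cC=\Set^G$ the terminal object $G/G$ is a trivial set, hence belongs to every indexing system and to every $\ccD$ of interest. In general it holds as soon as $\ccD$ is nonempty and some object of $\ccD$ admits a global point $\ast\to D$.
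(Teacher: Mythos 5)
Your proof is correct and uses exactly the same two pullback squares as the paper, so for the mathematical content there is nothing to change. The one place where you diverge is your (justified) worry at the end of part (2): applying pullback stability to the square with $\id_\ast$ on the right requires $\id_\ast\in\ccD$, i.e.\ $\ast\in\ob(\ccD)$, and neither the statement of the proposition nor the paper's proof addresses this. The paper's later invocation (in the proof of Proposition~\ref{prop:InitialImpliesMono}) quietly adds ``$\ccD$ contains the terminal object'' as a hypothesis, which confirms that this is the intended reading; so your identification of the crux is a fair catch of an implicit assumption rather than a gap in your own argument. Be aware, though, that your proposed patch --- pulling $\id_D$ back along a global point $p\colon\ast\to D$ --- does not work in general, and in particular not in the paper's main setting: in $\Set^G$ an orbit $G/H$ with $H\neq G$ admits no map from $G/G$, and $\emptyset$ admits no global point in any case, so nonemptiness of $\ccD$ alone does not secure $\ast\in\ob(\ccD)$. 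The honest statement is that $\ast\in\ob(\ccD)$ must simply be taken as a hypothesis (as the paper effectively does), and you correctly note that it is automatic for the subcategories of interest, since every indexing system contains the trivial sets.
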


\begin{proof}
For the first part, observe that 
\[
\xymatrix{
B \ar[r]^-f \ar[d]_-f & A \ar[d]^{\id} \\
A \ar[r]_-{\id} & A\\
}
\]
is a pullback diagram.  For the second, observe that
\[
\xymatrix{
B \ar[r] \ar[d]_-{\id} & \ast \ar[d]^-{\id} \\
B \ar[r] & \ast \\
}
\]
is a pullback diagram.
\end{proof}

With this, we can prove a surprising result that $\cP^{\cC}_{\ccD}$ is actually a subcategory when $\ccD$ is pullback stable and wide.

\begin{theorem}\label{thm:PolynomialSubcategories} 
If $\ccD$ is a wide, pullback stable subcategory, then the subgraph $\cP^{\cC}_{\ccD}$ is a subcategory of $\cP^{\cC}$.
\end{theorem}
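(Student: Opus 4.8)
The plan is to verify that $\cP^{\cC}_{\ccD}$ is closed under composition. Since every morphism in $\cP^{\cC}$ can be written in canonical form $T_h \circ N_g \circ R_f$, and a morphism lies in $\cP^{\cC}_{\ccD}$ precisely when its ``middle'' map $g$ can be chosen in $\ccD$, it suffices to show that the composite of two bispans, each with exponent in $\ccD$, again admits a canonical form whose exponent lies in $\ccD$. Concretely, I would compute the composite $(T_{h_2} \circ N_{g_2} \circ R_{f_2}) \circ (T_{h_1} \circ N_{g_1} \circ R_{f_1})$ with $g_1, g_2 \in \ccD$, and rearrange it into canonical form using the commutation relations of Propositions~\ref{prop:TNRcomp}, \ref{prop:RN}, and \ref{prop:NT}, checking at each step that the new ``$N$''-map produced still lies in $\ccD$.

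The first reduction is to the single nontrivial collision: after applying Proposition~\ref{prop:TNRcomp} to merge adjacent maps of the same type, and Proposition~\ref{prop:RN} to slide $R$'s to the right past $N$'s and $T$'s, the only obstruction to canonical form is a subword of the shape $N_{g_2} \circ T_{h_1}$ that must be rewritten. Here Proposition~\ref{prop:NT} applies: with the exponential diagram for $g_2$ and $h_1$, we get $N_{g_2} \circ T_{h_1} = R_{f'} \circ N_{g'} \circ T_{h'}$, where $g'$ is the base change of $g_2$ along the map $\prod_{g_2} A \to Y$ (this is exactly the map labeled $g'$ in the exponential diagram in Proposition~\ref{prop:NT}). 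Since $\ccD$ is pullback stable and $g_2 \in \ccD$, we conclude $g' \in \ccD$. The maps $R_{f'}$ and $T_{h'}$ introduce no new exponent constraints (their middle maps are identities, which lie in $\ccD$ because $\ccD$ is wide by Proposition~\ref{prop:basic}(2), noting $\cC$ admits pullbacks; or in the general setting one argues $\ccD$ is essentially wide and invokes Proposition~\ref{prop:basic}(1)). Iterating these moves, and using $N_{g'} \circ N_{g_1} = N_{g' \circ g_1}$ with $\ccD$ closed under composition, the whole composite is brought to canonical form $T \circ N \circ R$ with the $N$-map a composite of base changes of $g_1$ and $g_2$ — hence in $\ccD$.

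The remaining bookkeeping is to confirm that the interleaving of the two canonical forms really does reduce, after finitely many applications of the three propositions, to repeated instances of the $N_g \circ T_h$ collision and nothing worse: writing out $T_{h_2} N_{g_2} R_{f_2} T_{h_1} N_{g_1} R_{f_1}$, the adjacency $R_{f_2} T_{h_1}$ is handled by Proposition~\ref{prop:RN} (pushing $R$ right, producing $T_{h_1'} R_{f_2'}$), then $R_{f_2'} N_{g_1}$ again by Proposition~\ref{prop:RN}, leaving $T_{h_2} N_{g_2} T_{h_1'} N_{g_1} R_{f_2'} R_{f_1}$; now the lone $N_{g_2} T_{h_1'}$ is the collision above, after which one more pass of Proposition~\ref{prop:RN} and Proposition~\ref{prop:TNRcomp} collects everything.

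I expect the main obstacle to be purely organizational rather than conceptual: one must be careful that each base-change appearing in the reduction is a pullback of something already known to be in $\ccD$ (so that pullback stability genuinely applies), and that the ``wideness'' hypothesis is invoked correctly so that the identity maps and the freshly created $R$- and $T$-maps do not secretly impose exponent conditions. The key input — the only place the hypotheses on $\ccD$ are used — is the single step where $N_{g_2} \circ T_{h_1}$ is rewritten via the exponential diagram and pullback stability guarantees the new exponent stays in $\ccD$.
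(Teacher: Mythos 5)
Your proof is correct and follows essentially the same route as the paper's: reduce to the generating morphisms $R_f$, $N_g$, $T_h$, observe that wideness handles the $R$'s and $T$'s, use closure of $\ccD$ under composition for $N_g\circ N_{g'}$, and invoke pullback stability at the two nontrivial collisions — $N_g\circ T_h$ via the exponential diagram (whose outer rectangle is a pullback, so the new exponent $g'$ stays in $\ccD$) and $R_f\circ N_g$ via base change. The additional word-rewriting bookkeeping you supply is fine; note only that the canonical form produced by Proposition~\ref{prop:NT} should read $T_{h'}\circ N_{g'}\circ R_{f'}$ rather than $R_{f'}\circ N_{g'}\circ T_{h'}$, and with that orientation your final pass does terminate as claimed.
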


\begin{proof}
We show this using the generating morphisms $R_{f}$, $N_{g}$, and $T_{h}$. Since by assumption $\ccD$ is wide, for any $f$ and $h$, $R_f$ and $T_h$ are also in $\cP^{\cC}_{\ccD}$, while $N_g$ is if and only if $g\in\ccD$.  Therefore, it suffices to show that the composite of any such morphisms is again of the form
\[
T_{h}\circ N_{g}\circ R_{f}
\]
where $g\in\ccD$.

Since the maps $f$ and $h$ that specify $T_{f}$ and $R_{h}$ are arbitrary morphisms in $\cC$, any composite involving only basic maps of this form will again be an element in $\cP^{\cC}_{\ccD}$.  Thus, we need to show that composites with $N_{g}$ for $g\in \ccD$ are again in $\cP^{\cC}_{\ccD}$. Since $\ccD$ is a subcategory, we have $N_{g}\circ N_{g'}$ is again in $\cP^{\cC}_{\ccD}$ provided $g,g'\in\ccD$.

Proposition~\ref{prop:NT} shows that $N_{g}\circ T_{f}$ can be written as $T_{f'}\circ N_{g'}\circ R_{h'}$, where $g'$ is the pullback of $g$ in the relevant exponential diagram. This shows that $N_{g}\circ T_{f}$ is again in $\cP^{\cC}_{\ccD}$.

Finally, Proposition~\ref{prop:RN} shows that $R_{f}\circ N_{g}=N_{g'}\circ R_{f'}$, where $g'$ is the pullback of $g$ along $f$. This shows that $R_{f}\circ N_{g}$ is again in $\cP^{\cC}_{\ccD}$, and thus it is a subcategory.
\end{proof}

\begin{corollary}\label{cor:PolynomialInclusion}
If $\ccD_{1}\subset\ccD_{2}\subset \cC$ are wide, pullback stable subcategories, then we have an inclusion of subcategories
\[
\cP^{\cC}_{\ccD_{1}}\subset\cP^{\cC}_{\ccD_{2}}.
\]
\end{corollary}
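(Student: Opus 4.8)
The plan is to deduce this directly from Theorem~\ref{thm:PolynomialSubcategories} together with the defining description of the morphism sets in Definition~\ref{adef:Exponents}. First I would invoke the theorem twice: since $\ccD_1$ and $\ccD_2$ are each wide and pullback stable, both $\cP^{\cC}_{\ccD_1}$ and $\cP^{\cC}_{\ccD_2}$ are genuine subcategories of $\cP^{\cC}$, so the phrase ``inclusion of subcategories'' is meaningful. Both are wide by construction, so they have the same object set, namely $\ob(\cC)$, and it remains only to compare morphisms.

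Next I would compare hom-sets. A morphism in $\cP^{\cC}_{\ccD_1}(X,Y)$ is the isomorphism class of a bispan $X \leftarrow S \xrightarrow{f} T \to Y$ with $f \in \ccD_1$. Since $\ccD_1 \subset \ccD_2$ we have $f \in \ccD_2$, so this same bispan represents a morphism of $\cP^{\cC}_{\ccD_2}(X,Y)$, and the inclusion of underlying graphs $\cP^{\cC}_{\ccD_1} \hookrightarrow \cP^{\cC}_{\ccD_2}$ is well-defined on hom-sets. The one small point to check is that membership in $\cP^{\cC}_{\ccD_i}$ is invariant under isomorphism of bispans: such an isomorphism replaces the middle map $f\colon S\to T$ by $f' = \psi f \phi^{-1}$ for isomorphisms $\phi,\psi$ of $\cC$, and by Proposition~\ref{prop:basic}(1) — using that $\ccD_i$ is wide, so every object of $\cC$ lies in $\ccD_i$ — every isomorphism of $\cC$ lies in $\ccD_i$; since $\ccD_i$ is a subcategory it is closed under composition, so $f' \in \ccD_i$ if and only if $f \in \ccD_i$. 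Hence the assignment on morphisms is well-defined (and evidently injective).

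Finally I would note that this inclusion of graphs is a functor: by Theorem~\ref{thm:PolynomialSubcategories} the identities and composites in both $\cP^{\cC}_{\ccD_1}$ and $\cP^{\cC}_{\ccD_2}$ are computed as the identities and composites of the ambient category $\cP^{\cC}$, so the inclusion strictly commutes with them. This exhibits $\cP^{\cC}_{\ccD_1}$ as a subcategory of $\cP^{\cC}_{\ccD_2}$.

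There is no genuine obstacle here; the corollary is essentially a bookkeeping consequence of the main theorem of the section. The only things worth stating carefully are that ``inclusion of subcategories'' already presupposes that each $\cP^{\cC}_{\ccD_i}$ is a subcategory — which is precisely Theorem~\ref{thm:PolynomialSubcategories} — and the mild isomorphism-class invariance noted above.
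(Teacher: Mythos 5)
Your argument is correct and is exactly the intended one: the paper omits the proof of this corollary entirely, treating it as an immediate consequence of Theorem~\ref{thm:PolynomialSubcategories} and the definition of $\cP^{\cC}_{\ccD}$, which is precisely what you spell out (including the mild but worthwhile check, via Proposition~\ref{prop:basic}, that membership of the middle map in $\ccD_i$ is invariant under isomorphism of bispans). Nothing further is needed.
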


\begin{proposition}\label{prop:Products}
If $\ccD$ is a wide, pullback stable, symmetric monoidal subcategory
of $\Set^{G}$, then $\cP^{G}_{\ccD}$ has finite products and
the products are created in $\cP^{G}$.
\end{proposition}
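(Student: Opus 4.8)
The plan is to show that the standard description of finite products in $\cP^{G}$ restricts verbatim to $\cP^{G}_{\ccD}$, so that the subcategory inclusion $\cP^{G}_{\ccD}\hookrightarrow\cP^{G}$ (a subcategory by Theorem~\ref{thm:PolynomialSubcategories}, as $\ccD$ is wide and pullback stable) creates them. Recall from Tambara~\cite{Tambara} that $\cP^{G}$ has finite products: the terminal object is the empty $G$-set $\emptyset$, and the product of $X$ and $Y$ is the disjoint union $X\sqcup Y$ with projections the restriction maps $R_{\iota_{X}}$ and $R_{\iota_{Y}}$ along the coproduct injections $\iota_{X}\colon X\to X\sqcup Y$, $\iota_{Y}\colon Y\to X\sqcup Y$. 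Since the inclusion is injective on objects and morphisms, ``creating'' these products reduces to two things: (i) this limit data already lies in $\cP^{G}_{\ccD}$; and (ii) for every object $Z$ of $\cP^{G}_{\ccD}$, the universal arrows $Z\to\emptyset$ and $\langle a,b\rangle\colon Z\to X\sqcup Y$ (for $a\colon Z\to X$, $b\colon Z\to Y$ in $\cP^{G}_{\ccD}$) lie in $\cP^{G}_{\ccD}$. Uniqueness of these arrows in $\cP^{G}_{\ccD}$ is then automatic, being inherited from $\cP^{G}$.

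Part (i) and the terminal case are immediate. As observed in the proof of Theorem~\ref{thm:PolynomialSubcategories}, every $R_{f}$ and every $T_{h}$ is already a morphism of $\cP^{G}_{\ccD}$ (only the norms $N_{g}$ are constrained, to $g\in\ccD$), so the projections $R_{\iota_{X}},R_{\iota_{Y}}$ lie in $\cP^{G}_{\ccD}$. The unique morphism $Z\to\emptyset$ in $\cP^{G}$ is forced to be $[Z\leftarrow\emptyset\to\emptyset\to\emptyset]$ --- a map $T\to\emptyset$ forces $T=\emptyset$, hence $S=\emptyset$ --- whose middle leg is $\id_{\emptyset}\in\ccD$; so $\emptyset$ is also terminal in $\cP^{G}_{\ccD}$.

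The only real work is part (ii) for binary products. Given $a\colon Z\to X$ and $b\colon Z\to Y$ in $\cP^{G}_{\ccD}$, write them in canonical form $a=T_{f_{1}}\circ N_{g_{1}}\circ R_{h_{1}}$ and $b=T_{f_{2}}\circ N_{g_{2}}\circ R_{h_{2}}$ with $h_{i}\colon S_{i}\to Z$, $g_{i}\colon S_{i}\to T_{i}$, $f_{1}\colon T_{1}\to X$, $f_{2}\colon T_{2}\to Y$; here $g_{1},g_{2}\in\ccD$ because $a,b$ are morphisms of $\cP^{G}_{\ccD}$. I claim the pairing is represented by the ``disjoint union'' bispan
\[
Z\xleftarrow{h_{1}\sqcup h_{2}} S_{1}\sqcup S_{2}\xrightarrow{g_{1}\sqcup g_{2}} T_{1}\sqcup T_{2}\xrightarrow{f_{1}\sqcup f_{2}} X\sqcup Y,
\]
that is, by $T_{f_{1}\sqcup f_{2}}\circ N_{g_{1}\sqcup g_{2}}\circ R_{h_{1}\sqcup h_{2}}$. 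To check $R_{\iota_{X}}\circ\langle a,b\rangle=a$, I would use the commutation relations: Proposition~\ref{prop:RN} moves $R_{\iota_{X}}$ past $T_{f_{1}\sqcup f_{2}}$ after pulling $\iota_{X}$ back along $f_{1}\sqcup f_{2}$, and then past $N_{g_{1}\sqcup g_{2}}$ after pulling back along $g_{1}\sqcup g_{2}$; extensivity of $\Set^{G}$ identifies these two pullbacks with the summands $T_{1}$ and then $S_{1}$, and Proposition~\ref{prop:TNRcomp} composes the resulting restrictions, so that the whole composite collapses to $T_{f_{1}}\circ N_{g_{1}}\circ R_{h_{1}}=a$; the identity $R_{\iota_{Y}}\circ\langle a,b\rangle=b$ is symmetric. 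Since $\cP^{G}$ already has the product $X\sqcup Y$, this displayed bispan is the pairing, and its middle leg is $g_{1}\sqcup g_{2}$, which lies in $\ccD$ because $\ccD$ is a symmetric monoidal subcategory of $(\Set^{G},\sqcup)$ and hence closed under $\sqcup$ of morphisms. Therefore $\langle a,b\rangle\in\cP^{G}_{\ccD}$, which completes part (ii).

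I expect the identification of the pairing with the disjoint union bispan to be the main obstacle, though it is entirely routine: it is just bookkeeping with the $TNR$-commutation relations of Propositions~\ref{prop:TNRcomp} and~\ref{prop:RN} together with the extensivity of $\Set^{G}$ (used to identify pullbacks of coproduct injections with summands). Once that is in hand, the exponent condition on $\langle a,b\rangle$ is immediate from $\ccD$ being closed under $\sqcup$, and everything else is the elementary fact that restriction maps and $\id_{\emptyset}$ are always morphisms of $\cP^{G}_{\ccD}$. Assembling the pieces yields that $\cP^{G}_{\ccD}$ has finite products and that they are created by the inclusion into $\cP^{G}$.
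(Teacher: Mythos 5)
Your proof is correct and takes essentially the same route as the paper: both identify the product as the disjoint union with restriction projections and then use the decomposition of a bispan over a coproduct target together with the symmetric monoidal (and pullback-stable) closure of $\ccD$ under $\amalg$ to see that the pairing's exponent lies in $\ccD$. The paper phrases the key step as the biconditional ``$F\in\cP^{G}_{\ccD}$ iff both projections of $F$ are,'' while you construct the pairing bispan explicitly, but the computation is the same; your explicit treatment of the terminal object is a small bonus.
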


\begin{proof}
The product in $\cP^{G}$ is induced by the disjoint union of $G$-sets: if $S$ and $T$ are $G$-sets and $i_{S}\colon S\to S\amalg T$ and $i_{T}\colon T\to S\amalg T$ are the inclusions, then 
\[
S\xleftarrow{R_{i_{S}}} S\amalg T\xrightarrow{R_{i_{T}}} T
\]
is a product diagram in $\cP^{G}$. We must show that if $F=T_{h}\circ N_{g}\circ R_{f}$ is any morphism in $\cP^{G}$ then $R_{i_{S}}\circ F, R_{i_{T}}\circ F\in\cP^{G}_{\ccD}$ if and only if $F\in\cP^{G}_{\ccD}$. 

By assumption, $F$ is a polynomial of the form
\[
A\xleftarrow{f} B\xrightarrow{g} C\xrightarrow{h} S\amalg T.
\]
Since we are considering equivariant maps and since we are mapping into a disjoint union, $C$ decomposes as $C_{0}\amalg C_{1}$, where $h(C_{0})\subset S$ and $h(C_{1})\subset T$. Our map $h$ is then the disjoint union of maps $h_{i}=h|_{C_{i}}$. Similarly, $B$ and $g$ decompose as $B=B_{0}\amalg B_{1}$, where $g(B_{i})\subset C_{i}$ and if $g_{i}=g|_{B_{i}}$, then $g=g_{0}\amalg g_{1}$.

We now can directly compute $R_{i_{S}}\circ F$:
\[
R_{i_{S}}\circ T_{h}\circ N_{g}\circ R_{f}  = T_{h_{0}}\circ R_{i_{C_{0}}}\circ N_{g}\circ R_{f} = T_{h_{0}}\circ N_{g_{0}}\circ R_{i_{B_{0}}}\circ R_{f}.
\]
Thus $R_{i_{S}}\circ F$ is in $\cP^{G}_{\ccD}$ if and only if $g_{0}\in\ccD$ and similarly for $R_{i_{T}}\circ F$. Since $\ccD$ is a pullback stable symmetric monoidal subcategory, $g=g_{0}\amalg g_{1}$ is in $\ccD$ if and only if $g_{0}, g_{1}\in\ccD$.
\end{proof}

\begin{remark}
Proposition~\ref{prop:Products} holds much more generally: if $\cC$ is
a disjunctive category, then the same argument given goes through.
\end{remark}

\subsection{Adjunctions between categories of polynomials}
We can determine sufficient conditions for when adjunctions in the ambient categories give rise to adjunctions in the polynomials with exponents in a suitable subcategory. Our motivation is generalizing the classical result that adjoint pair 
\[
\Ind_{H}^{G}\colon \Set^{H}\leftrightarrows \Set^{G}\colon i_{H}^{\ast}
\]
induces by pre-composition an adjoint pair on the categories of Mackey functors:
\[
i_{H}^{\ast}=(\Ind_{H}^{G})^{\ast}\colon \Mackey_{G}\leftrightarrows \Mackey_{H}\colon \CoInd_{H}^{G}=(i_{H}^{\ast})^{\ast}.
\]
Strickland shows that this holds in the categories of Tambara functors, a result we will generalize in Theorem~\ref{thm:RightAdjoint} below. For now, we continue to work in the more abstract context.

\begin{definition}
We say that a subcategory $\ccD\subset \cC$ is \defemph{essentially a sieve} if for all $f'\colon a'\to b$ in $\cC$ and for all $g\colon b\to c$ in $\ccD$, there is an isomorphism $a'\to a$ and a map $f\colon a\to b$ in $\ccD$ making a commutative diagram
\[
\xymatrix{
{a'}\ar[d]_{\cong}\ar[r]^{f'} & {b}\ar[r]^{g} & {c.} \\
{a}\ar[ur]_{f}
}
\]
\end{definition}

\begin{remark}
This is the strong form of the ``not evil'' version of a sieve. Instead of asking that the maps form a kind of ideal under composition, we ask instead that that it be closed under precomposition with an isomorphic map. In particular, it is not a wide subcategory of the slice category in $\cC$ over its objects, but it is essentially wide.
\end{remark}

The prototypical example comes from the category of finite $G$-sets.

\begin{proposition}\label{prop:IndEssentiallyaSieve}
The image of the induction functor $\ind_{H}^{G}$ is essentially a sieve in $\Set^{G}$.
\end{proposition}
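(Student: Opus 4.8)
The plan is to make both definitions completely explicit and then produce the required factorization by ``transporting'' the $H$-set structure along the given map. Write $\ccD\subset\Set^G$ for the image of $\ind_H^G$: its objects are the $G$-sets of the form $G\times_H T$, each carrying the canonical projection $\epsilon_T\colon G\times_H T\to G/H$, and its morphisms are exactly the induced maps $G\times_H\varphi$, which are precisely the $G$-maps between such objects that commute with the projections to $G/H$. The one classical input I will use is the shearing isomorphism: for any $G$-map $p\colon X\to G/H$, the map $G\times_H p^{-1}(eH)\to X$, $(\gamma,x)\mapsto\gamma x$, is a $G$-isomorphism intertwining the projection $\epsilon$ with $p$. (Conceptually, $\ccD$ equipped with the maps $\epsilon_T$ is just the slice $\Set^G/(G/H)$, and the proposition will amount to the observation that a map into an object of that slice is canonically, up to the evident isomorphism, an object of and a morphism in the slice.)

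Now fix $g\colon b\to c$ in $\ccD$ and an arbitrary $f'\colon a'\to b$ in $\Set^G$. Since $b\in\ob(\ccD)$ we may write $b=G\times_H T$ with projection $\epsilon_T$. Set $p:=\epsilon_T\circ f'\colon a'\to G/H$, let $S:=p^{-1}(eH)$ (an $H$-set), let $a:=G\times_H S=\ind_H^G(S)\in\ob(\ccD)$, and let $\iota\colon a'\to a$ be the inverse of the shearing isomorphism $a\to a'$ associated to $p$. Define $f:=f'\circ\iota^{-1}\colon a\to b$, so that $f\circ\iota=f'$ holds on the nose, giving the required commutative triangle. It remains to check $f\in\ccD$, i.e.\ that $f$ commutes with the projections to $G/H$, i.e.\ $\epsilon_T\circ f=\epsilon_S$. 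Unwinding $\iota^{-1}(\gamma,s)=\gamma s$ with $s\in S$ satisfying $\epsilon_T f'(s)=eH$, we get $\epsilon_T f(\gamma,s)=\epsilon_T f'(\gamma s)=\gamma\cdot\epsilon_T f'(s)=\gamma H=\epsilon_S(\gamma,s)$. Hence $f=G\times_H\varphi$, where $\varphi\colon S\to T$ is the $H$-map obtained by restricting $f$ to the fibers over $eH$, so $f$ lies in the image of $\ind_H^G$, as needed.

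I expect no genuine obstacle here; the computation with $\epsilon_T$ and the shearing isomorphism is routine. The only point worth flagging is why the isomorphism $\iota$ in the definition of ``essentially a sieve'' cannot be dropped: the image of $\ind_H^G$ is not a replete (nor wide) subcategory of $\Set^G$, since an isomorphism $G\times_H S\xrightarrow{\cong}G\times_H T$ need not respect the projections to $G/H$ and so need not lie in the image of $\ind_H^G$. This is precisely the reason ``essentially a sieve'', rather than an honest sieve, is the correct notion, and it is exactly the choice of $S=p^{-1}(eH)$ (rather than an arbitrary $H$-set presentation of $a'$) that makes the transported map $f$ land back in $\ccD$.
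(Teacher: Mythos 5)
Your proof is correct and is essentially the paper's argument in more explicit form: the paper's pullback of $i_H^\ast T'$ along the unit $S\to i_H^\ast\Ind_H^G S$ is exactly your fiber $p^{-1}(eH)$, and the paper's ``check on orbits that $\ind_H^G T\to T'$ is an isomorphism'' is precisely your shearing isomorphism. The only difference is presentational (categorical pullback versus element-level computation), so there is nothing substantive to add.
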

\begin{proof}
We must show that if $f'\colon T'\to \Ind_{H}^{G}S$ is any $G$-map, then there is an $H$-set $T$, an $H$-map $f\colon T\to S$, and an isomorphism $T'\to \Ind_{H}^{G} T$ such that the diagram
\[
\xymatrix{
{T'}\ar[r]^{f'}\ar[d]_{\cong} & {\Ind_{H}^{G}S} \\
{\Ind_{H}^{G}T}\ar[ur]_{\ind f}
}
\]
commutes. Let $T$ be the pullback in $H$-sets
\[
\xymatrix{
{T}\ar[r]\ar[d] & {S}\ar[d] \\
{i_{H}^{\ast}T'}\ar[r]_{f'} & {i_{H}^{\ast}\Ind_{H}^{G}S,}
}
\]
where the map from $S$ is the unit of the adjunction. Let $f$ be the map $T\to S$ given by the pullback, and then by construction, the desired diagram commutes. By checking on orbits, we see that the natural map $\ind_{H}^{G}T\to T'$ is also an equivariant isomorphism.
\end{proof}

With this definition, we can describe sufficient conditions for a pair of adjoint functors on $\cC$ to descend to a pair of adjoint functors on $\cP^{\cC}_{\ccD}$.

\begin{theorem}\label{thm:AdjointPair}
Let $F\colon \cC\rightleftarrows \cC'\colon G$ be an adjoint pair with the following properties:
\begin{enumerate}
\item The image of $F$ is essentially a sieve,
\item $F$ and $G$ both restrict to functors $F\colon \ccD\to \ccD'$ and $G\colon \ccD'\to\ccD$, and
\item $F$ detects maps in $\ccD$: for any $f\in\cC$, $F(f)$ is in $\ccD'$ if and only if $f$ is in $\ccD$. 
\end{enumerate}
Then $F$ and $G$ induce an adjoint pair: 
\[
G\colon \cP^{\cC'}_{\ccD'}\rightleftarrows \cP^{\cC}_{\ccD}\colon F.
\]
\end{theorem}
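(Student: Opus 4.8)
The plan is to build the adjunction from the observation that both sides of the claimed adjunction are built out of the original categories $\cC$ and $\cC'$ by the same recipe, and that the recipe is functorial. First I would define the two functors on objects and basic polynomials: since $F$ restricts to $\ccD \to \ccD'$ and $G$ restricts to $\ccD' \to \ccD$, and since $R_h, N_g, T_f$ are built functorially from maps of $\cC$ (with $N_g$ requiring $g$ to lie in the exponent subcategory), one checks that $F$ sends $R_h \mapsto R_{F(h)}$, $N_g \mapsto N_{F(g)}$, $T_f \mapsto T_{F(f)}$ and similarly for $G$. The content here is that these assignments respect the commutation relations of Propositions~\ref{prop:TNRcomp}, \ref{prop:RN}, and \ref{prop:NT}; this reduces to $F$ and $G$ preserving pullbacks and exponential diagrams. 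A left adjoint preserves neither in general, so one should be careful: the functor on polynomials induced by $F$ going \emph{from} $\cP^{\cC'}_{\ccD'}$ \emph{to} $\cP^{\cC}_{\ccD}$ is the one induced by the \emph{right} adjoint $G$ on the underlying categories, and vice versa — this is exactly the variance swap one sees for Mackey functors, where $i_H^*$ on $G$-sets (a left adjoint) induces $\CoInd$ on Mackey functors and $\Ind_H^G$ (also available as a functor, the left adjoint) induces restriction. So I would be careful to set up: the right adjoint $G$ preserves limits, hence pullbacks and (being a right adjoint between locally cartesian closed categories, or by hypothesis) exponential diagrams, so it induces a functor $\cP^{\cC'}_{\ccD'} \to \cP^{\cC}_{\ccD}$; the left adjoint $F$ is where the sieve hypothesis enters.

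Next I would construct the functor induced by $F$ in the direction $\cP^{\cC}_{\ccD} \to \cP^{\cC'}_{\ccD'}$. The obstacle is that $F$, being a left adjoint, need not send pullbacks to pullbacks, so $F$ does not obviously respect the bispan-composition relations. Here is where hypotheses (1) and (3) do their work. The relations involving only $R$ and $T$ (Proposition~\ref{prop:TNRcomp}, third line and second line) and $R_f \circ T_g = T_{g'} \circ R_{f'}$ are formal and cause no trouble since $R$ and $T$ of \emph{any} map are allowed. The delicate relations are $R_f \circ N_g$ and $N_g \circ T_h$, i.e.\ exactly the ones whose right-hand sides reintroduce an $N$; these are governed by a pullback square (Proposition~\ref{prop:RN}) or an exponential diagram (Proposition~\ref{prop:NT}), and in both the map over which we are taking the exponent, $g \in \ccD$, sits over a map in the image of $F$. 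The point of "essentially a sieve" is that for such configurations the relevant pullback in $\cC'$ can be computed by pulling back in $\cC$ first: given $g\colon b \to c$ in $\ccD$ and an arbitrary $f'\colon a' \to F(b)$ in $\cC'$ (which is what we confront after applying $F$ and then composing), essentially-a-sieve-ness lets us replace $f'$ up to isomorphism by $F(f)$ for an $f$ in $\ccD$, and then $F$ applied to the pullback-in-$\cC$ square is the pullback-in-$\cC'$ square we need. So $F$ does respect the relations \emph{on the nose in the image}, and hypothesis (3) guarantees that the exponent maps produced stay inside $\ccD'$.

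With both functors in hand, I would establish the adjunction $\Hom_{\cP^{\cC'}_{\ccD'}}(F(X'), Y) \cong \Hom_{\cP^{\cC}_{\ccD}}(X', G(Y))$ — wait, more precisely, matching the statement, $G \colon \cP^{\cC'}_{\ccD'} \rightleftarrows \cP^{\cC}_{\ccD} \colon F$, so the left adjoint on polynomials is the functor induced by $G$ (call it $G_*$) and the right adjoint is the one induced by $F$ (call it $F_*$), and I want $\Hom(G_* Y', X) \cong \Hom(Y', F_* X)$. I would produce the unit and counit by applying the polynomial construction $R_{(-)}$ to the unit $\eta\colon \id \to GF$ and counit $\epsilon\colon FG \to \id$ of the original adjunction — i.e.\ the $2$-cells become bispans of the form $X \xleftarrow{\eta} GFX = GFX \to X$ etc., using that $\eta, \epsilon$ are maps in $\cC, \cC'$ so their $R$'s (and the fact that $R$ of any map is a legal morphism) live in the restricted polynomial categories. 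One then checks the triangle identities; these follow from the triangle identities for $(\eta,\epsilon)$ together with the composition formula $R_f \circ R_{f'} = R_{f' \circ f}$. The main obstacle throughout is the well-definedness of $F_*$ on morphisms — ensuring the bispan relations are preserved despite $F$ not preserving pullbacks — and I expect essentially all the real work to be in that verification, with the sieve condition (1) and the detection condition (3) being precisely what make it go through; conditions such as "$F(f) \in \ccD'$ iff $f \in \ccD$" also ensure $F_*$ and $G_*$ land in the stated target categories rather than merely the ambient polynomial categories. The triangle identities and the variance bookkeeping are then routine.
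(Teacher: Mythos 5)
Your architecture is genuinely different from the paper's, and the difference is where the trouble lies. The paper never defines the induced functors on general morphisms by applying $F$ or $G$ to each leg of a bispan; it directly constructs the bijection
\[
\cP^{\cC}_{\ccD}(G(X),Y)\;\cong\;\cP^{\cC'}_{\ccD'}(X,F(Y)),
\]
sending $G(X)\xleftarrow{f} A\xrightarrow{g} B\xrightarrow{h} Y$ to $X\xleftarrow{f_{\ast}} F(A)\xrightarrow{F(g)} F(B)\xrightarrow{F(h)} F(Y)$ (adjointing only the leftmost leg), and inverting this by using the sieve condition to rewrite \emph{any} bispan with target $F(Y)$ as one whose middle and right legs lie entirely in the image of $F$; hypothesis (3) keeps the exponent in $\ccD$. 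The induced functors are then determined by naturality of this bijection, so one never has to check that levelwise application of $F$ or $G$ respects bispan composition.

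That check is exactly where your argument has a gap. As you correctly note, well-definedness of the levelwise functors reduces to $F$ and $G$ carrying the pullback squares and exponential diagrams of Propositions~\ref{prop:RN} and~\ref{prop:NT} to pullback squares and exponential diagrams. For $G$ you assert this holds because $G$ is a right adjoint ``or by hypothesis'': neither is true --- right adjoints preserve pullbacks but not dependent products (exponential diagrams are not limits), and no such hypothesis appears in the statement. For $F$ you invoke the sieve condition to conclude that ``$F$ applied to the pullback-in-$\cC$ square is the pullback-in-$\cC'$ square we need,'' but the sieve condition only says that a map into $F(b)$ is isomorphic to $F$ of \emph{some} map; identifying that map with the pullback computed in $\cC$ requires more (e.g.\ full faithfulness of $F$), which again is not among the hypotheses. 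Both missing facts do hold for $F=\Ind_{H}^{G}$, $G=i_{H}^{\ast}$, so your route could be repaired in the application, but it does not prove the theorem as stated. (A minor point: your unit has the wrong variance --- since $R_{\phi}$ for $\phi\colon S\to T$ is a morphism $T\to S$ in the polynomial category, the unit of the induced adjunction at $X'$ is $R_{\epsilon_{X'}}\colon X'\to FG(X')$, built from the original \emph{counit}, and the counit is $R_{\eta_{Y}}\colon GF(Y)\to Y$.)
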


\begin{proof}
We first describe the natural transformations on Hom objects.

If
\[
G(X)\xleftarrow{f} A\xrightarrow{g} B\xrightarrow{h} Y
\]
is in $\cP^{\cC}_{\ccD}$, then we take this to 
\[
X\xleftarrow{f*} F(A)\xrightarrow{F(g)} F(B)\xrightarrow{F(h)} F(Y).
\]
Since $F$ descends to a functor $\ccD\to\ccD'$, this is a morphism of $\cP^{\cC'}_{\ccD'}$.

For the other direction, since the image of $F$ is essentially a sieve, any morphism of the form
\[
X\xleftarrow{f'} A'\xrightarrow{g'} B'\xrightarrow{h'} F(Y)
\]
can be rewritten as
\[
X\xleftarrow{f} F(A) \xrightarrow{F(g)} F(B) \xrightarrow{F(h)} F(Y).
\]
We take this arrow to 
\[
G(X)\xleftarrow{f_{*}} A\xrightarrow{g} B\xrightarrow{h} Y.
\]
Since $g\in\ccD$ if and only if $F(g)\in \ccD'$, this is a morphism in $\cP^{\cC}_{\ccD}$.

These constructions are natural and clearly inverses to each other.
\end{proof}

\begin{remark}
We note that we only require that $F$ and $G$ descend to functors between $\ccD$ and $\ccD'$, not that they give an adjoint pair. In particular, we will see below (Proposition~\ref{prop:IndEssentiallySieve}) that these conditions are satisfied by restriction and induction for certain subcategories of finite $G$-sets, even when these functors are not adjoint.
\end{remark}

\section{Pullback stable subcategories of
  \texorpdfstring{$\Set^G$}{SetG}}\label{sec:PBStableSubcats} 

Our incomplete Tambara functors are controlled by suitable
subcategories of the category $\Set^G$ of finite $G$-sets.  In this
section, we develop the basic properties of pullback stable
subcategories of $\Set^G$ that we will need for our subsequent work.
In particular, we show that there is an equivalence of posets between
the poset of pullback stable subcategories of $\Set^G$ and the poset
of indexing systems; this classification result gives a
``span-theoretic'' explanation for the importance of indexing systems
in our work on $N_\infty$ operads.

\subsection{Basic properties of pullback stable subcategories}

We now restrict attention to pullback stable subcategories of
$\Set^G$.  This ambient category is very well-behaved, and we will see
that simple assumptions give surprisingly strong results.  Many of the
results in this subsection work more generally; for clarity we
restrict ourselves to this basic case, and leave the (easy)
generalizations to the interested reader.  Additionally, motivated by
our study below of incomplete Tambara functors, we restrict attention
to those pullback stable subcategories of $\ccD$ which are symmetric
monoidal subcategories.  (Here recall that the symmetric monoidal
structure on $\Set^G$ is given by the coproduct, disjoint union.)  In
particular, we are assuming that given maps $f \colon X \to Y$ and $g
\colon X' \to Y'$ in $\aD$, then $\aD$ contains the map $X \coprod X'
\to Y \coprod Y'$.  Restricting to this setting has a very surprising
consequence.

\begin{proposition}\label{prop:InitialImpliesMono}
If $\ccD$ is a pullback stable, symmetric monoidal subcategory of
$\Set^G$ that contains $\emptyset\to\ast$, then $\ccD$ contains all
monomorphisms.
\end{proposition}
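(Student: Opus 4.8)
The plan is to write an arbitrary monomorphism, up to isomorphism, as a summand inclusion $A\to A\amalg C$, and then to build that inclusion out of an identity map together with the unique map $\emptyset\to C$ using the symmetric monoidal structure of $\ccD$.

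First I would record two easy facts. Since $\Set^{G}$ has a terminal object, Proposition~\ref{prop:basic} shows that $\ccD$ is wide, so it contains every identity morphism, and that (by part~(1) of that proposition, applied with an arbitrary object of $\Set^G$, which lies in $\ob(\ccD)$) $\ccD$ contains every isomorphism of $\Set^{G}$. Second, I claim that $(\emptyset\to C)\in\ccD$ for every finite $G$-set $C$. Indeed, the square
\[
\xymatrix{
\emptyset \ar[r] \ar[d] & \emptyset \ar[d] \\
C \ar[r] & \ast
}
\]
with top edge the identity, left edge the unique map $\emptyset\to C$, right edge $\emptyset\to\ast$, and bottom edge $C\to\ast$, is a pullback, because $C\times_{\ast}\emptyset=\emptyset$. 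Since $(\emptyset\to\ast)\in\ccD$ by hypothesis, pullback stability forces $(\emptyset\to C)\in\ccD$.

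Now let $i\colon A\to B$ be a monomorphism in $\Set^{G}$, i.e.\ an injective $G$-map, and let $C=B\setminus i(A)$ be the complementary $G$-set. Then $i$ factors as
\[
A\xrightarrow{\ \cong\ } i(A)\xrightarrow{\ \iota\ } i(A)\amalg C\xrightarrow{\ \cong\ } B,
\]
where $\iota$ is the summand inclusion and the two outer maps are the evident isomorphisms. The outer maps lie in $\ccD$ by the first observation above. For $\iota$: under the canonical identification $i(A)\amalg\emptyset\cong i(A)$, the map $\iota$ is $\id_{i(A)}\amalg(\emptyset\to C)$, which lies in $\ccD$ because $\ccD$ is closed under $\amalg$ of morphisms and both $\id_{i(A)}$ and $\emptyset\to C$ lie in $\ccD$. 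Composing, $i\in\ccD$.

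All of the content sits in the two preliminary observations; after that the symmetric monoidal structure does the work mechanically, and the only thing requiring a little care is the bookkeeping of the canonical isomorphisms $i(A)\amalg\emptyset\cong i(A)$ and $i(A)\amalg C\cong B$, which is harmless precisely because $\ccD$ already contains all isomorphisms. So I do not anticipate a genuine obstacle — the point of the proposition is exactly that pullback stability together with monoidality and the single hypothesis $(\emptyset\to\ast)\in\ccD$ already suffices to pull every monomorphism into $\ccD$.
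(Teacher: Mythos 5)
Your proof is correct and follows essentially the same route as the paper's: establish wideness via Proposition~\ref{prop:basic}, obtain $\emptyset\to C$ in $\ccD$ by pulling back $\emptyset\to\ast$ along $C\to\ast$, and then realize the monomorphism as the coproduct of an identity with an initial map, i.e.\ $\emptyset\amalg S\to(T-S)\amalg S\cong T$. Your version merely makes the bookkeeping of the canonical isomorphisms more explicit.
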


\begin{proof}
Since $\ccD$ contains the terminal object and is pullback stable, by
Proposition~\ref{prop:basic} we conclude that it is wide.  Next, any
monomorphism $S \to T$ can be written as  
\[
\emptyset\amalg S\to (T-S)\amalg S\cong T.
\]
Since $\ccD$ is pullback stable, for any finite $G$-set $S$, pulling
back the map $\emptyset \to \ast$ along the terminal map $T-S \to \ast$
implies that the initial map $\emptyset \to T-S$ is in $\ccD$.  Using
the fact that $\ccD$ is symmetric monoidal, we now conclude that any
monomorphism is in $\ccD$.
\end{proof}

The pullback stable subcategories $\ccD$ of finite $G$-sets we
consider will have an additional property: they have all finite
coproducts and the coproducts are created in $\Set^G$.  We will refer to this
property by saying that $\ccD$ is a {\em finite coproduct complete
  subcategory} of $\Set^G$.  Note that any a coproduct complete
subcategory of $\Set^G$ is in fact a symmetric monoidal subcategory,
since the coproduct in $\Set^G$ is the symmetric monoidal product.
From the point of view of the resulting Tambara functors, we will see
that this is a very natural condition due to the following simple lemma.

\begin{lemma}
Let $\ccD$ be a pullback stable subcategory of
$\Set^G$. Then the following are equivalent: 
\begin{enumerate}
\item The category $\ccD$ is a wide and finite coproduct complete
  subcategory of $\Set^G$.
\item The category $\ccD$ is a symmetric monoidal subcategory that
  contains the maps $\emptyset\to\ast$ and $\ast\amalg\ast\to\ast$.
\end{enumerate}
\end{lemma}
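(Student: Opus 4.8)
The plan is to prove the two implications separately, with pullback stability of $\ccD$ a standing hypothesis in both directions. The three tools I expect to use repeatedly are Proposition~\ref{prop:basic} (isomorphisms into objects of $\ccD$ lie in $\ccD$, and a terminal object makes $\ccD$ wide), Proposition~\ref{prop:InitialImpliesMono} (a pullback stable symmetric monoidal $\ccD$ containing $\emptyset\to\ast$ contains every monomorphism), and the universal property of coproducts read inside $\ccD$ itself.

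For $(1)\Rightarrow(2)$: a finite coproduct complete subcategory is symmetric monoidal — closure under $\amalg$ on objects is immediate, closure on morphisms follows from the coproduct universal property applied to $i_{Y}f$ and $i_{Y'}g$, and the coherence isomorphisms lie in $\ccD$ by Proposition~\ref{prop:basic}(1) since $\ccD$ is wide. Since $\ccD$ is wide, $\emptyset$ and $\ast$ are objects of $\ccD$; forming the coproduct $\ast\amalg\emptyset\cong\ast$ inside $\ccD$ exhibits the coprojection $\emptyset\to\ast\amalg\emptyset$, hence (composing with an isomorphism, which lies in $\ccD$ by Proposition~\ref{prop:basic}(1)) the map $\emptyset\to\ast$, as a morphism of $\ccD$. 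Finally, $\ast\amalg\ast$ is a coproduct in $\ccD$ created in $\Set^{G}$, so the morphism of $\ccD$ induced by $\id_{\ast}$ and $\id_{\ast}$ out of the two coprojections is, in $\Set^{G}$, exactly the fold map $\ast\amalg\ast\to\ast$.

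For $(2)\Rightarrow(1)$: since $\emptyset\to\ast$ is a morphism of $\ccD$ we have $\ast\in\ob(\ccD)$, so Proposition~\ref{prop:basic}(2) shows $\ccD$ is wide. By Proposition~\ref{prop:InitialImpliesMono}, $\ccD$ contains all monomorphisms; hence every map $\emptyset\to X$ and every coproduct injection $X\to X\amalg Y$ lies in $\ccD$, placing the initial object and the coproduct cocones of $\Set^{G}$ inside $\ccD$. The remaining point is the universal property internal to $\ccD$: given $f\colon X\to Z$ and $g\colon Y\to Z$ in $\ccD$, the copairing factors as $\nabla_{Z}\circ(f\amalg g)$, where $f\amalg g\in\ccD$ by symmetric monoidality and $\nabla_{Z}\colon Z\amalg Z\to Z$ is obtained by pulling back the hypothesized map $\ast\amalg\ast\to\ast$ along the terminal map $Z\to\ast$, hence lies in $\ccD$ by pullback stability; uniqueness of the copairing is inherited from $\Set^{G}$. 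Arbitrary finite coproducts then follow by induction on the number of summands, the associativity isomorphisms being in $\ccD$ by Proposition~\ref{prop:basic}(1).

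I expect the one genuinely substantive step to be showing that copairings — equivalently, the fold maps $\nabla_{Z}\colon Z\amalg Z\to Z$ — lie in $\ccD$. The key observation making the two modest-looking generators in $(2)$ do all the work is that $\nabla_{Z}$ is a pullback of the single map $\ast\amalg\ast\to\ast$ along $Z\to\ast$; pinning down that pullback square correctly (the fiber product $(\ast\amalg\ast)\times_{\ast}Z\cong Z\amalg Z$ with projection the fold) is the heart of the argument, and everything else is bookkeeping with Propositions~\ref{prop:basic} and~\ref{prop:InitialImpliesMono}.
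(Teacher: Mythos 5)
Your proposal is correct and follows essentially the same route as the paper: the decisive step in both is realizing the fold map $\nabla_{Z}\colon Z\amalg Z\to Z$ as the pullback of $\ast\amalg\ast\to\ast$ along $Z\to\ast$, and then expressing an arbitrary copairing as $\nabla_{Z}\circ(f\amalg g)$ using the symmetric monoidal structure, with Proposition~\ref{prop:InitialImpliesMono} supplying the coproduct injections. The only cosmetic difference is that you obtain the injections as monomorphisms while the paper pulls them back from $\ast\to\ast\amalg\ast$; both work.
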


\begin{proof}
If $\ccD$ is wide and finite coproduct complete as a subcategory of
$\Set^G$, then in particular, $\emptyset\to\ast$ and
$\ast\amalg\ast\to\ast$ are in $\ccD$.  

For the converse, all monomorphisms are in $\ccD$ by
Proposition~\ref{prop:InitialImpliesMono}.  In particular, empty
coproducts are in $\ccD$.  Next, for any finite $G$-set $T$, the
coproduct $T \amalg T$ in $\Set^G$ is the coproduct in $\ccD$.  To
see this, consider the following pullback diagram
\[
\xymatrix{
T \amalg T \ar[r] \ar[d] & T \ar[d] \\
\ast \amalg \ast \ar[r] & \ast.
}
\]
Pullback stability implies that the fold map $T\amalg T\to T$ is in
$\ccD$.  Moreover, the two maps $T \to * \to * \amalg *$ induce the
canonical inclusions $i_1, i_2 \colon T \to T \amalg T$ in $\ccD$.
Moreover, it is clear that this holds for arbitrary finite coproducts
of $T$.  Finally, for an arbitrary finite coproduct, we can write the
required universal map out as a composite of iterated fold maps and
the symmetric monoidal product of maps.  That is, given $T \amalg S$,
the universal map to $Z$ given $T \to Z$ and $S \to Z$ can be
expressed as the composite $S \amalg T \to Z \amalg Z \to Z$.
\end{proof}

Pullback stability itself implies a partial converse to this sort of result.

\begin{proposition}\label{prop:SummandsinD}
Let $\ccD$ be a pullback stable subcategory of $\Set^G$ and assume
that $\coprod_i T_i$ is the coproduct in $\ccD$.  If $f \colon S \to
\coprod_i T_i$ is in $\ccD$, then so are $f|_{S_i}$, where
$S_i=f^{-1}(T_i)$.
\end{proposition}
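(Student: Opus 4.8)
The plan is to realize each restriction $f|_{S_i}$ as a pullback of the map $f$ itself, so that the assertion falls out of pullback stability. Throughout, the hypothesis that $\coprod_i T_i$ is the coproduct \emph{in} $\ccD$ is used only to guarantee that the statement is well-posed: it forces each $T_i$ and the object $\coprod_i T_i$ to lie in $\ob(\ccD)$, so that both ``$f\in\ccD$'' and ``$f|_{S_i}\in\ccD$'' make sense. (I will not need the coproduct inclusions themselves to be morphisms of $\ccD$.)

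The one thing to check is an identification of pullbacks. For each index $i$, let $\iota_i\colon T_i\to\coprod_j T_j$ be the coproduct inclusion and form the pullback of $S\xrightarrow{f}\coprod_j T_j\xleftarrow{\iota_i}T_i$ in $\Set^G$. Because $\iota_i$ is a monomorphism, this pullback is canonically $f^{-1}(T_i)=S_i$, with first projection the inclusion $S_i\hookrightarrow S$ and second projection exactly $f|_{S_i}\colon S_i\to T_i$. Thus
\[
\xymatrix{
S_i \ar[r] \ar[d]_{f|_{S_i}} & S \ar[d]^{f} \\
T_i \ar[r]_{\iota_i} & \coprod_j T_j
}
\]
is a pullback square whose right-hand vertical is $f$, which is in $\ccD$ by hypothesis. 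Pullback stability of $\ccD$ then yields $f|_{S_i}\in\ccD$ (in particular $S_i\in\ob(\ccD)$), and since $S=\coprod_i S_i$ this covers every component.

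I expect no real obstacle: this is the ``easy half'' of the comparison between pullback stability and coproduct-completeness, and the only step needing any care is the routine verification that the pullback along the monomorphism $\iota_i$ is the set-theoretic restriction $f|_{S_i}$. The genuinely delicate converse --- that $f\in\ccD$ as soon as every $f|_{S_i}\in\ccD$ --- is what requires the symmetric monoidal (finite coproduct) hypothesis, exactly as in Proposition~\ref{prop:Products}, and is not asserted here.
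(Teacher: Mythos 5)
Your proof is correct and is exactly the paper's argument: the paper's one-line proof also observes that $f|_{S_i}$ is the pullback of $f$ along the inclusion $T_i\to\coprod_j T_j$, so pullback stability gives the claim. Your write-up merely makes explicit the (routine) identification of that pullback with the set-theoretic restriction.
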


\begin{proof}
The restrictions to these summands are just the pullbacks along the
inclusions of $T_i$ into the coproduct. 
\end{proof}

In particular, the conditions of being a wide, pullback stable, and
finite coproduct complete subcategory of $\Set^G$ are extremely
stringent: we can completely recover any subcategory of $\Set^G$ of
this form out of a subcategory of the orbit category of $G$. 

\begin{definition}
Let $\ccD$ be a subcategory of $\Set^G$.  We define $\cOrb_{\ccD}$ to
be the full subcategory of $\ccD$ obtained by restricting the objects
to the orbits $G/H$ (for $H\subset G$) that are contained in $\ccD$.
\end{definition}

\begin{remark}
When $\ccD=\Set^G$, this is the ordinary orbit category, so we can rewrite $\cOrb_{\ccD}$ as $\ccD\cap \cOrb$.
\end{remark}

\begin{proposition}\label{prop:CoprodComp}
For $\ccD$ as above, $\ccD$ is the finite coproduct completion of
$\cOrb_{\ccD}$ in $\Set^G$; it is the smallest subcategory of $\Set^G$
containing $\cOrb_{\ccD}$ that has all finite coproducts. 
\end{proposition}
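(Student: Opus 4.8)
The plan is to identify $\ccD$ with $\mathcal{E}$, the smallest subcategory $\mathcal{E}\subset\Set^G$ that contains $\cOrb_{\ccD}$ and is closed under the formation of finite coproducts in $\Set^G$ (so in particular $\mathcal{E}$ contains the coprojections into, and codiagonals out of, any finite coproduct of its objects). This is exactly what it means for $\ccD$ to be the finite coproduct completion of $\cOrb_{\ccD}$ inside $\Set^G$, so both assertions of the proposition reduce to the single equality $\ccD=\mathcal{E}$. One inclusion is immediate: $\ccD$ contains $\cOrb_{\ccD}$ by definition and $\ccD$ is finite coproduct complete, so $\mathcal{E}\subset\ccD$.

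For the reverse inclusion $\ccD\subset\mathcal{E}$, the idea is to reassemble, inside $\mathcal{E}$, every morphism of $\ccD$ out of orbit-to-orbit maps that already lie in $\cOrb_{\ccD}$. On objects there is nothing to prove, since $\ccD$ is wide: every finite $G$-set is a finite coproduct of orbits, and each such orbit is an object of $\cOrb_{\ccD}$. For morphisms, let $f\colon S\to T$ be a morphism of $\ccD$ and write $S=\coprod_k U_k$ and $T=\coprod_j W_j$ as coproducts of orbits. Since $\ccD$ is pullback stable, symmetric monoidal, and contains $\emptyset\to\ast$, Proposition~\ref{prop:InitialImpliesMono} shows $\ccD$ contains all monomorphisms; in particular each summand inclusion $U_k\hookrightarrow S$ lies in $\ccD$, so each restriction $f|_{U_k}\colon U_k\to T$ lies in $\ccD$. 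As $U_k$ is an orbit, $f|_{U_k}$ factors through a single summand, $U_k\xrightarrow{\bar f_k} W_{j(k)}\hookrightarrow T$, and applying Proposition~\ref{prop:SummandsinD} to $f|_{U_k}$ and the summand $W_{j(k)}$ of $T=\coprod_j W_j$ (the coproduct being created in $\Set^G$) shows $\bar f_k\in\ccD$; being a map between orbits lying in $\ccD$, it is a morphism of $\cOrb_{\ccD}$.

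It then remains to write $f$ as a composite built from the $\bar f_k$ together with coproduct structure maps, namely
\[
f=\Bigl(\coprod_k U_k \xrightarrow{\ \coprod_k \bar f_k\ } \coprod_k W_{j(k)} \longrightarrow \coprod_k T \xrightarrow{\ \nabla\ } T\Bigr),
\]
where the middle arrow is the coproduct of the summand inclusions $W_{j(k)}\hookrightarrow T$ and $\nabla$ is the codiagonal. Each of these three arrows lies in $\mathcal{E}$: the first because $\mathcal{E}$ is closed under finite coproducts of morphisms of $\cOrb_{\ccD}$, and the other two because $\mathcal{E}$ contains the coprojections into, and codiagonals out of, finite coproducts of its objects. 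Hence $f\in\mathcal{E}$, so $\ccD=\mathcal{E}$ and the proposition follows. (If one prefers the universal-property formulation of the completion, the same bookkeeping exhibits the realization functor from the free finite coproduct completion $\Sigma\,\cOrb_{\ccD}$ as a fully faithful functor with image $\ccD$, using that a $G$-map between coproducts of orbits is precisely the datum of an index function together with orbit-to-orbit maps.)

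\textbf{Main obstacle.} The only substantive point is the morphism analysis in the second paragraph: showing that a morphism of $\ccD$ decomposes into orbit-to-orbit maps that are themselves in $\ccD$. This is where pullback stability genuinely enters, via Propositions~\ref{prop:InitialImpliesMono} and~\ref{prop:SummandsinD}; the remainder is the formal combinatorics of finite coproducts in $\Set^G$.
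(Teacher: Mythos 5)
Your proof is correct and follows essentially the same route as the paper's: decompose source and target into orbits, use pullback stability (via Propositions~\ref{prop:InitialImpliesMono} and~\ref{prop:SummandsinD}) to see that the orbit-to-orbit components of a map in $\ccD$ lie in $\cOrb_{\ccD}$, and reassemble using the coproduct structure. The only difference is cosmetic—you restrict along source orbits first where the paper restricts along target orbits first, and you spell out the reassembly of $f$ from the $\bar f_k$ more explicitly than the paper does.
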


\begin{proof}
Let $f\colon S\to T$ be a map in $\ccD$.  Decomposing $T$ into orbits,
we write $T=\coprod G/H_i$, and let $S_i=f^{-1}(G/H_i)$.  Since $\ccD$ is closed
under pullbacks and is symmetric monoidal, the restriction of $f$ to
$S_i$ is again in $\ccD$ if and only if $f$ itself is by
Proposition~\ref{prop:SummandsinD}. It therefore suffices to consider
$T=G/H$ is an orbit. 

Similarly, decomposing $S$ into orbits we can write $S=\coprod G/H_j$, and since
$\ast\amalg \ast\to\ast$ is in $\ccD$, we see that $f$ is in $\ccD$ if
and only if $f|_{G/H_j}$ is in $\ccD$. In particular, any map in
$\ccD$ is a sum of maps in $\ccD$ between orbits. In particular, $f$
is in $\ccD$ if and only if it is in the finite coproduct completion of
$\cOrb_{\ccD}$. 
\end{proof}

\begin{remark}
This should be viewed as an analogue of the classical result that
$\Set^G$ is the finite coproduct completion of the full orbit category $\cOrb$.
\end{remark}

Proposition~\ref{prop:CoprodComp} provides a conceptual understanding
of what our incomplete Tambara functors look like; for this restricted
class of pullback stable subcategories, everything is determined by
which maps are in $\cOrb_{\ccD}$. 

\subsection{Subcategories of \texorpdfstring{$\Set^{G}$}{SetG} from indexing systems}

In this subsection, we explain how an indexing system determines a
wide, pullback stable, and finite coproduct complete subcategory of
$\Set^G$.  Recall (see Definition~\ref{defn:IndexingSystem}) that an
{\em indexing system} is a full symmetric monoidal 
sub-coefficient system $\mC$ of $\mSet$ that contains all trivial sets
and is closed under finite limits and self-induction, in the sense
that if $H/K\in \mC(H)$ and $T\in\mC(K)$, then
$H\times_{K}T\in\mC(H)$.  (Recall also that we refer to the sets
specified by $\cO$ as the admissible sets of $\cO$.)

\begin{definition}
For an indexing system $\cO$, let $\Set_{\cO}^{G}$ denote the wide
subgraph of $\Set^G$ where $f\colon S\to T$ is in $\Set_{\cO}^G$ if
and only if for all $s\in S$, 
\[
G_{f(s)}/G_{s}\in{\cO}(G_{f(s)}).
\]
\end{definition}

Consideration of orbits immediately gives an equivalent formulation of
the condition for maps to be in $\Set_{\cO}^G$.

\begin{proposition}\label{prop:OrbitReduction}
A map $f\colon S\to T$ is in $\Set_{\cO}^G$ if and only if for all $s\in S$, we have 
\[
 G_{f(s)}\cdot s\in \cO\big( G_{f(s)}\big).
\]
\end{proposition}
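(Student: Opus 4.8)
The plan is to unwind both conditions in terms of the orbit decomposition of $S$ and to observe they say the same thing about each orbit of $G_{f(s)}$ acting on the fiber through $s$. First I would fix $s \in S$ and set $K = G_{f(s)}$. The set $K \cdot s$ is a transitive $K$-set, and as such it is isomorphic to $K/\Stab_K(s) = K/(K \cap G_s)$. Since $s$ maps to $f(s)$ and $K = G_{f(s)}$ fixes $f(s)$, the stabilizer $G_s$ is automatically a subgroup of $K$, so $K \cap G_s = G_s$ and hence $K \cdot s \cong K/G_s = G_{f(s)}/G_s$ as $G_{f(s)}$-sets.

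Given this identification, the equivalence is essentially immediate: the condition defining $\Set_{\cO}^G$ is that $G_{f(s)}/G_s \in \cO(G_{f(s)})$ for all $s$, while the claimed reformulation is that $G_{f(s)} \cdot s \in \cO(G_{f(s)})$ for all $s$. Since $\cO(G_{f(s)})$ is (by definition of an indexing system) a \emph{full} sub-coefficient system of $\mSet$, it is closed under isomorphism of $G_{f(s)}$-sets, so $G_{f(s)}/G_s \in \cO(G_{f(s)})$ if and only if $G_{f(s)} \cdot s \in \cO(G_{f(s)})$. This is the whole content; I would spell out the $G_{f(s)}$-equivariant isomorphism $G_{f(s)}/G_s \xrightarrow{\ \cong\ } G_{f(s)} \cdot s$ sending the coset $gG_s$ to $g\cdot s$ and note it is well-defined and bijective precisely because $G_s = \Stab_{G_{f(s)}}(s)$.

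I do not expect any real obstacle here; the only point requiring a moment's care is the observation that $G_s \subseteq G_{f(s)}$, which follows because $f$ is equivariant ($g \cdot s = s$ forces $g \cdot f(s) = f(s)$). One could also phrase the argument purely in terms of the orbit decomposition $S = \coprod_i G/H_i$, noting that $f$ restricted to an orbit $G/H_i \to G/K_i$ is, up to isomorphism, the projection $G/H_i \to G/K_i$ for $H_i \subseteq K_i$, and that the fiber over the identity coset is $K_i/H_i$; but the pointwise stabilizer description above is the most direct route and matches the phrasing of the two statements being compared.
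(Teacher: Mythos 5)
Your argument is correct and is precisely the "consideration of orbits" that the paper declares makes the statement immediate (the paper gives no further proof): the key observations are that $G_s \subseteq G_{f(s)}$ by equivariance, so $G_{f(s)}\cdot s \cong G_{f(s)}/G_s$, and that admissibility is invariant under isomorphism of $G_{f(s)}$-sets. Nothing is missing.
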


The fact that $\cO$ is an indexing system implies that
$\Set_{\cO}^{G}$ is in fact a category.

\begin{theorem}
The graph $\Set_{\cO}^{G}$ forms a category.
\end{theorem}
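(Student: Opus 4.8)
The plan is to verify the two defining properties of a subcategory of $\Set^{G}$: that $\Set_{\cO}^{G}$ contains all identity morphisms and is closed under composition. The first is immediate. For the identity $\id\colon S\to S$ and any $s\in S$ we have $G_{\id(s)}/G_{s}=G_{s}/G_{s}=\ast$, and since $\cO$ is an indexing system it contains all trivial sets, so $\id\in\Set_{\cO}^{G}$.

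The substance is closure under composition, and the key input is the self-induction axiom of Definition~\ref{defn:IndexingSystem}. Suppose $f\colon S\to T$ and $f'\colon T\to U$ both lie in $\Set_{\cO}^{G}$. Fix $s\in S$, write $t=f(s)$ and $u=f'(t)$, and set $K=G_{s}$, $H=G_{t}$, $L=G_{u}$. The relation $f(s)=t$ forces $G_{s}\subseteq G_{t}$, and likewise $G_{t}\subseteq G_{u}$, so we have a chain $K\subseteq H\subseteq L$. The hypothesis that $f\in\Set_{\cO}^{G}$ gives $H/K=G_{f(s)}/G_{s}\in\cO(H)$, and the hypothesis that $f'\in\Set_{\cO}^{G}$ gives $L/H=G_{f'(t)}/G_{t}\in\cO(L)$.

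Now I would apply self-induction to the pair $\big(L/H,\,H/K\big)$: since $L/H\in\cO(L)$ and $H/K\in\cO(H)$, the induced $L$-set $L\times_{H}(H/K)$ lies in $\cO(L)$. But $L\times_{H}(H/K)\cong L/K$ as $L$-sets, so $L/K=G_{(f'\circ f)(s)}/G_{s}\in\cO\big(G_{(f'\circ f)(s)}\big)$. As $s\in S$ was arbitrary, this shows $f'\circ f\in\Set_{\cO}^{G}$, and hence $\Set_{\cO}^{G}$ is a category. One could equivalently phrase the pointwise step through Proposition~\ref{prop:OrbitReduction}, systematically identifying the $G_{f(s)}$-orbit of $s$ with the coset space $G_{f(s)}/G_{s}$.

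I do not anticipate a serious obstacle. The only point requiring care is checking that the stabilizer inclusions run in the correct direction — so that the outer stabilizer is the ``large'' group $L$ to which we induce — and that the self-induction axiom applies to $\big(L/H,\,H/K\big)$ exactly in the stated form; this is the sole place in the argument where the indexing-system hypotheses beyond ``symmetric monoidal sub-coefficient system containing trivial sets'' are used. Closure under finite limits plays no role in this particular statement; it enters elsewhere, for instance when one checks that $\Set_{\cO}^{G}$ is pullback stable and finite coproduct complete.
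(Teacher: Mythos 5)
Your proof is correct and follows essentially the same route as the paper: identities are handled by the trivial-sets condition, and closure under composition is exactly the self-induction axiom applied to the chain of stabilizers $G_{s}\subseteq G_{f(s)}\subseteq G_{f'(f(s))}$ together with the identification $L\times_{H}(H/K)\cong L/K$. No gaps.
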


\begin{proof}
Since trivial sets are admissible for any subgroup of $G$,
$\Set_{\cO}^{G}$ contains the identity map for each object.
Therefore, it suffices to show that given two morphisms $f_1\colon
S_1\to S_2$ and $f_2\colon S_2\to S_3$ in $\Set_{\cO}^{G}$, $f_2\circ
f_1$ is also in $\Set_{\cO}^{G}$.

Let $s\in S_1$ be any element.  By assumption, $G_{f_{1}(s)}/G_{s}\in
{\cO}(G_{f_{1}(s)})$, and $G_{f_{2}\circ f_{1}(s)}/G_{f_{1}(s)}\in
{\cO}(G_{f_{2}\circ f_{1}(s)})$.  Since admissible sets are closed
under self-induction, we conclude that
\[
G_{f_{2}\circ f_{1}(s)}/G_{s}\cong G_{f_{2}\circ
  f_{1}(s)}\times_{G_{f_{1}(s)}} G_{f_{1}(s)}/G_{s}\in
{\cO}(G_{f_{2}\circ f_{1}(s)}),
\]
and therefore the composite is also in the category.
\end{proof}

Since the condition of being a map in $\Set_G^{\cO}$ is determined
orbit-by-orbit and by assumption $\emptyset\in\cO(H)$ for all $H$, the
following is immediate.

\begin{proposition}
For any indexing system $\cO$, the category $\Set^G_{\cO}$ is a finite
coproduct complete subcategory of $\Set^G$.
\end{proposition}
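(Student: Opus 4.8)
The plan is to show that $\Set^G_{\cO}$ satisfies the two conditions characterizing a finite coproduct complete subcategory; the earlier lemma in this subsection reduces this to verifying that $\Set^G_{\cO}$ is a symmetric monoidal subcategory containing $\emptyset\to\ast$ and $\ast\amalg\ast\to\ast$. The maps $\emptyset\to\ast$ and $\ast\amalg\ast\to\ast$ involve only trivial orbits, and for trivial orbits the condition $G_{f(s)}/G_s \in \cO(G_{f(s)})$ is satisfied because every indexing system contains all trivial sets by Definition~\ref{defn:IndexingSystem}; so both maps lie in $\Set^G_{\cO}$.

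The main content is therefore to check that $\Set^G_{\cO}$ is closed under the coproduct of maps, i.e.\ that if $f\colon S\to T$ and $f'\colon S'\to T'$ are in $\Set^G_{\cO}$, then $f\amalg f'\colon S\amalg S'\to T\amalg T'$ is as well. First I would observe that membership in $\Set^G_{\cO}$ is defined purely by a pointwise condition indexed by elements $s$ of the source: for each $s$ we ask that $G_{f(s)}\cdot s \in \cO(G_{f(s)})$ (using the orbit reformulation of Proposition~\ref{prop:OrbitReduction}, which is the cleanest way to state it). Every element of $S\amalg S'$ lies in exactly one of the two summands, its stabilizer and the stabilizer of its image are computed entirely within that summand, and the value of $f\amalg f'$ on it agrees with the value of $f$ or of $f'$. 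Hence the pointwise condition for $f\amalg f'$ at such an element is literally the pointwise condition for $f$ or for $f'$ at that element, which holds by hypothesis. This gives closure under $\amalg$ on morphisms; closure on objects is automatic since $\Set^G_{\cO}$ is wide (it is a subcategory of $\Set^G$ containing every identity, as established in the preceding theorem).

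Combining these observations, $\Set^G_{\cO}$ is a symmetric monoidal subcategory of $\Set^G$ containing $\emptyset\to\ast$ and $\ast\amalg\ast\to\ast$, so by the lemma it is wide and finite coproduct complete. I do not anticipate a serious obstacle here: the only subtlety is bookkeeping about stabilizers under disjoint union, and the key point — that the defining condition is checked element-by-element on the source and is insensitive to which summand an element lives in — makes the argument essentially formal. One could alternatively invoke Proposition~\ref{prop:CoprodComp} together with the fact that $\Set^G_{\cO}$ is pullback stable (which follows from the earlier theorem and closure properties of $\cO$), but the direct element-wise verification via Proposition~\ref{prop:OrbitReduction} is the shortest route and is what I would write.
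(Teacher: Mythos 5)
Your core computation is right and is essentially what the paper has in mind: the paper dismisses this proposition as immediate precisely because membership in $\Set^G_{\cO}$ is checked element-by-element (equivalently, orbit-by-orbit) on the source, so it is insensitive to disjoint union decompositions, and $\emptyset\in\cO(H)$ handles the empty coproduct. Your verification that $\emptyset\to\ast$ and $\ast\amalg\ast\to\ast$ lie in $\Set^G_{\cO}$, and that the class of maps is closed under $\amalg$, is exactly this observation.

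However, the way you package the conclusion has a logical ordering problem. The lemma you invoke (the unnumbered one following Proposition~\ref{prop:InitialImpliesMono}, asserting the equivalence of ``wide and finite coproduct complete'' with ``symmetric monoidal containing $\emptyset\to\ast$ and $\ast\amalg\ast\to\ast$'') carries the standing hypothesis that the subcategory is \emph{pullback stable}, and the direction you need uses that hypothesis repeatedly (to get all monomorphisms, the fold maps, and the canonical inclusions). At the point where this proposition occurs, the pullback stability of $\Set^G_{\cO}$ has not yet been established --- it is the subject of the theorem that \emph{follows} --- so you cannot cite it as ``the earlier theorem,'' and your fallback via Proposition~\ref{prop:CoprodComp} has the same defect, since that proposition presupposes the very properties you are trying to prove. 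The good news is that your element-wise argument already proves the statement directly, with no appeal to the lemma: the canonical inclusion $S\to S\amalg T$ satisfies $G_{i(s)}/G_s=\ast$ at every point, hence is admissible; and for the universal map $h\colon S\amalg T\to Z$ induced by $f,g\in\Set^G_{\cO}$, each element of $S\amalg T$ lies in exactly one summand, where the defining condition for $h$ coincides with the condition for $f$ or for $g$. Together with $\emptyset\in\cO(H)$ this gives finite coproducts created in $\Set^G$ outright. Restructure the write-up to make that the proof and drop the appeal to the lemma.
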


Finally, we show that $\Set^G_{\cO}$ is pullback stable.  For this, we
need to check that the maps in $\Set^G_{\cO}$ are closed under
induction, in the following sense.

\begin{proposition}\label{prop:InductionClosure}
A map $f\colon S\to T$ is in $\Set_{i_H^\ast\cO}^H$ if and only if
\[
G\times_H f\colon G\times_H S\to G\times_H T\in\Set_{\cO}^G.
\]
\end{proposition}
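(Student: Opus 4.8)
The plan is to unwind both conditions in terms of stabilizer subgroups and then observe that the two resulting requirements differ only by conjugation, which is harmless because an indexing system, being a coefficient system on the orbit category, is closed under the conjugation isomorphisms $G/K\cong G/(gKg^{-1})$.

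First I would record the relevant stabilizers. Writing $[g,s]$ for the image of $(g,s)$ in $G\times_H S$ and $H_s$ for the $H$-stabilizer of $s$, the $G$-stabilizer of $[g,s]$ is $gH_sg^{-1}$, and $(G\times_H f)[g,s]=[g,f(s)]$ has $G$-stabilizer $gH_{f(s)}g^{-1}$. Also, since $i_H^\ast\cO$ is obtained from $\cO$ by restricting along the inclusion of subgroups of $H$ into those of $G$, for any subgroup $K\le H$ we have $(i_H^\ast\cO)(K)=\cO(K)$; hence $f$ lies in $\Set^H_{i_H^\ast\cO}$ exactly when $H_{f(s)}/H_s\in\cO(H_{f(s)})$ for every $s\in S$.

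For the ``only if'' direction, suppose $G\times_H f\in\Set^G_\cO$. Applying the defining condition of $\Set^G_\cO$ to the elements $[e,s]$, whose stabilizers are $H_s$ and $H_{f(s)}$, gives $H_{f(s)}/H_s\in\cO(H_{f(s)})$ for all $s$, so $f\in\Set^H_{i_H^\ast\cO}$. For the converse, suppose $f\in\Set^H_{i_H^\ast\cO}$ and let $[g,s]$ be an arbitrary element of $G\times_H S$. The quotient $G_{[g,f(s)]}/G_{[g,s]}=gH_{f(s)}g^{-1}/gH_sg^{-1}$ is the image of the $H_{f(s)}$-set $H_{f(s)}/H_s$ under the conjugation isomorphism $\cO(H_{f(s)})\to\cO(gH_{f(s)}g^{-1})$ coming from the coefficient-system structure on $\cO$; since $H_{f(s)}/H_s\in\cO(H_{f(s)})$ by hypothesis, its conjugate lies in $\cO(gH_{f(s)}g^{-1})$, and therefore $G\times_H f\in\Set^G_\cO$. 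Equivalently, one can phrase this through Proposition~\ref{prop:OrbitReduction}, noting that $H_{f(s)}\cdot[e,s]\cong H_{f(s)}\cdot s$ as $H_{f(s)}$-sets.

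I do not expect a serious obstacle here: the content is purely the stabilizer bookkeeping for induced $G$-sets. The one point that must be stated carefully is that $i_H^\ast\cO$ and $\cO$ agree on subgroups of $H$, and that the admissible sets of an indexing system are closed under conjugation --- both of which are immediate from the definition of a coefficient system as a functor on the orbit category, since the orbit category contains the conjugation isomorphisms.
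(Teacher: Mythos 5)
Your proof is correct and follows essentially the same route as the paper: compute the stabilizer of $[g,s]$ as $gH_sg^{-1}$, identify the relevant orbit quotient as the conjugate of $H_{f(s)}/H_s$, and invoke invariance of admissibility under the conjugation isomorphisms built into the coefficient-system structure. Your version is slightly more explicit about separating the two implications and about the identification $(i_H^\ast\cO)(K)=\cO(K)$ for $K\le H$, but the content is identical.
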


\begin{proof}
We observe that the stabilizer of the points in $G\times_H S$ can be determined by those of $S$:
\[
G_{[(g,s)]}=g G_{s}g^{-1}.
\]

If we let $F=G\times_H f$, then we have an identification
\[
G_{F(g,s)}/G_{(g,s)}=gG_{f(s)}g^{-1}/gG_{s}g^{-1}=g^{\ast} G_{f(s)}/G_s,
\]
where $g^\ast\colon \Set^{G_{s}}\xrightarrow{\cong} \Set^{gG_{s}g^{-1}}$ is the multiplication by $g$ map. In general, $U$ is an admissible $G_{s}$-set if and only if $g^\ast U$ is an admissible $gG_{s}g^{-1}$-set, from which the result follows.
\end{proof}

\begin{theorem}
For any indexing system $\cO$, the subcategory $\Set^G_{\cO}$ is pullback stable.
\end{theorem}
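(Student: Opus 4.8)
The plan is to reduce to the case where the base of the pullback is a single orbit, identify the pulled-back map with an induced map of finite sets for a subgroup, and then apply Proposition~\ref{prop:InductionClosure} to turn the question into a statement about the restricted indexing system $i_K^\ast\cO$ that follows directly from the closure axioms of Definition~\ref{defn:IndexingSystem}.

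For the setup: given $f\colon B\to D$ in $\Set^G_{\cO}$, an arbitrary map $C\to D$, and the pullback $A=C\times_D B$ with projection $f'\colon A\to C$, I would first observe that membership in $\Set^G_{\cO}$ is tested one element of the source at a time and that $A$ decomposes as a coproduct over the orbits of $C$, so it suffices to treat $C=G/K$. Writing $d\in D$ for the image of $eK$, equivariance forces $K\le G_d$; the fiber of $f'$ over $eK$ is the $K$-set $f^{-1}(d)$, and hence $A\cong G\times_K f^{-1}(d)$, with $f'$ corresponding to $G\times_K u$ for $u\colon f^{-1}(d)\to\ast$ the unique $K$-map to the one-point $K$-set. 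Proposition~\ref{prop:InductionClosure} then reduces the claim to showing $u\in\Set^K_{i_K^\ast\cO}$, and by Proposition~\ref{prop:OrbitReduction} applied to $K$ and $i_K^\ast\cO$ this is precisely the assertion that $K\cdot b\cong K/(K\cap G_b)$ lies in $(i_K^\ast\cO)(K)=\cO(G/K)$ for every $b\in f^{-1}(d)$.

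For that last point I would argue as follows. Since $f\in\Set^G_{\cO}$, Proposition~\ref{prop:OrbitReduction} gives $G_{f(b)}\cdot b\cong G_d/G_b\in\cO(G/G_d)$; restricting along $K\hookrightarrow G_d$ --- a structure map of the coefficient system, so it preserves admissibility because $\cO$ is a sub-coefficient system --- gives $i_K^\ast(G_d/G_b)\in\cO(G/K)$; and $K/(K\cap G_b)$ is the $K$-orbit of $eG_b$ inside that admissible set, so it is admissible once we know that summands (more generally sub-objects) of admissible sets are admissible, which holds because $\cO$ contains the trivial sets (hence $\ast\amalg\ast$) and is closed under finite limits, realizing the summand as a pullback of a point along the collapse map to $\ast\amalg\ast$. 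I expect the only real obstacles to be bookkeeping ones: pinning down the identification of the pullback with the correct induced $G$-set so Proposition~\ref{prop:InductionClosure} applies verbatim, and isolating the two ``closure'' facts (restriction to a subgroup, and passage to a summand, preserve admissibility) as consequences of the indexing-system axioms. If that reduction felt heavy, I would instead give the direct pointwise argument: for $a=(c,b)\in C\times_D B$ with $d=f(b)$ one has $G_a=G_c\cap G_b$ and $G_c\le G_d$, so $G_c/G_a$ is the $G_c$-orbit of $eG_b$ in the admissible set $i_{G_c}^\ast(G_d/G_b)$ and hence admissible.
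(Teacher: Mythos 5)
Your argument is correct and follows essentially the same route as the paper's: reduce to a pullback over a single orbit, identify the pulled-back map as one induced up from a subgroup, and apply Proposition~\ref{prop:InductionClosure} together with the closure facts that restriction to a subgroup and passage to a summand preserve admissibility. The only cosmetic difference is that the paper first reduces over the target of $f$ (working $G_t$-equivariantly over a point $\{t\}$) before recognizing the projection as an induced map, whereas you reduce over $C$ directly; both hinge on the same key lemma.
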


\begin{proof}
Consider a pullback diagram in $\Set^G$
\[
\xymatrix{
{U}\ar[r]^h\ar[d]_k & {S}\ar[d]^f \\
{V}\ar[r]_g & {T,}
}
\]
where the map $f\colon S\to T$ is in $\Set_{\cO}^G$ and where $g$ is
arbitrary.  We will show that $k$ is a map in $\Set_{\cO}^G$.  First,
we reduce to the case that $T=G/G$ is the terminal object.

Consider an element $u\in U$. By assumption, we have that 
\[
H:=G_{f\circ h(u)}=G_{g\circ k(u)},
\]
and by naturality, $H$ contains $G_{u}$, $G_{h(u)}$, and $G_{k(u)}$.  Proposition~\ref{prop:OrbitReduction} shows that it suffices to work $H$-equivariantly, and we need only look at the points which map to $t=g(k(u))=f(h(u))\in T$. Hence we can replace our original diagram with an $H$-equivariant one:
\[
\xymatrix{
{H\cdot k(u)\times H\cdot h(u)}\ar[r]^-h\ar[d]_k & {H\cdot h(u)}\ar[d]^f \\
{H\cdot k(u)}\ar[r]_g & {\{t\}.}
}
\]

We are therefore reduced to showing that if $G/H$ is an admissible $G$-set and if $G/K$ is arbitrary, then $G/K\times G/H\to G/K$ is in $\Set_{\cO}^G$. However, the projection map is isomorphic to the map
\[
G\times_K(i_K^\ast G/H\to K/K).
\]
Since $G/H$ is an admissible $G$-set, $i_K^\ast G/H$ is an admissible $K$-set for any subgroup $K$, and therefore by Proposition~\ref{prop:InductionClosure}, this map is in $\Set_{\cO}^G$.
\end{proof}

\begin{remark}
Roughly speaking, the preceding results imply that $\Set^G_{\cO}$ is
essentially the finite coproduct completion of the subcategory
determined by objects isomorphic to the union of the essential images
of the induction functors $\Set^H\to\Set^G$ restricted to $\cO(H)$.
\end{remark}

We close this subsection with an extremely important observation which
we will need in our study of the change of groups.  Since the action
map is the pullback of $G/H\to *$ along $T\to *$, the following is
immediate.

\begin{corollary}\label{cor:Factorization}
If $G/H$ is admissible for $\cO$, then for any $T$, the action map 
\[
\epsilon_{T}\colon G\times_{H} i_{H}^{\ast} T\to T
\]
is in $\Set^{G}_{\cO}$.
\end{corollary}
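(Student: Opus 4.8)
The plan is to follow the hint preceding the statement: realize $\epsilon_{T}$ as a pullback of the map $G/H\to\ast$, and then invoke the pullback stability of $\Set^{G}_{\cO}$ established above. The one point requiring a little care is an explicit identification of the domain of $\epsilon_{T}$, so I would begin by recording the standard ``untwisting'' $G$-isomorphism
\[
\theta\colon G\times_{H} i_{H}^{\ast} T\xrightarrow{\ \cong\ } G/H\times T,\qquad [g,t]\mapsto (gH,\,gt),
\]
with inverse $(gH,s)\mapsto [g,g^{-1}s]$; both maps are readily checked to be well defined, $G$-equivariant, and mutually inverse. Tracing through the definitions, $\theta$ carries the action map $\epsilon_{T}\colon[g,t]\mapsto gt$ to the second projection $\mathrm{pr}_{2}\colon G/H\times T\to T$, i.e.\ $\epsilon_{T}=\mathrm{pr}_{2}\circ\theta$. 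Hence $\theta$ together with $\id_{T}$ forms a pullback square
\[
\xymatrix{
G\times_{H} i_{H}^{\ast} T\ar[r]^-{\theta}\ar[d]_{\epsilon_{T}} & G/H\times T\ar[d]^{\mathrm{pr}_{2}} \\
T\ar@{=}[r] & T,
}
\]
so it suffices to prove $\mathrm{pr}_{2}\in\Set^{G}_{\cO}$.

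Next I would observe that $\mathrm{pr}_{2}$ itself fits into the pullback square
\[
\xymatrix{
G/H\times T\ar[r]^-{\mathrm{pr}_{1}}\ar[d]_{\mathrm{pr}_{2}} & G/H\ar[d] \\
T\ar[r] & \ast,
}
\]
exhibiting $\mathrm{pr}_{2}$ as the pullback of $G/H\to\ast$ along $T\to\ast$. The map $G/H\to\ast$ lies in $\Set^{G}_{\cO}$: for any $s\in G/H$ we have $G_{f(s)}=G$ and the orbit $G_{f(s)}\cdot s$ is all of $G/H$, which is admissible for $\cO$ by hypothesis, so the claim follows from Proposition~\ref{prop:OrbitReduction}. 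Since $\Set^{G}_{\cO}$ is pullback stable, $\mathrm{pr}_{2}$ is in $\Set^{G}_{\cO}$; pulling back once more along $\theta$ as in the first square, so is $\epsilon_{T}$.

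There is no genuine obstacle here, which is why the corollary is flagged as immediate: the entire content is the routine bookkeeping verifying that $\theta$ is a $G$-isomorphism intertwining $\epsilon_{T}$ with $\mathrm{pr}_{2}$, after which pullback stability does all the work. (Alternatively, one could dispense with $\theta$ entirely by writing down directly that the square with vertices $G\times_{H} i_{H}^{\ast} T$, $G/H$, $T$, $\ast$ and left vertical $\epsilon_{T}$ is a pullback, but I find the explicit untwisting isomorphism clearer.)
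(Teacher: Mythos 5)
Your proof is correct and is precisely the paper's argument: the paper justifies the corollary with the one-line remark that the action map is the pullback of $G/H\to\ast$ along $T\to\ast$, and you have simply filled in the routine untwisting isomorphism and the check that $G/H\to\ast$ lies in $\Set^{G}_{\cO}$.
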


\subsection{An intrinsic formulation of indexing systems}

The main result of this subsection is that all wide, pullback stable,
finite coproduct complete subcategories of $\Set^G$ are of the form
$\Set^G_{\cO}$ for some $\cO$.  This characterization provides an
intrinsic description of the data of an indexing system.  We begin
with a trivial observation.

\begin{lemma}\label{lem:AdmissibleInclusion}
If $\cO\subset \cO'$, then we have an inclusion
\[
\Set^{G}_{\cO}\subset \Set^{G}_{\cO'}.
\]
\end{lemma}

As a consequence of Lemma~\ref{lem:AdmissibleInclusion}, $\Set^G_{(-)}$
is a functor from the poset of indexing systems to the poset of wide,
pullback stable, finite coproduct complete subcategories of $\Set^G$.

\begin{theorem}\label{thm:Isomorphism}
The functor $\Set^G_{(-)}$
\[
\cO\mapsto \Set^G_{\cO}
\]
gives an isomorphism between the poset of indexing systems and the
poset of wide, pullback stable, finite coproduct complete
subcategories of $\Set^G$.
\end{theorem}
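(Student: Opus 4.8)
The functor $\Set^G_{(-)}$ is order-preserving by Lemma~\ref{lem:AdmissibleInclusion}, and the results of this section show that it indeed takes values in the poset of wide, pullback stable, finite coproduct complete subcategories of $\Set^G$. To upgrade this to an isomorphism of posets, I would establish two facts: that $\cO$ can be reconstructed from $\Set^G_\cO$ (which yields injectivity and order-reflection simultaneously, hence that the set-theoretic inverse is order-preserving), and that every wide, pullback stable, finite coproduct complete subcategory of $\Set^G$ is of the form $\Set^G_\cO$.

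For the reconstruction, the guiding observation is that an indexing system is determined by the orbits it contains: $\cO(H)$ is a \emph{full} subcategory of $\Set^H$ which is closed under finite coproducts (symmetric monoidality) and under passage to orbit summands, since a sub-$H$-set $S' \subseteq S$ is the pullback of a point $\ast \to \pi_0(S)$ along the characteristic map $S \to \pi_0(S)$, both $\ast$ and the trivial set $\pi_0(S)$ lie in $\cO(H)$, and $\cO$ is closed under finite limits. Thus $\cO$ is recovered from the data of which orbits $H/K$ lie in $\cO(H)$. In turn, unwinding the definition of $\Set^G_\cO$ together with Proposition~\ref{prop:OrbitReduction}, and using that an indexing system, being a coefficient system, is invariant under conjugation, one sees that $H/K \in \cO(H)$ if and only if the orbit projection $G/K \to G/H$ lies in $\Set^G_\cO$: at a point $gK$ the relevant orbit is the conjugate $c_g(H/K)$ inside $\cO(gHg^{-1})$. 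Consequently $\Set^G_\cO \subseteq \Set^G_{\cO'}$ forces $\cO \subseteq \cO'$, so $\Set^G_{(-)}$ is injective and reflects the partial order.

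For surjectivity, given a wide, pullback stable, finite coproduct complete $\ccD \subseteq \Set^G$, I would define $\cO$ by declaring $H/K \in \cO(H)$ precisely when $G/K \to G/H$ lies in $\ccD$, and letting $\cO(H)$ be the full subcategory of $\Set^H$ on coproducts of such orbits. The substance of the argument is then checking that $\cO$ satisfies Definition~\ref{defn:IndexingSystem}. It contains the trivial sets because $\ccD$ is wide. For closure under the restriction functors of the orbit category and for closure under finite limits, I would reduce to orbit maps and exploit pullback stability: for instance $G/K_1 \times_{G/H} G/K_2 \to G/H$ lies in $\ccD$ because it factors as $G/K_1 \times_{G/H} G/K_2 \to G/K_1 \to G/H$ with the first map a pullback of $G/K_2 \to G/H$, and then each orbit summand $G/M \hookrightarrow \coprod_i G/M_i$ of its source lies in $\ccD$ since it is a monomorphism (Proposition~\ref{prop:InitialImpliesMono}), whence $G/M \to G/H \in \ccD$. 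Closure under self-induction is the cleanest point: if $H/K \in \cO(H)$ and $K/L \in \cO(K)$, then $G/L \to G/K \to G/H$ is a composite inside the subcategory $\ccD$, so $H/L = H \times_K K/L \in \cO(H)$. Finally, to identify $\Set^G_\cO$ with $\ccD$ I would invoke Proposition~\ref{prop:CoprodComp}: both are the finite coproduct completions of their orbit subcategories, and those subcategories coincide by construction, with Proposition~\ref{prop:OrbitReduction} and conjugation-invariance handling the matching on the $\Set^G_\cO$ side, and Proposition~\ref{prop:SummandsinD} reducing arbitrary maps of $\ccD$ to orbit maps.

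The main obstacle I anticipate is the surjectivity step, and within it the verification that the $\cO$ manufactured from $\ccD$ is genuinely a sub-coefficient system closed under finite limits. Nothing here is deep given the machinery of this section, but it demands careful bookkeeping: one must consistently reduce general equivariant maps to orbit maps (legitimate because wide, pullback stable, finite coproduct complete subcategories are orbitwise, by Propositions~\ref{prop:SummandsinD} and~\ref{prop:CoprodComp}), keep track of conjugates of admissible sets, and repeatedly combine the two facts that $\ccD$ contains every monomorphism and is closed under composition in order to extract the orbit-map data. By contrast, the self-induction axiom is essentially immediate, and is precisely the condition that mirrors the statement ``$\ccD$ is a subcategory.''
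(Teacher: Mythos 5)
Your proposal is correct and follows essentially the same route as the paper: an explicit inverse is built by extracting from $\ccD$ the orbit maps $G/K\to G/H$ that it contains, the indexing-system axioms are verified by pulling back one orbit map along another and decomposing the result into orbit summands (using that $\ccD$ contains all monomorphisms and is closed under composition), and the two constructions are matched up via Proposition~\ref{prop:CoprodComp}. The only organizational difference is that the paper defines the inverse as the slice categories $\ccD_{/G/H}$ and then proves these are full subcategories of $\Set^H$ (Proposition~\ref{prop:Full}), whereas you impose fullness by definition and relocate that same pullback computation into the check that your $\cO$ is closed under the restriction functors of the coefficient system.
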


We prove this by explicitly constructing an inverse to this functor in
a series of lemmas that constitute the remainder of the section.  In
the following discussion, let $\ccD$ be a wide, pullback stable,
finite coproduct complete subcategory of $\Set^G$. 

\begin{lemma}
If $\ccD$ is as above, then there is a coefficient system of
categories $\cO_{\ccD}$ specified at $G/H$ by the assignment 
\[
\cO_{\ccD}(G/H)=\ccD_{/G/H},
\]
where $\ccD_{/G/H}$ denotes the overcategory of $G/H$ in $\ccD$.
\end{lemma}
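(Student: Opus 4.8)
The plan is to verify that the assignment $G/H \mapsto \ccD_{/G/H}$, together with pullback functors for maps in $\cOrb_G$, assembles into a coefficient system, i.e.\ a contravariant functor from $\cOrb_G^{op}$ to $\Cat$. The functoriality must come from the fact that $\ccD$ is pullback stable: given a $G$-map $\varphi\colon G/K \to G/H$, I would define $\varphi^*\colon \ccD_{/G/H} \to \ccD_{/G/K}$ by sending an object $(A \xrightarrow{f} G/H)$ with $f \in \ccD$ to the pullback $(G/K \times_{G/H} A \to G/K)$. The key point is that this pullback lands back in $\ccD$: since $\ccD$ is pullback stable and $f \in \ccD$, the base-changed map $G/K \times_{G/H} A \to G/K$ is again in $\ccD$. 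Thus the overcategory $\ccD_{/G/K}$ genuinely receives this object.

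The steps, in order: first, observe that $\ccD_{/G/H}$ is a well-defined category for each $H$ --- this is immediate from $\ccD$ being a subcategory. Second, construct the pullback functor $\varphi^*$ on objects as above, and on morphisms by the universal property of pullback (a morphism in $\ccD_{/G/H}$ over $A \to A'$ induces a unique morphism between the base changes, and this morphism is in $\ccD$ by Proposition~\ref{prop:basic}(1) if it is an isomorphism, or more generally because $\ccD$ is closed under composition with the structure maps --- actually one checks directly that the induced map is a pullback of a map in $\ccD$). Third, verify functoriality: $\id^* = \id$ up to canonical isomorphism, and $(\psi \circ \varphi)^* \cong \varphi^* \circ \psi^*$, both following from the standard pasting lemma for pullback squares. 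Since we are working with coefficient systems valued in categories, these need only hold up to coherent natural isomorphism, so strictness is not an issue.

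The main subtlety --- and the step I would be most careful about --- is checking that $\varphi^*$ actually preserves morphisms within $\ccD$, not merely objects. A morphism in $\ccD_{/G/H}$ from $f\colon A \to G/H$ to $f'\colon A' \to G/H$ is a map $a\colon A \to A'$ in $\ccD$ with $f' \circ a = f$. Pulling back along $\varphi$ produces $\varphi^*(a)\colon G/K\times_{G/H} A \to G/K\times_{G/H} A'$, and one must confirm this is in $\ccD$. This follows because $\varphi^*(a)$ fits into a pullback square whose opposite edge is $a$ itself: the square with corners $G/K\times_{G/H}A$, $G/K\times_{G/H}A'$, $A$, $A'$ (using the projections and $a$) is a pullback, so pullback stability of $\ccD$ gives $\varphi^*(a) \in \ccD$. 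Once this is in hand, the rest is bookkeeping with the pasting lemma, and there is no genuine obstacle --- the whole lemma is essentially unwinding that ``pullback stable'' is exactly the hypothesis needed for base change to define a functor on overcategories.
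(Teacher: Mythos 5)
Your proof is correct and follows the same route as the paper: the paper's (much terser) proof simply observes that pullback stability makes base change a functor $\ccD_{/T}\to\ccD_{/S}$ and that the restriction maps in $\mSet$ are pullbacks along maps of orbits. Your careful check that the induced map on morphisms is itself a pullback of a map in $\ccD$ (via the pasting lemma) is exactly the detail the paper leaves implicit.
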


\begin{proof}
First, the pullback stability of $\ccD$ implies that for any map of
finite $G$-sets $f\colon S\to T$, pullback along $f$ induces a functor 
\[
\ccD_{/T}\to\ccD_{/S}.
\]
Since the restriction maps in $\mSet$ come from pulling back along
maps of orbits, the result follows.
\end{proof}

Next, we show that $\cO_{\ccD}$ is a symmetric monoidal coefficient
system.  First, observe that since the forgetful functor $\ccD_{/T}
\to \ccD$ creates all colimits, the fact that $\ccD$ is finite
coproduct complete in $\Set^G$ implies that the analogous result for
$\ccD_{/T}$ holds.

\begin{lemma}
The slice categories $\ccD_{/T}$ are all finite coproduct complete as
subcategories of $\Set^G_{/T}$.
\end{lemma}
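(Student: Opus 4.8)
The slice categories $\ccD_{/T}$ are all finite coproduct complete as subcategories of $\Set^G_{/T}$.

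The plan is to exploit the standard fact that the forgetful functor $\Set^G_{/T}\to\Set^G$ creates finite coproducts: the coproduct of a finite family $(f_i\colon S_i\to T)_{i\in I}$ in $\Set^G_{/T}$ is $(f\colon\coprod_i S_i\to T)$, where $f$ is the map with $f|_{S_i}=f_i$. So I want to check that this construction stays inside $\ccD$. Since $\ccD$ is closed under finite coproducts in $\Set^G$ by hypothesis, $\coprod_i S_i$ lies in $\ccD$ and the coproduct injections $\iota_i\colon S_i\to\coprod_i S_i$ lie in $\ccD$; the only point requiring an argument is that the structure map $f\colon\coprod_i S_i\to T$ itself belongs to $\ccD$.

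For that I would factor $f$ as
\[
\coprod_{i\in I}S_i\xrightarrow{\ \coprod_i f_i\ }\coprod_{i\in I}T\xrightarrow{\ \nabla\ }T,
\]
with $\nabla$ the fold map. The first arrow lies in $\ccD$ because a finite coproduct complete subcategory of $\Set^G$ is symmetric monoidal, hence closed under $\amalg$ of morphisms. The fold map lies in $\ccD$ by the argument already used above for pullback stable subcategories: the binary fold $T\amalg T\to T$ is the pullback of $\ast\amalg\ast\to\ast\in\ccD$ along $T\to\ast$, so it lies in $\ccD$ by pullback stability; a general finite fold is built from binary folds and identities by composition and $\amalg$; and the nullary fold $\emptyset\to T$ is in $\ccD$ since $\emptyset$ is initial in $\ccD$ (or by Proposition~\ref{prop:InitialImpliesMono}). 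Hence $f\in\ccD$, so $(f\colon\coprod_iS_i\to T)$ is an object of $\ccD_{/T}$ and the $\iota_i$ are morphisms $(f_i)\to(f)$ there.

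It remains to see that this cocone is a coproduct in $\ccD_{/T}$ — and hence that $\ccD_{/T}$ has finite coproducts created in $\Set^G_{/T}$, which is exactly the assertion. Given a cocone $\big(h_i\colon(f_i)\to(g)\big)$ in $\ccD_{/T}$ with target $(g\colon S'\to T)$, there is a unique $h\colon\coprod_i S_i\to S'$ in $\Set^G$ with $h\iota_i=h_i$; it lies in $\ccD$ because it is the universal map out of a finite coproduct of objects of $\ccD$ into an object of $\ccD$, and closure under finite coproducts includes closure under such universal maps; and $gh=f$ since equality holds after precomposing with each $\iota_i$. So $h$ is the unique morphism $(f)\to(g)$ in $\ccD_{/T}$ extending the cocone. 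There is no real obstacle here: the only thing to be careful about is that $\ccD_{/T}$ need not be a full subcategory of $\Set^G_{/T}$ (as $\ccD$ need not be full in $\Set^G$), so one cannot just declare the coproduct ``inherited'' — but the two closure facts invoked above, that the structure map $f$ and the universal maps $h$ stay in $\ccD$, are precisely the substance, and both are immediate once $f$ is factored through fold maps.
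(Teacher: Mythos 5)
Your proof is correct and amounts to the same argument the paper gives in a single line before the lemma statement (the forgetful functor from the slice creates colimits, so everything reduces to $\ccD$ being finite coproduct complete in $\Set^G$). The only stylistic difference is your fold-map factorization of the structure map $f\colon\coprod_i S_i\to T$: this is an unnecessary detour, since $f$ is the universal map in $\ccD$ out of the coproduct applied to the cocone $(f_i)$, and the $f_i$ are morphisms of $\ccD$ because objects of $\ccD_{/T}$ are by definition morphisms of $\ccD$ --- exactly the argument you already use for the comparison map $h$.
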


Since the coproduct is the symmetric monoidal product, this has the
following immediate consequences.

\begin{corollary}
For any $T$, the slice category $\ccD_{/T}$ is a symmetric monoidal
subcategory of $\Set^G_{/T}$.
\end{corollary}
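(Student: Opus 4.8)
The plan is to deduce this directly from the preceding lemma, in exactly the way the analogous statement for $\ccD\subset\Set^G$ was obtained from finite coproduct completeness. First I would unwind the symmetric monoidal structure on the slice: the monoidal product on $\Set^G_{/T}$ sends a pair $\big(S\xrightarrow{a} T,\ S'\xrightarrow{a'} T\big)$ to $S\amalg S'\xrightarrow{a\amalg a'} T$, the unit is the initial object $\emptyset\to T$, and the associativity and symmetry constraints are inherited from those of the coproduct in $\Set^G$. This is the coproduct in $\Set^G_{/T}$, since the forgetful functor $\Set^G_{/T}\to\Set^G$ creates colimits.

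Next I would invoke the lemma: $\ccD_{/T}$ is finite coproduct complete as a subcategory of $\Set^G_{/T}$, meaning it has all finite coproducts and these are created by the inclusion. In particular $\ccD_{/T}$ contains the empty coproduct, namely the monoidal unit $\emptyset\to T$, and it is closed under binary coproducts of objects; closure under the coproduct of two \emph{morphisms} is then automatic from the functoriality of $\amalg$ together with the fact that the coproduct of two maps in $\ccD_{/T}$ is computed in $\Set^G_{/T}$. Hence the inclusion $\ccD_{/T}\hookrightarrow\Set^G_{/T}$ strictly preserves the monoidal product and unit, and the coherence isomorphisms of $\ccD_{/T}$ are precisely the restrictions of those of $\Set^G_{/T}$, so the inclusion is a strict symmetric monoidal functor and $\ccD_{/T}$ is a symmetric monoidal subcategory.

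There is no real obstacle here: the content is entirely the observation — already used in the excerpt for $\ccD$ inside $\Set^G$ — that a finite coproduct complete subcategory of a category whose symmetric monoidal product is its coproduct is automatically a symmetric monoidal subcategory. The only point requiring any care is confirming that the monoidal structure in question on $\Set^G_{/T}$ really is the coproduct over $T$, and this is immediate once one recalls that colimits in slices of $\Set^G$ are created by the forgetful functor.
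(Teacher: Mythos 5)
Your proof is correct and follows exactly the paper's (very brief) argument: the corollary is immediate from the preceding lemma because the symmetric monoidal structure on $\Set^G_{/T}$ is the coproduct, which is created by the forgetful functor to $\Set^G$. You have merely spelled out the details the paper leaves implicit.
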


\begin{corollary}\label{cor:SliceSymMonoidal}
For $\ccD$ as above, $\cO_{\ccD}$ is a symmetric monoidal coefficient system of $\mSet$.
\end{corollary}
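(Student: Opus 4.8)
The claim is Corollary~\ref{cor:SliceSymMonoidal}: for a wide, pullback stable, finite coproduct complete subcategory $\ccD$ of $\Set^G$, the coefficient system of categories $\cO_{\ccD}$ is a \emph{symmetric monoidal} coefficient system of $\mSet$. By the construction of $\cO_{\ccD}$, this amounts to upgrading the preceding corollary---that each slice $\ccD_{/T}$ is a symmetric monoidal subcategory of $\Set^G_{/T}$---to a statement about the whole functor $\cOrb_G^{op}\to\SymM$, i.e. checking that the restriction functors are \emph{strong symmetric monoidal}. So the plan is: (i) note that the objectwise symmetric monoidal structure on $\ccD_{/T}$ is the one inherited from $\Set^G_{/T}$, whose monoidal product is the coproduct taken in the fibers (equivalently, the coproduct of the total objects, with the induced structure map to $T$); (ii) recall that the restriction maps in $\mSet$ are, in the "$G$-sets over an orbit" formulation, given by pullback along maps of orbits $\alpha\colon G/H\to G/K$; (iii) verify that pullback $\alpha^\ast\colon \Set^G_{/G/K}\to\Set^G_{/G/H}$ is strong symmetric monoidal with respect to the coproduct, and that this restricts to $\ccD_{/G/K}\to\ccD_{/G/H}$.

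The heart of the argument is step (iii), and it is essentially formal: pullback functors between slice categories preserve colimits because they are themselves left adjoints (indeed $\alpha^\ast$ has the right adjoint $\Pi_\alpha$, as $\Set^G$ is locally Cartesian closed), so in particular $\alpha^\ast(X\amalg_{G/K} Y)\cong \alpha^\ast X\amalg_{G/H}\alpha^\ast Y$ naturally, and $\alpha^\ast$ preserves the initial object $\emptyset\to G/K$. The coherence isomorphisms (associativity, unit, symmetry) are then inherited automatically, since they are built from the universal property of the coproduct, which $\alpha^\ast$ respects up to the canonical natural isomorphism. That $\alpha^\ast$ sends $\ccD_{/G/K}$ into $\ccD_{/G/H}$ is exactly the content of the lemma constructing $\cO_{\ccD}$ (pullback stability of $\ccD$), and the comparison isomorphisms land in $\ccD_{/G/H}$ because $\ccD$ is finite coproduct complete in $\Set^G$ and these isomorphisms are among the coproduct structure maps, hence in $\ccD$ by Proposition~\ref{prop:basic}(1). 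Functoriality of the coherence data in the orbit $G/H$ follows from the (pseudo)functoriality of the pullback operation $\alpha\mapsto\alpha^\ast$.

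I would write this up in three or four sentences: first invoke Corollary~\ref{cor:SliceSymMonoidal}'s predecessor for the objectwise structure; then observe that restriction is pullback, which is a left adjoint and hence strong symmetric monoidal for the coproduct-in-fibers monoidal structure; then note pullback stability gives the factorization through $\ccD$ and finite coproduct completeness of $\ccD$ ensures the canonical comparison maps are in $\ccD$; finally remark that pseudofunctoriality of pullback provides the required coherence. The main (and only real) obstacle is bookkeeping: making precise that the coproduct in $\ccD_{/T}$ "is" the one in $\Set^G_{/T}$---which is exactly the content of the earlier lemma that $\ccD_{/T}$ is finite coproduct complete as a subcategory of $\Set^G_{/T}$---and that "strong symmetric monoidal functor" here means the structure isomorphisms are those canonical comparison maps for pullback, so there is genuinely nothing to check beyond naturality and coherence, both of which are standard facts about left adjoints and pseudofunctors. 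No hard computation is involved; the corollary is a packaging of the adjunction $\Sigma_f\dashv f^\ast\dashv\Pi_f$ already recalled in the paper.
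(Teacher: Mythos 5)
Your proposal is correct and follows essentially the same route as the paper, which deduces the corollary immediately from the preceding lemma that each slice $\ccD_{/T}$ is finite coproduct complete (hence symmetric monoidal) as a subcategory of $\Set^G_{/T}$, with restriction given by pullback along orbit maps. The only difference is that you spell out the check the paper leaves implicit---that pullback is strong symmetric monoidal for the coproduct (being a left adjoint via $\alpha^\ast\dashv\Pi_\alpha$) and that the comparison isomorphisms lie in $\ccD$ by pullback stability---which is a correct and harmless elaboration.
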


Another consequence of the slice categories $\ccD_{/T}$ being finite
coproduct complete is that when $T$ is an orbit, $\ccD_{/T}$ is a full
subcategory.  This is the only place in this subsection where we use
specific properties of the category $\Set^G$.

\begin{proposition}\label{prop:Full}
The slice category $\ccD_{/G/H}$ is a full subcategory of $\Set^H$.
\end{proposition}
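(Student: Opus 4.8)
The plan is to identify $\ccD_{/G/H}$ with the category $\Set^H$ (via the equivalence $\Set^H\simeq \Set^G_{/G/H}$ used implicitly throughout) and then show that fullness follows from pullback stability together with the fact that $\ccD$ is wide and finite coproduct complete. Recall that an object of $\Set^G_{/G/H}$ is a $G$-set $S$ equipped with a map $p\colon S\to G/H$, and that an arrow in $\ccD_{/G/H}$ between two such objects $(S,p)$ and $(S',p')$ is a map $f\colon S\to S'$ over $G/H$ that happens to lie in $\ccD$. So fullness amounts to the claim: if $(S,p)$ and $(S',p')$ are objects lying in $\ccD$ (equivalently, $p$ and $p'$ are in $\ccD$) and $f\colon S\to S'$ is \emph{any} $G$-map with $p'\circ f = p$, then $f\in\ccD$.

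First I would reduce to the case where $S'$ is an orbit. Decompose $S'$ into its $G$-orbits, $S'=\coprod_j G/K_j$; since $\ccD$ is finite coproduct complete, $p'$ restricted to each summand is in $\ccD$ by Proposition~\ref{prop:SummandsinD}, and by the same token $f$ is in $\ccD$ if and only if each restriction $f|_{f^{-1}(G/K_j)}$ is. So assume $S'=G/K$ is an orbit equipped with a map $G/K\to G/H$ in $\ccD$; pulling the orbit $G/K$ back along itself shows (or one checks directly) that $\ccD$ contains the orbit $G/K$ and, by Proposition~\ref{prop:InitialImpliesMono}, all monomorphisms. Now the key step: the map $f\colon S\to G/K$ over $G/H$ has a distinguished factorization through the \emph{graph} or through a pullback. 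Concretely, consider the pullback square
\[
\xymatrix{
{S}\ar[r]^-{f}\ar[d] & {G/K}\ar[d] \\
{G/H\times G/K}\ar[r] & {G/H,}
}
\]
which realizes $f$ as the pullback of the projection $\pi\colon G/H\times G/K\to G/H$ along the section-type map $(p,f)\colon S\to G/H\times G/K$. Wait — this is backwards; instead I would realize $f$ as a pullback of a map that is \emph{already known} to be in $\ccD$.

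The honest route: the map $f\colon S\to G/K$ fits into the pullback square with corners $S$, $G/K$, $S\times_{G/H} G/K$-type object — but more cleanly, use that $p=p'\circ f$ with $p,p'\in\ccD$ and that $\ccD$ is pullback stable. Form the pullback $S\times_{G/K}$(source of $p'$), i.e. pull $p\colon S\to G/H$ back along $p'\colon G/K\to G/H$; the resulting projection to $S$ is in $\ccD$ by pullback stability of $p'$, and the graph of $f$ gives a monomorphic section of this projection which is in $\ccD$ since all monos are; composing, $f$ factors as (mono in $\ccD$) followed by (pullback of $p'$, in $\ccD$), hence $f\in\ccD$ as $\ccD$ is a subcategory. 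This factorization — expressing an arbitrary map over $G/H$ as a monomorphism followed by a pulled-back copy of a structure map already known to be in $\ccD$ — is the main obstacle, since it is what genuinely uses that the base is an \emph{orbit} (so that $G/H\times G/K\to G/K$ is again handled by the action-map argument of Corollary~\ref{cor:Factorization} and its ilk) rather than a general finite $G$-set. Once the factorization is in hand, fullness is immediate, and combined with Corollary~\ref{cor:SliceSymMonoidal} this shows $\cO_{\ccD}$ is a genuine full symmetric monoidal sub-coefficient system of $\mSet$, setting up the verification that $\cO_{\ccD}$ is an indexing system and is inverse to $\Set^G_{(-)}$.
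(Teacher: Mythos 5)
Your ``honest route'' is, at its core, the paper's own argument. The paper reduces to a map of orbits $G/K\to G/J$ over $G/H$ and factors it as the inclusion of the summand $G/K\hookrightarrow G/K\times_{G/H}G/J$ --- which is precisely the graph of the map, a monomorphism, hence in $\ccD$ by Proposition~\ref{prop:InitialImpliesMono} --- followed by the projection to $G/J$, which is the pullback of the structure map of the \emph{source} and hence in $\ccD$ by pullback stability. Your graph factorization is the same factorization; in fact it works for arbitrary $S$ and $S'$, so your preliminary reduction of the target to an orbit (and the paper's reduction of both sides to orbits) is harmless but not actually needed.

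That said, the final step as written does not parse, and you should fix it before relying on it. Composing the graph $\Gamma_f\colon S\to S\times_{G/H}G/K$ with ``the projection to $S$'' yields $\id_S$, not $f$. The correct factorization is $f=\pi_{G/K}\circ\Gamma_f$, where $\pi_{G/K}\colon S\times_{G/H}G/K\to G/K$ is the \emph{other} projection; and $\pi_{G/K}$ is the pullback of $p$ (the structure map of the source) along $p'$, not the pullback of $p'$. It lies in $\ccD$ because $p\in\ccD$. With the labels corrected the proof is complete, and it also recovers the paper's remark that the hypothesis $p'\in\ccD$ is never used. Your closing claim that the factorization ``genuinely uses that the base is an orbit'' via Corollary~\ref{cor:Factorization} is also off: the graph factorization works over any base, and the only finite-$G$-set-specific input is Proposition~\ref{prop:InitialImpliesMono}, i.e., that every monomorphism lies in $\ccD$.
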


\begin{proof}
Let $T\to G/H$ and $S\to G/H$ be two elements in the overcategory $\ccD_{/G/H}$. These are isomorphic in the overcategory to maps of the form $G\times_H T'\to G/H$ and $G\times_H S'\to G/H$, where $T'$ and $S'$ are the respective corresponding $H$-sets. A map of finite $G$-sets $T\to S$ over $G/H$ is the same data (by the equivalence of categories) as an $H$-map $T'\to S'$. Choosing orbit decompositions of $T'$ and $S'$ shows that any $f\colon T'\to S'$ is isomorphic to one of the form
\[
\coprod_i \amalg f_{i,j}\colon \coprod_i \amalg_j T_{i,j}'\to \coprod_i S_i',
\]
where $T_{i,j}'$ and $S_i'$ are $H$-orbits for all $i$ and $j$. It therefore suffices to show that whenever
 $G/K\to G/H$ and $G/J\to G/H$ are in $\ccD$, then every map of orbits
\[
H/K\to H/J
\]
induces up to a map in $\ccD$. Since any such map is $H$-isomorphic to a canonical quotient $H/K\to H/J$ where $K\subset J$, without loss of generality, we may assume our map of orbits is of this form.

Here is where the properties of finite $G$-sets appear. Consider the pullback 
\[
G/K\times_{G/H} G/J\to G/J
\] 
of the map $G/K\to G/H$ along the map $G/J\to G/H$. By assumption, this map is in $\ccD$. However, $G/K\times_{G/H} G/J\to G/J$ is isomorphic to the obvious map
\[
G\times_J(i_J^\ast H/K)\to G/J.
\]
(The appearance of $H$ here is made more transparent by considering
instead the equivalent $H$-equivariant isomorphism and inducing back
up. This also shows that this isomorphism takes place in the
overcategory of $G/H$.) 
Since $J/K$ is a summand of $i_J^\ast H/K$, $G/K$ is a summand of
$G\times_J i_J^\ast H/K$, and the inclusion of this summand is
automatically in $\ccD$. Composing these two maps shows that $G/K\to
G/J$ is in $\ccD$ as desired.
\end{proof}

\begin{remark}
This is a very surprising asymmetry in the argument here: at no point
did we actually use that $S\to G/H$ was in $\ccD$. This indicates that
maps in $\cOrb_{\ccD}$ are much weirder than expected at first
blush. Thinking of this as the statement ``if $T$ is admissible for
$H$, then its restriction is admissible for any subgroup of $H$''
makes this phenomenon less confusing.
\end{remark}

\begin{lemma}
For any $\ccD$ as above, the symmetric monoidal coefficient system $\cO_{\ccD}$ is an indexing system.
\end{lemma}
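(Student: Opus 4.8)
The goal is to verify the four requirements of Definition~\ref{defn:IndexingSystem} for $\cO_{\ccD}$: that it is a full symmetric monoidal sub-coefficient system of $\mSet$, that it contains all trivial sets, that it is closed under finite limits, and that it is closed under self-induction. The first requirement is essentially in hand already: the lemma constructing $\cO_{\ccD}$ exhibits it as a coefficient system of categories each sitting inside the appropriate $\Set^H$, Corollary~\ref{cor:SliceSymMonoidal} makes it symmetric monoidal, and Proposition~\ref{prop:Full} identifies each $\cO_{\ccD}(G/H)=\ccD_{/G/H}$ with a \emph{full} subcategory of $\Set^H$. So the real work is the remaining three closure conditions, and I expect each to reduce quickly to the standing hypotheses that $\ccD$ is wide, pullback stable, and finite coproduct complete, together with the fact that $\ccD$ is closed under composition.

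For trivial sets, I would note that under the equivalence $\Set^G_{/G/H}\simeq\Set^H$ a trivial $n$-element $H$-set corresponds to the fold map $\coprod_n G/H\to G/H$. Since $\ccD$ is finite coproduct complete the codiagonal $\coprod_n\ast\to\ast$ lies in $\ccD$, and since $\ccD$ is pullback stable its pullback along $G/H\to\ast$, which is exactly $\coprod_n G/H\to G/H$, lies in $\ccD$ as well; hence every trivial $H$-set is in $\cO_{\ccD}(H)$. For finite limits, since $\cO_{\ccD}(G/H)$ is a full subcategory of $\Set^H$ it suffices to check closure under the terminal object and pullbacks. The terminal $H$-set corresponds to $\id_{G/H}$, which is in $\ccD$ as $\ccD$ is wide. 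Given objects $X\to G/H$ and $Y\to G/H$ of $\ccD_{/G/H}$, pullback stability puts each projection out of $X\times_{G/H}Y$ in $\ccD$, and composing one of these with a structure map puts $X\times_{G/H}Y\to G/H$ in $\ccD$; thus $\ccD_{/G/H}$ has pullbacks, computed as in $\Set^H$, and so $\cO_{\ccD}(H)$ is closed under all finite limits.

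The one step with genuine content is self-induction. Suppose $H/K\in\cO_{\ccD}(H)$ and $T\in\cO_{\ccD}(K)$. Unwinding $\Set^G_{/G/H}\simeq\Set^H$, the first hypothesis says exactly that the canonical quotient $G/K\to G/H$ is a map in $\ccD$, and the second says that the structure map $G\times_K T\to G/K$ is a map in $\ccD$. I would then invoke the induction-in-stages isomorphism $G\times_H(H\times_K T)\cong G\times_K T$ and observe that under it the structure map $G\times_H(H\times_K T)\to G/H$ becomes the composite $G\times_K T\to G/K\to G/H$ of the two maps just named. Since $\ccD$ is closed under composition, this composite lies in $\ccD$, which is precisely the statement that $H\times_K T\in\cO_{\ccD}(H)$.

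I expect the main obstacle — indeed the only point requiring care rather than formal manipulation — to be the bookkeeping in this last identification: checking that the structure map of $G\times_H(H\times_K T)$ over $G/H$ is literally the composite $G\times_K T\to G/K\to G/H$ under the standard isomorphism. This is a routine double-coset computation, and once it is in place the lemma follows entirely from closure of $\ccD$ under composition and pullback together with Proposition~\ref{prop:Full} and Corollary~\ref{cor:SliceSymMonoidal}.
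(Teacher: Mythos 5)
Your proof is correct and follows essentially the same route as the paper's: the full symmetric monoidal sub-coefficient system condition is delegated to Proposition~\ref{prop:Full} and Corollary~\ref{cor:SliceSymMonoidal}, and self-induction is reduced to closure of $\ccD$ under composition via induction in stages, exactly as in the paper. The only cosmetic differences are that you verify finite limits via the terminal object and pullbacks where the paper uses subobjects and products (both standard, equivalent reductions), and that you explicitly check the trivial-sets condition (via pullback of the fold map $\coprod_n\ast\to\ast$ along $G/H\to\ast$), which the paper leaves implicit.
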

\begin{proof}
We verify the conditions from Definition~\ref{defn:IndexingSystem}. Proposition~\ref{prop:Full} shows that this is a full symmetric monoidal sub-coefficient system, so we need only verify that it is closed under finite limits and under self-induction.

To show that $\cO_{\ccD}(H)$ is closed under finite limits, we show it is closed under subobjects and under products. Both of these follow immediately from pullback stability. Proposition~\ref{prop:InitialImpliesMono} shows that if $T_1\amalg T_2\to G/H$ is in $\ccD$, then the restrictions $T_1\to G/H$ and $T_2\to G/H$ are both in $\ccD$ as well, showing closure under subobjects. For products, if $S,T\to G/H$ are in $\ccD$, then the map $S\times_{G/H} T\to S\to G/H$ is a composite of maps in $\ccD$ and hence in $\ccD$.

Closure under self-induction is actually just the statement that
$\ccD$ forms a category, since induction along a map $G/H\to G/K$ is
just post-composition.  Stipulating that $T\to G/K$ and $G/K\to G/H$
are both in $\ccD$ is equivalent to the statements that
$T'\in\cO_{\ccD}(K)$, where $T'$ is the inverse image of $eK$, and
$H/K\in\cO_{\ccD}(H)$.  Then the composite $T\to G/K\to G/H$ is in
$\ccD$, which is equivalent to the fact that $H\times_K
T'\in\cO_{\ccD}(H)$.
\end{proof}

\begin{corollary}\label{cor:invfunc}
The assignment 
\[
\ccD\mapsto \cO_{\ccD}
\]
gives a functor from the poset of wide, pullback stable, coproduct complete subcategories of $\Set^G$ to the poset of indexing systems.
\end{corollary}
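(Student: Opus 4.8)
The plan is to treat this corollary as the formal consequence that its placement suggests. The preceding lemma already shows that for every wide, pullback stable, finite coproduct complete subcategory $\ccD$ of $\Set^G$ the symmetric monoidal coefficient system $\cO_{\ccD}$ is an indexing system, so the assignment $\ccD\mapsto\cO_{\ccD}$ does take values in the poset $\mathcal{I}$ of indexing systems. Since a functor between posets is nothing more than an order-preserving map, the only thing left to check is monotonicity: if $\ccD_1\subset\ccD_2$ are two such subcategories, then $\cO_{\ccD_1}\subset\cO_{\ccD_2}$ as indexing systems, i.e.\ as sub-coefficient-systems of $\mSet$.

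To verify this I would argue level-wise. By construction $\cO_{\ccD_i}(G/H)=(\ccD_i)_{/G/H}$, the overcategory of $G/H$ inside $\ccD_i$. An object of $(\ccD_1)_{/G/H}$ is a map $T\to G/H$ lying in $\ccD_1$; since $\ccD_1\subset\ccD_2$, that same map lies in $\ccD_2$, hence is an object of $(\ccD_2)_{/G/H}$, and the identical remark applies to morphisms over $G/H$. Under the identification of $\ccD_{/G/H}$ with a full subcategory of $\Set^H$ furnished by Proposition~\ref{prop:Full}, this containment of overcategories is precisely the inclusion of admissible sets $\cO_{\ccD_1}(H)\subseteq\cO_{\ccD_2}(H)$, the morphism condition being automatic by fullness.

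Finally I would note that these level-wise inclusions are compatible with the coefficient-system structure: the restriction maps in $\cO_{\ccD_i}$ are induced by pullback along maps of orbits, and such pullbacks are computed in $\ccD_1$ and $\ccD_2$ exactly as in $\Set^G$, so the square comparing restriction in $\cO_{\ccD_1}$ with restriction in $\cO_{\ccD_2}$ commutes strictly. Thus the inclusions assemble into a morphism of coefficient systems, which is exactly the relation $\cO_{\ccD_1}\subset\cO_{\ccD_2}$ in $\mathcal{I}$, and the corollary follows. There is no genuine obstacle here; the only point worth stating explicitly is that the source poset consists of honest (not merely essentially wide) subcategories of $\Set^G$, so that $\ccD_1\subset\ccD_2$ is a literal containment of morphism classes, which is what makes the overcategory containment immediate.
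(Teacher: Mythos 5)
Your proposal is correct and matches the paper's (implicit) reasoning: the paper offers no separate proof, treating the corollary as immediate from the preceding lemma plus the evident monotonicity of $\ccD\mapsto\ccD_{/G/H}$, which is exactly what you spell out. No issues.
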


To complete the proof of Theorem~\ref{thm:Isomorphism}, it suffices to
show that the functor constructed in Corollary~\ref{cor:invfunc} is an
inverse to $\Set^G_{(-)}$ on objects; since the categories in question
are posets, functors inducing a bijection on objects participate in an
equivalence of categories.  The next two lemmas complete this
verification.

\begin{lemma}
For any $\ccD$ as above, 
\[
\ccD=\Set^G_{\cO_{\ccD}}.
\]
\end{lemma}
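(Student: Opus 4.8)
The plan is to prove $\ccD=\Set^G_{\cO_{\ccD}}$ by reducing, on both sides, to the question of which maps \emph{between orbits} each subcategory contains. On the one hand, Proposition~\ref{prop:CoprodComp} identifies $\ccD$ with the finite coproduct completion of $\cOrb_{\ccD}$, so a map $f\colon S\to T$ lies in $\ccD$ if and only if, after decomposing $T=\coprod_i G/H_i$ into orbits, setting $S_i=f^{-1}(G/H_i)$, and further decomposing each $S_i$ into orbits, every resulting orbit-to-orbit restriction $f_{ij}\colon G/K_{ij}\to G/H_i$ lies in $\ccD$. For the forward implication I would invoke Proposition~\ref{prop:SummandsinD} together with the fact that the orbit inclusions are monomorphisms, hence in $\ccD$ by Proposition~\ref{prop:InitialImpliesMono}; for the reverse, that $\ccD$ is symmetric monoidal and contains the fold maps. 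On the other hand, membership in $\Set^G_{\cO_{\ccD}}$ is defined pointwise on the source, so $f\in\Set^G_{\cO_{\ccD}}$ if and only if each $f_{ij}\in\Set^G_{\cO_{\ccD}}$. Thus both subcategories are determined by their maps between orbits, and it suffices to compare them there.

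Next I would fix a map between orbits. Using Proposition~\ref{prop:basic}(1), which guarantees that isomorphisms between objects of $\ccD$ lie in $\ccD$, we may compose with isomorphisms freely and assume the map is a canonical quotient $q\colon G/K\to G/H$ with $K\subseteq H$. Under the equivalence $\Set^G_{/G/H}\simeq\Set^H$ the object $q$ corresponds to the $H$-set $H/K$, so by the defining formula $\cO_{\ccD}(G/H)=\ccD_{/G/H}$ we get immediately that $q\in\ccD$ if and only if $H/K\in\cO_{\ccD}(H)$. Unwinding the definition of $\Set^G_{\cO_{\ccD}}$ at the point $eK\in G/K$ (stabilizer $K$, with image $eH$ of stabilizer $H$) gives exactly the condition $H/K\in\cO_{\ccD}(H)$; the remaining points of $G/K$ only contribute conjugates of this data, which by Proposition~\ref{prop:OrbitReduction} and the coefficient-system (conjugation-invariance) property of the indexing system $\cO_{\ccD}$ impose no further constraint. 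Hence $q\in\Set^G_{\cO_{\ccD}}$ if and only if $H/K\in\cO_{\ccD}(H)$ if and only if $q\in\ccD$, and combining with the previous paragraph yields $\ccD=\Set^G_{\cO_{\ccD}}$.

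No genuinely new input is needed beyond the structural facts already in hand (Propositions~\ref{prop:InitialImpliesMono}, \ref{prop:SummandsinD}, \ref{prop:CoprodComp}, \ref{prop:basic}, \ref{prop:OrbitReduction}) and the identifications built into $\cO_{\ccD}$. I expect the main obstacle to be purely organizational: stating the orbit decomposition cleanly enough that both directions of the first-paragraph reduction are transparent — in particular tracking which copies of orbits in the source and target correspond under $f$, and checking that reassembling the $f_{ij}$ back into $f$ requires only coproducts, fold maps, and monomorphisms, all of which we already know lie in $\ccD$ — together with the care required in the conjugation bookkeeping of the second paragraph.
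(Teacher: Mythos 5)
Your proposal is correct and follows essentially the same route as the paper: both subcategories are reduced to their orbit categories via Proposition~\ref{prop:CoprodComp} (which applies to $\Set^G_{\cO_{\ccD}}$ since it is itself wide, pullback stable, and finite coproduct complete), and a quotient $G/K\to G/H$ lies in either one precisely when $H/K\in\cO_{\ccD}(H)$. Your version merely spells out the orbit decomposition and conjugation bookkeeping that the paper leaves implicit.
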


\begin{proof}
Since for any indexing system $\cO$, $\Set^G_{\cO}$ is always satisfies the conditions on $\ccD$, Proposition~\ref{prop:CoprodComp} applies. However, $K/H\in\cO_{\ccD}(K)$ if and only if $G/H\to G/K$ is in $\cOrb_{\ccD}$. By definition, $K/H\in\cO_{\ccD}(K)$ if and only if $G/H\to G/K$ is in $\cOrb_{\cO}$, so we see that $\cOrb_{\ccD}=\cOrb_{\cO}$, proving the result.
\end{proof}

\begin{lemma}
For any indexing system $\cO$, 
\[
\cO=\cO_{\Set^G_{\cO}}.
\]
\end{lemma}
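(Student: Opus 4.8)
The plan is to show that $\cO$ and $\cO_{\Set^G_\cO}$ agree as sub-coefficient systems of $\mSet$ by checking that, for every $H\le G$, the two full subcategories $\cO(H)$ and $\cO_{\Set^G_\cO}(H)$ of $\Set^H$ have the same objects, and in fact that it is enough to compare their admissible \emph{orbits}. Indeed, an indexing system is closed under disjoint union (it is a symmetric monoidal sub-coefficient system) and under passage to summands: if $T_1\amalg T_2\in\cO(H)$ then $T_1$ is the pullback of the inclusion $\ast\to\ast\amalg\ast$ along the evident map $T_1\amalg T_2\to\ast\amalg\ast$, and since $\ast$ and $\ast\amalg\ast$ are trivial (hence admissible) sets and $\cO(H)$ is closed under finite limits, $T_1\in\cO(H)$. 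As every finite $H$-set is a finite coproduct of orbits, $\cO(H)$ is recovered from the orbits it contains, and likewise for $\cO_{\Set^G_\cO}$; so it suffices to prove that $H/K\in\cO(H)$ if and only if $H/K\in\cO_{\Set^G_\cO}(H)$ for all $K\le H\le G$.

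To compare the two, I would unwind the right-hand side. By construction $\cO_{\Set^G_\cO}(H)=(\Set^G_\cO)_{/G/H}$, and under the equivalence $\Set^G_{/G/H}\simeq\Set^H$ the orbit $H/K$ corresponds to the canonical quotient $q\colon G/K\to G/H$; hence $H/K\in\cO_{\Set^G_\cO}(H)$ precisely when $q\in\Set^G_\cO$. Now apply Proposition~\ref{prop:OrbitReduction} to $q$: it lies in $\Set^G_\cO$ exactly when $G_{q(s)}\cdot s\in\cO(G_{q(s)})$ for every $s\in G/K$. Evaluated at $s=eK$ this is literally the statement $H/K\in\cO(H)$; evaluated at a general point $gK$ it reads $gHg^{-1}/gKg^{-1}\in\cO(gHg^{-1})$, which is equivalent to $H/K\in\cO(H)$ because conjugation by $g$ is an equivalence $\Set^H\xrightarrow{\cong}\Set^{gHg^{-1}}$ preserving admissibility --- the same ``multiplication by $g$'' observation used in the proof of Proposition~\ref{prop:InductionClosure}. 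Thus $q\in\Set^G_\cO\iff H/K\in\cO(H)$, which is exactly what we needed.

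I do not anticipate a real obstacle; the work is entirely bookkeeping. The only points requiring a little care are (i) checking that the equivalence $\Set^G_{/G/H}\simeq\Set^H$ sends the orbit $H/K$ to the \emph{canonical} quotient $G/K\to G/H$ rather than some twisted version, and (ii) the reduction to orbits, which rests on the closure-under-summands remark in the first paragraph. As an alternative route that avoids the orbit bookkeeping, one can apply the preceding lemma ($\ccD=\Set^G_{\cO_\ccD}$) to $\ccD=\Set^G_\cO$, obtaining $\Set^G_\cO=\Set^G_{\cO_{\Set^G_\cO}}$, and then cancel using the injectivity of $\Set^G_{(-)}$ on objects --- injectivity itself following from the very same orbit comparison, since $\Set^G_{\cO_1}=\Set^G_{\cO_2}$ forces $\cO_1$ and $\cO_2$ to contain the same orbits at every level. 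Either way, together with the preceding lemma this completes the proof of Theorem~\ref{thm:Isomorphism}.
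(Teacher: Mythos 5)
Your proof is correct. The underlying identification is the same as the paper's --- both arguments pass through the equivalence $\Set^G_{/G/H}\simeq\Set^H$ and reduce membership of a map over $G/H$ in $\Set^G_{\cO}$ to admissibility of the corresponding $H$-set --- but you package it differently. The paper takes an \emph{arbitrary} object $T\to G/H$ of the slice, writes it as $G\times_H T'\to G/H$, and applies Proposition~\ref{prop:InductionClosure} once; that proposition already contains the conjugation argument, so no reduction to orbits is needed. You instead first reduce to orbits (correctly, using closure under coproducts and under summands, the latter via the pullback along $\ast\to\ast\amalg\ast$) and then verify the stabilizer condition of Proposition~\ref{prop:OrbitReduction} pointwise on $G/K\to G/H$, handling the points $gK$ with $g\neq e$ by conjugation-invariance of admissibility. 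In effect you re-derive the special case of Proposition~\ref{prop:InductionClosure} that you need rather than citing it for the whole induced map; this costs a little bookkeeping but makes the role of conjugation explicit. Your closing alternative (apply the previous lemma to $\ccD=\Set^G_{\cO}$ and cancel) is fine only because, as you note, the injectivity of $\Set^G_{(-)}$ is itself established by the same orbit comparison, so it is not genuinely shorter.
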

\begin{proof}
Consider the slice category over $G/H$ of $\Set^G_{\cO}$. Any object $T\to G/H$ in the slice category over $G/H$ is isomorphic in the slice category to an object of the form $G\times_H T'\to G/H$, where $T'\to \ast$ is the canonical map. By Proposition~\ref{prop:InductionClosure}, $T\to G/H$ is in $\Set^G_{\cO}$ if and only if $T'\to\ast$ is in $\Set^H_{i_H^\ast\cO}$. However, it is immediate from the definition that $T'\to\ast$ is in $\Set^H_{i_H^\ast\cO}$ if and only if $T'\in i_H^\ast \cO(H)=\cO(H)$, proving the desired result.
\end{proof}

\section{Incomplete Tambara Functors}\label{sec:OTambara}

In this section, we use the work of the preceding sections to define
incomplete Tambara functors in terms of indexing systems.
Specifically, we construct a category of incomplete Tambara functors
that corresponds to any indexing system $\cO$ via the functor
$\Set^G_{(-)}$ described above.  

Using Theorem~\ref{thm:PolynomialSubcategories}, we can in fact define
$\ccD$-Tambara functors for any wide, pullback stable, symmetric
monoidal subcategory of $\Set^G$.

\begin{definition}\label{defn:incompleteTamb}
Let $\ccD$ be a wide, pullback stable, symmetric monoidal subcategory
of $\Set^G$.  A $\ccD$-semi-Tambara functor is a product preserving functor 
\[
\cP^{G}_{\ccD}\to\Set.
\]

A $\ccD$-Tambara functor is an $\ccD$-semi-Tambara functor that is [abelian] group valued.

When $\ccD=\Set^{G}_{\cO}$, then we will call $\ccD$-[semi]-Tambara functors simply $\cO$-[semi]-Tambara functors.
\end{definition}

In $\cP^{G}_{\ccD}$, the morphism sets have a natural commutative monoid structure given by disjoint union:
\[
[X\leftarrow S\to T\to Y]+[X\leftarrow S'\to T'\to Y]=[X\leftarrow S\amalg S'\to T\amalg T' \to Y].
\]
Following Tambara, we can therefore group complete this category by
group completing each of these morphism sets \cite{Tambara}.  This
is analogous to the passage from the category of spans of finite
$G$-sets to the Burnside category.  Using Tambara's original argument,
we then obtain a characterization of $\ccD$-Tambara functors in terms
of the group completion.

\begin{proposition}[{\cite{Tambara}}]
For any $\ccD$-semi-Tambara functor $\m{T}$, there is a unique $\ccD$-Tambara structure on the group completion of $\m{T}$.

An $\ccD$-Tambara functor is an additive functor from the group completion of $\cP^{G}_{\ccD}$ to abelian groups.
\end{proposition}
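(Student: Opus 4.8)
The plan is to adapt Tambara's original group-completion argument \cite{Tambara}, the one new point being that all the constructions stay inside $\cP^{G}_{\ccD}$ rather than $\cP^{G}$, which is exactly what Theorem~\ref{thm:PolynomialSubcategories} provides. First I would record the commutative-monoid structure carried by a $\ccD$-semi-Tambara functor $\m{T}$: for an orbit $G/H$ the fold $\nabla\colon G/H\amalg G/H\to G/H$ and the inclusion $\emptyset\to G/H$ have identity exponents, so $T_{\nabla}$ and $T_{\emptyset\to G/H}$ are morphisms of $\cP^{G}_{\ccD}$, and applying $\m{T}$ makes $\m{T}(G/H)$ a commutative monoid; product-preservation propagates this to all finite $G$-sets, and the standard identities among fold maps make it functorial. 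A direct check shows the restriction and transfer generators $R_{f}$, $T_{h}$ act by monoid homomorphisms, so the parts of $\m{T}$ built from them extend uniquely to the group completions $\m{T}^{+}(X):=\m{T}(X)^{+}$ by the universal property of group completion.

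The heart of the matter is the norm. For $g\in\ccD$ the map $\m{T}(N_{g})$ is multiplicative but not additive, and the substitute for additivity is an exponential (binomial-type) formula for $N_{g}$ of a sum: applying Proposition~\ref{prop:NT} to $N_{g}\circ T_{\nabla}$, where $\nabla\colon S\amalg S\to S$ is the fold on the source of $g\colon S\to T$, and identifying the resulting exponential diagram explicitly --- over $t\in T$ its total space is the power set of the fibre $g^{-1}(t)$ --- rewrites $N_{g}$ of a sum as a polynomial expression (a sum of products) in norms, transfers, and restrictions of the two summands. The one place the restricted setting enters is that every exponent occurring in this expression is a pullback of $g$, hence lies in $\ccD$ by pullback stability, so the whole identity takes place in $\cP^{G}_{\ccD}$ --- this is the computation already used to prove Theorem~\ref{thm:PolynomialSubcategories}. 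Using the formula one solves for the value of the extended norm on a difference $a-b$ by induction on the size of the relevant $G$-set, which gives both existence and uniqueness of the extension to $\m{T}^{+}$; since the defining relations of Propositions~\ref{prop:TNRcomp}, \ref{prop:RN}, and~\ref{prop:NT} hold in $\cP^{G}_{\ccD}$ before completion and the extensions are uniquely determined, the relations persist, so $\m{T}^{+}$ is a $\ccD$-Tambara functor and $\m{T}\to\m{T}^{+}$ is initial among maps from $\m{T}$ to $\ccD$-Tambara functors.

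For the second assertion I would assemble the group completion $\cP^{G,+}_{\ccD}$: it has the objects of $\cP^{G}_{\ccD}$ and group-completed hom-monoids, with composition extended so that precomposition is additive and postcomposition with a norm follows the exponential formula above; well-definedness and associativity are the special case of the preceding analysis applied to the representable functors $\cP^{G}_{\ccD}(X,-)$, which are $\ccD$-semi-Tambara functors because $\amalg$ is the categorical product of $\cP^{G}_{\ccD}$ (Proposition~\ref{prop:Products}). A $\ccD$-Tambara functor is by definition a group-valued, product-preserving functor on $\cP^{G}_{\ccD}$; by the universal property above it factors uniquely through $\cP^{G}_{\ccD}\to\cP^{G,+}_{\ccD}$ as an additive product-preserving functor, and restriction along that functor inverts the factorization, giving the identification. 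I expect the main obstacle to be precisely the norm step --- pinning down the exponential formula and checking that, together with the fact that every element of $\m{T}^{+}(X)$ is a difference, it yields a well-defined extension respecting all the $TNR$-relations; relative to Tambara the only genuinely new observation is that pullbacks of the exponent stay in $\ccD$.
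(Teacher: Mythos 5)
Your proposal is correct and follows essentially the same route as the paper, which simply group-completes the hom-monoids and defers to Tambara's original argument; the one genuinely new ingredient you isolate --- that every exponent appearing in the exponential/norm-of-a-sum formula is a pullback of $g$ and hence stays in $\ccD$ by pullback stability --- is precisely the point that makes Tambara's proof go through verbatim in $\cP^{G}_{\ccD}$. The well-definedness of the extended norm on differences is, as you note, the only delicate step, and it is handled exactly as in Tambara.
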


The definition of $\ccD$-Tambara functors in terms of polynomials with
exponents in a wide, pullback stable subcategory of $\Set^G$ makes
proving structural theorems remarkably straightforward.  However, a
priori, it is not clear how to understand the structure on
$\ccD$-Tambara functors in terms of the structure of $\ccD$.  We now
explain how properties of $\ccD$ give rise to familiar structures on
$\ccD$, culminating in a characterization of $\cO$-Tambara functors as
Green functors with additional structure in
Theorem~\ref{thm:GenAndRels}.

We begin by looking at the consequence of the simple observation that
every wide, pullback stable subcategory of $\Set^G$ contains
$\Set^G_{Iso}$, the category of finite $G$-sets and isomorphisms.

\begin{proposition}
A $\Set^{G}_{Iso}$-Tambara functor is a Mackey functor.
\end{proposition}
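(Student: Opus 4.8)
The plan is to unwind both sides of the claimed identity and check that the data of a $\Set^G_{Iso}$-Tambara functor is literally the data of a Mackey functor. First I would identify the category $\cP^G_{\Set^G_{Iso}}$ concretely: by Definition~\ref{adef:Exponents} its morphisms are isomorphism classes of bispans $X \leftarrow S \xrightarrow{g} T \to Y$ with $g$ an isomorphism. Using the canonical factorization (Equation~\ref{eqn:aTNR}) together with the relation $N_g \circ N_{g'} = N_{g\circ g'}$ from Proposition~\ref{prop:TNRcomp}, the norm part $N_g$ for $g$ an isomorphism is itself an isomorphism in $\cP^G_{\Set^G_{Iso}}$ (with inverse $N_{g^{-1}}$), and moreover the commutation relations in Propositions~\ref{prop:RN} and~\ref{prop:NT} let one absorb these isomorphism-norms into the restriction and transfer parts after a change of representative. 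The upshot is that every morphism in $\cP^G_{\Set^G_{Iso}}$ is, up to the isomorphism-identifications built into the definition of bispans, of the form $T_f \circ R_h$ for ordinary maps $h, f$ of finite $G$-sets — i.e.\ a span $X \xleftarrow{h} S \xrightarrow{f} Y$. Thus $\cP^G_{\Set^G_{Iso}}$ is equivalent to the category of spans of finite $G$-sets.

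Next I would invoke the group-completion and product-preservation formalism already set up in the excerpt. By Proposition~\ref{prop:Products}, $\cP^G_{\Set^G_{Iso}}$ has finite products created in $\cP^G$, induced by disjoint union of $G$-sets; group-completing the hom-monoids (as discussed after Definition~\ref{defn:incompleteTamb}, following Tambara) turns the span category into the Burnside category $\Burn_G$, exactly as the span-to-Burnside passage is described in the text. Then the cited consequence of Tambara's argument identifies $\Set^G_{Iso}$-Tambara functors with additive functors from this group completion to abelian groups — but additive functors from the Burnside category to $\Ab$ are precisely Mackey functors, by the standard Lindner description. Concretely, $R_h$ supplies the restriction maps, $T_f$ supplies the transfers, Proposition~\ref{prop:RN} (the $R$–$T$ pullback relation) is the double-coset/Mackey axiom, and Proposition~\ref{prop:TNRcomp} gives functoriality of restrictions and transfers separately.

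The main obstacle is the bookkeeping in the first step: one must check carefully that restricting the middle map to isomorphisms does not change the composite "span" category, i.e.\ that composition of spans (which in $\cP^G$ is computed via pullback and will in general introduce new maps in the middle slot only when those maps were there to begin with) stays within the isomorphism-middle locus. This is essentially automatic because pullback of an isomorphism is an isomorphism and $\Set^G_{Iso}$ is trivially pullback stable, so Theorem~\ref{thm:PolynomialSubcategories} already guarantees $\cP^G_{\Set^G_{Iso}}$ is a genuine subcategory; the only real content is verifying that the canonical-ordering normal form for a morphism with isomorphism-norm collapses to a plain span, which is a direct application of Propositions~\ref{prop:RN} and~\ref{prop:NT} with $g$ invertible. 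Everything after that is the standard dictionary between the Burnside category and Mackey functors, which I would simply cite.
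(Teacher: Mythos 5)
Your proposal is correct and follows essentially the same route as the paper: identify $\cP^G_{\Set^G_{Iso}}$ with the ordinary span category and then invoke the standard dictionary between (group-completed) spans and Mackey functors. The only difference is that your first step, rerouting the isomorphism-norms through the commutation relations of Propositions~\ref{prop:RN} and~\ref{prop:NT}, is more elaborate than necessary — the paper simply observes that the isomorphism $S\xrightarrow{\cong}T$ itself furnishes an isomorphism of bispans from $X\leftarrow S\xrightarrow{\cong}T\to Y$ to $X\leftarrow S\xrightarrow{\id}S\to Y$, so the identification with spans is immediate from the definition of morphisms as isomorphism classes.
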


\begin{proof}
Any bispan of the form 
\[
X\leftarrow S\xrightarrow{\cong} T \to Y
\]
is canonically isomorphic (via the isomorphism $S\to T$) to one of the form
\[
X\leftarrow S\xrightarrow{Id} S\to Y.
\]
The category of such bispans is the ordinary category of spans of $G$-sets.
\end{proof}

This implies that for $\ccD$-Tambara functors are Mackey functors with extra structure.

\begin{corollary}
For any wide, pullback stable subcategory $\ccD$ of $\Set^G$, a $\ccD$-Tambara functor has an underlying Mackey functor.
\end{corollary}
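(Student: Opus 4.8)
The plan is to realize the underlying Mackey functor by restriction along an inclusion of polynomial categories, exactly as in the proof of the preceding proposition. The key observation is that \emph{every} wide, pullback stable, symmetric monoidal subcategory $\ccD$ of $\Set^G$ contains the subcategory $\Set^{G}_{Iso}$ of finite $G$-sets and their isomorphisms: wideness gives that every object of $\Set^G$ lies in $\ccD$, and then Proposition~\ref{prop:basic}(1), which uses only pullback stability, shows that every isomorphism of $\Set^G$ lies in $\ccD$. Note also that $\Set^{G}_{Iso}$ is itself wide, pullback stable (a pullback of an isomorphism is an isomorphism), and symmetric monoidal (a disjoint union of isomorphisms is an isomorphism), so it is a legitimate choice of exponent subcategory and $\cP^{G}_{\Set^{G}_{Iso}}$ makes sense.

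With this in hand, I would first invoke Corollary~\ref{cor:PolynomialInclusion} with $\ccD_1=\Set^{G}_{Iso}$ and $\ccD_2=\ccD$ to obtain an inclusion of subcategories $\cP^{G}_{\Set^{G}_{Iso}}\subseteq\cP^{G}_{\ccD}$. Given a $\ccD$-Tambara functor, i.e.\ a product-preserving abelian group valued functor $\m{T}\colon\cP^{G}_{\ccD}\to\Set$, precomposition with this inclusion yields a functor $\cP^{G}_{\Set^{G}_{Iso}}\to\Set$; it is plainly still abelian group valued, so the only thing to check is that it is product preserving. This follows from Proposition~\ref{prop:Products}: the finite products in both $\cP^{G}_{\Set^{G}_{Iso}}$ and $\cP^{G}_{\ccD}$ are created in $\cP^{G}$, being computed by disjoint union of $G$-sets with projection maps the restriction bispans $R_{i_{S}}$, $R_{i_{T}}$ (which have identity middle map and hence lie in every exponent subcategory). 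Thus the inclusion $\cP^{G}_{\Set^{G}_{Iso}}\hookrightarrow\cP^{G}_{\ccD}$ preserves finite products, and the composite inherits product preservation from $\m{T}$.

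It then remains only to quote the previous proposition, which identifies product-preserving abelian group valued functors out of $\cP^{G}_{\Set^{G}_{Iso}}$ --- equivalently, out of the ordinary span category of finite $G$-sets --- with Mackey functors; this is the underlying Mackey functor of $\m{T}$, agreeing with $\m{T}$ on objects. There is essentially no obstacle here: the whole content is the containment $\Set^{G}_{Iso}\subseteq\ccD$ together with the functoriality of $\ccD\mapsto\cP^{G}_{\ccD}$ recorded in Corollary~\ref{cor:PolynomialInclusion}, and the only mild point to be careful about is the compatibility of the two notions of ``product preserving,'' which is subsumed by Proposition~\ref{prop:Products}.
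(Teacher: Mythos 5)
Your argument is correct and is exactly the route the paper intends: the containment $\Set^{G}_{Iso}\subseteq\ccD$ (via wideness and Proposition~\ref{prop:basic}), the inclusion of polynomial categories from Corollary~\ref{cor:PolynomialInclusion}, and restriction to the preceding proposition identifying $\Set^{G}_{Iso}$-Tambara functors with Mackey functors. You have merely spelled out the product-preservation check that the paper leaves implicit.
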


The next simplest case is when $\aD$ is the collection of finite $G$-sets and
monomorphisms.  This situation was analyzed by Hoyer in his thesis, so we just cite the result here.

\begin{proposition}[{\cite{HoyerThesis}}]
A $\Set^G_{Mono}$-Tambara functor is a ``pointed Mackey functor'': a Mackey functor $\mM$ together with a map $\mA\to \mM$.
\end{proposition}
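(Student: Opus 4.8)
The plan is to strip a $\Set^G_{Mono}$-Tambara functor $\mM$ down to its underlying Mackey functor together with one extra piece of data, and then to recognize that datum as a map $\mA\to\mM$. The underlying Mackey functor exists by the preceding corollary (restrict along $\cP^G_{\Set^G_{Iso}}\subset\cP^G_{\Set^G_{Mono}}$), so the content is carried by the norm maps $N_g$ with $g$ a monomorphism. By Proposition~\ref{prop:InitialImpliesMono}, $\Set^G_{Mono}$ contains every monomorphism, and every monomorphism of finite $G$-sets is isomorphic to a summand inclusion $S\hookrightarrow S\amalg U$, which is the disjoint union $\id_S\amalg\iota_U$ of the identity with an initial map $\iota_U\colon\emptyset\to U$. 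Since disjoint union of maps passes to disjoint union of norms and $\mM$ is product preserving, the function $\mM(N_{S\hookrightarrow S\amalg U})$ has the form $x\mapsto(x,u_U)$, where $u_U\in\mM(U)$ is the element selected by the structure map $\mM(N_{\iota_U})$ out of the zero group $\mM(\emptyset)$; note that $N_{\iota_U}$, being a norm rather than a transfer, is not additive, so $u_U$ need not vanish. Decomposing $U$ further gives $u_{U_1\amalg U_2}=(u_{U_1},u_{U_2})$, so all norms along monomorphisms are determined by the elements $u_{G/H}\in\mM(G/H)$.

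Next I would pare this down to a single element and check that nothing is lost. Applying the restriction--norm relation (Proposition~\ref{prop:RN}) to the pullback of $\iota_U\colon\emptyset\to U$ along an arbitrary $f\colon U'\to U$ --- whose left leg is again an initial map --- yields $R_f\circ N_{\iota_U}=N_{\iota_{U'}}\circ R_{f'}$ with $f'\colon\emptyset\to\emptyset$, hence $\mM(R_f)(u_U)=u_{U'}$. Taking $f$ to be the projections $G/H\to G/G$ and the conjugations shows that $\{u_{G/H}\}$ is compatible with all restrictions and conjugations, hence determined by $u:=u_{G/G}\in\mM(G/G)$; conversely any $u$ gives a candidate family $u_{G/H}:=\res^G_H(u)$. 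It remains to see that every such family really does assemble into a $\Set^G_{Mono}$-Tambara structure --- equivalently, that the group completion of $\cP^G_{\Set^G_{Mono}}$ is freely generated over the Burnside category by the morphisms out of $\emptyset$. The composition law $N_g\circ N_{g'}=N_{g\circ g'}$ (Proposition~\ref{prop:TNRcomp}) is automatic, and I expect the one genuine obstacle to be the distributivity relation $N_g\circ T_h=R_{f'}\circ N_{g'}\circ T_{h'}$ of Proposition~\ref{prop:NT} for $g$ a monomorphism: since $\prod_g(-)$ along a summand inclusion merely reinserts the complementary summand with trivial fibres, the exponential diagram is completely explicit, and one must verify that both sides of the relation unwind to the same combination of restrictions and transfers of $u$ --- a direct but slightly fiddly computation.

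Finally, the data of a Mackey functor $\mM$ with a chosen element $u\in\mM(G/G)$ is the same as the data of a Mackey functor $\mM$ with a map $\mA\to\mM$, by the (co)representability of the Burnside Mackey functor by $G/G$: $\Hom_{\Mackey_G}(\mA,\mM)\cong\mM(G/G)$, the isomorphism sending a map to the image of $1\in\mA(G/G)$. In the present language this is transparent: the group completion of the hom-monoid $\cP^G_{\Set^G_{Mono}}(\emptyset,G/H)$ is the free abelian group on the isomorphism classes of transitive $H$-sets, namely the Burnside ring $A(H)$, and post-composition with $T$'s and $R$'s recovers the Mackey structure of $\mA$, so evaluating a $\Set^G_{Mono}$-Tambara functor on its morphisms out of $\emptyset$ is precisely a natural transformation $\mA\to\mM$, and this encodes all the $u_{G/H}$ and hence all the mono-norms. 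Since any map of $\Set^G_{Mono}$-Tambara functors restricts to a map of underlying Mackey functors that, by naturality against $N_{\iota_{G/G}}$, preserves the distinguished element, the assignment is an equivalence of categories, proving the proposition.
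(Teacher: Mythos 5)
The paper does not actually prove this proposition: it cites Hoyer's thesis and only remarks, after the statement, that the ``point'' arises from the distinguished polynomial $\emptyset\leftarrow\emptyset\to T\xrightarrow{=}T$. Your argument fleshes out exactly that idea, and the parts you carry out are correct: $\mM(\emptyset)=0$ since $\emptyset$ is the empty product in $\cP^{G}$; every mono is isomorphic to a summand inclusion $S\hookrightarrow S\amalg U$, and the product-preservation together with Proposition~\ref{prop:RN} (applied to the pullback of $\emptyset\to U$ along $i_{S}$ and $i_{U}$) correctly gives $\mM(N_{S\hookrightarrow S\amalg U})(x)=(x,u_{U})$ with $u_{U}$ restriction-compatible, hence determined by $u_{G/G}$; and the identification of the group-completed hom $\cP^{G}_{\Set^{G}_{Mono}}(\emptyset,-)$ with $\mA$ is the right way to package the element as a map $\mA\to\mM$ (equivalently, via representability of $\mA$ by $G/G$ in the Burnside category). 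The one step you do not complete is the converse: that an arbitrary map $\mA\to\mM$ really does extend to a product-preserving functor on $\cP^{G}_{\Set^{G}_{Mono}}$, i.e.\ that the polynomial category with exponents the monomorphisms imposes no relations on $u$ beyond Mackey naturality. You correctly isolate the distributivity relation of Proposition~\ref{prop:NT} as the point to check (for $g\colon X\hookrightarrow X\amalg U$ one has $\prod_{g}A\cong A\amalg U$ with $f'$ an isomorphism, so the relation reads $N_{g}\circ T_{h}=N_{g'}\circ T_{h\amalg\mathrm{id}_{U}}$ and both sides evaluate to $x\mapsto(T_{h}(x),u_{U})$), but you leave the verification as ``fiddly'' rather than doing it; this verification is precisely the content supplied by the cited reference. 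With that computation written out, your argument is a complete and self-contained proof, which is more than the paper itself provides.
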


The pointedness of $\Set^G_{Mono}$-Tambara functors arises from the fact that $\Set^G$ has an initial object $\emptyset$, and hence for any $T$, there is a distinguished morphism $\emptyset\to T$ in the category of polynomials with exponents in $\Set^G_{Mono}$, namely
\[
\emptyset\leftarrow\emptyset \to T\xrightarrow{=} T.
\]

The canonical map from the empty set, together with the isomorphisms,
generates all of $\Set^G_{Mono}$ as a symmetric monoidal category. To
build the rest of $\Set^G$, we need to also include the projections
$G/H\to G/K$ for $H\subset K$ and the fold maps $T\amalg T\to T$. 

First, we study the consequences of including the fold map in $\aD$;
the proof of the following proposition is exactly the same as that
given by Tambara, so we omit it.

\begin{proposition}[{\cite[2.3]{Tambara}}]
Let $\ccD$ be a wide, pullback stable subcategory such that for some $T$, the fold map $T\amalg T\to T$ is in $\ccD$. Then the following hold for any $\ccD$-[semi-]Tambara functor $\m{R}$:
\begin{enumerate}
\item For all $S\to T$, $\m{R}(S)$ is a non-unital, commutative [semi-]ring,
\item For all $S_1\xrightarrow{f} S_2\to T$, the restriction map $R_f$ is a non-unital, commutative [semi-]ring map, 
\item For all $S_1\xrightarrow{f} S_2\to T$, the norm map $N_f$ is a map of multiplicative monoids, and
\item For finite $G$-sets over $S_1\xrightarrow{f} S_2\to T$, the
  Frobenius relation holds: 
\[
a\cdot T_f(b)=T_f(R_f(a)\cdot b).
\]
\end{enumerate}
If moreover $\emptyset\to T$ is in $\ccD$, then the all of the results in the previous list are instead for unital [semi-]rings.
\end{proposition}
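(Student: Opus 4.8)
The plan is to reduce everything to Tambara's original bispan manipulations (\cite[2.3]{Tambara}), observing that the only ``norm'' bispans appearing in his argument are norms along fold maps, initial maps, and isomorphisms, all of which lie in $\cP^{G}_{\ccD}$ under our hypotheses. The single new input needed is a closure statement: if $S\to T$ is any object over $T$, then the fold map $\nabla_{S}\colon S\amalg S\to S$ is the pullback of $\nabla_{T}\colon T\amalg T\to T$ along $S\to T$, and (when $\emptyset\to T\in\ccD$) the initial map $u_{S}\colon\emptyset\to S$ is the pullback of $u_{T}\colon\emptyset\to T$ along $S\to T$. Since $\ccD$ is pullback stable, both $\nabla_{S}$ and $u_{S}$ lie in $\ccD$; combined with the fact that $\ccD$ contains all isomorphisms (Proposition~\ref{prop:basic}) and that every $R$- and $T$-bispan automatically lies in $\cP^{G}_{\ccD}$, this shows that all of the relevant structure maps are morphisms of $\cP^{G}_{\ccD}$: the multiplication $m_{S}=N_{\nabla_{S}}$ precomposed with the product-preservation isomorphism $\m{R}(S)\times\m{R}(S)\cong\m{R}(S\amalg S)$, the unit $N_{u_{S}}$, the additions $T_{\nabla_{S}}$, the swap $N_{\sigma_{S}}$, and $R_{f}$, $T_{f}$, $N_{f}$ for $f\colon S_{1}\to S_{2}$ over $T$.

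Given this, I would invoke Theorem~\ref{thm:PolynomialSubcategories}: since $\cP^{G}_{\ccD}$ is a subcategory of $\cP^{G}$, any identity between composites of these basic morphisms that holds in $\cP^{G}$ automatically holds in $\cP^{G}_{\ccD}$, hence is preserved by the functor $\m{R}$. For part (1): commutativity of $m_{S}$ follows from $\nabla_{S}\circ\sigma_{S}=\nabla_{S}$ together with $N_{\nabla_{S}}\circ N_{\sigma_{S}}=N_{\nabla_{S}\circ\sigma_{S}}$ (Proposition~\ref{prop:TNRcomp}); associativity from $\nabla_{S}\circ(\nabla_{S}\amalg\id)=\nabla_{S}\circ(\id\amalg\nabla_{S})$ and the same proposition; distributivity of $m_{S}$ over $T_{\nabla_{S}}$ from the exponential-diagram relation $N_{g}\circ T_{h}=R_{f'}\circ N_{g'}\circ T_{h'}$ of Proposition~\ref{prop:NT} applied with $g=\nabla_{S}$, using that the resulting $g'$ is a pullback of $\nabla_{S}$ and so again lies in $\ccD$; and the unit laws from the analogous identities for $u_{S}$. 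For part (2): the square with vertical maps $\nabla_{S_{1}},\nabla_{S_{2}}$ and horizontal maps $f, f\amalg f$ is a pullback, so Proposition~\ref{prop:RN} gives $R_{f}\circ N_{\nabla_{S_{2}}}=N_{\nabla_{S_{1}}}\circ R_{f\amalg f}$, i.e.\ $R_{f}$ is multiplicative (and it respects units by the analogous pullback square with $\emptyset$). For part (3): $f\circ\nabla_{S_{1}}=\nabla_{S_{2}}\circ(f\amalg f)$ and $N_{g}\circ N_{g'}=N_{g\circ g'}$ show $N_{f}$ is a multiplicative-monoid map. For part (4): the Frobenius relation is exactly Tambara's projection-formula computation, which uses only $N_{\nabla}$, $T_{f}$, $R_{f}$ and Propositions~\ref{prop:RN} and~\ref{prop:NT}; since all the middle maps occurring are fold maps, maps $f\amalg f$, or pullbacks thereof, the computation stays inside $\cP^{G}_{\ccD}$. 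Finally, when $\emptyset\to T\in\ccD$, the unit $N_{u_{S}}(\ast)\in\m{R}(S)$ exists for every $S$ over $T$, since $\m{R}$ product-preserving forces $\m{R}(\emptyset)=\ast$, and all of the above identities acquire their unital refinements.

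The main obstacle is not any isolated hard step but the bookkeeping: one must confirm that in \emph{every} rewriting used in Tambara's derivations the norm bispan $N_{g}$ that is produced has $g$ equal to a fold map, an initial map, an isomorphism, or a pullback of one of these, so that it never leaves $\cP^{G}_{\ccD}$. The closure statement of the first paragraph, together with pullback stability of $\ccD$ and the commutation relations of Propositions~\ref{prop:TNRcomp}, \ref{prop:RN}, and~\ref{prop:NT}, is precisely what makes this bookkeeping succeed, which is why the proof is ``exactly the same'' as the one given by Tambara.
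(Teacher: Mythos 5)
Your proposal is correct and is precisely the argument the paper has in mind when it defers to Tambara: the paper omits the proof on the grounds that it is ``exactly the same as that given by Tambara,'' and your closure observation --- that $\nabla_S$ and $\emptyset\to S$ are pullbacks of $\nabla_T$ and $\emptyset\to T$ along $S\to T$, hence lie in $\ccD$, and that every norm bispan produced by the rewriting rules of Propositions~\ref{prop:TNRcomp}, \ref{prop:RN}, and~\ref{prop:NT} is a fold map, an isomorphism, or a pullback of one --- is exactly the bookkeeping needed to justify that deferral. No gaps.
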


\begin{corollary}
If $\emptyset \to \ast$ and $\ast\amalg\ast\to\ast$ are in $\ccD$, then any $\ccD$-Tambara functor $\m{R}$, has an underlying Green functor.
\end{corollary}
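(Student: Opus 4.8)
The plan is to reduce the statement to the immediately preceding Proposition (the analogue of \cite[2.3]{Tambara}) together with the Lemma above characterizing wide, finite coproduct complete subcategories of $\Set^G$. By standing assumption $\ccD$ is a wide, pullback stable, symmetric monoidal subcategory of $\Set^G$, and by hypothesis it contains $\emptyset\to\ast$ and $\ast\amalg\ast\to\ast$; hence that Lemma applies and $\ccD$ is wide and finite coproduct complete. Unwinding its proof, this gives us the two facts we need: for \emph{every} finite $G$-set $T$ the fold map $T\amalg T\to T$ lies in $\ccD$, and --- pulling $\emptyset\to\ast$ back along $T\to\ast$ exactly as in the proof of Proposition~\ref{prop:InitialImpliesMono} --- the initial map $\emptyset\to T$ lies in $\ccD$ as well.

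Next I would invoke the preceding Proposition, letting the parameter $T$ there range over all finite $G$-sets (permissible since every fold map is in $\ccD$) and with the supplementary hypothesis $\emptyset\to T\in\ccD$ in force, so that its conclusions hold in the unital form. This yields, for any $\ccD$-Tambara functor $\m{R}$: every $\m{R}(S)$ is a commutative unital ring; for every map $f\colon S_1\to S_2$ of finite $G$-sets the restriction $R_f\colon \m{R}(S_2)\to\m{R}(S_1)$ is a unital ring homomorphism; and the Frobenius relation $a\cdot T_f(b)=T_f\big(R_f(a)\cdot b\big)$ holds. Specializing $S_1,S_2$ to orbits equips each value $\m{R}(G/H)$ with a commutative ring structure; specializing $f$ to the orbit projections $G/H\to G/K$ and to isomorphisms of orbits shows that the restriction and conjugation maps of the underlying Mackey functor of $\m{R}$ (which exists because every $\ccD$-Tambara functor has an underlying Mackey functor, as noted above) are ring homomorphisms; and the Frobenius relation for $f\colon G/H\to G/K$ is precisely the projection formula $a\cdot T_f(b)=T_f\big(R_f(a)\cdot b\big)$ relating the transfer $T_f$ and the restriction $R_f$ of that Mackey functor. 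A Mackey functor whose values are commutative rings, whose structure maps are ring homomorphisms, and which satisfies Frobenius reciprocity is exactly a commutative Green functor, so $\m{R}$ has an underlying Green functor; the assignment is moreover manifestly functorial in $\m{R}$, since a natural transformation of functors out of $\cP^G_{\ccD}$ respects all the polynomial-defined structure.

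I expect no genuine obstacle here: the argument is pure bookkeeping once the Proposition is in hand. The one point meriting care is the identification of the basic polynomials $R_f$ and $T_f$, for $f$ an orbit projection, with the restriction and transfer of the underlying Mackey functor, so that parts (1), (2), and (4) of the Proposition translate respectively into the levelwise commutative ring axiom, the statement that restrictions (and conjugations) are ring maps, and Frobenius reciprocity. Note that the norm maps $N_f$ play no role in this corollary --- a Green functor carries no norm structure --- so the content is simply that enlarging $\ccD$ to contain the fold and initial maps promotes the multiplicative structure on a $\ccD$-Tambara functor from nothing to that of a commutative ring.
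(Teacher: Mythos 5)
Your argument is correct and is exactly the route the paper intends: the corollary is stated without proof as an immediate consequence of the preceding Proposition (Tambara's \cite[2.3]{Tambara} analogue), whose unital hypotheses you verify via the Lemma, and you then correctly identify the levelwise ring structures, ring-homomorphism restrictions, and Frobenius relation as the data of a commutative Green functor. The only simplification available is that taking $T=\ast$ in the Proposition already covers every finite $G$-set at once (since everything maps to $\ast$), so letting $T$ range over all finite $G$-sets is harmless but unnecessary.
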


This last corollary gives us a description of $\cO$-Tambara functors
as enhanced Green functors, which parallels the situation with
$N_\infty$ ring spectra.

\begin{corollary}
Let $\cO$ be any indexing system.  A $\cO$-Tambara functor has an underlying Green functor. 
\end{corollary}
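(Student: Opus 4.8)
The plan is to deduce this as an immediate specialization of the preceding corollary, which asserts that a $\ccD$-Tambara functor has an underlying Green functor as soon as the wide, pullback stable subcategory $\ccD\subset\Set^G$ contains both $\emptyset\to\ast$ and $\ast\amalg\ast\to\ast$. Since an $\cO$-Tambara functor is by definition a $\Set^G_\cO$-Tambara functor, it suffices to check that the subcategory $\Set^G_\cO$ contains these two maps.

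To see this, I would first recall that $\Set^G_\cO$ is a wide subcategory of $\Set^G$ — it is a category by the theorem above and contains the terminal object — and that it is finite coproduct complete by the proposition above; the lemma characterizing wide, finite coproduct complete subcategories then shows directly that such a subcategory contains $\emptyset\to\ast$ and $\ast\amalg\ast\to\ast$. Alternatively, one checks the two memberships by hand using the orbit criterion of Proposition~\ref{prop:OrbitReduction}: the map $\emptyset\to\ast$ lies in $\Set^G_\cO$ vacuously, since its source is empty, while for the fold map $\ast\amalg\ast\to\ast$ every source point $s$ is $G$-fixed, so $G_{f(s)}\cdot s=\{s\}\cong\ast$, which is a trivial $G$-set and hence admissible for every subgroup by the definition of an indexing system (indexing systems contain all trivial sets).

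Having placed both maps in $\Set^G_\cO$, I would apply the preceding corollary with $\ccD=\Set^G_\cO$ to conclude that every $\cO$-Tambara functor has an underlying Green functor, the underlying Mackey structure being supplied by the earlier corollary applied to the inclusion $\Set^G_{Iso}\subset\Set^G_\cO$. There is no real obstacle here: all of the substantive work — the Frobenius relations, the non-unital ring structure on values, the compatibility of norms and transfers — is already contained in the cited result of Tambara and in the preceding corollary, and the only content of this statement is the bookkeeping observation that the defining axioms of an indexing system force the two structural maps $\emptyset\to\ast$ and $\ast\amalg\ast\to\ast$ into $\Set^G_\cO$.
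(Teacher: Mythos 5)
Your proposal is correct and matches the paper's (implicit) argument: the corollary is intended as an immediate specialization of the preceding one, justified exactly by the observation that $\Set^G_{\cO}$ contains $\emptyset\to\ast$ and $\ast\amalg\ast\to\ast$ because indexing systems contain all trivial sets. Both of your verifications (via the lemma characterizing wide, finite coproduct complete subcategories, and via the orbit criterion of Proposition~\ref{prop:OrbitReduction}) are valid and consistent with the paper.
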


\begin{remark}
Strickland's ``green'' condition on maps in his formulation of Green functors is exactly the condition that the map be in the wide, pullback stable symmetric monoidal subcategory of $\Set^G$ containing $\emptyset\to \ast$ and $\ast\amalg\ast\to\ast$. This is equivalent to being in $\Set^G_{\cO^{tr}}$, where $\cO^{tr}$ is the indexing system of trivial sets.
\end{remark}

At this point, we have almost all of the structure present in a
$\cO$-Tambara functor.  We only need to understand the effect of the
inclusion of the maps $G/H\to G/K$ in $\aD$.  For this, it can be
helpful to recall an alternative formulation to the axioms for the
norms in a Tambara functors (analogous to the Weyl double coset
formulation of the compatibility of transfers and restrictions). This
has been described in detail in work of Mazur, but we reproduce it
here for clarity~\cite[Theorem 3.1]{MazurArxiv}. The compatibility
with addition was proved by Tambara to show that certain formulae
relating transfers and the Evans norm hold universally.

\begin{proposition}[{\cite{Tambara}}, {\cite[Theorem 3.1]{MazurArxiv}}]\label{prop:SumTambaraRec}
There is a universal formula expressing the norm of a sum:
\[
N_H^K(a+b)=T_{f_{s}}\circ N_{g_{s}}\circ R_{h_{s}}(a\oplus b),
\]
where $h_s$ is the composite of
\[
G\times_K\big(K/H\times\Map(K/H,\ast\amalg\ast)\big)\xrightarrow{\Delta\times 1}G\times_K\big(K/H\times K/H\times \Map(K/H,\ast\amalg \ast)\big)
\]
with the evaluation map
\[
G\times_K\big(K/H\times K/H\times \Map(K/H,\ast\amalg \ast)\big)\xrightarrow{1\times eval} G/H\amalg G/H,
\]
where 
\[
g_s\colon G\times_K(K/H\times \Map(K/H,\ast\amalg\ast)\to G\times_K\Map(K/H,\ast\amalg\ast)
\]
is the canonical quotient and where
\[
h_s\colon G\times_K\Map(K/H,\ast\amalg\ast)\to\ast
\]
is the canonical map.

In particular, this formula depends only on $H\subset K$.
\end{proposition}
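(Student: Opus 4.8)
The plan is to recognize the formula as precisely the instance of the exponential ($NT$) relation of Proposition~\ref{prop:NT} obtained by feeding it the fold map, together with an explicit computation of the resulting exponential diagram.

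First I would record the elementary observation that in any $\ccD$-semi-Tambara functor $\mR$ with $\ast\amalg\ast\to\ast$ admissible, the sum in $\mR(G/H)$ is computed by the fold transfer: writing $\nabla\colon G/H\amalg G/H\to G/H$ for the fold map, the polynomial $T_\nabla$ has identity middle map and hence lies in $\cP^{G}_{\ccD}$ for any wide $\ccD$, and under the product-preserving identification $\mR(G/H\amalg G/H)\cong\mR(G/H)\times\mR(G/H)$ the element $a\oplus b$ is the pair $(a,b)$ with $a+b=\mR(T_\nabla)(a\oplus b)$. Since $N_H^K$ is the operation $N_g$ attached to the canonical projection $g\colon G/H\to G/K$ (which lies in $\Set^{G}_{\cO}$ exactly because $K/H$ is admissible), this reduces the claim to computing the composite $N_g\circ T_\nabla$ inside $\cP^{G}_{\ccD}$.

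Next I would apply Proposition~\ref{prop:NT} with $h=\nabla$: it rewrites $N_g\circ T_\nabla$ in the canonical order as $T_{f'}\circ N_{g'}\circ R_{h'}$, where $g'$ is the pullback of $g$ in the exponential diagram attached to $g$ and $\nabla$ (hence $g'\in\ccD$ by pullback stability, so the right-hand side really does live in $\cP^{G}_{\ccD}$). The substance of the proof is identifying this exponential diagram explicitly. The dependent product $\prod_g(G/H\amalg G/H)$ is the bundle of sections of the trivial two-element bundle over the fibers of $g$; the fiber of $g$ over a point of $G/K$ is a copy of $K/H$, so a section is an element of $\Map(K/H,\ast\amalg\ast)$, and inducing from $K$ to $G$ yields $\prod_g(G/H\amalg G/H)\cong G\times_K\Map(K/H,\ast\amalg\ast)$. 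Pulling this back along $g$ identifies the remaining corner $G/H\times_{G/K}\prod_g(G/H\amalg G/H)$ with $G\times_K\bigl(K/H\times\Map(K/H,\ast\amalg\ast)\bigr)$, and under these identifications $g'$ is the canonical quotient $g_s$, the structure map $h'$ is the canonical map $f_s$, and $f'$ is the evaluation map, which unwinds to the composite of $\Delta\times 1$ with $1\times\mathrm{eval}$ into $G/H\amalg G/H$ written as $h_s$ in the statement. Substituting and evaluating both sides at $a\oplus b$ yields the displayed formula. Finally, since every map in the exponential diagram is manufactured solely from $g\colon G/H\to G/K$ and the fold map---both determined by the inclusion $H\subset K$---the whole recipe depends only on $H\subset K$; Tambara's verification that this identity is moreover compatible with the additive structure, which is what makes it a universal relation among transfers and norms, carries over unchanged and I would simply cite it.

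The main obstacle is wholly contained in the middle step: computing $\prod_g$ of the trivial two-element bundle over the fibers of $g\colon G/H\to G/K$ and then matching the three structure maps of the exponential diagram against the explicitly written $f_s$, $g_s$, $h_s$, while tracking the induction from $K$ up to $G$ and keeping straight which of the two copies of $G/H$ an evaluated section lands in. Everything else is a direct appeal to Proposition~\ref{prop:NT} and to the definition of addition in a semi-Tambara functor.
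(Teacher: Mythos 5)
Your proposal is correct and follows essentially the same route as the paper: identify $N_H^K(a+b)$ as the composite $N_{\pi_{K/H}}\circ T_{\nabla}$ and then apply Proposition~\ref{prop:NT} to the exponential diagram for the fold map, whose dependent product is the bundle of sections $G\times_K\Map(K/H,\ast\amalg\ast)$. The only cosmetic difference is that the paper first reduces to the case $K=G$ by pulling back along $G/K\to G/G$ before exhibiting the exponential diagram, whereas you compute the induced-up diagram for general $H\subset K$ directly; both yield the same identification of $f_s$, $g_s$, $h_s$.
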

\begin{proof}
The composite $N_H^K\circ(-+-)$ is the composite $N_{\pi_{K/H}}\circ T_{\nabla}$, where $\pi_{K/H}\colon G/H\to G/K$ is the canonical quotient and $\nabla\colon G/H\amalg G/H\to G/H$ is the fold map. Since the composite in question is the pullback along the map $G/K\to G/G$ of the case where $K=G$, it suffices to consider only this case. Here, it is not difficult to check from the definition that
\[
\xymatrix{
{G/H}\ar[d]_-{\pi_{G/H}} & {G/H\amalg G/H}\ar[l]_-{\nabla} & {G/H\times \Map(G/H,\ast\amalg\ast)}\ar[l]_-{h_a}\ar[d]^-{g_s} \\
{\ast} & & {\Map(G/H,\ast\amalg\ast)}\ar[ll]^-{h_s}
}
\]
is an exponential diagram, and this gives the required formula by Proposition~\ref{prop:NT}.
\end{proof}

We pause here to stress that since we are assuming that our exponents be drawn from a category that is pullback stable, the compatibility of norms with sums is automatically satisfied.

There is a similar description for the norm composed with the transfer.
\begin{proposition}[{\cite[Theorem 3.1]{MazurArxiv}}]\label{prop:TrTambaraRec}
There is a universal formula expressing the norm of a transfer:
\[
N_K^G Tr_H^K(a)= T_{f_t}\circ N_{g_t}\circ R_{h_t}(a),
\]
where
\[
h_t\colon G/K\times\Map_K(G,K/H)\to G/H
\]
is defined by $h_t(gK,\sigma)=g\sigma(g)$, where
\[
g_t\colon G/K\times\Map_K(G,K/H)\to \Map_K(G,K/H)
\]
is the canonical quotient and where
\[
f_t\colon \Map_K(G,K/H)\to\ast
\]
is the unique map.
\end{proposition}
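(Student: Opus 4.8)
The plan is to mirror exactly the strategy used for Proposition~\ref{prop:SumTambaraRec}: identify the composite $N_K^G \circ Tr_H^K$ with $N_{\pi}\circ T_{\pi'}$ for the appropriate quotient maps of orbits, reduce to the case where the outer group is all of $G$ by pulling back along $G/K\to G/G$, and then exhibit an explicit exponential diagram to which Proposition~\ref{prop:NT} applies. First I would observe that $Tr_H^K = T_{\pi_{K/H}}$ where $\pi_{K/H}\colon G/H\to G/K$ is the canonical quotient, and $N_K^G = N_{\pi_{G/K}}$ where $\pi_{G/K}\colon G/K\to G/G$. Thus $N_K^G Tr_H^K$ is, as a polynomial, $N_{\pi_{G/K}}\circ T_{\pi_{K/H}}$. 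Since the general case over $G/G'$ is the pullback along $G/G'\to G/G$ of the case $G'=G$ (using Proposition~\ref{prop:RN} and pullback stability, just as in the proof of Proposition~\ref{prop:SumTambaraRec}), it suffices to treat the displayed case.

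Next I would write down the exponential diagram associated to the map $g = \pi_{G/K}\colon G/K\to G/G = \ast$ and the map $h = \pi_{K/H}\colon G/H \to G/K$, using the explicit description of the dependent product $\Pi_g$ in finite $G$-sets. The dependent product of $G/H\to G/K$ along $G/K\to\ast$ is precisely the $G$-set of sections, which is $\Map_K(G, K/H)$ with its conjugation action; this is where the object $\Map_K(G,K/H)$ in the statement comes from. The pullback of this along $g$ back over $G/K$ is $G/K\times \Map_K(G,K/H)$, and the evaluation map $G/K \times_{\ast}\Pi_g(G/H) \to G/H$ is exactly $h_t(gK,\sigma)=g\sigma(g)$ — one checks this is well-defined and equivariant on $K$-cosets directly. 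So the diagram
\[
\xymatrix{
{G/H}\ar[d]_-{\pi_{K/H}} & {G/K\times\Map_K(G,K/H)}\ar[l]_-{h_t}\ar[d]^-{g_t} \\
{G/K} & {\Map_K(G,K/H)}\ar[l]^-{f_t}
}
\]
is the exponential diagram for this data, with $g_t$ the projection (a quotient of orbits since the fibers are copies of $G/K$) and $f_t$ the unique map to $\ast$. Applying Proposition~\ref{prop:NT} then yields
\[
N_{\pi_{G/K}}\circ T_{\pi_{K/H}} = R_{?}\circ N_{g_t}\circ T_{f_t}\circ \dots,
\]
and rearranging the basic maps into canonical order $T\circ N\circ R$ via the commutation relations of Propositions~\ref{prop:TNRcomp} and~\ref{prop:RN} produces the asserted formula $T_{f_t}\circ N_{g_t}\circ R_{h_t}$ — though I should be careful here that the roles of $f_t$ and $h_t$ in the final expression match the statement, which may require a transpose of the exponential diagram as drawn.

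The main obstacle I anticipate is purely bookkeeping rather than conceptual: correctly computing the dependent product $\Pi_{\pi_{G/K}}(G/H\to G/K)$ as $\Map_K(G,K/H)$ together with the right $G$-action, and verifying that the evaluation/counit map is the stated $h_t(gK,\sigma)=g\sigma(g)$ and is equivariant. A secondary subtlety is tracking which map ends up as the ``$R$'' and which as the ``$T$'' after reordering the output of Proposition~\ref{prop:NT} into canonical form, since the naming in the proposition statement ($h_t$ restriction, $f_t$ transfer) has to be made to agree with the canonical ordering convention of Equation~\ref{eqn:aTNR}. Everything else is a formal transcription of Mazur's computation, and pullback stability of the exponents makes the compatibility with the incomplete setting automatic, exactly as remarked after Proposition~\ref{prop:SumTambaraRec}.
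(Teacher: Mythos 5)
Your approach is correct and is exactly the template the paper uses for the companion statement (Proposition~\ref{prop:SumTambaraRec}); the paper itself offers no proof here, deferring to Mazur, but the exponential-diagram argument you outline is the intended one, with $\Pi_{\pi_{G/K}}(G/H\to G/K)$ identified as $\Map_K(G,K/H)$ and the evaluation as $h_t$. One correction to your displayed diagram: in the exponential diagram of Proposition~\ref{prop:NT} the left column must be $g=\pi_{G/K}\colon G/K\to\ast$ (not $\pi_{K/H}\colon G/H\to G/K$) and the bottom-left object is $\ast$, so that the output is the bispan $G/H\xleftarrow{h_t} G/K\times\Map_K(G,K/H)\xrightarrow{g_t}\Map_K(G,K/H)\xrightarrow{f_t}\ast$, which is already $T_{f_t}\circ N_{g_t}\circ R_{h_t}$ in canonical order with no further reordering required.
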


The following is then immediate from the construction.

\begin{proposition}\label{prop:NormsfromD}
If $G/H\to G/K$ is in $\ccD$, then a $\ccD$-Tambara functor $\m{R}$ has a norm map
\[
\m{R}(G/H)\to\m{R}(G/K)
\]
that satisfies the universal formulae specified by Propositions~\ref{prop:SumTambaraRec} and \ref{prop:TrTambaraRec}.

If $G/K\amalg G/K\to G/K$ is in $\ccD$ as well, then this norm is a map of multiplicative monoids, and if $\emptyset\to G/K$ is in $\ccD$, then it is unital.
\end{proposition}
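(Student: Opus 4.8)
The plan is to read everything off from Theorem~\ref{thm:PolynomialSubcategories}, which tells us that $\cP^G_\ccD$ is a genuine (hence faithful) subcategory of $\cP^G$, generated by those $R_f$, $T_h$ with $f,h$ arbitrary together with those $N_g$ for which $g\in\ccD$. Write $\pi\colon G/H\to G/K$ for the canonical quotient. By hypothesis $\pi\in\ccD$, so $N_\pi$ is a morphism of $\cP^G_\ccD$; applying the functor $\m{R}$ to $N_\pi$ produces the asserted norm map $\m{R}(G/H)\to\m{R}(G/K)$.

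To see that the two universal formulae hold, it suffices to check that both sides of each are already morphisms of $\cP^G_\ccD$: once that is known, the identity --- which holds in $\cP^G$ by Tambara and Mazur --- holds verbatim in the subcategory $\cP^G_\ccD$, and then applying $\m{R}$ gives the corresponding identity for $\m{R}$. The right-hand sides are the canonical-order decompositions $T_f\circ N_g\circ R_h$ of composites of the form $N_\pi\circ T_{(-)}$ (namely $N_{\pi_{K/H}}\circ T_\nabla$ for Proposition~\ref{prop:SumTambaraRec}, and $N_{\pi_{K/G}}\circ T_{\pi_{H/K}}$ for Proposition~\ref{prop:TrTambaraRec}; the latter of course presupposes that the outer projection defining the ambient norm also lies in $\ccD$). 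The proof of Theorem~\ref{thm:PolynomialSubcategories} shows that bringing $N_g\circ T_h$ into canonical order via Proposition~\ref{prop:NT} replaces $g$ by the map $g'$ occurring in the exponential diagram, and that $g'$ is a pullback of $g$ because the outer rectangle of an exponential diagram is a pullback. Inspecting the exponential diagrams used to prove Propositions~\ref{prop:SumTambaraRec} and~\ref{prop:TrTambaraRec}, and using that $G\times_K(-)$ preserves the relevant pullbacks, identifies $g'$ with the norm maps $g_s$, resp.\ $g_t$, appearing there. Hence $g_s$ and $g_t$ are pullbacks of maps in $\ccD$, so by pullback stability they lie in $\ccD$; every $N$ on the right-hand sides is thus a morphism of $\cP^G_\ccD$, and the $R$'s and $T$'s automatically are, which is what was needed. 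This is precisely the content of the remark after Proposition~\ref{prop:SumTambaraRec}: pullback stability of the exponents forces compatibility of the norms with sums.

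For the remaining assertions I would apply the Tambara-style structure proposition above (the one concerning a subcategory containing some fold map $T\amalg T\to T$) with $T=G/K$: if $G/K\amalg G/K\to G/K\in\ccD$, then $\m{R}(G/H)$ (via the structure map $\pi$) and $\m{R}(G/K)$ (via the identity) are non-unital commutative rings and $N_\pi$ is a map of multiplicative monoids; if in addition $\emptyset\to G/K\in\ccD$, the final clause of that proposition makes these unital rings and $N_\pi$ a map of unital multiplicative monoids. Pulling $\nabla_{G/K}$ and $\emptyset\to G/K$ back along $\pi$ also puts $G/H\amalg G/H\to G/H$ and $\emptyset\to G/H$ in $\ccD$, which is a useful consistency check. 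The only genuine --- and minor --- obstacle is the bookkeeping in the middle paragraph: matching Mazur's maps $g_s$, $g_t$ with the pullbacks produced by the exponential-diagram manipulation, and checking that induction preserves the pullbacks involved so the induced-up maps are again pullbacks of maps known to be in $\ccD$. Everything else is a direct appeal to results already in hand.
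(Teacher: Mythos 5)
Your proposal is correct, and it fills in exactly the argument the paper leaves implicit when it declares the proposition ``immediate from the construction'': pullback stability places the exponential-diagram maps $g_s$ and $g_t$ in $\ccD$ so that both sides of the universal formulae already live in the subcategory $\cP^G_{\ccD}$, and the Tambara structure proposition applied with $T=G/K$ gives the multiplicative-monoid and unitality claims. No gaps; your caveat about the ambient norm in the transfer formula needing to lie in $\ccD$ is the right reading of the statement.
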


Putting this together, we can give an alternate formulation of a
$\cO$-Tambara functor.  This is an extremely useful characterization, as it
allows us to use the proofs of many results in the literature on
ordinary Tambara functors to deduce results about $\cO$-Tambara functors. 

\begin{theorem}\label{thm:GenAndRels}
Let $\cO$ be an indexing system.  A $\cO$-Tambara functor is a
commutative Green functor $\m{R}$ together with norm maps of
multiplicative monoids 
\[
N_H^K\colon \m{R}(G/H)\to\m{R}(G/K)
\]
for each $G/H\to G/K\in\cOrb_{\cO}$ that satisfy the Tambara
reciprocity relations given in Proposition~\ref{prop:SumTambaraRec} and \ref{prop:TrTambaraRec}.
\end{theorem}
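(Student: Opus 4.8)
The plan is to establish the asserted identification in both directions, leaning on the structural results already in hand. The forward direction needs almost no work: since the trivial indexing system is contained in $\cO$, an $\cO$-Tambara functor $\m{R}$ has an underlying commutative Green functor by the corollaries above, and for each $G/H\to G/K$ in $\cOrb_{\cO}\subset\Set^G_{\cO}$, Proposition~\ref{prop:NormsfromD} equips $\m{R}$ with a norm $N_H^K\colon\m{R}(G/H)\to\m{R}(G/K)$. Because $\Set^G_{\cO}$ is finite coproduct complete it contains $G/K\amalg G/K\to G/K$ and $\emptyset\to G/K$, so the same proposition tells us $N_H^K$ is a unital map of multiplicative monoids and obeys the reciprocity relations of Propositions~\ref{prop:SumTambaraRec} and~\ref{prop:TrTambaraRec}.

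For the converse I would reconstruct, from such data, a product-preserving functor $\cP^G_{\Set^G_{\cO}}\to\Set$ (equivalently, after group completion, an additive functor to abelian groups). The key is to reduce everything to orbits. By Proposition~\ref{prop:Products}, finite products in $\cP^G_{\Set^G_{\cO}}$ are created in $\cP^G$ and hence built from disjoint unions, so such a functor is determined by, and can be reassembled from, its values on the orbits $G/H$; and by Proposition~\ref{prop:CoprodComp}, $\Set^G_{\cO}$ is the finite coproduct completion of $\cOrb_{\cO}$, so by the canonical decomposition of Equation~\ref{eqn:aTNR} every morphism of $\cP^G_{\Set^G_{\cO}}$ takes the form $T_h\circ N_g\circ R_f$ with $g$ a disjoint union of maps between orbits, each in $\cOrb_{\cO}$. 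I would therefore declare $\m{R}(G/H)$ to be the given group, define $R_f$ and $T_h$ from the Mackey structure and the ring multiplication from the Green structure (all of which use only norms along fold maps, already present, as in the remark on Strickland's ``green'' condition), and define $N_g$ for $g$ a coproduct of orbit maps as the corresponding product of the prescribed norm maps, extended symmetric-monoidally.

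What remains is to check compatibility with composition in $\cP^G_{\Set^G_{\cO}}$, which by the commutation relations of Propositions~\ref{prop:TNRcomp}, \ref{prop:RN} and~\ref{prop:NT} reduces to finitely many orbit-level identities. The relations not involving $N$ are exactly the Mackey and Green axioms, which hold by hypothesis; $N_g\circ N_{g'}=N_{g\circ g'}$ restricts on orbits to functoriality of the norm along composites $G/H\to G/K\to G/L$ in $\cOrb_{\cO}$, which is consistent precisely because $\Set^G_{\cO}$ is a category; and $R_f\circ N_g=N_{g'}\circ R_{f'}$ is base change along a pullback that stays inside $\Set^G_{\cO}$ by pullback stability --- in both cases Theorem~\ref{thm:PolynomialSubcategories} guarantees the composite remains in $\cP^G_{\Set^G_{\cO}}$, so no relation forces a norm along a map outside $\cOrb_{\cO}$. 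The only genuinely new relation is $N_g\circ T_h$ of Proposition~\ref{prop:NT}: decomposing the middle $G$-set into orbits and feeding in the explicit exponential diagrams, this unwinds on orbits into exactly the norm-of-sum formula (Proposition~\ref{prop:SumTambaraRec}) and the norm-of-transfer formula (Proposition~\ref{prop:TrTambaraRec}), both of which are assumed. I expect this last verification to be the main obstacle --- confirming that, after passing to orbit summands, the abstract identity $N_g\circ T_h=R_{f'}\circ N_{g'}\circ T_{h'}$ is generated by the two reciprocity formulas together with the established Mackey and Green relations. But this is precisely Tambara's original presentation argument for ordinary semi-Tambara functors: restricting the exponents to $\Set^G_{\cO}$ only discards generators and relations, so his verification applies verbatim, and the construction (with the evident matching on morphisms) yields the asserted equivalence of categories.
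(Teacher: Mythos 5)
Your proposal is correct and follows essentially the same route as the paper: the forward direction via Proposition~\ref{prop:NormsfromD}, and the converse by decomposing maps in $\Set^G_{\cO}$ into fold maps and disjoint unions of orbit maps and invoking Tambara's generators-and-relations presentation, which the paper compresses into ``these steps are clearly reversible.'' The only nitpick is that a general exponent $g$ is a \emph{composite} of fold maps and coproducts of orbit maps rather than literally a disjoint union of orbit maps, but you account for the fold maps separately via the Green structure, so the argument stands.
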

\begin{proof}
Since $\Set^G_{\cO}$ is a wide, pullback stable, coproduct complete subcategory of $\Set^G$, any $\cO$-Tambara functor is a Green functor plus norm maps. Proposition~\ref{prop:NormsfromD} shows that if $H\subset K$ is such that $G/H\to G/K\in\Set^G_{\cO}$, then we have a norm map satisfying the desired properties. Finally, any map in $\Set^G_{\cO}$ can be written as a composite of iterated fold maps and disjoint unions of maps of the form $G/H\to G/K$, so by naturality, to such a composite we associate the corresponding product of norm maps. These steps are clearly reversible, again using that any map in $\Set^G$ and in $\Set^G_{\cO}$ can be written as a coproduct of disjoint unions of maps of orbits.
\end{proof}

As a straightforward corollary of this result, we obtain the following
consistency result connecting $\cO$-Tambara functors to $N_\infty$
ring spectra.

\begin{theorem}\label{thm:pi0}
Let $\cO$ be an $N_\infty$ operad and $R$ an $\cO$-algebra in
orthogonal $G$-spectra.  Then $\pi_0(R)$ is an $\cO$-Tambara functor.
\end{theorem}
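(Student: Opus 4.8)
The plan is to reduce the statement to the generators-and-relations description of $\cO$-Tambara functors in Theorem~\ref{thm:GenAndRels}: it suffices to equip $\pi_0(R)$ with the structure of a commutative Green functor, to produce norm maps of multiplicative monoids $N_H^K\colon \pi_0(R)(G/H)\to\pi_0(R)(G/K)$ for each $G/H\to G/K$ in $\cOrb_{\cO}$, and to check that these satisfy the Tambara reciprocity relations of Propositions~\ref{prop:SumTambaraRec} and~\ref{prop:TrTambaraRec}. The Green functor is immediate: every $\Ninfty$ operad has underlying nonequivariant operad an $E_\infty$ operad and contains all trivial $H$-sets, so $R$ is in particular a homotopy-commutative genuine ring $G$-spectrum, and applying $\pi_0$ to its multiplication and unit recovers, by the classical dictionary between genuine commutative ring spectra and Green functors, the underlying commutative Green functor of $\pi_0(R)$.

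For the norm maps I would use that, by the classification of $\Ninfty$ operads by indexing systems \cite{BHNinfty} together with the norm functor of \cite{HHR}, an $\cO$-algebra structure on $R$ supplies, for every admissible $K$-set $K/H\in\cO(K)$ --- equivalently, by Theorem~\ref{thm:Isomorphism}, for every $G/H\to G/K$ in $\cOrb_{\cO}$ --- a structure map of commutative $K$-ring spectra $N_H^K i_H^{\ast}R\to i_K^{\ast}R$, where $N_H^K\colon \Sp^{H}\to\Sp^{K}$ is the Hill--Hopkins--Ravenel norm. Applying $\pi_0^K$ and precomposing with the multiplicative (non-additive) ``external norm'' $\pi_0^H(R)\to\pi_0^K(N_H^K i_H^{\ast}R)$, which sends a class represented by $S^0\to i_H^{\ast}R$ to its image under $N_H^K$ and is multiplicative because $N_H^K$ carries smash products to smash products, yields the algebraic norm $N_H^K$ on $\pi_0$. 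It is a map of multiplicative monoids because both of its constituents are; unitality is then automatic from Proposition~\ref{prop:NormsfromD}, since $\Set^{G}_{\cO}$ contains $\emptyset\to\ast$ and $\ast\amalg\ast\to\ast$.

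It remains to verify Tambara reciprocity. The key structural point is that, since $\Set^{G}_{\cO}$ is wide and pullback stable, Theorem~\ref{thm:PolynomialSubcategories} guarantees that every basic polynomial appearing on the right-hand sides of the formulas in Propositions~\ref{prop:SumTambaraRec} and~\ref{prop:TrTambaraRec} --- in particular the norms $N_{g_s}$ and $N_{g_t}$ along the quotient maps occurring there --- already lies in $\cP^{G}_{\Set^{G}_{\cO}}$ as soon as $G/H\to G/K$ does; so checking the relations never leaves the $\cO$-admissible world. With that understood, the verification is exactly the one for genuine Tambara functors: the norm-of-a-sum formula is the effect on $\pi_0$ of the distributivity (exponential-diagram) behaviour of $N_H^K$ on a coproduct $i_H^{\ast}R\vee i_H^{\ast}R$, and the norm-of-a-transfer formula is the effect on $\pi_0$ of the Hill--Hopkins--Ravenel identity for the norm of a transfer. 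These are precisely the diagram chases of Brun, Hoyer, and Mazur \cite{HoyerThesis, MazurArxiv}, none of which invokes a norm beyond those the $\cO$-structure already provides, so running them verbatim establishes the relations.

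The hard part will be exactly this last step: one must promote the two comparison diagrams --- the external norm against $T_{f_s}\circ N_{g_s}\circ R_{h_s}$, and the norm of a transfer against $T_{f_t}\circ N_{g_t}\circ R_{h_t}$ --- to diagrams that commute in the equivariant stable category \emph{before} passing to $\pi_0$, all the while keeping track that the norm functor is only ever applied along maps admissible for $\cO$. This is essentially a re-derivation of Brun's theorem in the incomplete setting; once it is in hand, the conclusion that $\pi_0(R)$ is an $\cO$-Tambara functor follows formally from Theorem~\ref{thm:GenAndRels}.
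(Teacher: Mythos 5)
Your proposal is correct and follows exactly the route the paper intends: the paper states Theorem~\ref{thm:pi0} as a ``straightforward corollary'' of the generators-and-relations description in Theorem~\ref{thm:GenAndRels}, with the Green functor structure coming from the underlying $E_\infty$ structure, the norms from the $N_\infty$/indexing-system classification, and Tambara reciprocity from the fact that the universal formulae stay inside the admissible subcategory. Your outline simply fills in the details the paper leaves implicit, and you correctly identify the one genuinely nontrivial input (the Brun--Hoyer comparison of the spectrum-level norm with the polynomial formulae, restricted to admissible maps).
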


We include one final example of an interesting kind of $\ccD$-Tambara
functor.  Consider the category $\Set^G_{epi}$ of finite $G$-sets and
epimorphisms.  This is visibly a wide, pullback stable, and symmetric
monoidal subcategory of $\Set^G$.  Moreover, it contains
$\ast\amalg\ast\to\ast$.  On the other hand, it is notably missing
$\emptyset\to\ast$, which means that it is not one of the categories
we have considered before.  This lets us give what may be the first
elementary definition of a non-unital Tambara functor: 

\begin{definition}
A \defemph{non-unital Tambara functor} is a $\Set^G_{epi}$-Tambara functor.
\end{definition} 
\begin{proposition}
A non-unital Tambara functor $\m{R}$ is a non-unital, commutative Green functor $\m{R}$ together with maps of multiplicative monoids
\[
N_H^K\colon\m{R}(G/H)\to\m{R}(G/K)
\]
for all $G/H\to G/K$ that satisfy the relations of Proposition~\ref{prop:SumTambaraRec} and~\ref{prop:TrTambaraRec}.
\end{proposition}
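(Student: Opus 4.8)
The plan is to run the same argument as in the proof of Theorem~\ref{thm:GenAndRels}, keeping track of the two features that distinguish $\Set^G_{epi}$ from the subcategories $\Set^G_{\cO}$ coming from indexing systems: it contains \emph{every} map of orbits, but it omits $\emptyset\to\ast$.

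First I would record the structural input. Since $\Set^G_{epi}$ is a wide, pullback stable, symmetric monoidal subcategory of $\Set^G$, a $\Set^G_{epi}$-Tambara functor has an underlying Mackey functor, by the corollary following the $\Set^G_{Iso}$ case. Because the fold map $\ast\amalg\ast\to\ast$ lies in $\Set^G_{epi}$, the cited analogue of \cite[2.3]{Tambara} applies and equips each $\m{R}(S)$ with a commutative ring structure for which restrictions are ring maps, norms are maps of multiplicative monoids, and the Frobenius relation holds; since $\emptyset\to\ast$ is \emph{not} in $\Set^G_{epi}$, these rings need not be unital, so $\m{R}$ has precisely an underlying non-unital commutative Green functor.

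Next I would identify the norm data. Any equivariant map of orbits $G/H\to G/K$ is automatically surjective, since its image is a nonempty $G$-stable subset of the transitive $G$-set $G/K$; hence $\cOrb_{\Set^G_{epi}}=\cOrb$. Proposition~\ref{prop:NormsfromD} therefore produces, for \emph{every} $G/H\to G/K$, a norm $N_H^K\colon\m{R}(G/H)\to\m{R}(G/K)$ satisfying the Tambara reciprocity formulae of Propositions~\ref{prop:SumTambaraRec} and~\ref{prop:TrTambaraRec}, and since $G/K\amalg G/K\to G/K\in\Set^G_{epi}$ each is a map of multiplicative monoids (not unital, as $\emptyset\to G/K\notin\Set^G_{epi}$). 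This gives one direction. For the converse I would reconstruct a $\Set^G_{epi}$-semi-Tambara functor from a non-unital commutative Green functor equipped with such norms: given an epimorphism $f\colon S\to T$, write $T=\coprod_i G/H_i$ and $S_i=f^{-1}(G/H_i)$; surjectivity forces each $S_i$ nonempty, so no empty sets intervene, each $f|_{S_i}\colon S_i\to G/H_i$ is again epi, and after orbit-decomposing $S_i$ it is a composite of a disjoint union of orbit maps followed by iterated fold maps — all in $\Set^G_{epi}$. Thus every morphism of $\cP^{G}_{\Set^G_{epi}}$ is assembled, through the $T$/$N$/$R$ decomposition, from transfers, restrictions, fold maps, and norms along orbit maps, so by naturality the Green structure and the given norms determine $\m{R}$ on all polynomials; the commutation relations of Propositions~\ref{prop:TNRcomp}, \ref{prop:RN}, \ref{prop:NT} together with the reciprocity relations of Propositions~\ref{prop:SumTambaraRec} and~\ref{prop:TrTambaraRec} make the assignment well-defined, and the two passages are mutually inverse.

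The main obstacle is bookkeeping rather than mathematics: confirming that the listed reciprocity relations constitute a \emph{complete} presentation of $\cP^{G}_{\Set^G_{epi}}$ (inherited from Tambara's analysis of the generating morphisms and their relations, applied to this subgraph rather than reproved), and keeping the non-unital caveats consistent throughout — in particular making sure that no step of the reconstruction covertly invokes a multiplicative unit or the map $\emptyset\to G/K$.
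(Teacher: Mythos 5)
Your argument is correct and is exactly the route the paper intends: the proposition is stated as a direct specialization of the machinery behind Theorem~\ref{thm:GenAndRels} (the non-unital form of Tambara's ring-structure result plus Proposition~\ref{prop:NormsfromD}) to $\ccD=\Set^G_{epi}$, using that every orbit map is an epimorphism while $\emptyset\to\ast$ is not. Your observation that surjectivity keeps the empty set out of the orbit decomposition, so the reconstruction never invokes a unit, is precisely the point the paper leaves implicit.
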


\section{Categorical properties of incomplete Tambara functors}\label{sec:OTambprop}

In this section, we describe formal properties of the category of
$\cO$-Tambara functors.  We begin by describing limits and colimits in
$\cO$-Tambara functors.  We then turn to a study of ``change''
functors associated to changing the indexing system.  Finally, we
conclude with discussions of ideals of $\cO$-Tambara functors and
localization phenomena.

\subsection{Limits and colimits in \texorpdfstring{$\cO$}{O}-Tambara functors}

Since $\ccD$-Tambara functors are simply product-preserving functors
into a complete and cocomplete category, the category of all such
functors is clearly complete.

\begin{proposition}
The category of $\ccD$-Tambara functors has all limits, where
\[
\big(\!\lim \m{R}_{i}\big)(T)=\lim \big(\m{R}_{i}(T)\big).
\]
\end{proposition}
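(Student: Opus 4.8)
The plan is to reduce the statement to the corresponding fact about the category of all functors from a fixed small category to abelian groups, which is well-known to be complete with limits computed objectwise. Recall that a $\ccD$-Tambara functor is, by Definition~\ref{defn:incompleteTamb} and the remarks following it, an additive functor from the group completion of $\cP^G_\ccD$ to $\Ab$; equivalently, it is a product-preserving functor $\cP^G_\ccD \to \Ab$. So the category of $\ccD$-Tambara functors sits as a full subcategory of the functor category $\Fun(\cP^G_\ccD, \Ab)$ (or of $\Fun$ out of the group completion), and the only thing to check is that the full subcategory cut out by the product-preservation condition is closed under the limits computed in the ambient functor category.

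First I would recall that $\Fun(\cP^G_\ccD,\Ab)$ is complete, with limits formed objectwise: $(\lim \m{R}_i)(T) = \lim(\m{R}_i(T))$, and the functoriality in $T$ is the evident one. This is the standard fact that a functor category into a complete category is complete. Second, I would verify that this objectwise limit is again product-preserving. By Proposition~\ref{prop:Products}, finite products in $\cP^G_\ccD$ are created in $\cP^G$ and are given by disjoint unions of $G$-sets, so for a product $S \amalg T$ in $\cP^G_\ccD$ we must check $(\lim \m{R}_i)(S \amalg T) \cong (\lim \m{R}_i)(S) \times (\lim \m{R}_i)(T)$. This follows because each $\m{R}_i$ is product-preserving, so $\m{R}_i(S \amalg T) \cong \m{R}_i(S) \times \m{R}_i(T)$, and limits commute with finite products in $\Ab$ (indeed, both can be computed as limits, and limits commute with limits). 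Hence the objectwise limit lands in the full subcategory of product-preserving functors, which is exactly the category of $\ccD$-Tambara functors, and it plainly has the required universal property there since it does in the ambient category. The case of $\cO$-Tambara functors is the special case $\ccD = \Set^G_\cO$, which is wide, pullback stable, and symmetric monoidal by the results of Section~\ref{sec:PBStableSubcats}.

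The only mild subtlety — and the closest thing to an obstacle — is bookkeeping about which category one takes functors out of: the group-completed category versus $\cP^G_\ccD$ itself, and additive versus merely product-preserving functors. But these present no real difficulty: group completion of the Hom-monoids is left adjoint to the inclusion of $\Ab$ into commutative monoids, so product-preserving $\Set$-valued functors that happen to be group-valued correspond to additive functors on the group completion, and an objectwise limit of group-valued functors is group-valued since $\Ab$ is closed under limits in $\Set$. So the argument is essentially formal; the content is entirely carried by Proposition~\ref{prop:Products} guaranteeing that products in $\cP^G_\ccD$ agree with those in $\cP^G$, together with the standard completeness of functor categories.
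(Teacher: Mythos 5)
Your proof is correct and takes essentially the same route as the paper, which justifies the proposition with the single observation that $\ccD$-Tambara functors are product-preserving functors into a complete (and cocomplete) category, so that limits exist and are computed objectwise. Your additional verification that the objectwise limit remains product-preserving (via Proposition~\ref{prop:Products} and the commutation of limits with finite products) simply fills in the details the paper leaves implicit.
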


Moreover, since limits commute with filtered colimits in this setting,
we immediately deduce the existence of filtered colimits.

\begin{proposition}
The category of $\ccD$-Tambara functors has filtered colimits which
are formed object-wise.
\end{proposition}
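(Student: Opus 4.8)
The plan is to deduce the existence of filtered colimits from the corresponding fact about the value category, together with the already-established fact that $\ccD$-Tambara functors (indeed $\ccD$-semi-Tambara functors) are precisely the product-preserving functors out of $\cP^G_{\ccD}$. First I would fix a filtered diagram $\{\m{R}_i\}$ of $\ccD$-Tambara functors, and for each object $T$ of $\cP^G_{\ccD}$ form the colimit $\m{R}_\infty(T) := \colim_i \m{R}_i(T)$ in abelian groups. For a morphism $\varphi$ in $\cP^G_{\ccD}$, the maps $\m{R}_i(\varphi)$ are compatible with the structure maps of the diagram, so they induce $\m{R}_\infty(\varphi)$; functoriality follows from the universal property of the colimit. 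Thus $\m{R}_\infty$ is at least a functor $\cP^G_{\ccD} \to \Ab$.

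The one nontrivial point is to check that $\m{R}_\infty$ is product-preserving, i.e. that it is genuinely a $\ccD$-Tambara functor rather than merely a functor out of $\cP^G_{\ccD}$. Here I would invoke the fact that $\cP^G_{\ccD}$ has finitely many objects up to the relevant structure being tested — more precisely, the products in $\cP^G_{\ccD}$ are finite (they are given by disjoint union of $G$-sets, by Proposition~\ref{prop:Products}), and filtered colimits of abelian groups commute with finite products. Concretely: each $\m{R}_i$ satisfies $\m{R}_i(S \amalg T) \cong \m{R}_i(S) \times \m{R}_i(T)$ naturally, so taking $\colim_i$ of both sides and using that the filtered colimit functor $\Ab^{I} \to \Ab$ preserves finite products gives $\m{R}_\infty(S \amalg T) \cong \m{R}_\infty(S) \times \m{R}_\infty(T)$, as required. (The empty-product case, $\m{R}_\infty(\emptyset)$ is trivial, is the special case of the empty finite product.)

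It remains to verify the universal property: that $\m{R}_\infty$ with the evident maps $\m{R}_i \to \m{R}_\infty$ is the colimit in the category of $\ccD$-Tambara functors. A cocone $\{\m{R}_i \to \m{S}\}$ consists, objectwise, of a cocone of abelian groups, so there is a unique objectwise map $\m{R}_\infty(T) \to \m{S}(T)$; naturality of this collection of maps — i.e. that it assembles into a morphism of $\ccD$-Tambara functors — follows again from the universal property of each $\colim_i \m{R}_i(T)$, since for any morphism $\varphi$ in $\cP^G_{\ccD}$ both composites $\m{R}_\infty(T) \to \m{S}(T) \to \m{S}(T')$ and $\m{R}_\infty(T) \to \m{R}_\infty(T') \to \m{S}(T')$ restrict to the same map on each $\m{R}_i(T)$. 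I do not expect any real obstacle; the only thing to be careful about is the product-preservation check, which is exactly the place where one uses that the relevant limits (finite products) and the colimits (filtered) commute in $\Ab$. This is also why the statement is restricted to \emph{filtered} colimits: general colimits of abelian groups do not commute with products, so the objectwise formula fails for, e.g., coproducts of $\ccD$-Tambara functors.
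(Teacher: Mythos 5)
Your argument is correct and is exactly the one the paper has in mind: the paper disposes of this proposition in one line by noting that filtered colimits commute with (finite) limits in $\Ab$, which is precisely your key step showing the objectwise colimit remains product-preserving. Your closing remark about why the restriction to filtered colimits is essential is also the right diagnosis.
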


The case of arbitrary colimits is much more subtle, and it depends
very heavily only which (if any) fold maps are in $\ccD$. The case
that $\ccD=\Set^{G}_{Iso}$ is classical, and here colimits are also
formed objectwise.  We restrict attention to $\cO$-Tambara functors
from now on. The following is immediate from work of Strickland.

\begin{theorem}
The category of $\cO$-Tamabra functors is cocomplete.  The box product is the coproduct of $\cO$-Tambara functors.
\end{theorem}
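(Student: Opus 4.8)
The plan is to reduce the claim to Strickland's work by exploiting the fact that $\cP^G_{\cO}$ is a subcategory of $\cP^G$ sitting between the span category and the full polynomial category, so that $\cO$-Tambara functors are sandwiched between Mackey functors and ordinary Tambara functors, both of which are known to be cocomplete. First I would recall that by Theorem~\ref{thm:GenAndRels} an $\cO$-Tambara functor is precisely a commutative Green functor equipped with norm maps $N_H^K$ for $G/H \to G/K \in \cOrb_{\cO}$ satisfying the Tambara reciprocity relations of Propositions~\ref{prop:SumTambaraRec} and~\ref{prop:TrTambaraRec}. Since these relations are the same universal formulae that appear in the full Tambara case (just imposed for a subset of the pairs $H \subset K$), Strickland's construction of colimits of Tambara functors — in particular his identification of the box product $\m{R}_1 \Box \m{R}_2$ as the coproduct — can be run verbatim in this restricted setting: one forms the same underlying object and checks that only the norms indexed by $\cOrb_{\cO}$ need to be defined, and the reciprocity relations for those are exactly the ones Strickland already verifies.

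Concretely, I would proceed as follows. Step one: observe that the forgetful functor from $\cO$-Tambara functors to commutative Green functors (precomposition with the inclusion $\Set^G_{\cO^{tr}} \hookrightarrow \Set^G_{\cO}$ of Corollary~\ref{cor:PolynomialInclusion}) has a left adjoint, the free $\cO$-Tambara functor on a Green functor, built by the usual polynomial-monad style construction; this gives a monadic description that makes the category cocomplete provided the underlying category (Green functors) is and the monad preserves reflexive coequalizers. Step two: for the explicit description of coproducts, imitate Strickland's argument that $\m{R}_1 \Box \m{R}_2$, the box product of the underlying Green functors with its evident commutative multiplication, carries a canonical $\cO$-Tambara structure; the norm maps are assembled from the norms on $\m{R}_1$ and $\m{R}_2$ exactly as in the complete case, and Proposition~\ref{prop:Products} (products in $\cP^G_{\ccD}$ are created in $\cP^G$) guarantees that the coproduct diagram in $\cP^G_{\cO}$ maps correctly. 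Step three: verify the universal property, i.e.\ that $\cO$-Tambara maps out of $\m{R}_1 \Box \m{R}_2$ correspond to pairs of $\cO$-Tambara maps out of $\m{R}_1$ and $\m{R}_2$; this is again Strickland's computation, and the point is simply that the Tambara reciprocity relations being checked only involve the norms indexed by $\cOrb_{\cO}$, all of which are present. Step four: combine the existence of coproducts with the existence of coequalizers (which follows from the monadic presentation) to conclude cocompleteness.

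The main obstacle is bookkeeping rather than anything conceptual: one must be careful that \emph{every} step of Strickland's construction that invokes a norm map $N_H^K$ only ever uses pairs $H \subset K$ with $G/H \to G/K$ admissible for $\cO$, i.e.\ that the closure properties of an indexing system (closure under self-induction, which is precisely the statement that $\Set^G_{\cO}$ is a category) are exactly what is needed for the box-product multiplication and its norms to be well-defined and associative. Self-induction is what guarantees that a composite of admissible norms is admissible, which is invoked implicitly whenever Strickland composes norm maps; the coproduct-completeness of $\Set^G_{\cO}$ handles the disjoint-union bookkeeping. Once one checks that no forbidden norm is ever required — and Theorem~\ref{thm:GenAndRels} together with Proposition~\ref{prop:Products} makes this transparent — the rest is a direct transcription of the classical argument, so I would present it as ``the proof is exactly Strickland's, observing that only the norms indexed by $\cOrb_{\cO}$ enter and that these are closed under the composites that arise.''
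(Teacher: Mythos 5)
Your treatment of the coproduct is essentially the paper's: both arguments observe that Strickland's construction of norms on the box product $\m{R}_1\Box\m{R}_2$ and his verification of the universal property only ever manipulate norms indexed by the pairs $H\subset K$ that are actually present, so that by Theorem~\ref{thm:GenAndRels} the argument transcribes verbatim to the $\cO$-restricted setting. Where you diverge is in how you finish off cocompleteness. The paper gets coequalizers by noting that Strickland's argument for them makes no reference to the form of the polynomials, and then obtains infinite coproducts as filtered colimits of finite ones (filtered colimits having already been constructed objectwise). You instead propose a monadic presentation over Green functors, using the free functor of Proposition~\ref{prop:extend} for the inclusion $\cO^{tr}\subset\cO$, and deducing cocompleteness from cocompleteness of Green functors \emph{provided} the induced monad preserves reflexive coequalizers. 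That proviso is the one substantive thing you leave unchecked, and it is not automatic: the paper goes out of its way (Warning~\ref{warn:Projective}) to flag that free $\cO$-Tambara functors are homologically badly behaved as Mackey functors, so one should be wary of "standard" facts about this adjunction. The monadic route, if completed, is cleaner in that it delivers all colimits in one stroke (including infinite coproducts, which your step four otherwise glosses over, since the box product only gives \emph{finite} coproducts); the paper's route avoids the monadicity verification entirely by leaning on Strickland's explicit coequalizer construction. Either way the heart of the theorem --- that the only thing that could go wrong is an appeal to a forbidden norm, and that Theorem~\ref{thm:GenAndRels} rules this out --- is identified correctly in your write-up.
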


\begin{proof}
For coproducts, Strickland (following unpublished work of Tambara) shows that there is a canonical way to define norms on the box product of two Tambara functors in a way that is compatible with the norms on the factors \cite[Proposition 9.1]{Strickland}. The proof proceeds by constructing explicit norm maps and verifying that they satisfy the appropriate relations. Thus, by Theorem~\ref{thm:GenAndRels}, the proof goes through without change for our restricted class of norm maps, since all of the consistency relations also take place in that category.

Next, Strickland's argument for \cite[Propositions 10.5, 10.6]{Strickland} makes no reference of the forms of the polynomials, and hence holds in general to show that $\cO$-Tambara functors have coequalizers.  Since $\cO$-Tambara functors have infinite coproducts constructed as filtered colimits of finite coproducts, the result now follows.
\end{proof}

Since [semi-]$\cO$-Tambara functors are a diagram category, there are enough ``free'' objects. In particular, we can form a resolution of any $\cO$-Tambara functor by particularly simply ones, which allows for more direct computation. However, these are not immediately amenable to homological algebra constructions, as many of these fail to be flat as Mackey functors (see Warning~\ref{warn:Projective}). Performing an analysis similar to the passage from a rigid $\cO$-Tambara functor to a more homotopical ``$\Ninfty$-algebra in Mackey functors'' fixes this, but we will not focus on that in this paper.

\begin{definition}\label{defn:Frees}
If $H\subset G$, let
\[
\mA^{\cO}[x_{H}]=\cP^{G}_{\cO}(G/H,-)
\]
be the $\cO$-Tambara functor represented by $G/H$.
\end{definition}
The notation here is chosen to draw attention to the parallels with ordinary free commutative rings. 

By the Yoneda lemma, we have a natural isomorphism
\[
\OTamb(\mA^{\cO}[x_{H}],\m{R})\cong \m{R}(G/H),
\]
so in particular, given any $\cO$-Tambara functor $\m{R}$, we can find a free Tambara functor of the form
\[
\mA^{\cO}[x_{H_{1}},\dots]:=\mA^{\cO}[x_{H_{1}}]\Box\dots
\]
which maps surjectively onto $\m{R}$. In fact, by taking the generating set to be all of $\m{R}(G/H)$ as $H$ varies, we can produce this functorially in $\m{R}$. This allows us to form simplicial resolutions of any $\cO$-Tambara functor by frees.

\begin{warning}\label{warn:Projective}
If $\cO$ is non-trivial, then in general, the underlying Mackey functors for $\mA^{\cO}[x_{H}]$ will not be projective. An illuminating example is given by $G=C_{2}$ and $\cO$ the complete coefficient system. Then we can describe $\mA^{\cO}[x_{G}]=\mA^{\cO}[x]$ as
\[
\xymatrix{
{\mA^{\cO}[x](G/G)}\ar@(r,r)[d]^{res} & & {\Z[t]/(t^{2}-2t)[x,nx]/t(nx-x^{2})}\ar@(r,r)[d] \\
{\mA^{\cO}[x](G/e)}\ar@(l,l)[u]^{tr} & & {\Z[x],}\ar@(l,l)[u]
}
\]
where the restriction map takes $nx$ to $x^{2}$ and is the identity on $x$. The transfer map is just multiplication by $t$. (The norm map is induced by $x\mapsto nx$, together with the Tambara relations). As a Mackey functor, this can be rewritten as
\[
\bigoplus_{n\in \N}\mA\oplus \bigoplus_{j\in J}\m{I},
\]
where $\m{I}$ is the augmentation ideal of $\mA$ and where $J$ is a $\Z$-basis for the ideal generated by $nx-x^{2}$ in $\Z[nx,x]$. This has infinite homological dimension.
\end{warning}

\subsection{\texorpdfstring{$\cO$}{O}-Ideals}
Just as in the commutative ring and classical Tambara cases, the kernel of a map between $\cO$-Tambara functors has extra structure. We can define an $\cO$-Tambara ideal in an $\cO$-Tambara functor, generalizing work of Nakaoka \cite{NakaokaIdeals}.

\begin{definition}
If $\m{R}$ is a $\cO$-Tambara functor, then an $\cO$-ideal is a sub-Mackey functor $\m{J}$ such that
\begin{enumerate}
\item The multiplication on $\m{R}$ makes $\m{J}$ an $\m{R}$-bimodule and
\item If $f\colon S\to T$ is in $\Set^{G}_{\cO}$ and is surjective, then $\m{J}$ is closed under $N_{f}$.
\end{enumerate}
\end{definition}

\begin{remark}
At first blush, the surjective condition is somewhat weird. When one recalls that the norm associated to the unique map $\emptyset\to T$ is the multiplicative unit $1$ in $\m{R}(T)$, however, then we see that by excluding this map from our possible maps, we are simply not requiring that $\m{J}(T)$ contain $1$.
\end{remark}

\begin{example}
If $\cO$ is the trivial indexing system, then an $\cO$-ideal is simply the obvious notion of an ideal in a Green functor.
\end{example}

\begin{example}
If $\cO$ is the complete indexing system, then an $\cO$-ideal is a Tambara ideal in the sense of Nakaoka \cite{NakaokaIdeals}.
\end{example}

\begin{example}
If $\cO$ is any indexing system, then a $\cO$-ideal is simply an ideal in the underlying Green functor which is closed under all norms maps indexed by elements in $\cOrb^{\cO}$. In other words, it is an ideal in the underlying Green functor which is simultaneously a sub-non-unital Tambara functor.
\end{example}

Tambara ideals have the feature that the quotient by them is automatically an $\cO$-Tambara functor. The proof is identical to Nakaoka's, so we omit it.

\begin{proposition}[See {\cite[Prop 2.6]{NakaokaIdeals}}]
If $\m{J}$ is an $\cO$-Tambara ideal of $\m{R}$, then $\m{R}/\m{J}$ has an $\cO$-Tambara functor structure such that the natural map $\m{R}\to \m{R}/\m{I}$ is a map of $\cO$-Tambara functors.
\end{proposition}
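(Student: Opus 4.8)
The plan is to assemble the structure on $\m{R}/\m{J}$ through the generators-and-relations description of Theorem~\ref{thm:GenAndRels}, so that the only things to verify are that $\m{R}/\m{J}$ carries a commutative Green structure, that each norm $N_H^K$ with $G/H\to G/K\in\cOrb_{\cO}$ descends, and that the Tambara reciprocity relations of Propositions~\ref{prop:SumTambaraRec} and~\ref{prop:TrTambaraRec} are inherited. The first point is the usual argument: since $\m{J}$ is a sub-Mackey functor, the levelwise quotients $\m{R}(T)/\m{J}(T)$ assemble into a Mackey functor $\m{R}/\m{J}$, and since $\m{J}$ is an $\m{R}$-bimodule the multiplication descends exactly as for ordinary commutative rings and for Green functors (this is Nakaoka's observation \cite{NakaokaIdeals} in the classical case). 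Thus $\m{R}/\m{J}$ is a commutative Green functor and $\m{R}\to\m{R}/\m{J}$ is a map of Green functors.

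The one genuinely non-formal point, and the step I expect to be the main obstacle, is that each norm descends, since norms are not additive; this is exactly where the hypotheses defining an $\cO$-ideal are designed to be used. Fix $G/H\to G/K\in\cOrb_{\cO}$, let $a\in\m{R}(G/H)$ and $j\in\m{J}(G/H)$, and apply the universal formula of Proposition~\ref{prop:SumTambaraRec} to write $N_H^K(a+j)=T_{f_s}\circ N_{g_s}\circ R_{h_s}(a\oplus j)$. Decomposing $\Map(K/H,\ast\amalg\ast)$ into $K$-orbits and inducing up to $G$ splits the target of $N_{g_s}$ as a coproduct, hence splits $\m{R}$ of it as a product; on the component indexed by the constant map selecting $a$, the element $N_{g_s}\circ R_{h_s}(a\oplus j)$ is precisely $N_H^K(a)$, on the component of the constant map selecting $j$ it is $N_H^K(j)$ (a \emph{surjective} norm, since $G/H\to G/K$ is onto), and on each mixed component it is produced from $j$ by restrictions, multiplication by restrictions of $a$, and norms along surjective maps in $\Set^G_{\cO}$ (each such map being, after restricting to a point stabilizer and then inducing, the norm along a surjection of an admissible set — using that admissible sets are closed under restriction and subobjects, and that the outermost $N_{g_s}$ is itself a surjective $\Set^G_{\cO}$-map because $G/H\to G/K\in\cOrb_{\cO}$). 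The transfer $T_{f_s}$ then sums these components over $G$-orbits, and since $\m{J}$ is a sub-Mackey functor, an $\m{R}$-bimodule, and closed under surjective norms indexed by $\Set^G_{\cO}$, every component other than $N_H^K(a)$ lies in $\m{J}(G/K)$. Hence $N_H^K(a+j)\equiv N_H^K(a)\pmod{\m{J}(G/K)}$; taking $j=a-b$ shows $N_H^K$ is constant on cosets of $\m{J}$ and so descends to a map $\m{R}(G/H)/\m{J}(G/H)\to\m{R}(G/K)/\m{J}(G/K)$, which is a unital map of multiplicative monoids since $N_H^K$ is.

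Finally, having arranged by construction that $\m{R}\to\m{R}/\m{J}$ commutes with all restrictions, transfers, and norms, every universal identity among these operations that holds in $\m{R}$ holds on the image of this map, and surjectivity forces it to hold throughout $\m{R}/\m{J}$; in particular the reciprocity relations of Propositions~\ref{prop:SumTambaraRec} and~\ref{prop:TrTambaraRec} persist, so Theorem~\ref{thm:GenAndRels} identifies the data just assembled as an $\cO$-Tambara structure on $\m{R}/\m{J}$ for which $\m{R}\to\m{R}/\m{J}$ is a map of $\cO$-Tambara functors. The only delicate part of a full write-up is the orbit-by-orbit bookkeeping in the second step, verifying that every non-principal summand in the expansion of $N_H^K(a+j)$ is genuinely reached by the operations under which an $\cO$-ideal is closed; as the referenced propositions indicate, this is exactly Nakaoka's argument in the complete case \cite{NakaokaIdeals}, and nothing in it is sensitive to which norms are present.
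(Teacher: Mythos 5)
Your proposal is correct and follows the same route the paper intends: the paper omits the proof precisely because it is ``identical to Nakaoka's,'' and your argument is exactly that argument adapted to the incomplete setting --- quotient the Mackey functor levelwise, descend the multiplication via the bimodule condition, and descend each norm $N_H^K$ by expanding $N_H^K(a+j)$ with the universal sum formula of Proposition~\ref{prop:SumTambaraRec} and checking that every non-principal orbit summand lands in $\m{J}$ using closure under restrictions, transfers, $\m{R}$-multiplication, and norms along surjective maps of $\Set^G_{\cO}$. Your identification of the surjectivity clause in the definition of an $\cO$-ideal as the point where the argument would otherwise break (the norm along $\emptyset\to T$ being the unit) is exactly right, and no further commentary is needed.
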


Given any collection of subsets of $\m{R}(T)$ as $T$ varies, we have a smallest $\cO$-ideal containing them. Informally, it is the closure of these sets under all sums, products, restrictions, transfers, and norms. Some of the most naturally occurring subsets also result in the most pathological $\cO$-ideals, namely those subsets which are simply the entirety of $\m{R}(T)$ for some collection of $G$-sets $T$.

\begin{definition}
If $\cF$ is a family of subgroups of $G$, then let $\m{I}_{\mathcal F}^{\cO}$ be the $\cO$-ideal of the Burnside ring generated by $\mA(G/H)$ for $H\in\cF$.
\end{definition}

\begin{example}
If $\cO$ is the trivial indexing system, then $\m{I}_{\mathcal F}^{\cO}(G/K)$ is the subgroup of $\mA(G/K)$ generated by elements the form $K/H$ where $H\in \cF$.  
\end{example}

The quotients $\m{A}/\m{I}_{\mathcal F}^{\cO}$ show up as various left adjoints to the forgetful functor applied to the zero Tambara functor, as described in Proposition~\ref{prop:NormofZero} below. Similarly, just as $\m{I}_{\mathcal F}^{\cO^{tr}}$ is $\m{\pi}_{0}(E\mathcal F_{+})$, when $\cO$ is the indexing system associated to an $\Ninfty$ operad $\cO$, the rings $\m{A}/\m{I}_{\mathcal F}^{\cO}$ is $\m{\pi}_0$ of the nullification functor killing all cells induced up from elements in the family applied to the zero sphere in the category of $\cO$-algebras.

\subsection{Change of Structure}

If $\ccD\subset \ccD'$, then applying Corollary~\ref{cor:PolynomialInclusion} gives us an inclusion
\[
\cP^{G}_{\ccD}\subset\cP^{G}_{\ccD'}.
\]
We apply this in the case of $\cO\subset \cO'$ and to the obvious inclusion  $\Set^{G}_{Iso}\subset \Set^{G}_{\cO}$. 
These gives us restriction functors.
\begin{proposition}
If $\cO\subset\cO'$, then there is a canonical forgetful functor
\[
\cO'\mhyphen\Tamb\to\OTamb
\]
given by precomposition with the inclusion. This is strong symmetric monoidal.
\end{proposition}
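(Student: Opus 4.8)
The plan is to produce the functor as precomposition along the inclusion of polynomial categories, check that it preserves the defining properties of an (incomplete) Tambara functor, and then deduce monoidality from the fact that the box product is computed on underlying Green functors. First I would assemble the functor: since $\cO\subset\cO'$, Lemma~\ref{lem:AdmissibleInclusion} gives $\Set^G_{\cO}\subset\Set^G_{\cO'}$, both wide and pullback stable subcategories of $\Set^G$, so Corollary~\ref{cor:PolynomialInclusion} yields an inclusion of subcategories $\iota\colon\cP^G_{\cO}\hookrightarrow\cP^G_{\cO'}$, and the forgetful functor is precomposition $\iota^\ast$. To see that $\iota^\ast$ carries $\cO'$-Tambara functors to $\cO$-Tambara functors, note that by Proposition~\ref{prop:Products} finite products in $\cP^G_{\cO}$ and in $\cP^G_{\cO'}$ are each created in $\cP^G$: the product of $S$ and $T$ is $S\amalg T$ with projections $R_{i_S},R_{i_T}$, the identical diagram in either category. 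Hence $\iota$ preserves finite products, so precomposition with $\iota$ sends a product-preserving, group-valued functor $\cP^G_{\cO'}\to\Ab$ to a product-preserving, group-valued functor $\cP^G_{\cO}\to\Ab$; functoriality in the Tambara functor (on morphisms of $\cO'\mhyphen\Tamb$) is the usual functoriality of precomposition. This proves the first sentence.

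For the monoidal statement, recall that $\cO\mhyphen\Tamb$ and $\cO'\mhyphen\Tamb$ are symmetric monoidal under the box product, which by the cocompleteness theorem above is the coproduct, with unit the Burnside Mackey functor carrying its canonical incomplete Tambara structure. I would show $\iota^\ast$ preserves finite coproducts. The key point, from Strickland's construction of the box product recalled in that theorem, is that the underlying Green functor of a box product $\mR\Box\mR'$ of incomplete Tambara functors is the box product (equivalently, the coproduct) of the underlying Green functors, and that $\iota^\ast$ is the identity on underlying Green functors and their maps, discarding only norm maps indexed outside $\cOrb_{\cO}$. Consequently, for $\cO'$-Tambara functors $\mR,\mR'$, the comparison map $\iota^\ast\mR\,\Box\,\iota^\ast\mR'\to\iota^\ast(\mR\Box\mR')$ induced by the coprojections is an isomorphism of underlying Green functors; and a morphism of $\cO$-Tambara functors that is an isomorphism on underlying Green functors is an isomorphism of $\cO$-Tambara functors, since its Green-theoretic inverse automatically commutes with the $\cO$-norm maps (conjugating the relation $\phi\circ N=N\circ\phi$ by $\phi^{-1}$ gives the corresponding relation for $\phi^{-1}$). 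Applying the same reasoning to the empty coproduct shows $\iota^\ast$ sends the unit to the unit. Taking these comparison isomorphisms as the monoidal structure maps, the associativity, unit, and symmetry coherences can all be checked on underlying Green functors, where they reduce to uniqueness up to canonical isomorphism of coproducts; hence $\iota^\ast$ is strong symmetric monoidal.

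The construction and product-preservation steps are purely formal, and the functoriality is immediate. The only real content is the monoidal claim, and the step I expect to be the crux is the identification of the underlying Green functor of the box product of incomplete Tambara functors with the coproduct of the underlying Green functors: once this is in hand, the fact that $\iota^\ast$ does not alter underlying Green functors forces it to commute with box products up to canonical isomorphism, and the coherence conditions follow from the universal property of coproducts. Concretely, this amounts to the observation that Strickland's formula for a $\cOrb_{\cO}$-indexed norm on the box product uses only $\cOrb_{\cO}$-indexed data (Green structure together with norms indexed by orbits admissible for $\cO$, which is closed under passage to subgroups by the asymmetry noted after Proposition~\ref{prop:Full}), so that the $\cO$-structure of $\mR\Box_{\cO'}\mR'$ is built from the $\cO$-structures of $\mR$ and $\mR'$ by exactly the formulas defining the $\cO$-box product. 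This is precisely the input already provided when we invoked Strickland's construction to establish cocompleteness, so no genuinely new argument is required.
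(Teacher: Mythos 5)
Your proposal is correct and follows essentially the same route as the paper: the paper's entire proof is the one-line observation that the symmetric monoidal product is the box product on underlying Mackey functors, which is exactly the crux you identify and then elaborate (comparison map via the universal property of the coproduct, checked to be an isomorphism on underlying Green functors, with Strickland's norm formulas on the box product depending only on the admissible data). Your version is just a fully spelled-out form of the paper's argument.
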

\begin{proof}
The only statement requiring proof is that this is strong symmetric monoidal. Here, though, we simply observe that the symmetric monoidal product is simply the box product on the underlying Mackey functors.
\end{proof}

\begin{remark}
From the point of view of structure on the resulting underlying Green functor, the forgetful functor simply forgets all norms in $\cO'$ that are not also norms in $\cO$.
\end{remark}

\begin{proposition}
For any $\cO$, there is a canonical, strong symmetric monoidal forgetful functor
\[
U\colon\OTamb\to\Mackey.
\]
\end{proposition}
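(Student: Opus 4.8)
The plan is to realize $U$ as a special case of the change-of-structure functors constructed above, exploiting the fact that $\Set^G_{Iso}$ sits inside every wide, pullback stable subcategory of $\Set^G$. First I would note that since $\Set^G$ has a terminal object and $\Set^G_{\cO}$ is pullback stable, Proposition~\ref{prop:basic} shows that $\Set^G_{\cO}$ is wide and in fact contains every isomorphism; hence $\Set^G_{Iso}\subset\Set^G_{\cO}$ is an inclusion of wide, pullback stable subcategories. Corollary~\ref{cor:PolynomialInclusion} then produces an inclusion of polynomial categories $\cP^G_{\Set^G_{Iso}}\hookrightarrow\cP^G_{\Set^G_{\cO}}=\cP^G_{\cO}$, and precomposition with this inclusion sends any product-preserving, abelian group valued functor on $\cP^G_{\cO}$ to one on $\cP^G_{\Set^G_{Iso}}$. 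By the proposition identifying $\Set^G_{Iso}$-Tambara functors with Mackey functors, the target category is precisely $\Mackey$, so precomposition defines $U$ on objects, and functoriality of precomposition handles morphisms. Thus $U$ is canonical, being induced by the canonical inclusion of indexing systems $\Set^G_{Iso}\subset\Set^G_{\cO}$.

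Next I would verify that $U$ is strong symmetric monoidal, arguing exactly as in the proof of the $\cO\subset\cO'$ forgetful functor above. By Theorem~\ref{thm:GenAndRels} together with Strickland's construction of norm maps on the box product \cite[Proposition 9.1]{Strickland}, the coproduct --- which is the symmetric monoidal product --- of two $\cO$-Tambara functors has underlying Mackey functor the box product of the underlying Mackey functors, and the unit object $\mA^{\cO}$ has underlying Mackey functor the Burnside functor $\mA$. Since the symmetric monoidal structure on $\Mackey$ is the box product with unit $\mA$, the comparison maps $U(\m{R}\,\Box\,\m{S})\to U(\m{R})\,\Box\,U(\m{S})$ and $\mA\to U(\mA^{\cO})$ are the identity, and the associativity, unit, and symmetry coherence diagrams commute because they do for the box product of Mackey functors. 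Hence $U$ is in fact essentially strict symmetric monoidal.

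The only point that takes any care --- and so the main, if mild, obstacle --- is making precise that the identification of $\Set^G_{Iso}$-Tambara functors with Mackey functors is an equivalence of \emph{symmetric monoidal} categories, i.e.\ that the box product on $\Set^G_{Iso}\mhyphen\Tamb$ supplied by the general theory of $\ccD$-Tambara functors corresponds under this equivalence to the usual box product of Mackey functors. This too follows from Strickland's analysis of the box product, and once it is recorded, the construction of $U$ and the verification that it is strong symmetric monoidal are purely formal consequences of Corollary~\ref{cor:PolynomialInclusion} and the results of Section~\ref{sec:OTambara}.
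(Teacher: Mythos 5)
Your proposal is correct and follows essentially the same route as the paper: the paper obtains $U$ by precomposition along the inclusion $\cP^{G}_{Iso}\subset\cP^{G}_{\cO}$ furnished by Corollary~\ref{cor:PolynomialInclusion}, identifies the target with $\Mackey$ via the proposition on $\Set^{G}_{Iso}$-Tambara functors, and disposes of the strong symmetric monoidal claim exactly as you do, by observing that the monoidal product of $\cO$-Tambara functors is the box product computed on underlying Mackey functors. Your additional remarks on coherence and on the monoidal compatibility of the $\Set^{G}_{Iso}$ identification only make explicit what the paper leaves implicit.
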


Since the restriction is given by the inclusion of a subcategory, the left adjoint to it is easy to determine.

\begin{proposition}\label{prop:extend}
If $\cO\subset\cO'$, then the restriction
\[
\cO'\mhyphen\Tamb\to \OTamb
\]
has a left adjoint given by Kan extension along the inclusion
$\cP^{G}_{\cO}\subset \cP^{G}_{\cO'}$, which we write as
\[
\cO'\otimes_{\cO}(-)\colon\OTamb\to\cO'\mhyphen\Tamb.
\]
\end{proposition}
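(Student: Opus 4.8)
The plan is to recognize the forgetful functor as precomposition with a product-preserving functor and then to quote the standard fact that such restrictions between categories of (abelian-group-valued) models admit left adjoints, computed by a left Kan extension that one reflects back into the category of models.

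First I would set up the formalism. By Corollary~\ref{cor:PolynomialInclusion} the hypothesis $\cO\subset\cO'$ gives an inclusion of categories $\iota\colon\cP^{G}_{\cO}\hookrightarrow\cP^{G}_{\cO'}$; applying Proposition~\ref{prop:Products} to both $\Set^{G}_{\cO}$ and $\Set^{G}_{\cO'}$ shows that finite products in $\cP^{G}_{\cO}$ and $\cP^{G}_{\cO'}$ are created in $\cP^{G}$, so $\iota$ is product-preserving. Since an $\cO$-Tambara functor is by definition a product-preserving functor $\cP^{G}_{\cO}\to\Ab$ (equivalently, an additive functor out of the group completion of $\cP^{G}_{\cO}$), the forgetful functor $\cO'\mhyphen\Tamb\to\OTamb$ is exactly the restriction functor $\iota^{\ast}$. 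Now both $\OTamb$ and $\cO'\mhyphen\Tamb$ are locally presentable: they are categories of product-preserving functors out of an essentially small category with finite products, hence models of an essentially algebraic theory, and they are cocomplete by the results above. Moreover $\iota^{\ast}$ preserves all limits and all filtered colimits, since both are computed pointwise in these categories. The adjoint functor theorem for locally presentable categories therefore produces a left adjoint, which we name $\cO'\otimes_{\cO}(-)$.

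Next I would identify this left adjoint with a left Kan extension along $\iota$, to justify both the statement and the notation. Abstractly, $\cO'\otimes_{\cO}(-)$ is the composite of the ordinary pointwise left Kan extension along $\iota$, viewed between the full functor categories $[\cP^{G}_{\cO},\Ab]\to[\cP^{G}_{\cO'},\Ab]$, with the reflection of $[\cP^{G}_{\cO'},\Ab]$ onto its reflective subcategory $\cO'\mhyphen\Tamb$ of product-preserving functors; this reflection exists because $\cO'\mhyphen\Tamb$ is closed under limits and filtered colimits in the ambient functor category and is accessible. Concretely, the left adjoint is pinned down by its behaviour on the free objects $\mA^{\cO}[x_{H}]=\cP^{G}_{\cO}(G/H,-)$ of Definition~\ref{defn:Frees}: for any $\m{S}\in\cO'\mhyphen\Tamb$ one has $\cO'\mhyphen\Tamb\big(\cO'\otimes_{\cO}\mA^{\cO}[x_{H}],\m{S}\big)\cong\OTamb\big(\mA^{\cO}[x_{H}],\iota^{\ast}\m{S}\big)\cong\m{S}(G/H)\cong\cO'\mhyphen\Tamb\big(\mA^{\cO'}[x_{H}],\m{S}\big)$, so $\cO'\otimes_{\cO}\mA^{\cO}[x_{H}]\cong\mA^{\cO'}[x_{H}]$ by Yoneda. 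Since every $\cO$-Tambara functor is a canonical reflexive coequalizer of coproducts of such frees and a left adjoint preserves colimits, this determines $\cO'\otimes_{\cO}(-)$ everywhere; equivalently one may read this paragraph backwards as a hands-on construction, defining $\cO'\otimes_{\cO}(-)$ on coproducts of frees and extending along these resolutions using cocompleteness of $\cO'\mhyphen\Tamb$.

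The main obstacle, and the only genuinely non-formal point, is the reflection step: the naive pointwise left Kan extension of a product-preserving functor along $\iota$ is typically not product-preserving, because colimits do not commute with finite products in $\Ab$ --- this is the usual phenomenon that extension of scalars between algebraic theories is not the presheaf Kan extension but its reflection, which enforces the new relations. So the care goes into verifying that $\cO'\mhyphen\Tamb$ really is a reflective subcategory of the ambient functor category (equivalently, in the hands-on version, that the extension over a free resolution is independent of the resolution chosen), after which the triangle identities for the adjunction $\cO'\otimes_{\cO}(-)\dashv\iota^{\ast}$ are routine.
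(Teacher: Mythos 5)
Your argument is correct, but it takes a different route from the paper, which in fact offers no proof at all: the proposition is presented as immediate from the observation that restriction along a subcategory inclusion has the left Kan extension as its left adjoint, the implicit input being the fact (made explicit later for the change-of-group functor $n_{H}^{G}$, where the paper cites Kelly--Lack) that the \emph{pointwise} left Kan extension along a finite-product-preserving functor between categories with finite products already carries product-preserving functors to product-preserving functors. You instead obtain existence from local presentability plus the adjoint functor theorem, and then identify the left adjoint as the reflection of the pointwise Kan extension, pinning it down on the representables $\mA^{\cO}[x_{H}]$ by Yoneda and extending along free resolutions; this is a perfectly valid alternative and has the merit of not depending on the Kelly--Lack theorem. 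The one claim I would push back on is that the pointwise left Kan extension of a product-preserving functor along $\iota$ is ``typically not product-preserving'': at the level of $\Set$-valued semi-Tambara functors this is exactly what the Lawvere/Kelly--Lack theorem rules out --- left Kan extension along a morphism of algebraic theories does preserve models --- so the reflection you insert is the identity on the image of $\mathrm{Lan}_{\iota}$ restricted to $\OTamb$ (and in the group-completed, additive formulation the analogous preservation of biproducts is automatic). This does not affect the correctness of your proof, since composing with the reflection is harmless and your verification that $\cO'\mhyphen\Tamb$ is reflective in the ambient functor category is all the adjunction needs; but the step you single out as ``the only genuinely non-formal point'' is less of an obstacle than you suggest, and it is precisely the point the paper silently outsources to Kelly--Lack.
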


The functor described in Proposition~\ref{prop:extend} has a simple
conceptual description.  We freely adjoin norms corresponding to maps
in $\cO$ which are not in $\cO'$ and all of their transfers, and then
we impose relations reflecting the norms being multiplicative
homomorphisms, norms factoring through the Weyl invariants, and the
universal ``Tambara reciprocity'' formulae reviewed in
Propositions~\ref{prop:SumTambaraRec} and \ref{prop:TrTambaraRec}. 

\begin{corollary}
If $\cO$ is any $\Ninfty$ operad, then the forgetful functor
\[
U\colon \OTamb\to\Mackey
\]
has a left adjoint $\Sym_{\cO}(-)$ given by left Kan extension along the inclusion $\cP^{G}_{Iso}\subset \cP^{G}_{\cO}$.
\end{corollary}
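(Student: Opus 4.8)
The plan is to deduce this from exactly the mechanism used for Proposition~\ref{prop:extend}, now applied to the inclusion of exponent categories $\Set^{G}_{Iso}\subset\Set^{G}_{\cO}$ rather than to an inclusion of indexing systems. First I would observe that $\Set^{G}_{Iso}$, the wide subcategory of $\Set^{G}$ whose morphisms are the isomorphisms, is wide and pullback stable, since a pullback of an isomorphism is an isomorphism; hence Corollary~\ref{cor:PolynomialInclusion} applies and yields an honest inclusion of categories $\cP^{G}_{Iso}\subset\cP^{G}_{\cO}$. By Tambara's group-completion argument (the proposition following Definition~\ref{defn:incompleteTamb}), an $\cO$-Tambara functor is the same thing as an additive functor out of the group completion $\Burn_{\cO}$ of $\cP^{G}_{\cO}$, and a Mackey functor is an additive functor out of the group completion $\Burn_{Iso}$ of $\cP^{G}_{Iso}$, i.e. out of the ordinary Burnside category. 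Group-completing morphism sets turns the inclusion above into an $\Ab$-enriched functor $\iota\colon\Burn_{Iso}\to\Burn_{\cO}$, and the forgetful functor $U$ is precisely restriction $\iota^{*}$ along $\iota$.

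With this identification in hand the corollary becomes a standard fact of enriched category theory: restriction along an $\Ab$-enriched functor between small $\Ab$-enriched categories, taking values in the complete and cocomplete category $\Ab$, always has a left adjoint, the $\Ab$-enriched left Kan extension $\iota_{!}$, computed for a Mackey functor $\mM$ by the coend
\[
\Sym_{\cO}(\mM)=\iota_{!}\mM,\qquad (\iota_{!}\mM)(T)=\int^{S\in\Burn_{Iso}}\Burn_{\cO}\big(\iota(S),T\big)\otimes_{\Z}\mM(S).
\]
This coend exists since $\Burn_{Iso}$ is small and $\Ab$ is cocomplete, and the usual Kan-extension/restriction adjunction supplies the natural isomorphism $\OTamb(\Sym_{\cO}\mM,\mR)\cong\Mackey(\mM,U\mR)$. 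The one point that deserves a moment of care is that one must use the $\Ab$-enriched, not the $\Cat$-enriched, left Kan extension: the former automatically produces an additive functor on $\Burn_{\cO}$, hence a genuine $\cO$-Tambara functor, whereas the underlying unenriched Kan extension need not be product preserving. This is the only place where anything could go wrong, and it is entirely formal; indeed it is just the observation that $\Mackey$ and $\OTamb$ are the module categories of the small $\Ab$-enriched categories $\Burn_{Iso}$ and $\Burn_{\cO}$, and that restriction of modules along $\iota$ has a left adjoint.

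Finally I would record, as in the discussion following Proposition~\ref{prop:extend}, the conceptual description of the resulting functor: $\Sym_{\cO}(\mM)$ freely adjoins to $\mM$ all norm maps $N_{H}^{K}$ indexed by $G/H\to G/K\in\cOrb_{\cO}$ together with all their transfers, and then imposes the relations of Theorem~\ref{thm:GenAndRels}, namely that the norms are multiplicative, factor through Weyl invariants, and satisfy the Tambara reciprocity formulae of Propositions~\ref{prop:SumTambaraRec} and~\ref{prop:TrTambaraRec}. Evaluated on a representable Mackey functor this recovers the representable $\cO$-Tambara functor $\mA^{\cO}[x_{H}]$ of Definition~\ref{defn:Frees} --- which justifies the notation $\Sym_{\cO}$ and makes the Yoneda description of the adjunction transparent --- and for the trivial indexing system it specializes to the free Green functor on a Mackey functor. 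I do not anticipate a genuine obstacle: the substantive content is already carried by Corollary~\ref{cor:PolynomialInclusion} and by the identification of $\cO$-Tambara functors with additive functors on a polynomial category, so the proof is really a matter of assembling these together with the choice of enrichment flagged above.
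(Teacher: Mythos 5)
Your proposal is correct and follows the paper's route: the paper likewise obtains $\Sym_{\cO}$ as the left Kan extension along the inclusion $\cP^{G}_{Iso}\subset\cP^{G}_{\cO}$, treating the corollary as an immediate instance of the same mechanism as Proposition~\ref{prop:extend}. The one point you flag --- that the Kan extension must be taken in the additive ($\Ab$-enriched, group-completed) sense so that the result is product-preserving and hence a genuine $\cO$-Tambara functor --- is precisely the detail the paper leaves implicit (it appeals to the Kelly--Lack product-preservation result only later, for $n_{H}^{G}$), and your resolution of it via the Burnside-category reformulation is sound.
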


Since $\Sym_{\cO}(-)$ is the left Kan extension, we know exactly what
it does to representable Mackey functors, the projective generators of
the category of Mackey functors.  Following the notation of
Definition~\ref{defn:Frees}, we let 
\[
\mA\cdot\{x_{H}\}
\]
denote the Mackey functor represented by $G/H$. We then have a natural isomorphism
\[
\Sym_{\cO}(\mA\cdot\{x_{H}\})\cong \mA[x_{H}].
\]

This makes it very easy to compute $\Sym_{\cO}(\mM)$ for any Mackey functor $\mM$. 
\begin{proposition}
If $\m{P}_{1}\to \m{P}_{0}\to \mM$ is the start of a projective resolution of $\mM$, then $\Sym_{\cO}$ induces an isomorphism
\[
\Sym_{\cO}(\mM)\cong \Sym_{\cO}(\m{P}_{0})\Box_{\Sym_{\cO}(\m{P}_{1})} \mA.
\]
\end{proposition}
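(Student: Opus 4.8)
The plan is to exploit that $\Sym_{\cO}$ is a left adjoint, hence preserves all colimits, and that the start of a projective resolution presents $\mM$ as a cokernel. Write $d\colon\m{P}_{1}\to\m{P}_{0}$ for the given map. Saying that $\m{P}_{1}\xrightarrow{d}\m{P}_{0}\to\mM$ is the start of a projective resolution means precisely that $\m{P}_{1}\xrightarrow{d}\m{P}_{0}\to\mM\to 0$ is exact, i.e.\ $\mM=\coker(d)$ in the abelian category $\Mackey$. (Projectivity of the $\m{P}_{i}$ is not actually needed for the isomorphism; it only makes $\Sym_{\cO}(\m{P}_{i})$ concretely a polynomial $\cO$-Tambara functor, via $\Sym_{\cO}(\mA\cdot\{x_{H}\})\cong\mA[x_{H}]$.) Since $\Mackey$ is abelian, $\coker(d)$ is the coequalizer of $d$ and the zero map $0\colon\m{P}_{1}\to\m{P}_{0}$, and that zero map factors through the zero object of $\Mackey$ as $\m{P}_{1}\to 0\to\m{P}_{0}$.

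Next I would apply $\Sym_{\cO}$. Because it is left adjoint to $U\colon\OTamb\to\Mackey$, it preserves coequalizers, so $\Sym_{\cO}(\mM)$ is the coequalizer, computed in the cocomplete category $\OTamb$, of the pair
\[
\Sym_{\cO}(d),\ \Sym_{\cO}(0)\colon \Sym_{\cO}(\m{P}_{1})\rightrightarrows\Sym_{\cO}(\m{P}_{0}).
\]
Being a left adjoint, $\Sym_{\cO}$ also preserves the initial object, and the initial $\cO$-Tambara functor is the Burnside Mackey functor $\mA$ (indeed $\mA=\Sym_{\cO}(0)$ for the zero Mackey functor). Hence the second leg factors as
\[
\Sym_{\cO}(\m{P}_{1})\xrightarrow{\epsilon}\mA\xrightarrow{\eta}\Sym_{\cO}(\m{P}_{0}),
\]
where $\eta$ is the unique map out of the initial object and $\epsilon=\Sym_{\cO}(\m{P}_{1}\to 0)$.

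Finally I would invoke the elementary categorical fact that the coequalizer of a parallel pair $f,g\colon X\rightrightarrows Y$ in which $g$ factors through an initial object $I$, say $g=\eta\circ\epsilon$ with $\eta\colon I\to Y$, coincides with the pushout of $Y\xleftarrow{f}X\xrightarrow{\epsilon}I$: a cocone under this span with vertex $\m{R}$ is a map $h\colon Y\to\m{R}$ together with the forced map $I\to\m{R}$, subject to exactly the relation $h\circ f=h\circ g$. Applying this with $f=\Sym_{\cO}(d)$ and the factorization above identifies $\Sym_{\cO}(\mM)$ with the pushout in $\OTamb$ of
\[
\Sym_{\cO}(\m{P}_{0})\xleftarrow{\Sym_{\cO}(d)}\Sym_{\cO}(\m{P}_{1})\xrightarrow{\epsilon}\mA .
\]
Since the box product is the coproduct of $\cO$-Tambara functors, this pushout is by definition the relative box product $\Sym_{\cO}(\m{P}_{0})\Box_{\Sym_{\cO}(\m{P}_{1})}\mA$, which is the claimed isomorphism.

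The argument is entirely formal; the points needing care are bookkeeping. One must make sure the colimit in play is genuinely a coequalizer/cokernel so that left-adjointness of $\Sym_{\cO}$ applies, pin down $\Sym_{\cO}(0)\cong\mA$ together with the resulting factorization of the second leg, and verify that the symbol $\Box_{\Sym_{\cO}(\m{P}_{1})}$ in the statement denotes precisely the pushout in $\OTamb$ (equivalently, the relative box product coequalizing the two $\Sym_{\cO}(\m{P}_{1})$-actions). I expect the main obstacle, to the extent there is one, to be this last notational matching; once it is fixed, the proof is immediate.
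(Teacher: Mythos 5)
Your argument is correct and is precisely the formal left-adjoint argument the paper leaves implicit (the proposition is stated there without proof as an immediate consequence of $\Sym_{\cO}$ being a left Kan extension, hence colimit-preserving). Your identification of the coequalizer with the pushout along the initial object $\mA=\Sym_{\cO}(0)$, and of that pushout with the relative box product, is exactly the intended bookkeeping.
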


\begin{remark}
Here the observation in Warning~\ref{warn:Projective} comes into play: the underlying Mackey functor for a free $\cO$-Tambara functor is essentially never projective. This means care must be taken when performing homological algebra constructions. These results should be seen as the algebra incarnation of the topological result that the $G$-spectrum underlying an equivariant commutative ring spectrum is almost never cofibrant.
\end{remark}

\subsection{Localization of \texorpdfstring{$\cO$}{O}-Tambara functors}

The free $\cO$-Tambara functors of Definition~\ref{defn:Frees} allow us to invert arbitrary collections of elements in $\cO$-Tambara functors. 

\begin{definition}
Let $\m{R}$ be a $\cO$-Tambara functor and let $\m{S}=\{(a_i,T_i) | a_i\in \m{R}(T_i), i\in I\}$ be a collection of elements in the values of $\m{R}$ at various finite $G$-sets. Then we say that a map $\phi\colon\m{R}\to\m{B}$ of $\cO$-Tambara functors \defemph{inverts $\m{S}$} if for all $i\in I$
\[
\phi(a_i)\in\m{B}(T_i)^\times.
\]
\end{definition}

It is clear that if $\phi\colon\m{R}\to\m{B}$ inverts $\m{S}$ and if $\psi\colon\m{B}\to\m{B'}$ is any map of $\cO$-Tambara functors, then $\psi\circ\phi$ inverts $\m{S}$. Thus the subgraph of the category of $\cO$-Tambara functors under $\m{R}$ is a subcategory provided it is non-empty. Luckily, the terminal $\cO$-Tambara functor provides an example.

\begin{proposition}
The zero $\cO$-algebra inverts any set $\m{S}$ for any $\m{R}$.
\end{proposition}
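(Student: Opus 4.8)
The claim is that the zero $\cO$-Tambara functor $\m{0}$ — the terminal object in the category of $\cO$-Tambara functors — inverts any collection $\m{S}$. The plan is to unwind the definition of ``inverts'' and observe that in $\m{0}$ every value is the trivial abelian group, in which $0 = 1$, so every element is a unit.

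First I would recall that the zero $\cO$-Tambara functor is the constant functor sending every finite $G$-set $T$ to the zero ring $\{0\}$, with all restrictions, transfers, and norms the unique maps; one checks this is a $\cO$-Tambara functor using Theorem~\ref{thm:GenAndRels}, since the zero Green functor with the (unique) trivial norm maps vacuously satisfies the Tambara reciprocity relations of Propositions~\ref{prop:SumTambaraRec} and \ref{prop:TrTambaraRec}. Second, given any $\cO$-Tambara functor $\m{R}$, there is a unique map $\phi\colon \m{R}\to\m{0}$, since $\m{0}$ is terminal. Third, for any $(a_i, T_i)\in\m{S}$ we have $\phi(a_i)=0\in\m{0}(T_i)=\{0\}$; but in the zero ring $0$ is a unit, since $0\cdot 0 = 1 = 0$. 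Hence $\phi(a_i)\in\m{0}(T_i)^\times$ for all $i\in I$, which is exactly the assertion that $\phi$ inverts $\m{S}$.

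There is essentially no obstacle here — the content is purely that the terminal $\cO$-Tambara functor has underlying rings equal to the zero ring, in which every element (there is only one) is invertible. The only point requiring a word is that $\m{0}$ is indeed the terminal object in the category of $\cO$-Tambara functors and carries a genuine $\cO$-Tambara structure; both follow immediately from the description in Theorem~\ref{thm:GenAndRels} together with the fact that the zero ring is both the initial and terminal commutative ring, and that limits in $\cO$-Tambara functors are computed objectwise. This shows that the subgraph of $\cO$-Tambara functors under $\m{R}$ inverting $\m{S}$ is always non-empty, so the preceding remark applies and it is a subcategory.
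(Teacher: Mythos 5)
Your argument is correct and is exactly the (omitted) argument the paper intends: the unique map to the terminal $\cO$-Tambara functor $\m{0}$ sends each $a_i$ to $0\in\m{0}(T_i)=\{0\}$, and $0$ is a unit in the zero ring, so the map inverts $\m{S}$. One minor slip in a side remark: the zero ring is the terminal but not the initial commutative ring ($\Z$ is initial); this does not affect the proof, which uses only terminality.
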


\begin{theorem}\label{thm:InvertingElements}
Let $\mR$ be a $\cO$-Tambara functor and let $\m{S}=\{(a_i,T_i) | i\in I, a_i\in\m{R}(T_i)\}$ be a collection of elements in the values of $\m{R}$ at various finite $G$-sets. Then the category of maps $\phi\colon\m{R}\to\m{B}$  of $\cO$-Tambara functors which invert $\m{S}$ has an initial object.
\end{theorem}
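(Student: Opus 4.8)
The plan is to build the initial object by hand, formally adjoining inverses to the chosen elements and then quotienting out the relations that force those elements to be inverses --- the usual strategy for localizing a commutative ring, adapted to $\cO$-Tambara functors via the machinery set up in this section: free $\cO$-Tambara functors (Definition~\ref{defn:Frees}), the identification of the box product with the coproduct, and the quotient by an $\cO$-ideal. A preliminary reduction lets me use the free objects of Definition~\ref{defn:Frees} directly: for a finite $G$-set $T=\coprod_j G/H_j$ and any $\cO$-Tambara functor $\m{B}$ we have $\m{B}(T)=\prod_j\m{B}(G/H_j)$, and an element of a finite product of rings is a unit precisely when each coordinate is; so replacing each pair $(a_i,T_i)$ by the pairs given by the components of $a_i$ over the orbits of $T_i$ does not change which maps out of $\m{R}$ invert $\m{S}$, and I may assume $T_i=G/H_i$ with $H_i\subseteq G$.

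For the construction, let $\m{R}'$ be the coproduct of $\m{R}$ with the family $\{\m{A}^{\cO}[x_i]\}_{i\in I}$ of free $\cO$-Tambara functors; this exists and is computed by the box product, since $\cO$-Tambara functors are cocomplete with box product the coproduct. Write $\iota\colon\m{R}\to\m{R}'$ and $\kappa_i\colon\m{A}^{\cO}[x_i]\to\m{R}'$ for the structure maps, and let $x_i\in\m{A}^{\cO}[x_i](G/H_i)$ be the universal element, corresponding under the Yoneda isomorphism $\OTamb(\m{A}^{\cO}[x_i],-)\cong(-)(G/H_i)$ to the identity. Since $\m{R}'$ has an underlying Green functor, $\m{R}'(G/H_i)$ is a commutative ring, so we may set
\[
e_i:=\iota(a_i)\cdot\kappa_i(x_i)-1\in\m{R}'(G/H_i).
\]
Let $\m{J}$ be the smallest $\cO$-ideal of $\m{R}'$ containing $\{e_i\}_{i\in I}$, put $\m{R}[\m{S}^{-1}]:=\m{R}'/\m{J}$ --- an $\cO$-Tambara functor with quotient map $q\colon\m{R}'\to\m{R}[\m{S}^{-1}]$ a morphism of $\cO$-Tambara functors, by the quotient proposition above (the analogue of \cite[Prop.~2.6]{NakaokaIdeals}) --- and define $\eta:=q\circ\iota\colon\m{R}\to\m{R}[\m{S}^{-1}]$.

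It remains to verify the universal property. In $\m{R}[\m{S}^{-1}](G/H_i)$ we have $\eta(a_i)\cdot q(\kappa_i(x_i))=q(e_i+1)=1$, so $\eta(a_i)$ is a unit and $\eta$ inverts $\m{S}$. Given any $\phi\colon\m{R}\to\m{B}$ inverting $\m{S}$, the coproduct property of $\m{R}'$ produces a unique $\tilde\phi\colon\m{R}'\to\m{B}$ with $\tilde\phi\circ\iota=\phi$ and $\tilde\phi\circ\kappa_i$ the map classifying $\phi(a_i)^{-1}\in\m{B}(G/H_i)$ (which is defined since $\phi(a_i)$ is a unit); then $\tilde\phi(e_i)=\phi(a_i)\phi(a_i)^{-1}-1=0$. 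Because the kernel of a morphism of $\cO$-Tambara functors is again an $\cO$-ideal and $\ker\tilde\phi$ contains every $e_i$, it contains $\m{J}$, so $\tilde\phi$ factors uniquely as $\bar\phi\circ q$ with $\bar\phi\circ\eta=\phi$. Uniqueness of $\bar\phi$ follows because the image of $\m{R}'$ generates $\m{R}[\m{S}^{-1}]$: $\bar\phi$ must restrict to $\phi$ on the image of $\m{R}$, and $a_i\cdot\bar\phi\big(q(\kappa_i(x_i))\big)=1$ forces $\bar\phi\big(q(\kappa_i(x_i))\big)=\phi(a_i)^{-1}$, inverses in a commutative ring being unique. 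Hence $(\m{R}[\m{S}^{-1}],\eta)$ is the required initial object.

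The only non-formal inputs are the two assertions about $\cO$-ideals just invoked --- that the kernel of a morphism of $\cO$-Tambara functors is an $\cO$-ideal, and that $\m{R}'/\m{J}$ carries a compatible $\cO$-Tambara structure --- which are the incomplete analogues of Nakaoka's results \cite{NakaokaIdeals,NakaokaLocalization}; here the ``surjective'' clause in the definition of an $\cO$-ideal is exactly what is needed, since the norm of $0$ along a surjection vanishes whereas $N_{\emptyset\to T}(0)=1$. I expect this bookkeeping, rather than any genuinely hard step, to be the bulk of the argument, everything else reducing to Theorem~\ref{thm:GenAndRels} and the preceding material on free objects and box products; a less hands-on alternative would be to note that the maps inverting $\m{S}$ form a colimit-closed subcategory of the cocomplete category $\m{R}\downarrow\OTamb$ and appeal to an adjoint functor theorem.
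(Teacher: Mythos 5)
Your construction is correct, and it shares the paper's central idea: use the free objects $\mA^{\cO}[x_{H}]$ of Definition~\ref{defn:Frees} (which represent evaluation, by Yoneda) to box a fresh generator onto $\m{R}$, and then force $a\cdot x=1$. Where you diverge is in how that relation is imposed and in the bookkeeping around it. The paper first reduces to a single element via filtered colimits over finite subsets of $I$ and induction, and then imposes the relation by a pushout along two maps out of a second free object $\m{R}^{\cO}[y_T]$ (one classifying $ax$, one classifying $1$); the universal property of the pushout immediately identifies maps out of $a^{-1}\m{R}$ with pairs $(\phi,b)$ satisfying $\phi(a)b=1$, so only cocompleteness is needed. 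You instead adjoin all generators at once and quotient by the $\cO$-ideal generated by the elements $\iota(a_i)\kappa_i(x_i)-1$. That route is also valid, but it leans on two facts about $\cO$-ideals: that quotients by $\cO$-ideals carry $\cO$-Tambara structures (stated in the paper, following Nakaoka), and that kernels of maps of $\cO$-Tambara functors are $\cO$-ideals. The second is only gestured at in the paper, and your parenthetical justification --- that $N_f(0)=0$ for $f$ surjective because $\Pi_f\emptyset=\emptyset$ when $f$ is surjective, while $N_{\emptyset\to T}(0)=1$ --- is exactly the point that makes the surjectivity clause in the definition of $\cO$-ideal the right one; you should spell it out rather than defer it. Your preliminary reduction to orbits $T_i=G/H_i$ is a nice touch, since Definition~\ref{defn:Frees} literally only defines $\mA^{\cO}[x_H]$ for orbits, whereas the paper implicitly uses the representable $\cP^G_{\cO}(T,-)$ for general $T$. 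The two constructions produce the same object, since both represent the functor sending $\m{B}$ under $\m{R}$ to the set of $b\in\m{B}(T)$ with $\phi(a)b=1$, and such $b$ is unique when it exists.
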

\begin{proof}
If the cardinality of $I$ is infinite, then we simply consider the directed set of finite subsets of $I$ and form the colimit over this. It therefore suffices to show this if $|I|<\infty$. By induction on $|I|$, it therefore suffices to show that we can invert a single element $a\in\m{R}(T)$. 

Consider the $\cO$-Tambara functor 
\[
\m{R}^{\cO}[x_T]:=\m{R}\Box \mA^\cO[x_T].
\]
In the category of $\cO$-Tambara functors under $\m{R}$, this represents the functor which takes $\m{B}$ to $\m{B}(T)$. Although in general the value of the box product on a finite $G$-set is very difficult to understand, we need only describe several expected elements. The unit of the norm-forget adjunction between $\cO$-Tambara functors and Mackey functors has the form
\[
\mA_{T}\hookrightarrow \mA^\cO[x_T],
\]
and the unit of this $\cO$-Tambara functor is a map $\mA\hookrightarrow \mA^\cO[x_T]$. Together, this gives a map
\[
\mA\oplus \mA_{T}\hookrightarrow \mA^{\cO}[x_T].
\]
The reader should this of this as the inclusion of the unit and the degree $1$-monomials. In particular, there is a canonical element $x_T$ in $\mA^{\cO}[x_T](T)$ given by the span $T\leftarrow T\to T$ in $\mA_{T}(T)$. The Yoneda lemma says that any map out of $\mA^{\cO}[x_T]$ is completely determined by its value on this element.

Boxing $\mA^{\cO}[x_T]$ with $\m{R}$ and evaluating at $T$ then gives us an element $ax\in \m{R}^{\cO}[x_T]$, and this choice of element gives us a map of $\cO$-Tambara functors
\[
\mR^{\cO}[y_T]\to\mR^{\cO}[x_T].
\]
At this point, the construction is standard. Let $a^{-1}\m{R}$ be the pushout in $\cO$-Tambara functors under $\m{R}$
\[
\xymatrix{
{\mR^{\cO}[y_T]}\ar[d]\ar[r] &  {\mR^{\cO}[x_T]}\ar[d] \\
{\mR}\ar[r] & {a^{-1}\mR,}
}
\]
where the map from $\mR^{\cO}[y_T]\to \mR$ is the $\mR$-algebra map adjoint to the element $1$ in $\mR(T)$. By construction, a map from $a^{-1}\mR$ to a $\cO$-Tambara functor $\m{B}$ under $\m{R}$ is an element $b\in\m{B}(T)$ such that $\phi(a)b=1\in\m{B}(T)$. Thus $a^{-1}\m{R}$ satisfies the named universal property.
\end{proof}

Inverting an element in a $\cO$-Tambara functor can be an extremely weird operation. For example, it can produce the zero ring for frustratingly many examples.
\begin{example}
If $a\in \m{I}\subset \m{A}$ is any element in the augmentation ideal of the Burnside ring, then the localized Tambara functor $a^{-1}\m{A}$ is always zero.
\end{example}

Just as topologically, it can also be difficult to know whether the localization of $\m{R}$ in the category of $\m{R}$ modules is the same as the localization described above for the category of $\m{R}$-algebras. Consider a set $\m{S}$ as above. If for each $i\in I$, $T_i$ has a trivial $G$-action, then we can copy the arguments of the second author and Hopkins and of Nakaoka to show the following \cite{HHInversion}.

\begin{theorem}\label{thm:InvertingColimit}
Let $a\in\m{R}(G/G)$.  If for all $\pi\colon G/H\to G/K\in\cOrb_{\cO}$, the element $N_{\pi}\circ Res_K^G(a)$ divides a power of $Res_H^G(a)$, then the ordinary sequential colimit
\[
\m{R}\xrightarrow{a\cdot} \m{R}\xrightarrow{a\cdot}\dots
\]
compute the localization $a^{-1}\m{R}$.
\end{theorem}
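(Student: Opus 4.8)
The plan is to show directly that the colimit $\m{R}_\infty$ of the telescope
\[
\m{R}\xrightarrow{a\cdot}\m{R}\xrightarrow{a\cdot}\m{R}\xrightarrow{a\cdot}\cdots,
\]
which is a priori only a Green functor, carries a canonical $\cO$-Tambara structure extending that of $\m{R}$, and that the tautological map $\m{R}\to\m{R}_\infty$ is initial among maps of $\cO$-Tambara functors out of $\m{R}$ that invert $a$. Comparing this with the universal property established in Theorem~\ref{thm:InvertingElements} then identifies $\m{R}_\infty$ with $a^{-1}\m{R}$. This follows the template of the arguments of Hill--Hopkins and Nakaoka cited above.

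First I would record that multiplication by $a$ is an endomorphism of the underlying Mackey functor of $\m{R}$: it commutes with restriction maps because restriction is a ring map and restrictions of $a$ compose, and it commutes with transfer maps by the Frobenius relation, $\mathrm{tr}_H^K(x)\cdot Res_K^G(a)=\mathrm{tr}_H^K\bigl(x\cdot Res_H^G(a)\bigr)$. Hence $\m{R}_\infty$ is a Mackey functor, and since the sequential colimit of a ring along multiplication by a fixed element recovers the corresponding localization, $\m{R}_\infty$ is in fact a Green functor with $\m{R}_\infty(G/H)=\m{R}(G/H)[1/Res_H^G(a)]$, computed levelwise as an ordinary commutative-ring localization.

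The heart of the argument is producing the norm maps. For $\pi\colon G/H\to G/K$ in $\cOrb_\cO$, the norm $N_\pi\colon\m{R}(G/H)\to\m{R}(G/K)$ is multiplicative, so it carries the telescope computing $\m{R}_\infty(G/H)$ to the telescope $\m{R}(G/K)\xrightarrow{N_\pi(Res_H^G(a))\,\cdot}\m{R}(G/K)\to\cdots$, inducing a map
\[
\m{R}(G/H)[1/Res_H^G(a)]\longrightarrow\m{R}(G/K)\bigl[1/N_\pi(Res_H^G(a))\bigr].
\]
The divisibility hypothesis is precisely what guarantees that the relevant norm of $a$ divides a power of a restriction of $a$, hence is already a unit in $\m{R}_\infty(G/K)$; composing with the ensuing localization map yields $N_\pi\colon\m{R}_\infty(G/H)\to\m{R}_\infty(G/K)$, given by the fraction formula $N_\pi\bigl(x/Res_H^G(a)^n\bigr)=N_\pi(x)/N_\pi(Res_H^G(a))^n$, which is well-defined and multiplicative because $N_\pi$ is. It then remains to verify the Tambara reciprocity relations of Propositions~\ref{prop:SumTambaraRec} and~\ref{prop:TrTambaraRec} for these extended norms together with the Green structure on $\m{R}_\infty$. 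Since every element of $\m{R}_\infty(T)$ is a fraction with numerator pulled back from $\m{R}(T)$ and denominator a power of a restriction of $a$, and since restriction maps are ring homomorphisms, norm maps are multiplicative, and transfer maps obey the Frobenius relation, each such relation for fractions reduces by clearing denominators to the corresponding relation already holding in $\m{R}$. By Theorem~\ref{thm:GenAndRels}, $\m{R}_\infty$ is then an $\cO$-Tambara functor, and $\m{R}\to\m{R}_\infty$ is a map of $\cO$-Tambara functors which inverts $a$.

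Finally I would check the universal property. If $\phi\colon\m{R}\to\m{B}$ is a map of $\cO$-Tambara functors with $\phi(a)\in\m{B}(G/G)^\times$, then each $\phi\bigl(Res_H^G(a)\bigr)=Res_H^G\bigl(\phi(a)\bigr)$ is a unit, so on underlying Green functors $\phi$ factors uniquely through $\m{R}_\infty$, say as $\bar\phi$. Because the norms on $\m{R}_\infty$ are given by the fraction formula above and $\phi$ already commutes with every $N_\pi$, the factorization $\bar\phi$ commutes with norms, hence is the unique map of $\cO$-Tambara functors through which $\phi$ factors. Thus $\m{R}\to\m{R}_\infty$ is the initial object of the category of $\cO$-Tambara maps out of $\m{R}$ inverting $a$, which by Theorem~\ref{thm:InvertingElements} is $a^{-1}\m{R}$. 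I expect the only genuinely non-formal step to be the construction of the norms on the telescope: this is where the divisibility hypothesis is used, and without it $\m{R}_\infty$ need not be closed under norms. The persistence of the reciprocity relations, while demanding careful bookkeeping of denominators, is a formal consequence of their validity in $\m{R}$.
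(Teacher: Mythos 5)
Your argument is correct and is precisely the one the paper intends: the paper offers no proof of Theorem~\ref{thm:InvertingColimit} beyond the remark that one may copy the arguments of Hill--Hopkins and Nakaoka, and your construction of the levelwise telescope, the fraction formula $N_{\pi}(x/s^{n})=N_{\pi}(x)/N_{\pi}(s)^{n}$ for the norms (with the divisibility hypothesis guaranteeing the normed denominators become units in the target localization), the reduction of Tambara reciprocity to the relations in $\m{R}$ by clearing denominators, and the comparison with the universal property of Theorem~\ref{thm:InvertingElements} constitute exactly that argument. Note also that you have read the hypothesis in the only way that typechecks, namely that $N_{\pi}(Res_H^G(a))$ divides a power of $Res_K^G(a)$; as printed the statement transposes $H$ and $K$.
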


To make a similar statement for inverting elements in $\m{R}(T)$ for $T$ not a trivial $G$-set, we must consider the various change-of-group functors relating a $\cO$-Tambara functor and an $i_H^\ast\cO$-Tambara functor.

\section{Change functors}\label{sec:ChangeofGroup}

In this section we study the adjunction induced on categories of
$\cO$-Tambara functors associated to a group homomorphism $H \to G$;
we are most interested in the situation where $H \subseteq G$ is a
subgroup.  As one would expect, the situation is precisely analogous
to the situation for $\cO$-algebras in spectra; there is a
``norm-forget'' adjunction involving the admissible sets specified by
$\cO$ (see Proposition~\ref{prop:NTaLeftAdjoint} below).  The
structure we describe here is in fact similarly an aspect of an
incomplete $G$-symmetric monoidal structure on Mackey functors, under
which the commutative monoids are precisely the $\cO$-Tambara
functors.  We intend to describe this structure in detail in a
subsequent paper.

\subsection{Change of Groups}
Observe that induction on $\Set^{H}$ gives a faithful embedding of $\Set^{H}$ into $\Set^{G}$, and thus gives us a faithful embedding
\[
\Ind_{H}^{G}\colon\cP^{H}_{i_{H}^{\ast}\cO}\hookrightarrow \cP^{G}_{\cO}.
\]
Since the product in polynomials is the disjoint union and since induction is strong symmetric monoidal for disjoint unions, this is a product preserving embedding. The following is then immediate.
\begin{proposition}
Precomposing with $\ind_{H}^{G}$ gives the restriction functor
\[
i_{H}^{\ast}\colon \OTamb_{G}\to i_{H}^{\ast}\OTamb_{H}.
\]
\end{proposition}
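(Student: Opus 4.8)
The plan is to treat the statement as essentially formal, building on the faithful, product-preserving embedding $\Ind_H^G\colon\cP^H_{i_H^\ast\cO}\hookrightarrow\cP^G_\cO$ recorded just above. I would prove it in three steps: first, confirm that $\Ind_H^G$ really is a well-defined functor of polynomial categories whose image lies in $\cP^G_\cO$; second, conclude that precomposing with it sends product-preserving group-valued functors to product-preserving group-valued functors, hence defines a functor $\OTamb_G\to i_H^\ast\OTamb_H$; and third, identify this functor with the usual restriction by evaluating on orbits.

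For the first step, the only non-formal ingredient is that the induction functor $G\times_H(-)\colon\Set^H\to\Set^G$ sends pullback squares to pullback squares and exponential diagrams to exponential diagrams — standard features of finite $G$-sets — so that it descends to the polynomial categories in a way that respects the generating morphisms $R$, $N$, $T$ and the relations of Propositions~\ref{prop:TNRcomp}--\ref{prop:NT}. That the image lands in $\cP^G_\cO$ and not merely $\cP^G$ is exactly Proposition~\ref{prop:InductionClosure}: a map $\bar f$ of finite $H$-sets is in $\Set^H_{i_H^\ast\cO}$ precisely when $G\times_H\bar f$ is in $\Set^G_\cO$, so a bispan with exponent in $i_H^\ast\cO$ is carried to a bispan with exponent in $\cO$. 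Faithfulness and product-preservation were already noted, the latter because induction is strong symmetric monoidal for disjoint unions and disjoint union is the categorical product in $\cP$. Granting this, the second step is immediate from Definition~\ref{defn:incompleteTamb}: if $\m R\colon\cP^G_\cO\to\Ab$ is product-preserving then so is $\m R\circ\Ind_H^G\colon\cP^H_{i_H^\ast\cO}\to\Ab$, and naturality in $\m R$ is automatic, so $(-)\circ\Ind_H^G$ is a functor $\OTamb_G\to i_H^\ast\OTamb_H$.

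For the third step I would check that $\m R\circ\Ind_H^G$ agrees with the restriction $i_H^\ast\m R$. On objects, $\Ind_H^G(H/K)=G/K$, so the underlying Mackey functor of $\m R\circ\Ind_H^G$ has value $\m R(G/K)$ at $H/K$; on morphisms, $\Ind_H^G$ carries $R_{\bar f}$ and $T_{\bar f}$ for $\bar f\colon H/K\to H/L$ to $R_f$ and $T_f$ for $f=G\times_H\bar f\colon G/K\to G/L$, and these are by definition the restriction and transfer of $i_H^\ast\m R$; transitivity of induction, $\Ind_H^G\Ind_K^H\cong\Ind_K^G$, propagates this from orbits to all of $\Set^H$. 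Thus the underlying Green functor is $i_H^\ast$ of that of $\m R$, and by Theorem~\ref{thm:GenAndRels} the norms retained are exactly the $N_K^L$ with $H/K\to H/L\in\cOrb_{i_H^\ast\cO}$, which is the claim.

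Honestly there is no real obstacle: as the text already says, the proposition is immediate, and the content is the bookkeeping of steps one and three, with the single substantive fact — Proposition~\ref{prop:InductionClosure} — already in hand. The most error-prone point is simply keeping track of which group each induction is taken along, and transitivity of induction disposes of that.
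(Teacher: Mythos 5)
Your proposal is correct and follows exactly the route the paper takes: the paper offers no written proof, declaring the proposition ``immediate'' from the observation that $\Ind_{H}^{G}$ gives a faithful, product-preserving embedding $\cP^{H}_{i_{H}^{\ast}\cO}\hookrightarrow\cP^{G}_{\cO}$, and your three steps simply make that explicit (with Proposition~\ref{prop:InductionClosure} doing the one substantive piece of work, as you say). The extra checks you supply --- that induction preserves pullbacks and exponential diagrams, and the identification on orbits --- are accurate elaborations rather than a different argument.
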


This functor always has a right adjoint.  For this, we need to apply
Theorem~\ref{thm:AdjointPair}.  This requires a very basic analysis of
the images of the restriction and induction functors when applied to
our categories $\Set^G_{\cO}$.

\begin{proposition}\label{prop:IndEssentiallySieve}
For any $\cO$, the image of $\Ind_{H}^{G}$ restricted to $\Set^{H}_{i_{H}^{\ast}\cO}$ is essentially a sieve in $\Set^{G}_{\cO}$.
\end{proposition}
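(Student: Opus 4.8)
The plan is to bootstrap from the unrestricted statement. Proposition~\ref{prop:IndEssentiallyaSieve} already establishes that the image of $\Ind_{H}^{G}$ is essentially a sieve in $\Set^{G}$; what remains is to check that the factorization it produces is compatible with the exponent constraint imposed by $\cO$. First I would unwind the definition. Every morphism of the subcategory in question---the image of $\Ind_{H}^{G}$ restricted to $\Set^{H}_{i_{H}^{\ast}\cO}$---has the form $\Ind_{H}^{G}\tilde g\colon \Ind_{H}^{G} S\to \Ind_{H}^{G} S'$ with $\tilde g$ an $H$-map in $\Set^{H}_{i_{H}^{\ast}\cO}$; in particular its domain is of the form $b=\Ind_{H}^{G} S$, and conversely every such $b$ is the domain of at least one such morphism, namely $\id_{b}=\Ind_{H}^{G}(\id_{S})$, since $\Set^{H}_{i_{H}^{\ast}\cO}$ is wide. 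So it suffices to prove: for every map $f'\colon T'\to \Ind_{H}^{G} S$ in $\Set^{G}_{\cO}$, there exist an $H$-set $T$, an $H$-map $\tilde f\colon T\to S$ lying in $\Set^{H}_{i_{H}^{\ast}\cO}$, and a $G$-isomorphism $\iota\colon T'\xrightarrow{\ \cong\ }\Ind_{H}^{G} T$ with $f'=(\Ind_{H}^{G}\tilde f)\circ\iota$.

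To produce these data I would apply Proposition~\ref{prop:IndEssentiallyaSieve} to $f'$ viewed merely as a map of finite $G$-sets: it furnishes exactly such $T$, $\tilde f$, and $\iota$ with $f'=(\Ind_{H}^{G}\tilde f)\circ\iota$ in $\Set^{G}$ (concretely, $T$ is the pullback of $S\to i_{H}^{\ast}\Ind_{H}^{G} S\leftarrow i_{H}^{\ast} T'$ in $H$-sets and $\tilde f$ is the resulting map to $S$). It then remains only to see that this factorization actually lives in the subcategory, i.e.\ that $\tilde f\in\Set^{H}_{i_{H}^{\ast}\cO}$. By the results of Section~\ref{sec:PBStableSubcats}, $\Set^{G}_{\cO}$ is a wide, pullback stable subcategory of $\Set^{G}$, so it contains every isomorphism by Proposition~\ref{prop:basic}(1); hence $\iota\in\Set^{G}_{\cO}$, and therefore $\Ind_{H}^{G}\tilde f=f'\circ\iota^{-1}\in\Set^{G}_{\cO}$ since a subcategory is closed under composition. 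Proposition~\ref{prop:InductionClosure} then upgrades this to $\tilde f\in\Set^{H}_{i_{H}^{\ast}\cO}$, because a map $\tilde f$ of $H$-sets lies in $\Set^{H}_{i_{H}^{\ast}\cO}$ if and only if $\Ind_{H}^{G}\tilde f=G\times_{H}\tilde f$ lies in $\Set^{G}_{\cO}$. This yields the required factorization and completes the argument.

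I do not expect a real obstacle here; the only genuine content---and the only place where the hypothesis on $\cO$ enters---is confirming that the $H$-map supplied by the unrestricted construction lies in $\Set^{H}_{i_{H}^{\ast}\cO}$, and this is immediate from the ``if and only if'' in Proposition~\ref{prop:InductionClosure} once one knows that $\Set^{G}_{\cO}$ is closed under isomorphisms and composition. The same reasoning applies verbatim in the $\Cat$-enriched setting, since the isomorphism $\iota$ produced above is an honest isomorphism of $G$-sets, so the relevant central square is automatically a pullback.
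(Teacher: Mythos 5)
Your argument is correct and is essentially identical to the paper's proof: both bootstrap from Proposition~\ref{prop:IndEssentiallyaSieve}, use that $\Set^{G}_{\cO}$ is closed under composition with isomorphisms to see the induced map lands in $\Set^{G}_{\cO}$, and then invoke the ``if and only if'' of Proposition~\ref{prop:InductionClosure} to conclude the $H$-map lies in $\Set^{H}_{i_{H}^{\ast}\cO}$. Your write-up merely spells out the details that the paper's two-sentence proof leaves implicit.
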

\begin{proof}
Proposition~\ref{prop:IndEssentiallyaSieve} shows that in $\Set^{G}$, the image of $\ind_{H}^{G}$ is essentially a sieve. Since any map isomorphic to a map in $\Set^{G}_{\cO}$ is in $\Set^{G}_{\cO}$, by Proposition~\ref{prop:InductionClosure}, we are done.
\end{proof}

We have the analogous result for the restriction functors.

\begin{proposition}\label{prop:RestrictionClosure}
The restriction functor $i_{H}^{\ast}\colon \Set^{G}\to\Set^{H}$ restricts to give a functor $\Set^{G}_{\cO}\to\Set^{H}_{i_{H}^{\ast}\cO}$.
\end{proposition}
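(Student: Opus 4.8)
The plan is to reduce the claim to a statement about stabilizer subgroups under restriction, which is essentially trivial. Recall that by definition a map $f\colon S\to T$ in $\Set^G$ lies in $\Set^G_{\cO}$ if and only if for every $s\in S$ we have $G_{f(s)}/G_s\in\cO\big(G_{f(s)}\big)$; by Proposition~\ref{prop:OrbitReduction} this is the same as requiring $G_{f(s)}\cdot s\in\cO\big(G_{f(s)}\big)$. So given $f\colon S\to T$ in $\Set^G_{\cO}$, I must show that $i_H^\ast f\colon i_H^\ast S\to i_H^\ast T$ satisfies the analogous condition for $i_H^\ast\cO$.

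First I would note that the underlying set and underlying function of $i_H^\ast f$ are the same as those of $f$; only the group acting has shrunk. Hence for $s\in i_H^\ast S$ the $H$-stabilizer is $H_s=G_s\cap H$ and likewise $H_{f(s)}=G_{f(s)}\cap H$. I then need to identify the $H_{f(s)}$-set $H_{f(s)}\cdot s$ (the $H_{f(s)}$-orbit of $s$ inside $i_H^\ast S$) with the restriction of the $G_{f(s)}$-set $G_{f(s)}\cdot s$ along the inclusion $H_{f(s)}=G_{f(s)}\cap H\hookrightarrow G_{f(s)}$. Concretely, $H_{f(s)}\cdot s = (G_{f(s)}\cdot s)\cap i_{H_{f(s)}}^{\ast}(G_{f(s)}\cdot s)$ is a sub-$H_{f(s)}$-set of $i_{H_{f(s)}}^{\ast}\big(G_{f(s)}\cdot s\big)$; since $f\in\Set^G_{\cO}$ the latter lies in $i_{H_{f(s)}}^{\ast}\cO(G_{f(s)}) \subseteq (i_H^\ast\cO)(H_{f(s)})$ because, as $\cO$ is an indexing system (hence a coefficient system), restricting along $H_{f(s)}\subset G_{f(s)}$ and along $H_{f(s)}\subset H$ both land in the same subcategory of $\Set^{H_{f(s)}}$, and indexing systems are closed under passing to sub-$H$-sets by Proposition~\ref{prop:InitialImpliesMono} (applied to $\Set^{H_{f(s)}}_{(i_H^\ast\cO)(H_{f(s)})}$) together with closure under finite limits. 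Therefore $H_{f(s)}\cdot s\in(i_H^\ast\cO)(H_{f(s)})$, which is exactly the condition for $i_H^\ast f$ to be in $\Set^H_{i_H^\ast\cO}$.

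The only mildly delicate point — and the one I would write most carefully — is the naturality/compatibility of the two restriction operations on admissible sets: that restricting the $G_{f(s)}$-set $G_{f(s)}\cdot s$ down to $H_{f(s)} = G_{f(s)}\cap H$ gives a set that is admissible for the restricted indexing system $i_H^\ast\cO$ evaluated at $H_{f(s)}$. This follows because $i_H^\ast\cO$ is defined levelwise by $(i_H^\ast\cO)(K)=\cO(K)$ for $K\subseteq H$, so there is literally nothing to check beyond the observation that $\cO$ is closed under subobjects (a consequence of closure under finite limits, cf. the proof that $\cO_{\ccD}$ is an indexing system). Everything else — that $i_H^\ast$ preserves the identity map, so that the image is a wide subgraph — is immediate since trivial $H$-sets are admissible for every subgroup and the identity of $i_H^\ast S$ restricts the identity of $S$. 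Hence $i_H^\ast$ restricts to a functor $\Set^G_{\cO}\to\Set^H_{i_H^\ast\cO}$ as claimed.
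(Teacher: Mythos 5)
Your argument is correct and is essentially the paper's own proof: reduce to orbits, observe that the $H\cap G_{f(s)}$-orbit of $s$ is a summand of the restriction of the admissible $G_{f(s)}$-set $G_{f(s)}\cdot s$, and conclude using that restrictions of admissible sets are admissible (the coefficient-system structure) and that summands of admissible sets are admissible (closure under finite limits). The only cosmetic issue is the garbled formula $H_{f(s)}\cdot s = (G_{f(s)}\cdot s)\cap i_{H_{f(s)}}^{\ast}(G_{f(s)}\cdot s)$ and the slightly misplaced citation of Proposition~\ref{prop:InitialImpliesMono}; the correct justification (closure under subobjects via finite limits) is already present in your write-up.
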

\begin{proof}
Let $f\colon S\to T$ be a map in $\Set^{G}_{\cO}$, and let $s\in S$. When we consider the $H$-orbit of $s$, the stabilizer of $s$ in $H$ is $H\cap G_{s}$, and similarly for $f(s)$. We therefore need to show that if $G_{f(s)}\cdot s$ is an admissible $G_{f(s)}$-set, then $(H\cap G_{f(s)})\cdot s$ is an admissible $H\cap G_{f(s)}$-set. However, if we consider the restriction of $G_{f(s)}\cdot s$ to $H\cap G_{f(s)}$, then $(H\cap G_{f(s)})\cdot s$ is visibly a disjoint summand. Since the restriction of admissible sets are admissible and since summands of admissible sets are admissible, we conclude that $(H\cap G_{f(s)})$ is admissible.
\end{proof}

\begin{theorem}\label{thm:RightAdjoint}
For all subgroups $H$ and for all indexing systems $\cO$, the functor $i_{K}^{\ast}$ has a right adjoint, $\CoInd_{H}^{G}$ given by
\[
\CoInd_{H}^{G}\m{R}(T):=R(i_{H}^{\ast}T).
\]
\end{theorem}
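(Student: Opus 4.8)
The goal is to show that $i_H^\ast\colon \OTamb_G \to i_H^\ast\OTamb_H$ admits a right adjoint given on underlying sets by $\CoInd_H^G\m{R}(T) = \m{R}(i_H^\ast T)$. The strategy is to invoke Theorem~\ref{thm:AdjointPair} with the adjoint pair $(\Ind_H^G, i_H^\ast)$ on the categories of finite $G$- and $H$-sets, taking $\ccD = \Set^G_{\cO}$ and $\ccD' = \Set^H_{i_H^\ast\cO}$. Note that Theorem~\ref{thm:AdjointPair} is stated for an adjoint pair $F\colon\cC\rightleftarrows\cC'\colon G$, producing an adjunction $G\colon \cP^{\cC'}_{\ccD'}\rightleftarrows\cP^{\cC}_{\ccD}\colon F$; here we apply it with $F = \Ind_H^G\colon \Set^H\to\Set^G$ and its right adjoint $G = i_H^\ast\colon\Set^G\to\Set^H$, so that the induced left adjoint on polynomial categories is $i_H^\ast\colon \cP^G_{\cO}\to\cP^H_{i_H^\ast\cO}$ (i.e.\ precomposition with $\Ind_H^G$, matching the restriction functor of the preceding proposition) and the induced right adjoint is $\Ind_H^G$-precomposition, which on functors out of $\cP^H_{i_H^\ast\cO}$ is exactly the coinduction formula claimed.

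First I would verify the three hypotheses of Theorem~\ref{thm:AdjointPair}. Condition (1), that the image of $F = \Ind_H^G$ restricted to $\Set^H_{i_H^\ast\cO}$ is essentially a sieve in $\Set^G_{\cO}$, is precisely Proposition~\ref{prop:IndEssentiallySieve}. Condition (2), that both $F$ and $G$ restrict to functors between $\ccD$ and $\ccD'$: the restriction $i_H^\ast\colon \Set^G_{\cO}\to\Set^H_{i_H^\ast\cO}$ is Proposition~\ref{prop:RestrictionClosure}, and that $\Ind_H^G$ carries $\Set^H_{i_H^\ast\cO}$ into $\Set^G_{\cO}$ is the ``only if'' direction of Proposition~\ref{prop:InductionClosure}. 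Condition (3), that $F$ detects membership in $\ccD$ --- i.e.\ $\Ind_H^G(f)\in\Set^G_{\cO}$ iff $f\in\Set^H_{i_H^\ast\cO}$ --- is exactly the biconditional statement of Proposition~\ref{prop:InductionClosure}. With all three conditions in hand, Theorem~\ref{thm:AdjointPair} yields an adjunction between $\cP$-categories, and precomposition gives the adjunction on (semi-)Tambara functors.

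The remaining point is identifying the right adjoint concretely. Unwinding the construction in the proof of Theorem~\ref{thm:AdjointPair}: the right adjoint sends an $i_H^\ast\cO$-Tambara functor $\m{R}\colon\cP^H_{i_H^\ast\cO}\to\Ab$ to the composite $\m{R}\circ \Ind_H^G\colon \cP^G_{\cO}\to\Ab$, evaluated on an object $T$ as $\m{R}(i_H^\ast T)$ once one accounts for the direction of the functors. I would spell this out: the Hom-set bijection of Theorem~\ref{thm:AdjointPair} says $\cP^G_{\cO}(T, i_H^\ast(-))\cong \cP^H_{i_H^\ast\cO}(\Ind_H^G T, -)$ contravariantly in the appropriate variable, so on representables the adjunction formula forces $\CoInd_H^G\m{R}(T) = \m{R}(i_H^\ast T)$ with restriction, transfer, and norm maps induced functorially from those of $\m{R}$ via the structure maps of $i_H^\ast T$. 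One should also note this descends from semi-Tambara to Tambara functors since the construction is visibly additive (the bijection on Hom-commutative-monoids respects the monoid structure given by disjoint union), and group completion is preserved.

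The main obstacle I anticipate is not any single hard step but rather the bookkeeping in condition (2)/(3): one must be careful that $\Ind_H^G$ and $i_H^\ast$ really do land in the \emph{$\cO$-restricted} subcategories and not merely in $\Set^G$ and $\Set^H$, and that the indexing system on the $H$-side is $i_H^\ast\cO$ throughout. This is where Propositions~\ref{prop:InductionClosure} and~\ref{prop:RestrictionClosure} do the real work, reducing everything to the statement that restriction and induction of admissible sets along subgroup inclusions track the admissibility condition orbit-by-orbit. Once those are cited correctly, the theorem follows formally from Theorem~\ref{thm:AdjointPair}, and the explicit coinduction formula is a matter of unwinding the Yoneda/adjunction identities.
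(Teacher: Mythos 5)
Your proof is correct and follows the paper's argument exactly: apply Theorem~\ref{thm:AdjointPair} to the adjoint pair $\Ind_H^G\dashv i_H^\ast$, using Proposition~\ref{prop:IndEssentiallySieve} (and, for the remaining hypotheses, Propositions~\ref{prop:InductionClosure} and~\ref{prop:RestrictionClosure}) to obtain the adjunction $i_H^\ast\colon\cP^G_{\cO}\rightleftarrows\cP^H_{i_H^\ast\cO}\colon\Ind_H^G$ on polynomial categories, and then pass to product-preserving functors by precomposition. The only blemish is a bookkeeping slip in the directions: relative to Theorem~\ref{thm:AdjointPair} one should take $\ccD=\Set^H_{i_H^\ast\cO}$ and $\ccD'=\Set^G_{\cO}$, and the right adjoint on Tambara functors is precomposition with $i_H^\ast\colon\cP^G_{\cO}\to\cP^H_{i_H^\ast\cO}$ (so $\m{R}\circ i_H^\ast$, not $\m{R}\circ\Ind_H^G$), which is exactly why the formula reads $\m{R}(i_H^\ast T)$.
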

\begin{proof}
We have an adjoint pair $\Ind_{H}^{G} \leftadjoint i_{H}^{\ast}$ on the category of finite $G$-sets. By Proposition~\ref{prop:IndEssentiallySieve}, we know that the image of the left adjoint $\Ind_{H}^{G}$ is essentially a sieve, so by Theorem~\ref{thm:AdjointPair}, we know that we have an induced adjoint pair $i_{H}^{\ast}\leftadjoint \Ind_{H}^{G}$ on polynomials:
\[
i_{H}^{\ast}\colon \cP^{G}_{\cO}\rightleftarrows \cP^{H}_{i_{H}^{\ast}\cO}\colon \Ind_{H}^{G}.
\]
Since both $i_{H}^{\ast}$ and $\Ind_{H}^{G}$ are product preserving functors, the result follows.
\end{proof}

It is obvious that the functor $\CoInd_{H}^{G}\m{R}$ is a Green functor. The somewhat surprising part of Theorem~\ref{thm:RightAdjoint} is that we have norm maps. These are built in the most na\"{i}ve way possible: simply multiplying together copies. The example of $H=\{e\}$ shows this quite transparently, as we can build on the representation theory story implicit in the underlying Mackey functor.

\begin{example}
If $R$ is a commutative ring (which is an $\cO$-Tambara functor for the trivial group for any $\cO$), then 
\[
\CoInd_{H}^{G} R(T)=Map(T,R)
\]
with the coordinatewise addition and multiplication. Just as the transfer maps are ``sum over cosets'', the norm maps are ``multiply over cosets'':
\[
N_{H}^{G}(f)=\prod_{gH\in G/H} f(gH).
\]
\end{example}

This example gives the intuition for the general case: the new norms are just products over the orbits of the old ones. In other words, the heuristic is that when we consider a norm $N_{\ind f}$ arising by inducing up a map $f$ of $H$-sets, then we just multiply:
\[
N_{\ind f}=\prod_{gH\in G/H} g N_{f}.
\]

The composite $\ind_{H}^{G}\circ i_{H}^{\ast}$ is also readily determined, and this lets us also generalize this functor.
\begin{proposition}\label{prop:KanExtendGmodH}
The composite $\ind_{H}^{G}\circ i_{H}^{\ast}$ is isomorphic to the functor
\[
m_{G/H}(\m{R}):=\m{R}_{G/H},
\]
where $\m{R}_{G/H}(T)=\m{R}(G/H\times T)$.
\end{proposition}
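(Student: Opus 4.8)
The plan is to unwind the two precomposition functors that make up $\ind_H^G\circ i_H^*$ and then recognize the resulting endofunctor of $\cP^G_\cO$ as, up to a natural isomorphism, multiplication by $G/H$. Recall from Theorem~\ref{thm:RightAdjoint} and the discussion preceding it that at the level of polynomial categories we have two functors: $\Ind_H^G\colon\cP^H_{i_H^*\cO}\hookrightarrow\cP^G_\cO$, precomposition with which is the restriction $i_H^*\colon\OTamb_G\to i_H^*\OTamb_H$; and $i_H^*\colon\cP^G_\cO\to\cP^H_{i_H^*\cO}$, precomposition with which is $\ind_H^G\colon i_H^*\OTamb_H\to\OTamb_G$. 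Composing, for any $\cO$-Tambara functor $\m{R}$ we obtain
\[
\ind_H^G i_H^*\m{R}\;=\;\m{R}\circ\big(\Ind_H^G\circ i_H^*\big),
\]
where $\Ind_H^G\circ i_H^*$ is now an endofunctor of $\cP^G_\cO$ which sends a finite $G$-set $T$ to $G\times_H i_H^*T$. So it suffices to identify this composite polynomial endofunctor with $m_{G/H}\colon T\mapsto G/H\times T$; precomposing a natural isomorphism of endofunctors of $\cP^G_\cO$ then yields the asserted natural isomorphism of endofunctors of $\OTamb_G$, and in particular shows that $m_{G/H}$ takes values in $\cO$-Tambara functors.

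First I would record that $-\times G/H$ is indeed an endofunctor of $\cP^G_\cO$: for $g\colon S\to T$ in $\Set^G_\cO$ the map $g\times\id_{G/H}$ is the pullback of $g$ along the projection $T\times G/H\to T$, hence lies in $\Set^G_\cO$ by pullback stability, and preservation of pullbacks and of finite coproducts is automatic, which is enough to extend $-\times G/H$ to the polynomial level. The key ingredient is then the classical shearing isomorphism
\[
\theta_T\colon G\times_H i_H^*T\xrightarrow{\ \cong\ } G/H\times T,\qquad [g,t]\longmapsto(gH,\,gt),
\]
with inverse $(gH,t')\mapsto[g,g^{-1}t']$; I would check it is well defined, $G$-equivariant and bijective, and natural in $G$-maps $T\to T'$ (the last point being immediate from equivariance). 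This exhibits $\theta$ as a natural isomorphism between the underlying $\Set^G$-valued endofunctors of $\Ind_H^G\circ i_H^*$ and of $-\times G/H$.

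The remaining step is to promote $\theta$ from the underlying $\Set^G$-functors to the polynomial endofunctors. Since each $\theta_T$ is an isomorphism of $G$-sets it is in particular an isomorphism in $\cP^G_\cO$, and the naturality squares of $\theta$ against the generating bispans $R_f$, $N_g$, $T_h$ are cartesian — their vertical legs being the isomorphisms $\theta$ — which is exactly what is needed for $\theta$ to constitute a natural isomorphism of polynomial functors; alternatively one may sidestep this entirely by defining the polynomial-level $-\times G/H$ to be the transport of $\Ind_H^G\circ i_H^*$ across $\theta$, making the comparison tautological. I expect this final promotion — checking compatibility of the shearing isomorphism with bispan composition rather than merely with the underlying $\Set^G$ structure — to be the only point requiring any care, and it is essentially formal given that the polynomial functors in play are functorially induced via the Gambino--Kock/Weber machinery recalled in Section~\ref{sec:Polynomials}.
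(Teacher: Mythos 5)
Your argument is correct and is precisely the one the paper leaves implicit (the proposition is stated without proof as "readily determined"): unwinding the two precomposition functors from Theorem~\ref{thm:RightAdjoint} reduces everything to the shearing isomorphism $G\times_H i_H^{\ast}T\cong G/H\times T$, $[g,t]\mapsto(gH,gt)$, promoted to $\cP^G_{\cO}$. Your care about why the $\Set^G$-level natural isomorphism lifts to the polynomial level (components are isomorphisms, so the naturality squares are cartesian and Proposition~\ref{prop:RN} applies) is exactly the right formal point, and the transport-of-structure fallback disposes of it cleanly.
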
 

\begin{corollary}
For any indexing system $\cO$ and for any finite $G$-set $T$, the assignment
\[
m_{T}(\m{R}):=\m{R}_{T}
\]
gives an endofunctor of the category of $\cO$-Tambara functors.
\end{corollary}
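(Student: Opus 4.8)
The plan is to reduce to the case where $T$ is an orbit, which is exactly Proposition~\ref{prop:KanExtendGmodH}, and then to assemble the general case using that the category of $\cO$-Tambara functors is complete, with limits formed objectwise.

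First I would fix an orbit decomposition $T\cong\coprod_{i=1}^{n}G/H_{i}$, unique up to reordering. Multiplication by $T$ sends a bispan $b=[S\xleftarrow{h}A\xrightarrow{g}B\xrightarrow{f}S']$ of $\cP^{G}_{\cO}$ to
\[
\id_{T}\times b=[\,T\times S\xleftarrow{\id_{T}\times h}T\times A\xrightarrow{\id_{T}\times g}T\times B\xrightarrow{\id_{T}\times f}T\times S'\,],
\]
and this is again a bispan of $\cP^{G}_{\cO}$: its only exponent map is $\id_{T}\times g$, which is the pullback of $g$ along the projection $T\times B\to B$ and therefore lies in $\Set^{G}_{\cO}$ by pullback stability of $\Set^{G}_{\cO}$. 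Because $T\times(-)$ carries the orbit decomposition of $T$ through each term, $\id_{T}\times b$ is the orbit-wise disjoint union $\coprod_{i}(\id_{G/H_{i}}\times b)$.

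Next I would apply $\m{R}$. Since $\m{R}$ is product preserving and disjoint union is the categorical product in $\cP^{G}_{\cO}$, the splitting above gives, on objects and then --- by computing components into $\m{R}(T\times S')=\prod_{i}\m{R}(G/H_{i}\times S')$ --- on morphisms, an identification
\[
\m{R}_{T}\;\cong\;\prod_{i=1}^{n}\m{R}_{G/H_{i}}\;=\;\prod_{i=1}^{n}m_{G/H_{i}}(\m{R})
\]
of functors $\cP^{G}_{\cO}\to\Ab$. This already exhibits $\m{R}_{T}$ as an honest functor respecting composition of bispans, which is the one thing not visible from the bare definition. By Proposition~\ref{prop:KanExtendGmodH} each $m_{G/H_{i}}(\m{R})\cong\ind_{H_{i}}^{G}i_{H_{i}}^{\ast}\m{R}$ is an $\cO$-Tambara functor, being a composite of functors between categories of Tambara functors; and since the category of $\cO$-Tambara functors is complete with limits formed objectwise, the finite product on the right is again an $\cO$-Tambara functor. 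It is exactly $m_{T}(\m{R})=\m{R}_{T}$, and functoriality in $\m{R}$ is inherited from the $m_{G/H_{i}}$, so $m_{T}$ is an endofunctor of $\cO$-Tambara functors.

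The step that is not purely formal is the naturality of the orbit-wise decomposition of $\m{R}_{T}$ with respect to all the structure maps, in particular the norm maps $N_{g}$ (restrictions and transfers are span-level, so there it is transparent). This is precisely where pullback stability of $\Set^{G}_{\cO}$, which guarantees that $\id_{T}\times g$ is again an admissible exponent, combines with the product-preservation of $\m{R}$; no further input about indexing systems is needed. Alternatively one could check directly that $T\times(-)$ is an endofunctor of $\cP^{G}_{\cO}$ (preserving pullbacks, exponential diagrams, and finite coproducts) and set $m_{T}(\m{R})=\m{R}\circ(T\times-)$, but routing through the orbit decomposition and Proposition~\ref{prop:KanExtendGmodH} keeps the bookkeeping to a minimum.
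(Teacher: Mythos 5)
Your proposal is correct and follows essentially the same route as the paper: decompose $T$ into orbits, use $\m{R}_{T_{1}\amalg T_{2}}\cong \m{R}_{T_{1}}\times\m{R}_{T_{2}}$ together with closure of $\cO$-Tambara functors under limits to reduce to $T=G/H$, and then invoke Proposition~\ref{prop:KanExtendGmodH}. The extra verification you supply (that $\id_{T}\times g$ remains an admissible exponent by pullback stability) is a detail the paper leaves implicit but is exactly the right point to check.
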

\begin{proof}
Since $\m{R}_{T_{1}\amalg T_{2}}\cong \m{R}_{T_{1}}\times\m{R}_{T_{2}}$ and since the category of $\cO$-Tambara functors is closed under limits, it suffices to show this for $T=G/H$. This is Proposition~\ref{prop:KanExtendGmodH}.
\end{proof}

The functor $m_{T}$ can also be described as the Kan extension along the functor on polynomials given by $T\times -$. For formal reasons, its left adjoint should be the functor that is Kan extension along the internal Hom object $F(T,-)$. Since $\m{R}\mapsto \m{R}_{G/H}$ is the composite $\CoInd_{H}^{G}\circ i_{H}^{\ast}$, then the left adjoint will be the composite of $i_{H}^{\ast}$ with its left adjoint. 

We now wish to build a left adjoint to the forgetful functor. Formally, such a left adjoint will be given by the left Kan extension along the inclusion, and work of Kelly-Lack shows that the left Kan extension of any product preserving functor along $\ind_{H}^{G}$ is again product preserving \cite[Prop 2.5]{KellyLack}. This shows the well-definedness of the following definition.

\begin{definition}
Let 
\[
n_{H}^{G}\colon i_{H}^{\ast}\OTamb_{H}\to \OTamb_{G}
\]
be the left Kan extension along the inclusion $\cP^{H}_{i_{H}^{\ast}\cO}\hookrightarrow \cP^{G}_{\cO}$. 
\end{definition}

By the universal property of the left Kan extension, the following is immediate.

\begin{proposition}
The functor $n_{H}^{G}$ is the left-adjoint to the restriction functor $i_{H}^{\ast}$.
\end{proposition}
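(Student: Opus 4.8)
The plan is to recognize this as the standard fact that a left Kan extension is left adjoint to the corresponding restriction functor, the only point requiring care being that everything must remain inside the subcategory of product-preserving (equivalently, additive) functors.

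First I would record two identifications already made above. By the Proposition immediately preceding the definition of $n_{H}^{G}$, the restriction functor $i_{H}^{\ast}\colon \OTamb_{G}\to i_{H}^{\ast}\OTamb_{H}$ is precisely precomposition with the product-preserving embedding
\[
j := \ind_{H}^{G}\colon \cP^{H}_{i_{H}^{\ast}\cO}\hookrightarrow \cP^{G}_{\cO},
\]
so that $i_{H}^{\ast}=j^{\ast}$ as a functor between categories of product-preserving functors to $\Set$ (or, in the additive formulation, additive functors out of the group completions). By definition $n_{H}^{G}=\mathrm{Lan}_{j}$, the pointwise left Kan extension along $j$; and the content of \cite[Prop 2.5]{KellyLack}, already invoked to make Definition legitimate, is that $\mathrm{Lan}_{j}$ carries product-preserving functors to product-preserving functors, so $n_{H}^{G}$ genuinely lands in $\OTamb_{G}$.

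The adjunction is then formal. For $\mM\in i_{H}^{\ast}\OTamb_{H}$ and $\mN\in\OTamb_{G}$, the universal property of the left Kan extension furnishes a bijection
\[
\Hom_{[\cP^{G}_{\cO},\Set]}(\mathrm{Lan}_{j}\mM,\mN)\;\cong\;\Hom_{[\cP^{H}_{i_{H}^{\ast}\cO},\Set]}(\mM,\mN\circ j),
\]
natural in both arguments. Since $\mathrm{Lan}_{j}\mM=n_{H}^{G}\mM$ is product-preserving, $\mN\circ j=i_{H}^{\ast}\mN$, and the inclusions of the categories of product-preserving functors into the full functor categories are fully faithful, the left-hand $\Hom$-set equals $\Hom_{\OTamb_{G}}(n_{H}^{G}\mM,\mN)$ and the right-hand one equals $\Hom_{i_{H}^{\ast}\OTamb_{H}}(\mM,i_{H}^{\ast}\mN)$. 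This is exactly the assertion $n_{H}^{G}\dashv i_{H}^{\ast}$, with the unit and counit inherited from the Kan-extension adjunction on the full functor categories; the group-valued case is identical, using that an $\Ab$-enriched pointwise left Kan extension preserves additivity.

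The main obstacle is not in this final argument, which is purely formal, but in the input already secured upstream: the identification of $i_{H}^{\ast}$ with the precomposition functor $j^{\ast}$ (which rests on induction being a product-preserving embedding of polynomial categories, Proposition on change of groups) and the Kelly--Lack fact guaranteeing $n_{H}^{G}$ is well defined on $\OTamb$. The only subtlety worth spelling out in the write-up is the explicit appeal to fullness of the subcategory of product-preserving functors, which is what lets the Kan-extension hom-bijection on all functors descend to the claimed adjunction on Tambara functors.
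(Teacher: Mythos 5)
Your proof is correct and follows exactly the route the paper takes: the paper declares the adjunction ``immediate from the universal property of the left Kan extension,'' relying on the same two inputs you cite (the identification of $i_{H}^{\ast}$ with precomposition along the product-preserving embedding $\ind_{H}^{G}$ of polynomial categories, and the Kelly--Lack result guaranteeing $n_{H}^{G}$ lands in product-preserving functors). Your write-up simply makes explicit the descent of the Kan-extension hom-bijection to the full subcategories of product-preserving functors, which is a worthwhile but routine elaboration of the paper's one-line argument.
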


In general, this is a very difficult functor to understand. We pause here to give a short example that shows how pathological this can be. Let $\m{0}$ be the zero Green functor. This has a unique $\cO$-Tambara functor structure for any $\cO$.

\begin{proposition}\label{prop:NormofZero}
For any $H\subset G$ and for any $\cO$, we have
\[
n_{H}^{G}\m{0}=\mA/\m{I}_{\cF(H)}^{\cO},
\] 
where $\cF(H)$ is the family of subgroups of $G$ subconjugate to $H$.
\end{proposition}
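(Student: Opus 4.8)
The plan is to identify $n_H^G\m{0}$ by a corepresentability argument: I will show that both $n_H^G\m{0}$ and $\mA/\m{I}_{\cF(H)}^{\cO}$ corepresent the same functor $\OTamb_G\to\Set$, and then conclude by the Yoneda lemma. Let $\Phi(\m{B})$ be a one-point set when $\m{B}(G/K)=0$ for every $K\in\cF(H)$ and the empty set otherwise; this is visibly functorial in $\m{B}$, since a map of $\cO$-Tambara functors restricts to ring homomorphisms on each value $\m{B}(G/K)$ and hence preserves the vanishing condition.

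For the left-hand side, I would use the adjunction $n_H^G\dashv i_H^\ast$, giving a natural isomorphism
\[
\OTamb_G(n_H^G\m{0},\m{B})\cong i_H^\ast\OTamb_H(\m{0},i_H^\ast\m{B}).
\]
The zero Green functor is the terminal object of $i_H^\ast\OTamb_H$ (it is the product-preserving functor constant at $0$), and a map of $\cO$-Tambara functors out of it forces its target to be zero: for any finite set $T$, compatibility with the norm $N_{\emptyset\to T}$ --- which lies in $\cP^{G}_{\cO}$ because $\emptyset\to T$ is a monomorphism and hence admissible for any indexing system by Proposition~\ref{prop:InitialImpliesMono} --- sends the unique element of the source to the multiplicative unit of the target at $T$, forcing $1=0$ there. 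Hence $\OTamb_G(n_H^G\m{0},\m{B})$ is nonempty (and then a single point) exactly when $i_H^\ast\m{B}=\m{0}$. Since $i_H^\ast\m{B}$ is product-preserving and $(i_H^\ast\m{B})(H/L)=\m{B}(\Ind_H^G(H/L))=\m{B}(G/L)$, this happens precisely when $\m{B}(G/K)=0$ for every $K$ subconjugate to $H$, i.e.\ for every $K\in\cF(H)$; so $\OTamb_G(n_H^G\m{0},-)\cong\Phi$.

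For the right-hand side, recall that $\mA$ is the unit of the box product, hence the initial $\cO$-Tambara functor, so $\OTamb_G(\mA,-)$ is constant at a point, and the quotient map $\mA\to\mA/\m{I}_{\cF(H)}^{\cO}$ is epic. Thus $\OTamb_G(\mA/\m{I}_{\cF(H)}^{\cO},\m{B})$ is a point or empty according to whether the unique map $\mA\to\m{B}$ annihilates $\m{I}_{\cF(H)}^{\cO}$. Since the kernel of a map of $\cO$-Tambara functors is an $\cO$-ideal and $\m{I}_{\cF(H)}^{\cO}$ is by definition the smallest $\cO$-ideal containing $\mA(G/K)$ for $K\in\cF(H)$, this happens iff the map annihilates each $\mA(G/K)$, and since $1\mapsto 1$ under $\mA(G/K)\to\m{B}(G/K)$, that in turn holds iff $\m{B}(G/K)=0$ for every $K\in\cF(H)$. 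So $\OTamb_G(\mA/\m{I}_{\cF(H)}^{\cO},-)\cong\Phi$ as well, and Yoneda gives $n_H^G\m{0}\cong\mA/\m{I}_{\cF(H)}^{\cO}$.

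The main obstacle I anticipate is the verification in the second paragraph that a map of $\cO$-Tambara functors out of the zero functor really does kill the target --- in particular making sure the ``empty norm'' $N_{\emptyset\to T}$ is genuinely a morphism of $\cP^{G}_{\cO}$ for every indexing system (which is where Proposition~\ref{prop:InitialImpliesMono} enters) and that it realizes the multiplicative unit. An alternative to this step would be to write $\m{0}$ over $H$ as the quotient of $\mA_H$ by the $(i_H^\ast\cO)$-ideal generated by the units at all $H$-orbits, and use that the left adjoint $n_H^G$ preserves this colimit together with $n_H^G\mA_H\cong\mA$; this, however, trades the bookkeeping above for tracking the image of the ideal generators under the unit of the $n_H^G\dashv i_H^\ast$ adjunction, so the Yoneda route seems cleanest.
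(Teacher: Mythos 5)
Your proposal is correct and is exactly the paper's argument: the paper's proof consists of the single observation that both sides corepresent the functor that is a point or empty according to whether $i_H^\ast\m{B}$ vanishes, and you have simply supplied the verifications (via the $n_H^G\dashv i_H^\ast$ adjunction, the empty norm forcing $1=0$, and the universal property of the quotient by the $\cO$-ideal $\m{I}_{\cF(H)}^{\cO}$) that the paper leaves implicit. No gaps.
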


\begin{proof}
Both sides have the same universal property: the space of maps out of them is either a point or empty, with the latter occurring exactly when the restriction to $H$ of the target is non-zero.
\end{proof}

We can use the functor $n_H^G$ to give another formulation of inverting classes, building a result analogous to the colimit formulation of inversion.

\begin{proposition}\label{prop:InvertinH}
If $\m{R}$ is an $\cO$-Tambara functor and $a\in\m{R}(G/H)$, then $a^{-1}\m{R}$ is isomorphic to the pushout in $\cO$-Tambara functors of the diagram
\[
\xymatrix{
{n_H^G i_H^\ast \m{R}}\ar[r]^{\epsilon}\ar[d]_{n_H^G\iota} & {\m{R.}} \\
{n_H^G \big(a^{-1}i_H^\ast \m{R}\big)}
}
\]
\end{proposition}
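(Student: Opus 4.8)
\emph{Proof proposal.} The plan is to check that the pushout $\m{P}$ appearing in the statement has, in the category of $\cO$-Tambara functors under $\m{R}$, the universal property that Theorem~\ref{thm:InvertingElements} assigns to $a^{-1}\m{R}$: for an $\cO$-Tambara functor $\m{B}$ and a fixed $\phi\colon\m{R}\to\m{B}$, the set of maps $\m{P}\to\m{B}$ compatible with $\phi$ should be a single point if $\phi(a)\in\m{B}(G/H)^{\times}$ and empty otherwise. Since this property characterizes $a^{-1}\m{R}$ up to canonical isomorphism over $\m{R}$, this suffices.

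First I would unwind the pushout. Write $\epsilon\colon n_{H}^{G}i_{H}^{\ast}\m{R}\to\m{R}$ for the counit of the adjunction $n_{H}^{G}\dashv i_{H}^{\ast}$, and $\iota\colon i_{H}^{\ast}\m{R}\to a^{-1}i_{H}^{\ast}\m{R}$ for the localization map of $i_{H}^{\ast}\cO$-Tambara functors inverting the element $a\in\m{R}(G/H)=(i_{H}^{\ast}\m{R})(H/H)$; this localization exists by Theorem~\ref{thm:InvertingElements} applied to the group $H$ and the singleton $\{(a,H/H)\}$. Given $\phi\colon\m{R}\to\m{B}$, the universal property of the pushout identifies lifts $\m{P}\to\m{B}$ compatible with $\phi$ with maps $\psi\colon n_{H}^{G}\big(a^{-1}i_{H}^{\ast}\m{R}\big)\to\m{B}$ satisfying $\psi\circ n_{H}^{G}\iota=\phi\circ\epsilon$.

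Next I would transport this across the adjunction $n_{H}^{G}\dashv i_{H}^{\ast}$. A map $\psi$ as above corresponds to its adjunct $\tilde\psi\colon a^{-1}i_{H}^{\ast}\m{R}\to i_{H}^{\ast}\m{B}$, and, using naturality of the unit $\eta$ of the adjunction together with the triangle identity $i_{H}^{\ast}(\epsilon)\circ\eta_{i_{H}^{\ast}\m{R}}=\id$, the constraint $\psi\circ n_{H}^{G}\iota=\phi\circ\epsilon$ becomes equivalent to the plain factorization condition $\tilde\psi\circ\iota=i_{H}^{\ast}(\phi)$. Thus lifts of $\phi$ over $\m{P}$ are in natural bijection with factorizations of $i_{H}^{\ast}(\phi)$ through the localization $\iota$. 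By the universal property of $a^{-1}i_{H}^{\ast}\m{R}$ from Theorem~\ref{thm:InvertingElements}, such a factorization exists, and is then unique, exactly when $i_{H}^{\ast}(\phi)$ carries $a$ to a unit of $(i_{H}^{\ast}\m{B})(H/H)$. Since $(i_{H}^{\ast}\m{B})(H/H)=\m{B}(G/H)$ as commutative rings and the element in question is $\phi(a)$, this is precisely the condition $\phi(a)\in\m{B}(G/H)^{\times}$, and the identification of universal properties is complete; Yoneda then gives the asserted isomorphism over $\m{R}$.

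The argument is essentially formal; the step that needs genuine care is the third one, translating the pushout compatibility condition through $n_{H}^{G}\dashv i_{H}^{\ast}$ and verifying, via the triangle identities and naturality of $\eta$, that it really reduces to $\tilde\psi\circ\iota=i_{H}^{\ast}(\phi)$ and not to some variant twisted by the unit. It is also worth recording where the hypothesis enters: because $a$ lives over the orbit $G/H$, ``inverting $a$'' at the level of $i_{H}^{\ast}\cO$-Tambara functors is inverting a single element of the ordinary commutative ring $(i_{H}^{\ast}\m{R})(H/H)$, and being a unit there is literally the same condition as $\phi(a)$ being a unit in $\m{B}(G/H)$ — so no further equivariant subtlety intervenes at the last step.
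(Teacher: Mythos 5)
Your argument is correct and is exactly the paper's proof: the paper simply asserts that the pushout and the localization share the same universal property, and your write-up supplies the verification (unwinding the pushout, passing across the adjunction $n_H^G\dashv i_H^*$ via the triangle identity, and invoking the universal property of $a^{-1}i_H^*\m{R}$ over $H$). No gaps; you have just made the one-line proof explicit.
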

\begin{proof}
Both the pushout and the localization of $\m{R}$ have the same universal property.
\end{proof}

This allows us to refine Theorem~\ref{thm:InvertingColimit}.
\begin{theorem}
Let $a\in\m{R}(G/H)$. If for all $\pi\colon H/K\to H/J\in \cOrb_{i_H^\ast\cO}$, the element $N_{\pi}\circ\res_K^H(a)$ divides a power of $\res_J^H(a)$, then
\begin{enumerate}
\item the sequential colimit $\m{R}'$
\[
i_H^\ast \m{R}\xrightarrow{a\cdot} i_H^\ast\m{R}\xrightarrow{a\cdot}\dots
\]
computes the localization $a^{-1}i_H^\ast\m{R}$, and
\item $a^{-1}\m{R}$ is the pushout of the diagram
\[
\xymatrix{
{n_H^G i_H^\ast\m{R}}\ar[r]^{\epsilon}\ar[d]_{n_H^G\iota} & {\m{R}.} \\
{n_H^G \m{R}'}
}
\]
\end{enumerate}
\end{theorem}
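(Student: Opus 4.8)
The plan is to deduce both parts essentially formally, part (1) from Theorem~\ref{thm:InvertingColimit} applied to a smaller group, and part (2) from Proposition~\ref{prop:InvertinH} together with part (1); the only real content is a bookkeeping check that the divisibility hypothesis and the structure maps transport correctly under $i_H^\ast$.

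For part (1), I would apply Theorem~\ref{thm:InvertingColimit} with the group $H$ in place of $G$, the restricted indexing system $i_H^\ast\cO$ in place of $\cO$, the $i_H^\ast\cO$-Tambara functor $i_H^\ast\m{R}$ in place of $\m{R}$, and the element $a$, viewed as living in $(i_H^\ast\m{R})(H/H)=\m{R}(G/H)$. The hypothesis of the present theorem is, verbatim after this relabeling, the divisibility hypothesis demanded by Theorem~\ref{thm:InvertingColimit}: for every $\pi\colon H/K\to H/J$ in $\cOrb_{i_H^\ast\cO}$, the element $N_\pi\circ\res_K^H(a)$ divides a power of $\res_J^H(a)$. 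Hence Theorem~\ref{thm:InvertingColimit} gives that the sequential colimit $\m{R}'$ of the telescope $i_H^\ast\m{R}\xrightarrow{a\cdot}i_H^\ast\m{R}\xrightarrow{a\cdot}\cdots$ is the localization $a^{-1}i_H^\ast\m{R}$, and moreover identifies the localization unit $\iota\colon i_H^\ast\m{R}\to a^{-1}i_H^\ast\m{R}$ with the canonical map of $i_H^\ast\m{R}$ into its colimit.

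For part (2), I would start from Proposition~\ref{prop:InvertinH}, which presents $a^{-1}\m{R}$ as the pushout in $\cO$-Tambara functors of the span
\[
n_H^G\big(a^{-1}i_H^\ast\m{R}\big)\xleftarrow{n_H^G\iota} n_H^G i_H^\ast\m{R}\xrightarrow{\epsilon}\m{R}.
\]
Since $n_H^G$ is a functor, the isomorphism $a^{-1}i_H^\ast\m{R}\cong\m{R}'$ of part (1) induces an isomorphism $n_H^G(a^{-1}i_H^\ast\m{R})\cong n_H^G\m{R}'$ under $n_H^G i_H^\ast\m{R}$, carrying $n_H^G\iota$ to $n_H^G$ of the canonical map $i_H^\ast\m{R}\to\m{R}'$. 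Substituting this isomorphism into the span leaves the pushout unchanged, so $a^{-1}\m{R}$ is the pushout of $n_H^G\m{R}'\xleftarrow{n_H^G\iota} n_H^G i_H^\ast\m{R}\xrightarrow{\epsilon}\m{R}$, which is exactly the displayed diagram.

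The only step with genuine content — everything else being an invocation of the two cited results — is the compatibility assertion used at the end of part (1): that the isomorphism $a^{-1}i_H^\ast\m{R}\cong\m{R}'$ matches the localization unit with the structure map into the telescope. This is precisely what the proof of Theorem~\ref{thm:InvertingColimit} delivers (following the arguments of the second author and Hopkins, and of Nakaoka), since the colimit map is shown there to have the universal property of the localization; with that identification in hand, part (2) follows because the pushout in question and $a^{-1}\m{R}$ are then characterized by the same universal property, so no further argument is needed.
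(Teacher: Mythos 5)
Your proposal is correct and matches the paper's argument exactly: part (1) is Theorem~\ref{thm:InvertingColimit} applied over the subgroup $H$ with the indexing system $i_H^\ast\cO$ and the element $a\in(i_H^\ast\m{R})(H/H)=\m{R}(G/H)$, and part (2) then follows by substituting this identification into the pushout presentation of Proposition~\ref{prop:InvertinH}. The compatibility of the localization unit with the telescope structure map that you flag is indeed the only point of substance, and it is supplied by the universal-property argument underlying Theorem~\ref{thm:InvertingColimit}.
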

\begin{proof}
The first part is simply a restatement of Theorem~\ref{thm:InvertingColimit}. The second part follows from the first from Proposition~\ref{prop:InvertinH}.
\end{proof}

\subsection{Identifying \texorpdfstring{$n_{H}^{G}$}{n}}

In general, identifying the Mackey functor underlying $n_{H}^{G}\m{R}$ is difficult. We can single out, however, distinguished cases where such an identification is possible. We being with a construction in Mackey functors, due to Mazur for cyclic $p$-groups and Hoyer for all finite groups.

\begin{definition}[{\cite[Def 2.3.2]{HoyerThesis}}]
Let $N_{H}^{G}\colon \Mackey_{H}\to\Mackey_{G}$ be the left Kan extension along the coinduction map
\[
\Set^{H}\to\Set^{G}.
\]
\end{definition}

\begin{remark}
Since the coinduction functor $F_H(G,-)$ is a strong symmetric monoidal functor for the cartesian product, this definition also gives a functor
\[
N_H^G\colon\Green_H\to\Green_G.
\]
We will therefore blur the distinction between which functor we mean at will.
\end{remark}

\begin{theorem}\label{thm:IdentifyingLeftAdjoint}
If $G/H$ is admissible for $\cO$, then we have a natural isomorphism
\[
U\circ n_{H}^{G}(-)\cong N_{H}^{G}\circ U(-).
\]
\end{theorem}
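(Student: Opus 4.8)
The goal is to identify the Mackey functor underlying $n_H^G \m{R}$ with the Hoyer--Mazur multiplicative norm $N_H^G$ applied to the underlying Mackey functor of $\m{R}$, under the hypothesis that $G/H$ is admissible for $\cO$. The strategy is to compare universal properties: both $U \circ n_H^G$ and $N_H^G \circ U$ are left adjoints to something, and I would identify those right adjoints and show they agree. Concretely, $n_H^G$ is left adjoint to the restriction $i_H^\ast \colon \OTamb_G \to \OTamb_H$ (Proposition before Definition of $n_H^G$), and $N_H^G$ on Mackey functors is by definition left Kan extension along coinduction $\Set^H \to \Set^G$, hence left adjoint to the restriction of Mackey functors. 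So I want to show that the two composite right adjoints $\Mackey_G \xrightarrow{\ } \Mackey_H$ and $\Mackey_G \xrightarrow{\ } \OTamb_H \text{?}$ — no: rather, I should show $U \colon \OTamb_G \to \Mackey_G$ commutes with restriction and that the right adjoint side matches, then conclude by uniqueness of left adjoints.

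More carefully, here is the sequence of steps. First, recall that restriction commutes with the forgetful functors: $i_H^\ast \circ U_G = U_H \circ i_H^\ast$, since both are given by precomposition with the inclusion $\cP^{\Set^H_{Iso}} \to \cP^G_{\cO}$ factored appropriately, and restriction of polynomials with exponents is compatible with restriction of spans (this follows from Proposition~\ref{prop:RestrictionClosure} and the description of $i_H^\ast$ on polynomials in Theorem~\ref{thm:RightAdjoint}). Second, I would use that $U_G$ has a left adjoint $\Sym_{\cO}$ — so $U_G$ is a right adjoint — but the cleaner route is: $U \circ n_H^G$ is a left adjoint (composite of left adjoints $n_H^G$ and the identity, with the observation that we only need the functor, and Kelly--Lack ensures product-preservation so $n_H^G$ exists); its right adjoint is $i_H^\ast \circ (\text{right adjoint of } U_G$-on-the-$H$-side is not needed$)$. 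Actually the right adjoint of $U_G \circ n_H^G \colon \OTamb_H \to \Mackey_G$ would be the composite going $\Mackey_G \xrightarrow{} \OTamb_H$, which isn't what I want. The correct comparison is at the level of the functors themselves restricted to representables: since $n_H^G$ is left Kan extension along $\cP^H_{i_H^\ast\cO} \hookrightarrow \cP^G_\cO$, it takes the representable $\mA^{i_H^\ast\cO}[x_K] = \cP^H_{i_H^\ast\cO}(H/K,-)$ to $\cP^G_\cO(G\times_H H/K, -) = \mA^\cO[x_{gKg^{-1}\text{-type}}]$ — more precisely to the $\cO$-Tambara functor represented by $G \times_H (H/K)$. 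So I would first check the isomorphism on representable (free) $\cO$-Tambara functors, where both sides are explicitly computable, and then extend by colimits.

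For the representable check: $U \circ n_H^G$ applied to $\mA^{i_H^\ast\cO}[x_K]$ is the underlying Mackey functor of the $\cO$-Tambara functor represented by $G\times_H H/K$, which by the $\cO$-version of the computation in Warning~\ref{warn:Projective} is the ``free $\cO$-Tambara on the Mackey functor represented by $G \times_H H/K$''; meanwhile $N_H^G \circ U$ applied to $\mA^{i_H^\ast\cO}[x_K]$ requires knowing $U(\mA^{i_H^\ast\cO}[x_K])$ and then applying the multiplicative norm. The key input making these match is precisely that $G/H$ admissible forces the orbit $G\times_H(H/K) = G/K'$ (for appropriate $K'$) to be admissible, via self-induction in the indexing system — this is where Corollary~\ref{cor:Factorization} and the self-induction closure of $\cO$ enter, ensuring that the norm maps freely generated on the $H$-side induce up correctly on the $G$-side without creating or destroying admissibility. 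Then, since every $\cO$-Tambara functor (and every Mackey functor) is a colimit of representables, and both $U \circ n_H^G$ and $N_H^G \circ U$ preserve the relevant colimits (left Kan extensions preserve all colimits; $U$ preserves reflexive coequalizers and filtered colimits, which together with the bar resolution by frees suffices — cf. the simplicial resolution discussion after Definition~\ref{defn:Frees}), the isomorphism on frees propagates to all objects, and naturality is automatic from the Yoneda/Kan-extension formalism.

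\textbf{Main obstacle.} The hard part will be the bookkeeping at the level of free objects: pinning down precisely that $U$ of the free $\cO$-Tambara functor represented by $G \times_H H/K$ agrees with $N_H^G$ of $U$ of the free $i_H^\ast\cO$-Tambara functor represented by $H/K$. This is essentially an avatar of the topological fact that the underlying $G$-spectrum of a norm $N_H^G X$ of a free $\cO$-algebra is again a free $\cO$-algebra on the norm (or indexed smash power) of the underlying spectrum — and the admissibility hypothesis on $G/H$ is exactly what makes the relevant indexed monoidal products land back inside the category of exponents. Making this comparison genuinely rigorous requires either an explicit description of the box-product of free $\cO$-Tambara functors (which, as Warning~\ref{warn:Projective} shows, is delicate) or a clean abstract argument that both functors are the left adjoint to $i_H^\ast$ composed with $U$ — the latter being cleanest if one can show directly that $N_H^G \circ U$ is left adjoint to $i_H^\ast$ when restricted to the appropriate subcategory, which in turn uses that $U$ itself participates in an adjunction. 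I expect the paper's proof takes this adjoint-functor route, deducing the isomorphism abstractly from uniqueness of adjoints once the compatibility $i_H^\ast \circ U \cong U \circ i_H^\ast$ of restrictions is established and the admissibility of $G/H$ is used to guarantee $n_H^G$ and $N_H^G$ are defined on matching data.
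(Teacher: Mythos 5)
There is a genuine gap: the step you flag as ``the main obstacle'' is not an obstacle to be handled later --- it is the entire content of the theorem, and your proposed framework does not make it easier. The paper does not reduce to representables at all. It works directly with the coend defining the left Kan extension $n_{H}^{G}\m{T}$: every generator $(T_{h}\circ N_{g}\circ R_{f},m)\in \cP^{G}_{\cO}(G\times_{H}X,Y)\times\m{T}(X)$ is brought, using the coend relations (which let induced basic maps be absorbed into $\m{T}$) and the fact that any map into an induced $G$-set is isomorphic to an induced map, into the canonical form $(T_{h}\circ N_{\epsilon_{T}},m)$ where $\epsilon_{T}$ is the counit of induction--restriction. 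The single place admissibility of $G/H$ enters is Corollary~\ref{cor:Factorization}, which guarantees $\epsilon_{T}\in\Set^{G}_{\cO}$, so that the factorization $g=\epsilon_{T}\circ\ind g'$ stays inside the allowed exponents. A parallel canonical form $(T_{h}\circ R_{\eta},m)$ for the coend computing $N_{H}^{G}(U\m{T})$ then sets up the comparison map $(T_{h}\circ N_{\epsilon},m)\mapsto(T_{h}\circ R_{\eta},m)$, whose well-definedness and bijectivity are cited from Hoyer. You correctly locate where admissibility is used, but you never carry out this manipulation, and your representable check requires exactly the same work: $U$ of the representable $\cO$-Tambara functor $\mA^{\cO}[x_{K}]$ is \emph{not} a representable Mackey functor (see Warning~\ref{warn:Projective}), so comparing it with $N_{H}^{G}$ of the $H$-side object is not ``explicitly computable'' --- it is the coend analysis in disguise.

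The propagation step of your argument also has an unaddressed hole. To extend from free objects to all objects you need both $U\circ n_{H}^{G}$ and $N_{H}^{G}\circ U$ to preserve reflexive coequalizers (and filtered colimits) of $\cO$-Tambara functors. But $U$ is a right adjoint, and colimits of $\cO$-Tambara functors are emphatically not computed on underlying Mackey functors --- the coproduct is the box product --- so preservation of reflexive coequalizers by $U$ would itself require a monadicity-type argument that is nowhere established in the paper. Your first, abandoned route via uniqueness of left adjoints fails for the reason you yourself noticed: $N_{H}^{G}\circ U$ is not exhibited as the left adjoint of any functor comparable to the right adjoint of $U\circ n_{H}^{G}$, so there is no pair of adjunctions to match up. The direct coend computation is what closes all of these gaps at once.
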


\begin{proof}
Our proof closely follow's Hoyer's proof for the case of ordinary Tambara functors. We include the details, since there is a single point where care must be taken.

Since $n_{H}^{G}(\m{T})$ is built as a left Kan extension, we can think of an element as an equivalence class of pairs
\[
(T_{h}\circ N_{g}\circ R_{f},m)\in \cP^{G}_{\cO}(G\times_{H} X,Y)\times \m{T}(X).
\]
The coend equivalence relations say that whenever a map like $N_{g}$ is induced up from $H$, then we can move it across:
\[
(T_{\ind f'}\circ N_{\ind g'}\circ R_{\ind h'},m)=\big(1,T_{f'}\circ N_{g'}\circ R_{h'}(m)\big).
\]
We use these to bring the bispan
\[
G\times_{H} X\xleftarrow{f} S\xrightarrow{g} T\xrightarrow{h} Y
\]
into a more canonical form, giving a better spanning set for our coend.

Since any map to an induced object is isomorphic to an induced map, we can replace our bispan with an equivalent one:
\[
G\times_{H} X\xleftarrow{\ind f} G\times_{H} S'\xrightarrow{\epsilon_{T}\circ \ind g'} T\xrightarrow{h} Y,
\]
where $\epsilon_{T}$ is the counit of the adjunction. Since $G/H$ is admissible, Corollary~\ref{cor:Factorization} shows that $\epsilon_T$ is in $\Set^G_{\cO}$, and in particular, the factorization $\epsilon\circ \ind g'$ takes place in the category $\Set_{\cO}^{G}$. This is the only point in Hoyer's argument where the fact that we were in $\Set^G_{\cO}$ arises. Now, any element
\[
(T_{h}\circ N_{g}\circ R_{f},m)\in \cP^{G}_{\cO}(G\times_{H} X,Y)\times \m{T}(X).
\]
is equivalent to one of the form
\[
(T_{h}\circ N_{\epsilon}, N_{g'}\circ R_{f'}m)\in \cP^{G}_{\cO}(G\times_{H} i_{H}^{\ast} T,Y)\times \m{T}(i_{H}^{\ast}T).
\]
This shows that pairs $(T_{h}\circ N_{\epsilon},m)$ for a spanning set for the coend defining $n_{H}^{G}\m{T}$.

A similar argument shows that any element
\[
(T_{h}\circ R_{f},m)\in \cP^{G}_{Iso}(\Map_{H}(G,X),Y)\times \mM(X)
\]
can be brought into the form
\[
(T_{h}\circ R_{\eta}, R_{f'}m)\in \cP^{G}_{Iso}(\Map_{H}(G,i_{H}^{\ast}S),Y)\times\mM(i_{H}^{\ast}S).
\]

The natural transformation is defined by
\[
(T_{h}\circ N_{\epsilon},m)\mapsto (T_{h}\circ R_{\eta},m);
\]
Hoyer shows that this is well-defined and an isomorphism of Mackey functors \cite[Thm 2.3.3]{HoyerThesis}.
\end{proof}

\begin{corollary}
The composite $N_{H}^{G}\circ i_{H}^{\ast}$ is naturally isomorphic to the left Kan extension along the functor $F(G/H,-)$.
\end{corollary}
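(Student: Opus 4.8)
The plan is to prove this directly in the Burnside category, and then to note how the same identification also falls out of Theorem~\ref{thm:IdentifyingLeftAdjoint} together with the adjunction bookkeeping assembled above. For the direct route, recall that $N_{H}^{G}$ is by definition left Kan extension along $\mathrm{Span}(\CoInd_{H}^{G})$ and that $i_{H}^{\ast}\colon \Mackey_{G}\to\Mackey_{H}$ is precomposition with $\mathrm{Span}(\Ind_{H}^{G})$ (here I use that $\Ind_{H}^{G}$, $\CoInd_{H}^{G}$, and $i_{H}^{\ast}$ all preserve pullbacks, so each induces a functor of span categories). The first thing I would bring in is the classical self-duality of the Burnside category: $\mathrm{Span}(\Ind_{H}^{G})$ has left adjoint $\mathrm{Span}(i_{H}^{\ast})$ (equivalently, restriction of Mackey functors is self-conjugate, being left adjoint to induction and right adjoint to coinduction). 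Then the purely formal identity $\operatorname{Lan}_{K}(M\circ J)\cong\operatorname{Lan}_{K\circ J^{L}}(M)$, valid whenever $J^{L}\dashv J$, applies with $K=\mathrm{Span}(\CoInd_{H}^{G})$ and $J=\mathrm{Span}(\Ind_{H}^{G})$, giving
\[
N_{H}^{G}\circ i_{H}^{\ast}\ \cong\ \operatorname{Lan}_{\mathrm{Span}(\CoInd_{H}^{G}\circ i_{H}^{\ast})}.
\]
Finally I would invoke the projection (Frobenius) isomorphism $\CoInd_{H}^{G}\circ i_{H}^{\ast}\cong F(G/H,-)$ of endofunctors of $\Set^{G}$, which identifies the right-hand side with left Kan extension along $F(G/H,-)$.

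The alternative, which is the route suggested by the placement of this corollary, uses Theorem~\ref{thm:IdentifyingLeftAdjoint}. Since $i_{H}^{\ast}$ on $\cO$-Tambara functors has right adjoint $\CoInd_{H}^{G}$ (Theorem~\ref{thm:RightAdjoint}) and left adjoint $n_{H}^{G}$, the composite $\CoInd_{H}^{G}\circ i_{H}^{\ast}$ has left adjoint $n_{H}^{G}\circ i_{H}^{\ast}$; by Proposition~\ref{prop:KanExtendGmodH} this composite is $m_{G/H}$, i.e.\ precomposition with the endofunctor $G/H\times(-)$ of $\cP^{G}_{\cO}$. The cartesian-closed adjunction $G/H\times(-)\dashv F(G/H,-)$ on $\Set^{G}$ has unit and counit realized by honest $G$-maps, hence lifts through the transfer generators to an adjunction of endofunctors of $\cP^{G}_{\cO}$, so precomposition with $F(G/H,-)$ is left adjoint to $m_{G/H}$; uniqueness of adjoints gives $n_{H}^{G}\circ i_{H}^{\ast}\cong\operatorname{Lan}_{F(G/H,-)}$ on $\OTamb_{G}$, and applying $U$ and Theorem~\ref{thm:IdentifyingLeftAdjoint} (together with the fact that $U$ commutes with $i_{H}^{\ast}$) transports the statement to $N_{H}^{G}\circ i_{H}^{\ast}$ on Mackey functors. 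Note that this route needs $G/H$ to be admissible for $\cO$, whereas the span-categorical argument above needs neither $\cO$ nor admissibility.

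The step demanding the most care is the projection isomorphism $\CoInd_{H}^{G}\circ i_{H}^{\ast}\cong F(G/H,-)$ together with its compatibility with pullback squares, so that it promotes to an isomorphism of endofunctors of the span category; with suitable conventions the comparison map is $\phi\mapsto\bigl(gH\mapsto g\cdot\phi(g^{-1})\bigr)$, but equivariance, bijectivity, and naturality in pullbacks must all be checked. For the second route, the analogous subtlety is verifying that the $\Set^{G}$-adjunction really does descend to $\cP^{G}_{\cO}$ — in particular that $F(G/H,-)$ carries exponents in $\cO$ to exponents in $\cO$ — which is exactly the bookkeeping the first route avoids.
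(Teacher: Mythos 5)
Your first argument is correct and is exactly the paper's intended proof: the corollary is stated without proof, but the ingredients you compose --- $N_{H}^{G}$ defined as left Kan extension along coinduction, $i_{H}^{\ast}$ on Mackey functors being precomposition with $\mathrm{Span}(\Ind_{H}^{G})$ and hence left Kan extension along $\mathrm{Span}(i_{H}^{\ast})$ via the (ambidextrous) Burnside adjunction, and the projection isomorphism $F_{H}(G,i_{H}^{\ast}(-))\cong F(G/H,-)$ --- are precisely what the paper invokes (the last of these explicitly, in the proof of Proposition~\ref{prop:NTaLeftAdjoint}). One caution about your second route: Theorem~\ref{thm:AdjointPair} \emph{reverses} the direction of an adjunction on passage to polynomial categories, and its hypothesis that the image of the left adjoint be essentially a sieve fails for $G/H\times(-)$ (a map $S\to G/H\times T$ need not be isomorphic to $G/H\times(\text{something})$), so the claim that $G/H\times(-)\dashv F(G/H,-)$ lifts in the same direction to $\cP^{G}_{\cO}$, and hence that precomposition with $F(G/H,-)$ is a \emph{left} adjoint of $m_{G/H}$, is not justified as stated; the direct computation gives $n_{H}^{G}\circ i_{H}^{\ast}\cong\operatorname{Lan}_{\cP(G/H\times-)}$, and identifying its underlying Mackey functor with $\operatorname{Lan}_{F(G/H,-)}$ is really the content of Theorem~\ref{thm:IdentifyingLeftAdjoint}, not a formal consequence of uniqueness of adjoints. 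Since the corollary lives entirely at the Mackey-functor level, your span-theoretic first route suffices and no admissibility hypothesis is needed, as you observe.
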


This motivates the following definition.

\begin{definition}
If $T$ is a finite $G$ set, then let 
\[
N^{T}\colon \Mackey_{G}\to\Mackey_{G}
\]
be left Kan extension along the functor $S\mapsto F(T,S)$.
\end{definition}

Since 
\[
F(T_{1}\amalg T_{2},S)\cong F(T_{1},S)\times F(T_{2},S),
\]
we have a natural isomorphism of functors
\[
N^{T_{1}\amalg T_{2}}\cong N^{T_{1}}\Box N^{T_{2}}.
\]

\begin{proposition}\label{prop:NTaLeftAdjoint}
For all admissible $G$-sets $T$, we have an adjoint pair of functors on $\cO$-Tambara functors:
\[
N^{T}\leftadjoint m_{T}.
\]
\end{proposition}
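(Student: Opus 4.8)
The plan is to reduce to orbits and then glue along coproducts. First I would handle $T=G/H$ for an admissible orbit. By Proposition~\ref{prop:KanExtendGmodH} (and the discussion immediately following it) the endofunctor $m_{G/H}$ of $\cO$-Tambara functors is the composite $\CoInd_{H}^{G}\circ i_{H}^{\ast}$. Now $i_{H}^{\ast}$ has $\CoInd_{H}^{G}$ as a right adjoint (Theorem~\ref{thm:RightAdjoint}) and $n_{H}^{G}$ as a left adjoint (by construction of $n_{H}^{G}$), so the composite $\CoInd_{H}^{G}\circ i_{H}^{\ast}$ has left adjoint $n_{H}^{G}\circ i_{H}^{\ast}$, via the chain
\[
\OTamb_{G}\big(n_{H}^{G}i_{H}^{\ast}\m{R},\m{S}\big)\cong i_{H}^{\ast}\OTamb_{H}\big(i_{H}^{\ast}\m{R},i_{H}^{\ast}\m{S}\big)\cong \OTamb_{G}\big(\m{R},\CoInd_{H}^{G}i_{H}^{\ast}\m{S}\big).
\]
By Theorem~\ref{thm:IdentifyingLeftAdjoint} and the corollary following it, the Mackey functor underlying $n_{H}^{G}i_{H}^{\ast}\m{R}$ is $N_{H}^{G}i_{H}^{\ast}U\m{R}=N^{G/H}(U\m{R})$, so $n_{H}^{G}\circ i_{H}^{\ast}$ is exactly the lift of $N^{G/H}$ to $\cO$-Tambara functors and $N^{G/H}\leftadjoint m_{G/H}$.

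For a general admissible $G$-set $T$, I would write $T=\coprod_{i}G/H_{i}$ as a disjoint union of orbits. Each summand $G/H_{i}$ is again admissible, since an indexing system is closed under subobjects (Definition~\ref{defn:IndexingSystem}); equivalently one may invoke Proposition~\ref{prop:SummandsinD} applied to the terminal map $T\to\ast$. Because each $\cO$-Tambara functor is product preserving, the corollary to Proposition~\ref{prop:KanExtendGmodH} gives $m_{T}\cong\prod_{i}m_{G/H_{i}}$. A finite product of right adjoints is right adjoint to the coproduct of the corresponding left adjoints, provided the source has the relevant coproducts; since the category of $\cO$-Tambara functors is cocomplete with the box product as its coproduct (the cocompleteness theorem above), the left adjoint of $\prod_{i}m_{G/H_{i}}$ is $\Box_{i}\big(n_{H_{i}}^{G}\circ i_{H_{i}}^{\ast}\big)$. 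Since $U$ is strong symmetric monoidal for the box product, the Mackey functor underlying $\big(\Box_{i}(n_{H_{i}}^{G}i_{H_{i}}^{\ast})\big)(\m{R})$ is $\Box_{i}N^{G/H_{i}}(U\m{R})$, which equals $N^{T}(U\m{R})$ by the decomposition $N^{T_{1}\amalg T_{2}}\cong N^{T_{1}}\Box N^{T_{2}}$. Thus $\Box_{i}(n_{H_{i}}^{G}\circ i_{H_{i}}^{\ast})$ refines $N^{T}$ to an endofunctor of $\cO$-Tambara functors and is left adjoint to $m_{T}$.

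I expect the main obstacle to be bookkeeping rather than a deep difficulty: one must check carefully that ``the left adjoint of a product is the coproduct of the left adjoints'' really applies here (which needs the cocompleteness of $\OTamb$, already available) and, more delicately, that this abstract left adjoint genuinely refines the Mackey-level functor $N^{T}$ compatibly---i.e., that the box-product decompositions of $N^{(-)}$ and of the $m_{(-)}$'s match up through the strong monoidal forgetful functor $U$. As an alternative to the orbit decomposition, one could observe directly that $m_{T}$ is restriction along the product-preserving endofunctor $T\times(-)$ of $\cP^{G}_{\cO}$, so that its left adjoint exists as a left Kan extension and is product preserving by the Kelly--Lack argument already used to construct $n_{H}^{G}$; identifying its underlying Mackey functor as $N^{T}$ would nonetheless still route through Theorem~\ref{thm:IdentifyingLeftAdjoint} and the orbit decomposition.
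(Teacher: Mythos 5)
Your proposal is correct and follows essentially the same route as the paper: reduce to orbits using the compatibility of $N^{(-)}$ with disjoint unions and of $m_{(-)}$ with products, identify $N^{G/H}$ with $n_{H}^{G}\circ i_{H}^{\ast}$ via Theorem~\ref{thm:IdentifyingLeftAdjoint} and the isomorphism $F_{H}(G,i_{H}^{\ast}(-))\cong F(G/H,-)$, and obtain the adjunction by composing $n_{H}^{G}\dashv i_{H}^{\ast}$ with $i_{H}^{\ast}\dashv\CoInd_{H}^{G}$. You simply spell out the adjunction chain and the gluing bookkeeping that the paper leaves implicit.
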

\begin{proof}
The assignment $T\mapsto N^{T}$ takes disjoint unions to categorical coproducts, and similarly, $T\mapsto m_{T}$ takes disjoint unions to categorical products. This reduces the proposition to checking on orbits $G/H$. Now we have a natural isomorphism
\[
N_{H}^{G}\circ i_{H}^{\ast}\cong N^{G/H}
\]
arising from the natural isomorphism $F_{H}(G,i_{H}^{\ast}(-))\cong F(G/H,-)$. The result then follows from Theorem~\ref{thm:IdentifyingLeftAdjoint}.
\end{proof}

\bibliographystyle{plain}

\bibliography{Noo}

\end{document}